\documentclass[a4paper,12pt]{amsart}
\usepackage[a4paper,hscale=0.7,vscale=0.75,centering]{geometry}
\usepackage{fullpage}
\usepackage[utf8]{inputenc} 
\usepackage[T1]{fontenc}    
\usepackage{hyperref}       
\usepackage{url}            
\usepackage{booktabs}       
\usepackage{amsfonts}       
\usepackage{nicefrac}       
\usepackage{microtype}      
\usepackage{xcolor}         
\usepackage{amsmath}
\usepackage{mathtools}
\usepackage{amssymb}
\usepackage{amsthm}
\usepackage{bbm}
\usepackage[ruled]{algorithm2e}
\usepackage{graphicx}
\usepackage{subcaption}
\usepackage{enumitem}

\makeatletter

\renewcommand{\@tocline}[7]{%
  \ifnum #1>\c@tocdepth\else
    \par\addpenalty\@secpenalty\addvspace{#2}%
    \begingroup
      \hyphenpenalty\@M
      \@ifempty{#4}{%
        \@tempdima\csname r@tocindent\number#1\endcsname\relax
      }{%
        \@tempdima#4\relax
      }%
      \parindent\z@
      \leftskip#3\relax
      \advance\leftskip\@tempdima\relax
      \rightskip\@pnumwidth plus4em
      \parfillskip-\@pnumwidth
      #5\leavevmode
      \hskip-\@tempdima
      #6\nobreak
      \leaders\hbox{\kern .35em.\kern .35em}\hfill
      \nobreak
      \hbox to\@pnumwidth{\@tocpagenum{#7}}\par
    \endgroup
  \fi
}

\renewcommand{\l@section}{\@tocline{1}{6pt}{0pt}{2.5em}{\bfseries}}
\renewcommand{\l@subsection}{\@tocline{2}{2pt}{1.5em}{3.2em}{\normalfont\small}}

\makeatother

\DeclareMathOperator*{\argmin}{argmin}

\newtheorem{theorem}{Theorem}[section]
\newtheorem{proposition}[theorem]{Proposition}
\newtheorem{lemma}[theorem]{Lemma}
\newtheorem{corollary}[theorem]{Corollary}
\newtheorem{example}[theorem]{Example}
\newtheorem{definition}[theorem]{Definition}
\newtheorem{assumption}[theorem]{Assumption}
\newtheorem{remark}[theorem]{Remark}

\begin{document}

\title{From Saddle Points Toward Global Minima: A Newton-Type Method on Wasserstein Space}
\author{Razvan-Andrei Lascu}
\address{Center for Advanced Intelligence Project, RIKEN, Tokyo, Japan}
\email{razvan-andrei.lascu@riken.jp}

\author{Taiji Suzuki}
\address{Department of Mathematical Informatics, The University of Tokyo, and Center for Advanced Intelligence Project, RIKEN, Tokyo, Japan}
\email{taiji@mist.i.u-tokyo.ac.jp}

\begin{abstract}
We study the minimization of non-convex functionals over the Wasserstein space. While recent work has showed that perturbed Wasserstein gradient methods can avoid saddle points for benign landscapes, existing approaches remain essentially first-order and do not provide fast local convergence once the iterates enter a neighborhood of a global minimizer. We propose Wasserstein Saddle-Free Newton (WSFN), a second-order method that preconditions the Wasserstein gradient by a regularized square root of the squared Wasserstein Hessian. This construction preserves attraction toward directions of positive curvature while inducing repulsion along directions of negative curvature, thereby overcoming the tendency of standard Wasserstein Newton dynamics to be attracted to saddles. We also establish second-order sufficient optimality conditions on Wasserstein space for strict local minimality. Under regularity and benign landscape assumptions, we prove that WSFN escapes saddle regions and reaches an $\alpha$-neighborhood of a global minimizer in polynomial time, with improved dependence on saddle parameters compared with prior perturbed first-order methods. Once inside this neighborhood, we show that WSFN converges linearly in $L^2$-Wasserstein distance to a non-degenerate global minimizer. Finally, we present a particle-based implementation of the method.
\end{abstract}

\maketitle
\section{Introduction}
\label{section:Introduction}
We study the minimization of a non-convex functional $F:\mathcal{P}_2(\mathbb R^d) \to \mathbb R$ over the Wasserstein space $(\mathcal{P}_2(\mathbb R^d), W_2)$, namely
\begin{equation}
\label{eq:mean-field-min-problem-opt}
\min_{\mu \in \mathcal{P}_2(\mathbb R^d)} F(\mu),
\end{equation}
where $\mathcal{P}_2(\mathbb R^d)$ denotes the set of probability measures on $\mathbb R^d$ with finite second moment, equipped with the $L^2$-Wasserstein distance $W_2$. Throughout, we assume that $F$ is bounded from below, that is, $\inf_{\mu \in \mathcal{P}_2(\mathbb R^d)} F(\mu) > -\infty$. We refer to the metric space $(\mathcal{P}_2(\mathbb R^d), W_2)$ as the Wasserstein space.
\subsection{Non-convex optimization on Wasserstein space}
Problems such as \eqref{eq:mean-field-min-problem-opt} have attracted significant attention in recent years due to their broad range of applications in machine learning, including sampling and variational inference \cite{yao2023meanfieldvariationalinferencewasserstein,lambert2022variational,yifei-projected,Blei03042017,pmlr-v75-wibisono18a}, generative modeling \cite{arbel,pmlr-v97-chu19a,huang2024generativemodelingminimizingwasserstein2}, mean-field training of two-layer neural networks \cite{10.1214/20-AIHP1140,Nitanda2022ConvexAO,chizat2022meanfield,Chizat2018OnTG,Mei2018AMF,SIRIGNANO20201820,rotskoff}, and reinforcement learning \cite{lascu2025nonconvex,agazzi2021global,leahy,yamamoto24a,malo2024convexregularizationconvergencepolicy}. A significant part of this literature studies the convergence of the Wasserstein gradient flow (WGF) to a global minimizer under suitable notions of convexity of $F$ over the space of measures, most commonly linear convexity or geodesic convexity. Recent works have further considered the distinction between linear convexity and convexity along Wasserstein geodesics. For instance,
\cite{chizat2026quantitativeconvergencewassersteingradient} study the WGF of kernel mean discrepancy functionals, which are linearly convex but are
typically not geodesically convex in Wasserstein space. In a related
direction, \cite{chizat2025convergencedriftdiffusionpdesarising} study drift-diffusion equations arising as WGFs of entropy-regularized objectives. There, diffusion induced by entropy provides an additional mechanism
which allows one to exploit linear convexity despite the absence of displacement
convexity in the non-regularized objective. Another line of work develops discrete-time counterparts of WGF, often inspired by the Jordan--Kinderlehrer--Otto (JKO) scheme \cite{jko}. These include proximal-type methods on Wasserstein space \cite{zhu2025convergenceanalysiswassersteinproximal,lascu2024linearconvergenceproximaldescent,JMLR:rentian-proximal,korbaproximal}, as well as mirror descent and preconditioned gradient descent methods \cite{bonet2024mirror}. The convergence guarantees of these stepping schemes likewise rely on either linear or geodesic convexity assumptions. We defer a more detailed discussion of these schemes to Appendix \ref{section:wasserstein-proximal-taylor}, and further background on linear and geodesic convexity in Wasserstein space to Appendix \ref{section:app_conv_in_W_space}.
 
While much of the existing literature relies on convexity, many objective functionals arising in deep learning are inherently non-convex. In this setting, first-order critical points may correspond to global minimizers, local minimizers, saddle points, or even local maximizers, which makes the optimization landscape substantially more complicated. Although finding a global minimizer is generally difficult, in many applications it is enough to reach a local minimizer. This is particularly relevant for a number of non-convex problems in deep learning whose landscapes are \textit{benign}, in the sense that every local minimizer is in fact global \cite{boufadene, kim2024transformers, lu2020meanfield}. In this context, \cite{boufadene} analyze the WGF of the MMD discrepancy with Coulomb kernel, proving exponential convergence to the target on closed Riemannian manifolds by establishing a Polyak--\L ojasiewicz inequality and showing that the objective function has no local minima other than the global one. \cite{kim2024transformers} study a Transformer with a Multi-Layer Perceptron feature map and linear attention, and show in the mean-field limit that the resulting nonconvex objective is benign, that is critical points are global minima or saddles, with WGF almost always avoiding the saddles via a Gaussian noise-injection procedure in the flow when it gets close to saddle points. \cite{lu2020meanfield} develop a mean-field model for deep residual networks (ResNets) and show that, in this regime, every local minimizer is global, and hence that when the WGF converges, it has to reach the global minimum of the objective function. We provide a detailed discussion of these examples in Appendix \ref{section:examples-benign}. Despite the desirable benignity property, the design and analysis of efficient methods for non-convex optimization on measure space that both escape saddle points and converge to local minima of $F$ remain largely underexplored.

\subsection{Why first-order methods and Wasserstein Newton are insufficient} 
The recent work \cite{yamamoto2025hessianguided} studies problem \eqref{eq:mean-field-min-problem-opt} through WGF and addresses the challenge of escaping saddle points. Their proposed perturbed Wasserstein gradient flow (PWGF) modifies the standard WGF with suitably designed perturbations drawn from a Gaussian process whose covariance is constructed from the Wasserstein Hessian of the objective $F$. Under benignity assumptions, they show that this Hessian-guided perturbation mechanism enables the WGF to escape saddle points efficiently and to reach a neighborhood of a global minimizer in polynomial time. Nevertheless, two important limitations remain. First, PWGF is still a first-order method. The iterate update is driven only by the Wasserstein gradient of $F$, while second-order information is used only to define the perturbation injected near saddle points. Second, although PWGF is showed to reach a neighborhood of a global minimizer, no convergence rate is established once the iterates enter that region.

More generally, first-order methods such as WGF and Wasserstein gradient descent behave correctly near critical points in the sense that the gradient points in a descent direction. Their main limitation is that they may require very small steps to converge efficiently toward a local minimum or to escape a saddle region. On the other hand, a straightforward second-order approach such as the Wasserstein Newton method is also unsatisfactory. Indeed, inverse-Hessian preconditioning of the Wasserstein gradient removes the repulsive effect of negative curvature and may cause saddle points to become locally attractive. We make this phenomenon precise in Subsection~\ref{subsection:why-wass-newton-fails}. These observations motivate the following question:
\begin{center}
    \textit{Can one design a second-order method on Wasserstein space for non-convex objectives that escapes saddle points faster than PWGF, while also enjoying fast convergence once it enters a neighborhood of a global minimizer?}
\end{center}

\subsection{Contributions} 
We provide a theoretical contribution to non-convex optimization on Wasserstein space by developing and analyzing a second-order Wasserstein optimization method for benign non-convex landscapes. Our contributions can be summarized as follows:
\begin{itemize}[leftmargin = 5mm]
    \item We propose \textit{Wasserstein Saddle-Free Newton} (WSFN), a second-order method for non-convex optimization over the Wasserstein space. WSFN is based on a Hessian preconditioner acting on $L_\mu^2$, which modifies the Wasserstein gradient direction in a way that preserves attraction toward minimizers while inducing repulsion along directions of negative curvature. Moreover, this preconditioner is directly connected to the covariance operator of the Gaussian perturbation used near saddle points, so that both components of the method are informed by the same second-order structure of the landscape.

    \item We show that, under benignity and regularity assumptions, WSFN escapes saddle regions and reaches a neighborhood of a global minimizer in polynomial time, with improved dependence on saddle parameters compared with prior perturbed first-order methods. We further prove that, once in this neighborhood, the method converges linearly in $W_2$ to the global minimizer.
\end{itemize}
The paper is organized as follows. Section \ref{section:wass-geom-for-second-order} introduces the geometric ingredients, Section \ref{section:Wasserstein Saddle-Free Newton} presents WSFN, Section \ref{section:theoretical guarantees} states the theoretical guarantees, and Section \ref{conclusion} discusses future directions.
\section{Wasserstein geometry for second-order optimization}
\label{section:wass-geom-for-second-order}
This section introduces the geometric ingredients needed for the design and analysis of our method. We begin with the basic notation and the notion of Wasserstein differentiability, and then describe the
second-order structure of $F$ along transport curves. Additional background on optimal transport and Wasserstein geometry is deferred to Appendix \ref{section:background on OT} and Appendix \ref{sec:wass-geom}, respectively. Further notation from operator theory is collected in
Appendix \ref{section:additional notation}.
\subsection{Problem setup and minimal notation}
Let $\mathcal{P}_2^{\mathrm{ac}}(\mathbb R^d) \subset \mathcal{P}_2(\mathbb R^d)$ be the subset of probability measures that are absolutely continuous with respect to the Lebesgue measure. For $\mu \in \mathcal{P}_2(\mathbb R^d)$, let $(L_\mu^2, \Vert \cdot \Vert_{L_\mu^2})$ be the Hilbert space of measurable functions $f : \mathbb R^d \to \mathbb R^d$ such that $\Vert \cdot \Vert_{L_\mu^2}^2 :=\int_{\mathbb R^d} \|f(x)\|^2 \mathrm{d}\mu(x) < \infty$, endowed with the inner product $\langle\cdot,\cdot\rangle_{L_\mu^2}$. We define the identity map $\operatorname{Id}:\mathbb R^d \to \mathbb R^d$ by $\operatorname{Id}(x)= x,$ for all $x \in \mathbb R^d.$ If it exists, we denote the $\mu\textnormal{-essential supremum}$ of $f$ by $\|f\|_{\mu,\infty} := \inf\{M \geq 0: \|f(x)\| \leq M \text{ for } \mu\text{-a.e. } x \in \mathbb R^d\}$. Given a measurable map $\operatorname{T}:\mathbb R^d \to \mathbb R^d $ and $\mu\in \mathcal{P}_2(\mathbb R^d)$, we denote by $\operatorname{T}_\#\mu$ the pushforward measure of $\mu$ by $\operatorname{T}$. For $\mu,\nu \in \mathcal{P}_2(\mathbb R^d)$, define $W_2^2 (\mu, \nu) = \inf_{\gamma \in \Pi(\mu,\nu)} \int_{\mathbb R^d \times \mathbb R^d} \|x-y\|^2\ \mathrm{d}\gamma(x,y)$, where $\Pi(\mu,\nu)=\{\gamma\in\mathcal{P}_2(\mathbb R^d \times \mathbb R^d): (\pi_0)_\#\gamma=\mu, (\pi_1)_\#\gamma=\nu\}$ denotes the set of couplings between $\mu$ and $\nu$, with $\pi_0(x,y)=x$ and $\pi_1(x,y)=y$. We denote (open) Wasserstein balls of radius $r > 0$ centered at $\hat\mu \in \mathcal{P}_2(\mathbb R^d)$ by $B_{2}(r, \hat\mu) := \{\mu\in\mathcal{P}_2(\mathbb R^d): \exists \gamma \in \Pi(\mu,\hat{\mu}) \text{ such that } \int_{\mathbb R^d \times \mathbb R^d}\|y-x\|^2\mathrm{d}\gamma(x,y)<r^2\}.$ We denote by $\Pi_o(\mu,\nu)$ the set of optimal couplings. When the optimal coupling is of the form $\gamma=(\operatorname{Id}, \operatorname{T}_\mu^\nu)_\#\mu$ with $\operatorname{T}_\mu^\nu\in L^2_\mu$ satisfying $\operatorname{T}_\mu^\nu{_\#}\mu=\nu$, we call $\operatorname{T}_\mu^\nu$ the optimal transport map from $\mu$ to $\nu$.

We now recall the notion of Wasserstein differentiability \cite[Chapter 10]{ambrosio2008gradient}, \cite[Chapter 5]{Carmona2018ProbabilisticTO}, \cite{benoit}. 
\begin{definition}[Wasserstein differentiability]
\label{def:wass-differentiability}
    We say $F$ is Wasserstein differentiable at $\mu\in \mathcal{P}_2(\mathbb R^d)$, if there exists a map $\nabla_\mu F:\mathcal{P}_2(\mathbb R^d)\times \mathbb R^d \to \mathbb R^d$, called Wasserstein gradient of $F$, satisfying $\nabla_\mu F(\mu) \in L_\mu^2$, and for all $\nu \in \mathcal{P}_2(\mathbb R^d),$ $\gamma \in \Pi_o(\mu,\nu),$
    \begin{equation}
    \label{eq:wass_diff}
        F(\nu) = F(\mu) + \int_{\mathbb R^d \times \mathbb R^d} \langle \nabla_\mu F(\mu, x), y-x\rangle\ \mathrm{d}\gamma(x,y) + o(W_2(\mu,\nu)).
    \end{equation}
\end{definition}
Among all gradients satisfying \eqref{eq:wass_diff}, there is a unique one that belongs to the tangent space $\mathcal T_\mu \mathcal P_2(\mathbb R^d)$ \cite[Proposition 2.11]{lanzetti}, and throughout the paper we always work with this canonical representative; see Appendix \ref{subsec:wass-diff}. Under additional assumptions discussed in
Appendix \ref{subsection:First and second variation}, this gradient can be written as $\nabla_\mu F = \nabla \frac{\delta F}{\delta\mu}$, where $\frac{\delta F}{\delta\mu}$ denotes the first variation of $F$. To study the second-order behavior of $F$, we introduce the following regularity class.
    \begin{definition}[Wasserstein regularity]
\label{def:regularity-F}
    We say $F$ is Wasserstein regular if
    \begin{itemize}[leftmargin = 5mm]
        \item $F$ is Wasserstein differentiable at all $\mu \in \mathcal{P}_2(\mathbb R^d)$ with Wasserstein  gradient $\nabla_\mu F \in L_\mu^2$,
        \item $\nabla_\mu F$ is Wasserstein differentiable at all $\mu \in \mathcal{P}_2(\mathbb R^d)$ with Wasserstein gradient $\nabla^2_\mu F:\mathcal{P}_2(\mathbb R^d)\times \mathbb R^d \times \mathbb R^d \to \mathbb R^{d \times d}$, the map $ \mathbb R^d \ni x \mapsto \nabla_\mu F(\mu,x) \in \mathbb R^d$ is differentiable for all $\mu \in \mathcal{P}_2(\mathbb R^d)$,
        \item $\nabla^2_\mu F(\mu,x,y) = \nabla_\mu^2 F(\mu,y,x)^\top$, $\mu\otimes\mu$-a.e., and $\nabla \nabla_\mu F(\mu,x) = \nabla \nabla_\mu F(\mu,x)^\top$, $\mu$-a.e., with $\nabla_\mu^2 F(\mu,\cdot,\cdot) \in L^2_{\mu \otimes \mu}$ and $\|\nabla\nabla_\mu F(\mu,\cdot)\|_{\mu, \infty} < \infty$.
    \end{itemize}
\end{definition}

\subsection{Wasserstein Hessian along transport curves}
A central object in second-order optimization over Wasserstein space is the Hessian of the objective functional. It is typically defined along Wasserstein geodesics \cite[Chapter 8]{alma9912197933502466} (cf. Definition \ref{def:wass_hessian}). In this work, however, we work with the more general family
of transport curves $\mu_t = (\pi_t)_\#\mu$, with $\pi_t: =\operatorname{Id} + t v$, for $v\in L^2_\mu$ and $t \in [0,1]$. We therefore derive the Hessian of $F$ along such curves. The proof is given in Appendix \ref{subsection: Hessian identities and second-order expansions}. Related formulas also appear in \cite{bonet2024mirror} and \cite{wang2020informationnewtonsflowsecondorder}. For $\mu\in\mathcal P_2(\mathbb R^d)$ and $v\in L_\mu^2$, define the multiplication and integral operators $\widetilde{\operatorname{M}}_{\mu, t}, \widetilde{\operatorname{K}}_{\mu, t}:L_\mu^2 \to L_\mu^2$ by 
\begin{align*}
    &\widetilde{\operatorname{M}}_{\mu, t}[v](x) := \nabla \nabla_\mu F(\mu_t,\pi_t(x))v(x), \quad \operatorname{M}_\mu := \widetilde{\operatorname{M}}_{\mu, 0},\\
    &\widetilde{\operatorname{K}}_{\mu, t}[v](x) := \int_{\mathbb R^d} \nabla_\mu^2 F(\mu_t, \pi_t(x), \pi_t(\bar{x}))v(\bar{x})\mathrm d\mu(\bar{x}), \quad \operatorname{K}_\mu := \widetilde{\operatorname{K}}_{\mu, 0}.
\end{align*}
\begin{proposition}[Wasserstein Hessian of $F$ along $W_2$--curves]
\label{proposition:Wasserstein-hessian-appendix}
    Assume that $F$ is Wasserstein regular (cf. Definition \ref{def:regularity-F}). Let $\mu \in\mathcal{P}_2(\mathbb R^d)$, $v \in L_\mu^2$ and for $t \in [0,1]$ define the curve $\mu_t = (\pi_t)_{\#}\mu$, with $\pi_t = \operatorname{Id}+tv$. Then, for any $t\in[0,1]$, we have
    \begin{equation*}
        \frac{\mathrm d^2}{\mathrm d t^2}F(\mu_t)
= \langle \widetilde{\operatorname{H}}_{\mu, t}v, v\rangle_{L^2_\mu},
    \end{equation*}
where the operator $\widetilde{\operatorname{H}}_{\mu, t}$ defined by $\widetilde{\operatorname{H}}_{\mu, t}[v] = \widetilde{\operatorname{M}}_{\mu, t}[v] + \widetilde{\operatorname{K}}_{\mu, t}[v]$ is linear and symmetric.
\end{proposition}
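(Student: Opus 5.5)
The plan is to differentiate $t\mapsto F(\mu_t)$ twice with a chain rule along the transport curve $\mu_t=(\pi_t)_\#\mu$. The first step is the identity
\begin{equation*}
\frac{\mathrm d}{\mathrm dt}F(\mu_t)=\int_{\mathbb R^d}\langle \nabla_\mu F(\mu_t,\pi_t(x)),v(x)\rangle\,\mathrm d\mu(x).
\end{equation*}
Fix $t$ and compare $\mu_{t+h}$ with $\mu_t$ through the (not necessarily optimal) coupling $\gamma_h=(\pi_t,\pi_{t+h})_\#\mu$. Its cost satisfies $\int\|y-x\|^2\,\mathrm d\gamma_h=h^2\|v\|^2_{L^2_\mu}$, so $W_2(\mu_t,\mu_{t+h})\le |h|\,\|v\|_{L^2_\mu}$.

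The difficulty is that Definition~\ref{def:wass-differentiability} only gives the expansion along optimal couplings. I would handle this by using the canonical (tangent-space) gradient chosen after the definition. For that gradient the first-order expansion holds along any coupling; this is the standard fact (Ambrosio--Gigli--Savaré, or \cite[Prop.~2.11]{lanzetti}) that tangent gradients are ``strong'' subdifferentials, which I will assume. It gives $F(\mu_{t+h})-F(\mu_t)=h\int\langle\nabla_\mu F(\mu_t,\pi_t(x)),v(x)\rangle\,\mathrm d\mu+o(h)$, and hence the displayed formula for the first derivative.

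For the second derivative I write $g(t,x)=\nabla_\mu F(\mu_t,\pi_t(x))$ and differentiate in $t$, splitting the increment $g(t+h,x)-g(t,x)$ into two pieces. The first is a change of point at fixed measure, $\nabla_\mu F(\mu_{t+h},\pi_{t+h}(x))-\nabla_\mu F(\mu_{t+h},\pi_t(x))$. Since $\nabla_\mu F(\mu,\cdot)$ is differentiable, this piece equals $h\,\nabla\nabla_\mu F(\mu_{t+h},\pi_t(x))v(x)+o(h)$. The second is a change of measure at fixed point, $\nabla_\mu F(\mu_{t+h},\pi_t(x))-\nabla_\mu F(\mu_t,\pi_t(x))$. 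Applying the Wasserstein differentiability of $\nabla_\mu F$ along the same coupling gives
\begin{equation*}
h\int\nabla^2_\mu F(\mu_t,\pi_t(x),\pi_t(\bar x))v(\bar x)\,\mathrm d\mu(\bar x)+o(h).
\end{equation*}
Pairing with $v$ in $L^2_\mu$, dividing by $h$ and letting $h\to0$ yields $\langle\widetilde{\operatorname M}_{\mu,t}v+\widetilde{\operatorname K}_{\mu,t}v,v\rangle_{L^2_\mu}$. The limit under the integral is justified by dominated convergence, using the bound $\|\nabla\nabla_\mu F(\mu,\cdot)\|_{\mu,\infty}<\infty$ for the first piece and $\nabla^2_\mu F\in L^2_{\mu\otimes\mu}$ with Cauchy--Schwarz for the second. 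The dependence of $\nabla\nabla_\mu F(\mu_{t+h},\cdot)$ on $h$ is handled by a mild continuity assumption, or by reordering the split so that the measure is frozen at $\mu_t$ in the first term.

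I expect the main obstacle to be rigor in these limits. The $o(h)$ remainders are pointwise or in $L^2$ sense, and they must be integrated against $v$ uniformly. Moreover, the Taylor remainders are only controlled along optimal couplings, whereas $\gamma_h$ need not be optimal for large $t$; this is where the tangent-space choice of gradients is used.

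Linearity of $\widetilde{\operatorname H}_{\mu,t}$ is immediate, and both operators are bounded on $L^2_\mu$: $\widetilde{\operatorname M}_{\mu,t}$ by the $\mu$-essential sup bound (interpreted along the pushforward by $\pi_t$), and $\widetilde{\operatorname K}_{\mu,t}$ as a Hilbert--Schmidt operator by the $L^2$ kernel bound. Symmetry of $\widetilde{\operatorname M}_{\mu,t}$ follows from the pointwise symmetry $\nabla\nabla_\mu F=(\nabla\nabla_\mu F)^\top$. For $\widetilde{\operatorname K}_{\mu,t}$, Fubini gives
\begin{equation*}
\langle\widetilde{\operatorname K}_{\mu,t}u,w\rangle_{L^2_\mu}=\iint w(x)^\top\nabla^2_\mu F(\mu_t,\pi_t(x),\pi_t(\bar x))u(\bar x)\,\mathrm d\mu(\bar x)\,\mathrm d\mu(x).
\end{equation*}
Swapping $x$ and $\bar x$ and using $\nabla^2_\mu F(\mu,x,y)=\nabla^2_\mu F(\mu,y,x)^\top$ a.e. (transported to the pushforward of $\mu\otimes\mu$ under $\pi_t\times\pi_t$) shows this equals $\langle u,\widetilde{\operatorname K}_{\mu,t}w\rangle_{L^2_\mu}$.
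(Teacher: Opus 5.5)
Your proposal is correct and follows essentially the paper's own route: the first derivative comes from the coupling-independent first-order expansion for the tangent-space gradient (the paper's Lemma~\ref{lemma_for_second_order_mu}, built on Proposition~\ref{prop:strong_diff_w}, which is \cite[Proposition 2.12]{lanzetti} rather than 2.11). The second derivative comes from the same chain-rule split into a point-change term producing $\widetilde{\operatorname{M}}_{\mu,t}$ and a measure-change term producing $\widetilde{\operatorname{K}}_{\mu,t}$, and symmetry follows from the pointwise symmetries of $\nabla\nabla_\mu F$ and $\nabla_\mu^2 F$ together with Fubini. The uniformity issues you flag (integrating pointwise $o(h)$ remainders against $v$, and the $h$-dependence of $\nabla\nabla_\mu F(\mu_{t+h},\cdot)$) are not supplied by Wasserstein regularity alone. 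The paper simply interchanges $\frac{\mathrm d}{\mathrm dt}$ with the $L^2_\mu$ pairing formally, so your version is, if anything, more explicit about the extra joint continuity being used.
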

The Hessian $\widetilde{\operatorname{H}}_{\mu, t}$ is evaluated at time $t$ while the vector field $v\in L^2_\mu$ is in the tangent space at $t=0$. This is the reason for the distinction between Proposition \ref{proposition:Wasserstein-hessian-appendix} and Definition \ref{def:wass_hessian}. When $\mu \in \mathcal{P}_2^{\mathrm{ac}}(\mathbb R^d)$ and $\mu_t= (\pi_t)_\#\mu$, with $\pi_t :=\operatorname{Id} +t(\operatorname{T}_{\mu}^{\mu_1}-\operatorname{Id})$, is the Wasserstein geodesic from $\mu$ to $\mu_1$, it holds that $\widetilde{\operatorname{H}}_{\mu,0}=\mathrm{H}_{\mu}$, so Proposition \ref{proposition:Wasserstein-hessian-appendix} recovers the Hessian defined along $(\mu_t)_{t \in [0,1]}$ at $t=0$.
\begin{remark}
Proposition \ref{proposition:Wasserstein-hessian-appendix} can be used to compute the Wasserstein Hessian of several standard functionals, including the Kullback--Leibler divergence \cite{OTTO2000361}, the maximum mean discrepancy \cite{arbel}, and the kernel Stein discrepancy
\cite{korba21a}. Additional examples are provided in Example \ref{example:hessian_potential} and Example \ref{example:hessian_interaction}.
\end{remark}
The Wasserstein Hessian $\operatorname{H}_\mu$ is a well-defined operator in $L_\mu^2$. Indeed, by \cite[Theorem VI.23]{ReedSimon1980}, since $\nabla_\mu^2F(\mu,\cdot,\cdot) \in L^2_{\mu \otimes \mu}$, the integral operator $\operatorname{K}_\mu$ is well-defined in $L_\mu^2$, and moreover it is Hilbert--Schmidt (HS) with $\|\operatorname{K}_\mu\|_{\text{op}} \leq \|\operatorname{K}_\mu\|_{\text{HS}} = \|\nabla_\mu^2F(\mu,\cdot,\cdot)\|_{L^2_{\mu \otimes \mu}}$. By \cite[Theorem VI.22]{ReedSimon1980}, since $\operatorname{K}_\mu$ is HS, it is compact and by \cite[Theorem VI.15 (Riesz-Schauder theorem)]{ReedSimon1980}, the set of its eigenvalues is at most countable having no limit points except perhaps zero, thus we may denote by $\lambda_\textnormal{min}\operatorname{K}_\mu \in \mathbb R$ its smallest nonzero eigenvalue. Similarly, since $\|\nabla\nabla_\mu F(\mu,\cdot)\|_{\mu, \infty} < \infty$, the multiplication operator $\operatorname{M}_\mu$ is well-defined in $L_\mu^2$ and $\|\operatorname{M}_\mu\|_{\mathrm{op}} = \|\nabla\nabla_\mu F(\mu,\cdot)\|_{\mu, \infty}$ \cite[Theorem 2.1.3.]{arveson2001short}. Moreover, Proposition \ref{prop:wass-hessian-variations} shows that these operators can be expressed in terms of the first and second variations of $F$, namely, $\widetilde{\operatorname{M}}_{\mu, t} [v]= \nabla_z^2\frac{\delta F}{\delta\mu}(\mu_t,\pi_t)v$ and $\widetilde{\operatorname{K}}_{\mu, t}[v] = \int_{\mathbb R^d} \nabla_{\bar{z}} \nabla_{z} \frac{\delta^2 F}{\delta\mu^2}(\mu_t, \pi_t, \pi_t(\bar{x}))v(\bar{x})\mathrm d\mu(\bar{x})$. 

\subsection{Why Wasserstein Newton fails near saddles}
\label{subsection:why-wass-newton-fails}
We now explain why a straightforward Wasserstein analogue of Newton's method is not suitable for escaping saddles. We focus on the case where saddles are non-degenerate, i.e., where the Hessian is not exactly singular. Let $\mu^*$ be a critical point of $F$ such that $\nabla_\mu F(\mu^*,\cdot)=0$ in $L_{\mu^*}^2$, and assume that the Wasserstein Hessian $\operatorname H_{\mu^*}$ is self-adjoint and invertible on $L_{\mu^*}^2$. If $e\in L_{\mu^*}^2$ is an eigenvector of $\operatorname H_{\mu^*}$ associated with an eigenvalue $\lambda\in\mathbb R$, then, for perturbations of the form $v=a e$, for some $a \in \mathbb R$, the second-order expansion (cf. Lemma \ref{lem:2ndorder-expansion}) yields
\[
F((\operatorname{Id}+ae)_{\#}\mu^*) \approx F(\mu^*)
+ \frac{1}{2}\lambda a^2\|e\|_{L_{\mu^*}^2}^2.
\]
Thus, when $\lambda > 0$, the map $a \mapsto F(\mu^*)
+ \frac{1}{2}\lambda a^2\|e\|_{L_{\mu^*}^2}^2$ has a local minimum at $F(\mu^*)$, whereas when $\lambda<0$, it has a local maximum at $F(\mu^*)$. In other words, directions with $\lambda>0$ are locally positively curved, whereas directions with $\lambda<0$ correspond to negative curvature. Consider the curve $\mu_{t,e}=(\operatorname{T}_t)_\#\mu^*$, with $\operatorname{T}_t:=\operatorname{Id}+tae$, for $t \in [0,1]$, and define the displacement map of Wasserstein gradient descent along this curve by
\[
\Phi_\tau^{\operatorname{GD}}(\operatorname{T}_t)
:=
\left(\operatorname{Id}-\tau \nabla_\mu F(\mu_{t,e})\right)\circ \operatorname{T}_t.
\]
Linearizing at $\mu^*$, that is, $\frac{\mathrm{d}}{\mathrm{d}t}\big|_{t=0}\nabla_\mu F(\mu_{t,e}, \cdot) = \operatorname{H}_{\mu^*}$, gives
\begin{equation*}
\frac{\mathrm d}{\mathrm dt}\Big|_{t=0}\Phi_\tau^{\operatorname{GD}}(\operatorname{T}_t) = \left(\operatorname{I_{d\times d}}-\tau\operatorname{H}_{\mu^*}\right)\frac{\mathrm d}{\mathrm dt}\Big|_{t=0}\operatorname{T}_t = (1-\tau \lambda)ae.
\end{equation*}
Hence, near $\mu^*$, directions associated with positive eigenvalues are locally attractive since $1-\tau \lambda < 1$, while directions associated with negative eigenvalues are locally repulsive since $1-\tau \lambda > 1$. However, attraction in the positively curved directions requires $\tau < \lambda_{\max}^{-1}$, where $\lambda_{\max}$ denotes the largest positive eigenvalue of $\operatorname H_{\mu^*}$, while escape along negative directions can be very slow $1 + \tau |\lambda| \approx 1$ when $|\lambda|$ is small. By contrast, the Wasserstein Newton step rescales the gradient by the inverse Hessian. Define the associated displacement map along the curve by
\[
\Phi_\tau^{\operatorname{N}}(\operatorname{T}_t)
:=
\left(\operatorname{Id}-\tau \operatorname{H}_{\mu_{t,e}}^{-1}\nabla_\mu F(\mu_{t,e})\right)\circ \operatorname{T}_t.
\]
Linearizing at $\mu^*$, that is, $\frac{\mathrm{d}}{\mathrm{d}t}\big|_{t=0}\nabla_\mu F(\mu_{t,e}, \cdot) = \operatorname{H}_{\mu^*}$, gives
\begin{equation*}
\frac{\mathrm d}{\mathrm dt}\Big|_{t=0}\Phi_\tau^{\operatorname{N}}(\operatorname{T}_t) = \left(\operatorname{I_{d\times d}}-\tau \operatorname{H}_{\mu^*}^{-1}\operatorname{H}_{\mu^*}\right)\frac{\mathrm d}{\mathrm dt}\Big|_{t=0}\operatorname{T}_t = (1-\tau)ae,
\end{equation*}
which is independent of the sign of $\lambda$. Therefore, inverse-Hessian preconditioning removes the repulsive effect of negative curvature. Even directions associated with $\lambda<0$ become locally
attractive under the Newton dynamics. As a consequence, a Wasserstein Newton method may converge to a non-degenerate saddle instead of escaping it. This observation is the starting point for the construction of our WSFN method. The preconditioner must retain the attraction benefits of second-order information while preserving repulsion along directions of negative curvature.
\section{Wasserstein Saddle-Free Newton}
\label{section:Wasserstein Saddle-Free Newton}
The discussion in Section \ref{subsection:why-wass-newton-fails} shows that a suitable second-order method should preserve attraction toward local minimizers while retaining repulsion along directions of
negative curvature. This motivates replacing the inverse Hessian in the Wasserstein Newton step by an operator that depends on the Hessian while still preserving the sign of its eigenvalues in the induced dynamics.

\subsection{From Newton to saddle-free preconditioning}
Assume throughout this subsection that $\operatorname{H}_\mu$ is self-adjoint. Recall that the Wasserstein Newton direction is obtained by minimizing the second-order Taylor approximation $\mathfrak T_2^F(\mu)[v]$. Our starting point is to replace this quadratic model by the upper bound
\[
L_\mu^2 \ni v \mapsto F(\mu)+
\langle \nabla_\mu F(\mu,\cdot),v\rangle_{L^2_\mu} + \frac12\langle |\operatorname H_\mu| v,v\rangle_{L^2_\mu} = \mathfrak T_1^F(\mu)[v] + \frac12\langle |\operatorname H_\mu| v,v\rangle_{L^2_\mu},
\]
where $|\operatorname H_\mu| := (\operatorname H_\mu^2)^{1/2}$ is the absolute value of the Hessian. Since $\operatorname{H}_\mu$ is self-adjoint, the operator $\operatorname{H}_\mu^2$ is non-negative and therefore $|\operatorname{H}_\mu|$ is well defined. Moreover, $|\langle \operatorname H_\mu v,v\rangle_{L^2_\mu}| \leq \langle |\operatorname H_\mu| v,v\rangle_{L^2_\mu}$, so the above quadratic form indeed upper bounds the second-order term; see Lemma \ref{lem:upper-bound-inner-hessian} and \cite[Problem 2.35]{kato1995perturbation}. Accordingly, for $n\in\mathbb N$, an initial measure $\mu^0\in\mathcal P_2(\mathbb R^d)$, and a stepsize $\tau>0$, we define the ideal saddle-free Newton direction by
\begin{align*} 
    v_{\mathrm {SFN}}(\mu^n) = \argmin_{v \in L^2_{\mu^n}}\left\{\mathfrak T_1^F(\mu^n)[v] + \frac12\langle |\operatorname H_{\mu^n}| v,v\rangle_{L^2_{\mu^n}}\right\},\quad \mu^{n+1} = (\operatorname{Id}+\tau v_{\mathrm {SFN}}(\mu^n))_\#\mu^n.
\end{align*}
If $|\operatorname H_{\mu^n}|^{-1}$ exists, the minimizer is characterized by
\[
v_{\mathrm{SFN}}(\mu^n)=-|\operatorname H_{\mu^n}|^{-1}\nabla_\mu F(\mu^n,\cdot),
\]
and the resulting transport step becomes
\[
\mu^{n+1} = \Bigl(\operatorname{Id}-\tau |\operatorname H_{\mu^n}|^{-1}\nabla_\mu F(\mu^n,\cdot)\Bigr)_\#\mu^n.
\]
To understand the local behavior of this scheme near a critical point $\mu^*$, consider the displacement map of the SFN scheme along the curve introduced in Subsection~\ref{subsection:why-wass-newton-fails},
\[
\Phi_\tau^{\operatorname{SFN}}(\operatorname{T}_t)
:=
\left(\operatorname{Id}-\tau |\operatorname{H}_{\mu_{t,e}}|^{-1}\nabla_\mu F(\mu_{t,e})\right)\circ \operatorname{T}_t.
\]
Linearizing at $\mu^*$, that is, $\frac{\mathrm{d}}{\mathrm{d}t}\big|_{t=0}\nabla_\mu F(\mu_{t,e}, \cdot) = \operatorname{H}_{\mu^*}$, gives
\begin{equation*}
\frac{\mathrm d}{\mathrm dt}\Big|_{t=0}\Phi_\tau^{\operatorname{SFN}}(\operatorname{T}_t) = \left(\operatorname{I_{d\times d}}-\tau |\operatorname{H}_{\mu^*}|^{-1}\operatorname{H}_{\mu^*}\right)\frac{\mathrm d}{\mathrm dt}\Big|_{t=0}\operatorname{T}_t = \left(1-\tau\frac{\lambda}{|\lambda|}\right)ae.
\end{equation*}
Hence directions associated to $\lambda > 0$ remain locally attractive since $1-\tau < 1$, while directions associated to $\lambda < 0$ become locally repulsive since $1+\tau > 1$, for all $\tau  > 0$. In particular, the repulsion induced by negative curvature is restored, unlike in the Newton dynamics. Moreover, the admissible stepsize no longer depends on the largest positive eigenvalue of $\operatorname{H}_{\mu^*}$ since it suffices to take $\tau<1$. 

The idea of replacing the Hessian by the absolute value Hessian operator is inspired by saddle-free Newton methods in finite-dimensional non-convex optimization \cite{NIPS2014_dauphin}. The contribution here is not the Euclidean idea itself, but its formulation and analysis on Wasserstein space. This requires defining the appropriate Hessian operator on $L_\mu^2$, regularizing the absolute value operator in a way compatible with Wasserstein geometry, proving that the resulting map preserves repulsion along negative curvature directions, and combining this preconditioner with Hessian-guided perturbations to obtain global saddle escape and local convergence guarantees.

\subsection{Regularized WSFN update}
Although the ideal saddle-free update is appealing, the operator $|\operatorname{H}_\mu|^{-1}$ presents two major difficulties. First, even though $|\operatorname{H}_\mu|$ is non-negative, it may fail to be invertible. Second, the map $\operatorname{H}_\mu \mapsto |\operatorname{H}_\mu|$ is not Lipschitz, even when $\operatorname{H}_\mu$ itself is Lipschitz; see
\cite[VI: Bounded Operators, Problem~17]{ReedSimon1980}. Since Lipschitz continuity of the Hessian is a standard assumption in the analysis of perturbed first-order methods, including PWGF
\cite{yamamoto2025hessianguided} and Euclidean perturbed gradient methods \cite{Ge15,jin17a,NEURIPS2019_Li}, this issue is significant for our convergence analysis as well. 

To overcome these difficulties, we formulate a Wasserstein analogue of saddle-free Newton, called WSFN, by replacing $|\operatorname{H}_\mu|$ with the regularized operator $(\operatorname{H}_{\mu}^2+ \beta \operatorname{I_{d\times d}})^{1/2}$, where $\beta \geq 0$ is a regularization parameter. By Lemma \ref{lem:upper-bound-inner-hessian}, $\langle |\operatorname H_\mu| v,v\rangle_{L^2_\mu} \leq \langle(\operatorname{H}_{\mu}^2+ \beta \operatorname{I_{d\times d}})^{1/2} v,v\rangle_{L^2_\mu}$, so this regularized operator still yields an upper bound on the second-order term in the Taylor expansion. If $\beta > 0$, then $(\operatorname{H}_{\mu}^2+ \beta \operatorname{I_{d\times d}})^{1/2} \succeq \sqrt{\beta}\operatorname{I_{d\times d}} \succ 0$ and is therefore invertible. In addition, it is Lipschitz continuous (cf. Lemma~\ref{lem:resolvent-lip}). Observe also that as $\beta\to 0$, the operator converges to $|\operatorname{H}_\mu|$. We are thus led to the following regularized saddle-free scheme. Given
$n\in\mathbb N$, $\mu^0\in\mathcal P_2(\mathbb R^d)$, and $\tau>0$, define
\begin{equation}
\begin{aligned}
\label{eq:saddle_free_newton}
    &v_{\mathrm{WSFN}}(\mu^n) = \argmin_{v \in L^2_{\mu^n}}\left\{\mathfrak T_1^F(\mu^n)[v] +\frac{1}{2}\left\langle(\operatorname{H}_{\mu^n}^2+\beta\operatorname{I_{d \times d}})^{\frac{1}{2}}v, v\right\rangle_{L^2_{\mu^n}}\right\}, \\ 
    &\mu^{n+1} = (\operatorname{Id}+\tau v_{\mathrm{WSFN}}(\mu^n))_\#\mu^n.
\end{aligned}
\end{equation}
Proposition \ref{prop:well-posedness-saddle-free} shows that the iterates of \eqref{eq:saddle_free_newton} are well-defined and unique. In particular, the update can be written as
\begin{equation}
\label{eq:sfn}
        \mu^{n+1} = \left(\operatorname{Id} -\tau  \left(\operatorname{H}_{\mu^n}^2 + \beta\operatorname{I_{d \times d}}\right)^{-\frac{1}{2}}\nabla_\mu F(\mu^n)\right)_{\#}\mu^n.
\end{equation}
To understand its local behavior, define the corresponding displacement map by
\[
\Phi_\tau^{\operatorname{WSFN}}(\operatorname{T}_t)
:=
\left(\operatorname{Id}-\tau \left(\operatorname{H}_{\mu_{t,e}}^2 + \beta\operatorname{I_{d \times d}}\right)^{-\frac{1}{2}}\nabla_\mu F(\mu_{t,e})\right)\circ \operatorname{T}_t.
\]
Linearizing at $\mu^*$, that is, $\frac{\mathrm{d}}{\mathrm{d}t}\big|_{t=0}\nabla_\mu F(\mu_{t,e}, \cdot) = \operatorname{H}_{\mu^*}$, gives
\begin{equation*}
\frac{\mathrm d}{\mathrm dt}\Big|_{t=0}\Phi_\tau^{\operatorname{WSFN}}(\operatorname{T}_t) = \left(\operatorname{I_{d\times d}}-\tau \left(\operatorname{H}_{\mu^*}^2 + \beta\operatorname{I_{d \times d}}\right)^{-\frac{1}{2}}\operatorname{H}_{\mu^*}\right)\frac{\mathrm d}{\mathrm dt}\Big|_{t=0}\operatorname{T}_t = \left(1-\tau\frac{\lambda}{\sqrt{\lambda^2+\beta}}\right)ae.
\end{equation*}
Therefore, directions with $\lambda > 0$ remain locally attractive since $1-\tau\tfrac{\lambda}{\sqrt{\lambda^2+\beta}} < 1$, while directions with $\lambda < 0$ become locally repulsive since $1+\tau\frac{|\lambda|}{\sqrt{\lambda^2+\beta}} > 1$, for all $\tau  > 0$. Moreover, $\tfrac{|\lambda|}{\sqrt{\lambda^2+\beta}} < 1$, so it is again sufficient to impose $\tau<1$ in the positively curved directions, without any dependence on the largest eigenvalue of $\operatorname{H}_{\mu^*}$. 
\begin{remark}[Comparison with Levenberg--Marquardt regularization]
It is worth emphasizing that \eqref{eq:sfn} differs from the Wasserstein analogue of the Levenberg--Marquardt regularized Newton method (see Proposition \ref{prop:well-posedness}), namely, 
\[
\mu^{n+1} = \left(\operatorname{Id} - \left(\operatorname{H}_{\mu^n}+\tau^{-1}\operatorname{I_{d\times d}}\right)^{-1}\nabla_\mu F(\mu^n,\cdot)\right)_\#\mu^n.
\]
The latter interpolates between Newton's method (as $\tau\to+\infty$) and gradient descent (as $\tau\to 0$). However, for the regularized Hessian $\operatorname{H}_{\mu^n}+\tau^{-1}\operatorname{I_{d\times d}}$ to be invertible, one must either assume that $\operatorname{H}_{\mu^n} \succeq 0$, or impose a restriction of the form $\tau<\lambda^{-1}$ when $\operatorname{H}_{\mu^n}\succeq -\lambda\operatorname{I_{d \times d}}$. In other words, the regularization $\tau^{-1}\operatorname{I_{d\times d}}$ suppresses negative curvature in non-convex regions, thereby enforcing a descent direction but potentially at the expense of a prohibitively small stepsize. By contrast, our scheme regularizes the non-negative operator $\operatorname{H}_{\mu^n}^2$. As a result, the parameter $\beta$ can be chosen independently of the eigenvalues of $\operatorname{H}_{\mu^n}$, enough to guarantee invertibility of $\operatorname{H}_{\mu^n}^2+\beta \operatorname{I_{d\times d}}$.
\end{remark}

\subsection{Second-order structure in the perturbation and preconditioner}
While the update in \eqref{eq:sfn} corrects the local behavior of Newton's method by making directions of negative curvature repulsive, our global convergence analysis still relies on injecting perturbations when the iterate enters a region where the Wasserstein gradient is small. As in perturbed first-order methods, the role of this perturbation is to efficiently move the iterate away from saddle regions. The key point here is that the perturbation is itself guided by second-order information of the objective. More precisely, for $\mu \in \mathcal P_2(\mathbb R^d)$, define the matrix-valued kernel 
\begin{equation*}
C_\mu(x,y) = \int_{\mathbb R^d} \nabla_\mu^2 F(\mu,x,z)\nabla_\mu^2 F(\mu,z,y)\mathrm d\mu(z).
\end{equation*}
By Lemma \ref{lem:Gaussian-L2}, there exists a Gaussian process $\xi \sim \mathrm{GP}(0,C_\mu)$ such that $\xi \in L^2_\mu$ almost surely. Moreover, Lemma \ref{lem:KHS-implies-Ctrace} shows that the integral operator induced by $C_\mu$ coincides with the squared operator $\operatorname{K}_\mu^2$, where $\operatorname{K}_\mu$ denotes the integral part of the Wasserstein Hessian. Hence, the perturbation uses Hessian information of $F$ through the squared operator $\operatorname{K}_\mu^2$. In particular, both the perturbation and the WSFN preconditioner in \eqref{eq:sfn} are informed by the same second-order geometry of the landscape.

This observation leads to the following perturbed version of WSFN. Let $\mu^n$ be the current iterate. Whenever $\mu^n$ lies in a saddle region, namely when the Wasserstein gradient is small and the
landscape exhibits a direction of sufficiently negative curvature, we inject a perturbation by setting 
\begin{equation*}
\mu_n^{\mathrm{pert}} = (\operatorname{Id} + \eta \xi)_\# \mu^n, \quad \xi \sim \mathrm{GP}(0,C_{\mu^n}),
\end{equation*}
where $\eta > 0$ is the perturbation amplitude. Starting from $\mu_n^{\mathrm{pert}}$, we then apply
the WSFN transport step 
\begin{equation*}
\mu^{n+1} = \left(\operatorname{Id} - \tau \left(\operatorname{H}_{\mu^n}^2 + \beta \operatorname{I_{d\times d}}\right)^{-1/2} \nabla_\mu F(\mu^n,\cdot) \right)_\# \mu^n, \quad \mu^0:= \mu_n^{\mathrm{pert}},
\end{equation*}
for a prescribed number $n_{\mathrm{out}}$ of iterations. We call the resulting finite perturbed trajectory a \textit{saddle-point episode} (cf. Definition \ref{def:saddle-point-episode}). We say that the saddle-point episode is successful if the objective decreases by at least $F_0>0$ at the end of this phase. The resulting procedure therefore alternates between a \textit{perturbation phase}, which is activated when the iterate is close to a saddle, and a \textit{WSFN descent phase}, where the measure is transported along the regularized saddle-free Newton direction. As demonstrated by the numerical experiments in Appendix \ref{section:practical implementation}, using second-order information both in the perturbation mechanism and in the descent direction leads to significantly faster decrease of the objective compared with first-order methods. We summarize the full particle-based algorithm in Algorithm \ref{alg_WSFN_particles}.

\begin{remark}[Hessian-guided perturbations in second-order methods]
We are not aware of prior second-order methods, even in finite-dimensional non-convex optimization, that combine Hessian-based preconditioning with Hessian-guided perturbations. WSFN does so by using the Wasserstein Hessian both in the preconditioner and in the covariance of the perturbation.
\end{remark}

\section{Theoretical guarantees}
\label{section:theoretical guarantees}
In this section, we state our main results and start with the assumptions under which WSFN enjoys global and local convergence guarantees.
\subsection{Second-order optimality and landscape assumptions}
By Proposition \ref{proposition:Wasserstein-hessian-appendix}, for any $v\in L_{\mu^*}^2$, we have $\frac{\mathrm d^2}{\mathrm d t^2}\big|_{t=0}F\left((\operatorname{Id}+tv)_{\#}\mu^*\right)
= \left\langle (\operatorname{M}_{\mu^*} + \operatorname{K}_{\mu^*})v, v\right\rangle_{L^2_{\mu^*}}$. If $\mu^*\in\mathcal P_2^{\mathrm{ac}}(\mathbb R^d)$ is a critical point of $F$, then
$\nabla_\mu F(\mu^*,\cdot)=0$ in $L_{\mu^*}^2$ and $\operatorname{M}_{\mu^*}=0$ $\mu^*$-a.e. Hence second-order
optimality at $\mu^*$ is entirely determined by the integral operator $\operatorname{K}_{\mu^*}$. Based on this observation, \cite{yamamoto2025hessianguided} give a second-order necessary optimality condition for local minimizers. If $\mu^* \in \mathcal{P}_2^{\mathrm{ac}}(\mathbb R^d)$ is a local minimizer of $F$, then $\operatorname{K}_{\mu^*} \succeq 0$; see Proposition \ref{prop:necessary-second-order-optimality-minimizers}. Our next result provides a converse sufficient condition.
\begin{proposition}[Second-order sufficient optimality condition]
\label{prop:second-order-suff}
   Assume $F$ is Wasserstein regular. Let $\mu^* \in \mathcal{P}_2^{\mathrm{ac}}(\mathbb R^d)$ be such that $\nabla_\mu F(\mu^*, \cdot) = 0$ in $L_{\mu^*}^2$ and $\lambda_{\mathrm{min}}\operatorname{K}_{\mu^*} > 0$. Assume further that the maps $(\mu ,x )\mapsto \nabla \nabla_\mu F(\mu,x)$ and $(\mu ,x,\bar{x})\mapsto \nabla_\mu^2 F(\mu,x,\bar{x})$ are jointly locally continuous. Then there exists $r > 0$ such that $F(\mu) \ge F(\mu^*) + \frac{\lambda_\mathrm{min} \operatorname{K}_{\mu^*}}{4}W_2^2(\mu,\mu^*)$, for all $\mu\in B_2(r,\mu^*)$. In particular, $F(\mu)>F(\mu^*)$ for all $\mu\neq\mu^*$ in $B_2(r,\mu^*)$, i.e., $\mu^*$ is a strict local minimizer of $F$.
\end{proposition}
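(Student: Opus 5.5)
Since $\mu^*\in\mathcal P_2^{\mathrm{ac}}(\mathbb R^d)$, Brenier's theorem gives, for every $\mu\in\mathcal P_2(\mathbb R^d)$, an optimal map $\operatorname{T}=\operatorname{T}_{\mu^*}^{\mu}$. Set $v:=\operatorname{T}-\operatorname{Id}\in L^2_{\mu^*}$, so that $\|v\|_{L^2_{\mu^*}}=W_2(\mu,\mu^*)$ and $\mu=(\operatorname{Id}+v)_\#\mu^*$. Along the curve $\mu_t=(\pi_t)_\#\mu^*$ with $\pi_t=\operatorname{Id}+tv$, define $f(t):=F(\mu_t)$. I will use Taylor's formula with integral remainder,
\[
f(1)=f(0)+f'(0)+\int_0^1(1-t)f''(t)\,\mathrm dt .
\]
Here $f'(0)=\langle\nabla_\mu F(\mu^*,\cdot),v\rangle_{L^2_{\mu^*}}=0$ by criticality. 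Proposition \ref{proposition:Wasserstein-hessian-appendix} then gives $f''(t)=\langle\widetilde{\operatorname{H}}_{\mu^*,t}v,v\rangle_{L^2_{\mu^*}}$. Differentiability of $f$ on $[0,1]$ follows from Wasserstein regularity, as in the proof of that proposition. This reduces the claim to
\[
\langle \widetilde{\operatorname{H}}_{\mu^*,t}v,v\rangle_{L^2_{\mu^*}}\ \ge\ \tfrac{\lambda}{2}\|v\|^2_{L^2_{\mu^*}}\qquad\text{for all }t\in[0,1],
\]
where $\lambda:=\lambda_{\min}\operatorname{K}_{\mu^*}$. Integrating this bound against $(1-t)$ yields exactly $\frac{\lambda}{4}W_2^2$.

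\textbf{Splitting at $t=0$.} I will compare with the Hessian at $\mu^*$:
\[
\langle\widetilde{\operatorname{H}}_{\mu^*,t}v,v\rangle=\langle \operatorname{M}_{\mu^*}v,v\rangle+\langle \operatorname{K}_{\mu^*}v,v\rangle+\langle(\widetilde{\operatorname{M}}_{\mu^*,t}-\operatorname{M}_{\mu^*})v,v\rangle+\langle(\widetilde{\operatorname{K}}_{\mu^*,t}-\operatorname{K}_{\mu^*})v,v\rangle.
\]
For the first term: $\nabla_\mu F(\mu^*,\cdot)=0$ $\mu^*$-a.e., and $\mu^*$ is absolutely continuous with $x\mapsto\nabla_\mu F(\mu^*,x)$ differentiable. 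So $\nabla\nabla_\mu F(\mu^*,x)=0$ at $\mu^*$-a.e. point (a Lebesgue density point of the zero set), which is the fact $\operatorname{M}_{\mu^*}=0$ already noted before the statement. For the second term, I read the hypothesis $\lambda_{\min}\operatorname{K}_{\mu^*}>0$ as coercivity, $\langle\operatorname{K}_{\mu^*}v,v\rangle\ge\lambda\|v\|^2$ on the relevant tangent directions. I will state this interpretation explicitly.

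\textbf{The two error terms (main obstacle).} It remains to show that both error terms are at most $\frac{\lambda}{4}\|v\|^2$ in absolute value once $W_2(\mu,\mu^*)<r$. The operator norms satisfy
\[
\|\widetilde{\operatorname{M}}_{\mu^*,t}-\operatorname{M}_{\mu^*}\|_{\mathrm{op}}\le\|\nabla\nabla_\mu F(\mu_t,\pi_t(\cdot))-\nabla\nabla_\mu F(\mu^*,\cdot)\|_{\mu^*,\infty},
\]
and the integral part is bounded by the $L^2_{\mu^*\otimes\mu^*}$ norm of the kernel difference. We know $W_2(\mu_t,\mu^*)\le t\|v\|<r$, but $\pi_t(x)-x=tv(x)$ is only small in $L^2$, not pointwise. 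So joint local continuity does not directly give a uniform sup-norm bound.

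I would first handle this by interpreting joint local continuity as local uniform continuity with respect to $W_2$ in the measure and the $L^2$/essential-sup displacement. That is exactly the form used for the Lipschitz-Hessian assumptions elsewhere in the paper. If that is not available, a fallback is a splitting argument. On the set $\{|v|\le\delta\}$, local continuity makes the multiplication error small pointwise. The complement has $\mu^*$-mass at most $\|v\|^2/\delta^2$, and there I would bound the contribution using the uniform bound on $\|\nabla\nabla_\mu F\|_{\mu,\infty}$ together with dominated convergence. The integral term is treated the same way, using $L^2$-continuity of the kernel.

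With both error terms bounded by $\lambda/4$ in operator norm, the Hessian lower bound $f''(t)\ge\frac{\lambda}{2}\|v\|^2$ follows. Plugging it into the Taylor formula gives the result, and $F(\mu)>F(\mu^*)$ for $\mu\neq\mu^*$ follows immediately.
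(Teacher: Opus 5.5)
Your proposal is essentially the paper's proof. It uses the same curve $(\operatorname{Id}+tv)_\#\mu^*$ from the Brenier map, the same Taylor formula with remainder $\int_0^1(1-t)f''(t)\,\mathrm dt$, and the same use of $\operatorname{M}_{\mu^*}=0$ and coercivity of $\operatorname{K}_{\mu^*}$. It also needs the same bound $\|\widetilde{\operatorname{H}}_{\mu^*,t}-\operatorname{H}_{\mu^*}\|_{\mathrm{op}}\le\lambda/2$, uniformly in $t\in[0,1)$ and $\mu\in B_2(r,\mu^*)$. The paper simply asserts this bound from ``joint local continuity''. You are right that pointwise continuity does not give it, so your uniform (coupling-type) continuity reading is exactly what that step tacitly requires.

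Drop the fallback splitting argument, though, because it does not close the gap:
\begin{itemize}
\item Chebyshev bounds $\mu^*(\{\|v\|>\delta\})$, but not $\int_{\{\|v\|>\delta\}}\|v\|^2\,\mathrm d\mu^*$. For an admissible optimal map, e.g.\ a one-dimensional monotone map shifting only a far tail of mass $\epsilon$ by $L>\delta$, that integral equals all of $\|v\|^2_{L^2_{\mu^*}}$.
\item So the argument gives no bound of the form $\frac{\lambda}{4}\|v\|^2_{L^2_{\mu^*}}$ that is uniform over $\|v\|_{L^2_{\mu^*}}<r$. Dominated convergence does not help, since the normalized fields $v/\|v\|_{L^2_{\mu^*}}$ are not dominated.
\item In any case, the uniform constant $C_{\operatorname{M}}$ you invoke comes from Assumption \ref{assumption:smooth-F}, which is not a hypothesis of this proposition.
\end{itemize}
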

This motivates the following notion of approximate second-order stationarity.
\begin{definition}[$(\varepsilon,\delta)$-second-order critical point and saddle point]
We say that $\mu^*\in\mathcal P_2^{\mathrm{ac}}(\mathbb R^d)$ is:
\begin{itemize}
    \item an $(\varepsilon,\delta)$-second-order critical point if $\|\nabla_\mu F(\mu^*,\cdot)\|_{L_{\mu^*}^2}\leq \varepsilon$ and $\lambda_{\min}\operatorname{K}_{\mu^*}\geq -\delta$;
    \item an $(\varepsilon,\delta)$-saddle point if $\|\nabla_\mu F(\mu^*,\cdot)\|_{L_{\mu^*}^2}\leq \varepsilon$ and $\lambda_{\min}\operatorname{K}_{\mu^*}< -\delta$.
\end{itemize}
\end{definition}
Since small Wasserstein gradient in $L_\mu^2$ does not by itself imply that $\nabla \nabla_\mu F(\mu,\cdot)$ is small, we impose the following assumption.
\begin{assumption}[Local regularity near $(\varepsilon,\delta)$-critical points]
    \label{assumption_regularity_wg}
    Let $\mu^*$ be either an $(\varepsilon,\delta)$-second-order critical point or an $(\varepsilon,\delta)$-saddle point of $F$. Assume there exists $R_F,r > 0$ such that for any $ \mu \in B_2(r, \mu^*)$, we have $\|\nabla \nabla_{\mu} F(\mu,\cdot)\|_{\mu, \infty} \leq R_F \|\nabla_{\mu} F (\mu,\cdot)\|_{L^2_\mu}$.
\end{assumption}
Assumption \ref{assumption_regularity_wg} ensures that, in small gradient regions, the multiplication part $\operatorname{M}_\mu$ of the Wasserstein Hessian is controlled by the Wasserstein gradient, so that negative curvature is detected through the integral operator $\operatorname{K}_\mu$. This is the same role played by \cite[Assumption 2]{yamamoto2025hessianguided} in the analysis of PWGF. 

The next assumption is the Wasserstein analogue of the standard saddle property used in saddle escape analyses in Euclidean non-convex optimization; see, e.g., \cite{jin17a,Ge15}. It states that before the iterate reaches an $\alpha$-neighborhood of a global minimizer, it must either have a large Wasserstein gradient or exhibit a direction of negative curvature. 
\begin{assumption}[Saddle property (benignity)]
\label{ass:ss}
Assume that $F$ is an $(\varepsilon,\delta,\alpha)$-saddle, i.e., for every $\mu \in \mathcal{P}_2(\mathbb R^d)$, at least one of the following holds:
\begin{itemize}
    \item $\left\|\nabla_\mu F(\mu,\cdot)\right\|_{L^2_\mu} > \varepsilon$,
    \item $\lambda_{\mathrm{min}} \operatorname{K}_\mu \leq -\delta$,
    \item there exists a global minimizer $\mu^*$ such that $W_2(\mu, \mu^*) \leq \alpha$.
\end{itemize}
\end{assumption}
Examples of functionals satisfying this property are given in Appendix \ref{section:examples-benign}. A consequence of the benignity assumption is that approximate second-order critical points are close to global minimizers (cf. Lemma \ref{second-order stationary close to local}). Finally, for the convergence analysis of WSFN we assume uniform boundedness and coupling-Lipschitz continuity of the Wasserstein Hessian.
\begin{assumption}[Uniform boundedness of the Wasserstein Hessian]
\label{assumption:smooth-F}
Assume $F$ is Wasserstein regular and there exist $C_{\operatorname{M}},C_{\operatorname{K}} > 0$ such that for any $\mu \in \mathcal{P}_2(\mathbb R^d)$,
        \begin{align*}
            \|\nabla  \nabla_\mu F(\mu,\cdot)\|_{\mu ,\infty} \leq C_{\operatorname{M}}, \quad \|\nabla_\mu^2 F(\mu,\cdot,\cdot)\|_{L_{\mu \otimes \mu}^2} \leq C_{\operatorname{K}}.
        \end{align*}
\end{assumption}
\begin{assumption}[Coupling Lipschitzness of the Wasserstein Hessian]\label{assumption:Hessian-lipschitz}
Assume $F$ is Wasserstein regular and there exist $L_{\operatorname{M}}, L_{\operatorname{K}} > 0$ such that, for any $\gamma \in \Pi(\mu,\nu)$,
        \begin{align*}
            \|\nabla \nabla_\mu F(\nu,\cdot)  - \nabla \nabla_\mu F(\mu,\cdot)\|_{\gamma ,\infty}
            \leq L_{\operatorname{M}}\left(\int_{\mathbb R^d \times \mathbb R^d} \|y-x\|^2 \mathrm{d}\gamma(x,y)\right)^{1/2},\\
            \|\nabla_\mu^2 F(\nu,\cdot,\cdot)  -  \nabla_\mu^2 F(\mu,\cdot,\cdot)\|_{L_{\gamma \otimes \gamma}^2}
            \leq L_{\operatorname{K}} \left(\int_{\mathbb R^d \times \mathbb R^d} \|y-x\|^2 \mathrm{d}\gamma(x,y)\right)^{1/2}.
        \end{align*}
\end{assumption}
Assumptions \ref{assumption:smooth-F} and \ref{assumption:Hessian-lipschitz} are regularity assumptions on the Wasserstein Hessian. They imply the uniform boundedness and Lipschitz continuity of $\operatorname{H}_\mu$, with constants $C_{\operatorname{H}}:=C_{\operatorname{M}}+C_{\operatorname{K}}$, and $L_{\operatorname{H}}:=L_{\operatorname{M}}+L_{\operatorname{K}}$, respectively (cf. Lemma \ref{lem:uniform_bound_tildeH} and Lemma \ref{lem:lip-curves}). Overall, Assumptions \ref{assumption_regularity_wg}-\ref{assumption:Hessian-lipschitz} identify a class of regular benign landscapes for which second-order Wasserstein methods can be analyzed globally. The assumptions are comparable in spirit to those used in perturbed first-order saddle-escape analyses but the conclusion differs. The WSFN preconditioner normalizes directions of negative curvature, leading to the improved dependence on the saddle parameter in Theorem \ref{thm:global_strict_saddle}.

\subsection{Global convergence to a neighborhood of a global minimizer}
The following are the two main theorems of this paper, asserting that WSFN reaches an $\alpha$-neighbourhood of a global minimizer of $F$ with high probability, and once it reached the neighbourhood, it converges with linear rate to that global minimizer.
\begin{theorem}[Global convergence to an $\alpha$-neighbourhood of a global minimizer]
\label{thm:global_strict_saddle}
    Let Assumptions \ref{assumption_regularity_wg}, \ref{ass:ss}, \ref{assumption:smooth-F} and \ref{assumption:Hessian-lipschitz} hold. Define $\tilde{\delta} := \frac{\delta}{\sqrt{\delta^2 + \beta}}$ and let $\varepsilon,\delta > 0$ be chosen such that $\varepsilon \left(\frac{R_F}{\sqrt{\beta}}+\frac{2L_{\operatorname{H}}}{\pi\beta}\right) \leq \tilde{\delta}^\frac{3}{2}$. Let $\mu^0 \in \mathcal{P}_2(\mathbb R^d)$ be the initial value of scheme \eqref{eq:sfn} and define $F_{\mathrm{min}} := F(\mu^0) - \inf_{\mu \in \mathcal P_2(\mathbb R^d)} F(\mu)$. Fix $\zeta \in (0,1)$ and choose the parameters $\tau = O(1)$, $\kappa = O(1)$, $n_\mathrm{out}=\tilde O(\tilde{\delta}^{-1})$, $F_0=\tilde O(\tilde{\delta}^3)$ and $\eta = \tilde O \left( \frac{\tilde \delta^3}{\varepsilon+\tilde \delta^\frac{3}{2}}\right)$, with $\zeta_{\mathrm{ep}} = \frac{4}{3}\left\lceil \frac{F_{\mathrm{min}}}{F_0}\right\rceil^{-1}\zeta$. For an $\alpha$-neighbourhood around the global minimizer $\mu^* \in \mathcal{P}_2^{\mathrm{ac}}(\mathbb R^d)$ of $F$, define the hitting time $N_\alpha := \inf\left\{n\ge 0: \mu^n \in B_2(\alpha, \mu^*)\right\}$. Whenever the iterate $\mu^n$ satisfies $\|\nabla_\mu F(\mu^n,\cdot)\|_{L^2_{\mu^n}} \le \varepsilon$ and $\lambda_{\min}\operatorname K_{\mu^n}< -\delta$, a saddle point episode is initiated by drawing $\xi\sim \mathrm{GP}(0,C_{\mu^n})$, setting $\mu^{n_{\mathrm{in}}} := (\operatorname{Id}+\eta \xi)_\#\mu^n$, and then running \eqref{eq:sfn} for $n_{\mathrm{out}}$ iterations starting from $\mu^{n_{\mathrm{in}}}$. Then,
    \begin{equation*}
    N_\alpha \le \frac{2(C_{\operatorname H}^2+\beta)}{\tau\sqrt{\beta}}\frac{F_{\mathrm{min}}}{\varepsilon^2} + n_{\mathrm{out}}\frac{F_{\mathrm{min}}}{F_0},
    \end{equation*}
    with probability at least $1-\zeta$. In particular, up to logarithmic factors,
    \begin{equation*}
    N_\alpha = \tilde{O} \left(\left(\frac{C_{\operatorname H}^2+\beta}{\sqrt{\beta}}\frac{1}{\varepsilon^2} + \left(1+\frac{\beta}{\delta^2}\right)^2\right)F_{\mathrm{min}} \right).
    \end{equation*}
\end{theorem}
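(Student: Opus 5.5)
The proof follows the standard perturbed-descent template of \cite{jin17a,Ge15}, adapted to the preconditioned transport step \eqref{eq:sfn}. Write $P_\mu := (\operatorname H_\mu^2+\beta\operatorname{I_{d\times d}})^{-1/2}$. By Assumption \ref{assumption:smooth-F} and Lemma \ref{lem:uniform_bound_tildeH}, we have $\|\operatorname H_\mu\|_{\mathrm{op}}\le C_{\operatorname H}$. Hence the spectrum of $P_\mu$ lies in $[(C_{\operatorname H}^2+\beta)^{-1/2},\beta^{-1/2}]$.

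\textbf{Step 1 (large-gradient descent).} For one step $\mu^{n+1}=(\operatorname{Id}-\tau P_{\mu^n}\nabla_\mu F(\mu^n))_\#\mu^n$, I would use the second-order expansion along the transport curve (Lemma \ref{lem:2ndorder-expansion} with Proposition \ref{proposition:Wasserstein-hessian-appendix}) and bound the Hessian term by $C_{\operatorname H}$. This gives
\[
F(\mu^{n+1})\le F(\mu^n)-\tau\langle P\nabla F,\nabla F\rangle+\tfrac{\tau^2C_{\operatorname H}}{2}\|P\nabla F\|^2 .
\]
Choosing $\tau=O(1)$ with $\tau C_{\operatorname H}\le \sqrt\beta$ (or a similar constant condition), the decrease is at least $\frac{\tau}{2}\langle P\nabla F,\nabla F\rangle\ge \frac{\tau}{2}(C_{\operatorname H}^2+\beta)^{-1/2}\varepsilon^2$. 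I would tune the constants so that the bound reads $\frac{\tau\sqrt\beta}{2(C_{\operatorname H}^2+\beta)}\varepsilon^2$, matching the first term of the claimed bound. Summing over all large-gradient steps, their number is at most $\frac{2(C_{\operatorname H}^2+\beta)}{\tau\sqrt\beta}\frac{F_{\min}}{\varepsilon^2}$.

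\textbf{Step 2 (small gradient).} If $\|\nabla_\mu F(\mu^n)\|\le\varepsilon$, Assumption \ref{ass:ss} leaves two cases: either $\mu^n\in B_2(\alpha,\mu^*)$, and then we have stopped, or $\lambda_{\min}\operatorname K_{\mu^n}\le-\delta$, which is a saddle. In the saddle case, Assumption \ref{assumption_regularity_wg} gives $\|\operatorname M_{\mu}\|\le R_F\varepsilon$ nearby, so the negative curvature of $\operatorname H$ is essentially that of $\operatorname K$. The core claim is then that a saddle-point episode decreases $F$ by $F_0=\tilde O(\tilde\delta^3)$ with probability at least $1-\zeta_{\mathrm{ep}}$. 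The total decrease is at most $F_{\min}$, so there are at most $\lceil F_{\min}/F_0\rceil$ successful episodes. A union bound over the episodes gives the failure probability; the factor $\frac43$ in $\zeta_{\mathrm{ep}}$ absorbs the ceiling. Combining the two counts gives the bound on $N_\alpha$. Substituting $n_{\mathrm{out}}/F_0=\tilde O(\tilde\delta^{-4})=\tilde O((1+\beta/\delta^2)^2)$ yields the $\tilde O$ form.

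\textbf{Step 3 (the episode lemma: the main obstacle).} I would argue by the coupling / ``thin stuck region'' method. Take two perturbed starting points $(\operatorname{Id}+\eta\xi)_\#\mu^n$ and $(\operatorname{Id}+\eta\xi')_\#\mu^n$ that differ along the top eigendirection $e$ of $-\operatorname K_{\mu^n}$. Both are written as maps on $L^2_{\mu^n}$ and compared through the coupling they induce. The argument has three parts.
\begin{itemize}
\item \emph{Improve or localize.} If $F$ does not decrease by $F_0$, the trajectory stays in a ball of radius $\tilde O(\sqrt{F_0 n_{\mathrm{out}}})$. This follows from the Step 1 inequality summed over the episode, with Cauchy--Schwarz.
\item \emph{Divergence of the difference.} The difference $w_k$ of the two displacement maps obeys $w_{k+1}=(I-\tau P_{\mu^n}\operatorname H_{\mu^n})w_k+\Delta_k$. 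The error term $\Delta_k$ is controlled by Lemma \ref{lem:lip-curves}, Lemma \ref{lem:resolvent-lip} and the Lipschitz constants $L_{\operatorname H}$ and $R_F$; this is where the condition $\varepsilon(R_F/\sqrt\beta+2L_{\operatorname H}/(\pi\beta))\le\tilde\delta^{3/2}$ enters. Along $e$, the linear part grows like $(1+\tau\tilde\delta)^k$, since the relevant eigenvalue of $P\operatorname H$ is $\lambda/\sqrt{\lambda^2+\beta}\le-\tilde\delta$. After $n_{\mathrm{out}}=\tilde O(\tilde\delta^{-1})$ steps, $w_k$ would therefore exceed the localization radius. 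This contradiction shows that at most one of the two trajectories can be stuck.
\item \emph{Probability.} The stuck set is thin along $e$. Because the covariance operator is $\operatorname K_{\mu^n}^2$ (Lemma \ref{lem:KHS-implies-Ctrace}), the Gaussian process $\xi$ has variance at least $\delta^2$ along $e$, so it has a component along $e$ of size $\gtrsim\eta\delta$. Gaussian anti-concentration then bounds the stuck probability by $\zeta_{\mathrm{ep}}$ for the stated $\eta$.
\end{itemize}

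Within Step 3, the hardest technical point is controlling the perturbation of the $P_\mu\operatorname H_\mu$ linearization along the trajectory. That requires the Lipschitz bound on $\mu\mapsto(\operatorname H_\mu^2+\beta)^{-1/2}$ from Lemma \ref{lem:resolvent-lip}, whose resolvent-integral proof presumably accounts for the $2/(\pi\beta)$ constant.
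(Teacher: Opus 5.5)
Your overall route is the paper's. It has a descent bound for large-gradient steps, a saddle-point episode handled by two coupled trajectories that differ along the negative eigendirection, and a union bound over at most $\lceil F_{\min}/F_0\rceil$ episodes. Your Step 1 bound $\frac{\tau}{2}\langle Pg,g\rangle\ge\frac{\tau}{2\sqrt{C_{\operatorname H}^2+\beta}}\|g\|^2$ is actually slightly stronger than the paper's $\frac{\tau\sqrt\beta}{2(C_{\operatorname H}^2+\beta)}\|g\|^2$, so no tuning is needed.

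\textbf{Missing step: the cost of the perturbation.} Your improve-or-localize step sums the descent inequality starting from the perturbed measure $\mu^{n_{\mathrm{in}}}=(\operatorname{Id}+\eta\xi)_\#\mu^n$. It therefore gives a decrease relative to $\mu^{n_{\mathrm{in}}}$ only. The counting in Step 2 needs a net decrease relative to the saddle iterate $\mu^n$, and the jump $\mu^n\mapsto\mu^{n_{\mathrm{in}}}$ can increase $F$. The paper closes this with Lemma \ref{not_very_increase}: on $\{\|\xi\|_{L^2_{\mu^n}}\le\kappa\}$,
$F(\mu^{n_{\mathrm{in}}})-F(\mu^n)\le\eta\kappa\varepsilon+\frac{C_{\operatorname H}}{2}\eta^2\kappa^2=F_0$.
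This identity is exactly what defines $\eta$ in \eqref{eq:def_eta}. The coupling argument (Proposition \ref{discrete_lower_bound_metric}) is then run with threshold $2F_0$ measured from $\mu^{n_{\mathrm{in}}}$, so the net decrease from $\mu^n$ is at least $F_0$.

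\textbf{Linearization.} At an $(\varepsilon,\delta)$-saddle, $\operatorname M_{\mu^n}\neq0$ in general, so the eigenvector $e$ of $\operatorname K_{\mu^n}$ is not an eigenvector of $P_{\mu^n}\operatorname H_{\mu^n}$. The linear part of your recursion for $w_k$ must be $\operatorname{I_{d\times d}}-\tau(\operatorname K_{\mu^n}^2+\beta\operatorname{I_{d\times d}})^{-1/2}\operatorname K_{\mu^n}$, with $\operatorname M_{\mu^n}$ moved into the remainder. That is precisely where the $R_F\varepsilon/\sqrt\beta$ term comes from (Lemma \ref{lem:preconditioned_gradient_expansion}). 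To sum the remainders you also need
$\|\operatorname{I_{d\times d}}-\tau(\operatorname K_{\mu^n}^2+\beta\operatorname{I_{d\times d}})^{-1/2}\operatorname K_{\mu^n}\|_{\mathrm{op}}\le1+\tau|\lambda_0|/\sqrt{\lambda_0^2+\beta}$.
This follows from monotonicity of $\lambda\mapsto\lambda/\sqrt{\lambda^2+\beta}$ together with $\tau\le1$. Working with the eigenvector of $\operatorname K$ (not of $P\operatorname H$) is also what makes $\langle e,\xi\rangle\sim\mathcal N(0,\lambda_0^2)$ independent of $\xi_\perp$ (Lemma \ref{lem:independence}). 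Your thin-slab conditioning needs that independence.

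\textbf{Probability bookkeeping.} As written it does not close. With per-episode failure probability $\zeta_{\mathrm{ep}}$, the union bound over $\lceil F_{\min}/F_0\rceil$ episodes gives only $1-\frac43\zeta$. The factor $\frac43$ in $\zeta_{\mathrm{ep}}$ cancels the paper's per-episode failure bound $\frac34\zeta_{\mathrm{ep}}$, not the ceiling. That bound is $\frac{\zeta_{\mathrm{ep}}}{4}$ each for the events $\|\xi\|>\kappa$, the stuck slab, and the perturbation cost.
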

The global argument decomposes the trajectory before the hitting time $N_\alpha$ into two regimes. In the large gradient regime, the WSFN step gives a uniform decrease proportional to $\varepsilon^2$, after accounting for the regularized preconditioner. In the saddle regime, where the Wasserstein gradient is small and $\operatorname{K}_\mu$ has the minimum eigenvalue below $-\delta$, a Hessian-guided perturbation followed by $n_{\mathrm{out}}$ WSFN steps decreases the objective by at least $F_0$ with high probability. The benign landscape assumption ensures that, before reaching the $\alpha$-neighborhood of a global minimizer, every iterate belongs to one of these two regimes. Summing the decreases over both regimes gives the hitting time bound.
\begin{remark}[Improved dependence on the saddle parameter $\delta$]
    The admissible precision condition $\varepsilon\left(\tfrac{R_F}{\sqrt{\beta}}+\tfrac{2L_{\operatorname H}}{\pi\beta}\right)\leq\tilde{\delta}^{3/2}$ is comparable to the corresponding condition in \cite[Theorem 5.2]{yamamoto2025hessianguided}. Indeed, under the natural scaling $\beta\asymp \delta^2$, it reduces, up to constants, to $L_H\varepsilon \lesssim \delta^2$. However, the saddle-escape term improves from $\tilde O(\delta^{-4})$ to $\tilde O(1)$, up to logarithmic factors. Thus, unlike the first-order perturbed dynamics in \cite{yamamoto2025hessianguided}, whose escape rate depends on the raw curvature scale $\delta$, WSFN normalizes directions of sufficiently strong negative curvature almost uniformly, and the saddle-escape complexity no longer deteriorates polynomially in $1/\delta$.
\end{remark}
\subsection{Local linear convergence to a non-degenerate global minimizer}
\begin{theorem}[Linear convergence rate to a non-degenerate global minimizer]
\label{thm:local_rate_from_expansion_lemma}
Let Assumptions \ref{assumption:smooth-F}, \ref{assumption:Hessian-lipschitz} hold. Let $\mu^*\in \mathcal P_2^{\mathrm{ac}}(\mathbb R^d)$ be a global minimizer of $F$ satisfying the conditions of Proposition \ref{prop:second-order-suff}, i.e., $\nabla_\mu F(\mu^*, \cdot) = 0$ in $L_{\mu^*}^2$ and $\lambda_{\mathrm{min}}\operatorname{K}_{\mu^*} > 0$. If $\tau \in (0,1]$, then there exists $\alpha > 0$ such that 
\begin{align*}
    W_2(\mu^{n+1},\mu^*) \le \left(1 - \tau \frac{\lambda_\mathrm{min} \operatorname{K}_{\mu^*}}{\sqrt{(\lambda_\mathrm{min} \operatorname{K}_{\mu^*})^2+4\beta}}\right)
W_2(\mu^n,\mu^*)
+
\frac{\tau L_H}{\sqrt{(\lambda_{\mathrm{min}}\operatorname{K}_{\mu^*})^2+\beta}}
W_2^2(\mu^n,\mu^*),
\end{align*}
for all $\mu^n\in B_2(\alpha,\mu^*)$, for all $n \geq N_\alpha$. Furthermore, if $\alpha < \frac{\lambda_\mathrm{min} \operatorname{K}_{\mu^*}}{2L_{\operatorname{H}}}\sqrt{\frac{(\lambda_\mathrm{min} \operatorname{K}_{\mu^*})^2+\beta}{(\lambda_\mathrm{min} \operatorname{K}_{\mu^*})^2+4\beta}}$, then $\mu^n \in B_2(\alpha, \mu^*)$, for all $n \ge N_\alpha$, and 
\[
W_2(\mu^n,\mu^*)\le \alpha\left(1-\frac{\tau}{2}\frac{\lambda_{\mathrm{min}} \operatorname{K}_{\mu^*}}{\sqrt{(\lambda_{\mathrm{min}} \operatorname{K}_{\mu^*})^2+4\beta}}\right)^{n-N_\alpha}.
\]
\end{theorem}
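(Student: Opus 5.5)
We propose to prove Theorem \ref{thm:local_rate_from_expansion_lemma} by a coupling argument: build an explicit coupling between $\mu^{n+1}$ and $\mu^*$ from an optimal coupling of $\mu^n$ and $\mu^*$, and then bound its cost by a linearization around $\mu^*$. Write $\mu=\mu^n$, $P_\mu:=(\operatorname H_\mu^2+\beta\operatorname{I_{d\times d}})^{-1/2}$ and $\lambda:=\lambda_{\min}\operatorname K_{\mu^*}$.

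\textbf{Step 1: the coupling.} Since $\mu^*\in\mathcal P_2^{\mathrm{ac}}$, there is an optimal map $\operatorname T:=\operatorname T_{\mu^*}^{\mu}$. Set $\pi_t=\operatorname{Id}+t(\operatorname T-\operatorname{Id})$, $\mu_t=(\pi_t)_\#\mu^*$ and $u:=\operatorname T-\operatorname{Id}$, so that $\|u\|_{L^2_{\mu^*}}=W_2(\mu,\mu^*)$. The measure $(\operatorname T-\tau (P_\mu\nabla_\mu F(\mu))\circ\operatorname T,\operatorname{Id})_\#\mu^*$ couples $\mu^{n+1}$ with $\mu^*$. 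Hence
\[
W_2(\mu^{n+1},\mu^*)\le \bigl\|u-\tau (P_\mu\nabla_\mu F(\mu))\circ \operatorname T\bigr\|_{L^2_{\mu^*}}.
\]

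\textbf{Step 2: expansion of the gradient.} Pull everything back to $L^2_{\mu^*}$ through $\operatorname T$. Because $\nabla_\mu F(\mu^*)=0$, the fundamental theorem of calculus along $\mu_t$ combined with Proposition \ref{proposition:Wasserstein-hessian-appendix} (the derivative of $\nabla_\mu F(\mu_t,\pi_t)$ is $\widetilde{\operatorname H}_{\mu^*,t}u$) gives
\[
\nabla_\mu F(\mu,\operatorname T(\cdot))=\int_0^1\widetilde{\operatorname H}_{\mu^*,t}u\,\mathrm dt=\operatorname H_{\mu^*}u+r,
\]
with $\|r\|\le \tfrac{L_{\operatorname H}}{2}\|u\|^2$ by Lemma \ref{lem:lip-curves}. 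Also, $\operatorname M_{\mu^*}=0$ since $\mu^*$ is critical and absolutely continuous, so $\operatorname H_{\mu^*}=\operatorname K_{\mu^*}\succeq\lambda$ on the relevant space.

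\textbf{Step 3: the preconditioner.} Compare $P_\mu$, transported to $L^2_{\mu^*}$, with $P_{\mu^*}$. Lipschitz continuity of $H\mapsto (H^2+\beta)^{-1/2}$ (Lemma \ref{lem:resolvent-lip}), together with Lemma \ref{lem:lip-curves}, gives an error of order $L_{\operatorname H}\|u\|/\beta$ in operator norm, up to constants depending on $C_{\operatorname H}$. Multiplying by $\|\operatorname H_{\mu^*}u\|\le C_{\operatorname H}\|u\|$ yields a quadratic remainder. Meanwhile $\|P\,r\|\le \|r\|/\sqrt{\lambda^2+\beta}$, and the quadratic constant $\tau L_H/\sqrt{\lambda^2+\beta}$ in the statement suggests that the perturbation must be organized so that only $P_{\mu^*}$-type bounds appear. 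The main term is $(\operatorname I-\tau P_{\mu^*}\operatorname K_{\mu^*})u$. By spectral calculus its norm is at most $\sup_{s\ge\lambda}|1-\tau s/\sqrt{s^2+\beta}|\,\|u\|$. For $\tau\le1$ the function $s/\sqrt{s^2+\beta}$ is increasing and below 1, so the supremum is $1-\tau\lambda/\sqrt{\lambda^2+\beta}\le 1-\tau\lambda/\sqrt{\lambda^2+4\beta}$. The statement carries the weaker factor $4\beta$, which leaves room to absorb the error from the preconditioner perturbation: we would work with $\lambda/2$, giving $\lambda/\sqrt{\lambda^2+4\beta}$ after rescaling, once $\alpha$ is small enough that $\operatorname H$ near $\mu^*$ has spectrum bounded below by $\lambda/2$.

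\textbf{Step 4: contraction and induction.} Collecting the terms gives the one-step inequality. If $W_2(\mu^n,\mu^*)<\alpha$ with $\alpha$ below the stated threshold, the quadratic term is at most half of the linear gain. Then
\[
W_2(\mu^{n+1},\mu^*)\le \Bigl(1-\tfrac{\tau}{2}\tfrac{\lambda}{\sqrt{\lambda^2+4\beta}}\Bigr)W_2(\mu^n,\mu^*),
\]
so the iterates stay in the ball, and induction from $N_\alpha$ gives the geometric bound.

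\textbf{Main obstacle.} The expected difficulty is Step 3, controlling $P_\mu$ versus $P_{\mu^*}$. These operators act on different spaces, $L^2_\mu$ and $L^2_{\mu^*}$, and we must identify them through composition with $\operatorname T$, which is an isometry only onto its image. The plan is to conjugate by $\operatorname T$ and use that the pulled-back Hessian equals $\widetilde{\operatorname H}_{\mu^*,1}$, then apply the Lipschitz estimate for the operator square root. Getting precisely the stated constants, in particular $\lambda$ versus $\lambda/2$ in the spectrum, may need an operator-monotonicity argument for $s\mapsto s(s^2+\beta)^{-1/2}$ on the perturbed Hessian.
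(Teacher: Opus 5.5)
Your argument is correct and uses the same coupling as the paper: its $\gamma_n$ is your $(\operatorname T,\operatorname{Id})_\#\mu^*$. The difference is where you linearize.

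\textbf{The paper's route.} The paper linearizes at the current iterate. It splits the displacement into $(\operatorname{I_{d\times d}}-\tau P_{\mu^n}\operatorname H_{\mu^n})(x-y)$ plus $\tau P_{\mu^n}$ times a Taylor remainder based at $\mu^n$, and bounds $\|P_{\mu^n}\|_{\mathrm{op}}\le 2(\lambda^2+\beta)^{-1/2}$. It controls $\|\operatorname{I_{d\times d}}-\tau P_{\mu^n}\operatorname H_{\mu^n}\|_{\mathrm{op}}$ by showing that the pulled-back Hessians $\widetilde{\operatorname H}_{\mu,t}$ along the geodesic from $\mu^*$ to $\mu^n$ stay within $\lambda/2$ of $\operatorname K_{\mu^*}$. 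Their spectrum therefore lies in $[\lambda/2,\infty)$, and the paper evaluates the decreasing function $s\mapsto 1-\tau s/\sqrt{s^2+\beta}$ at $s=\lambda/2$, then lets $t\uparrow 1$. This is exactly where $4\beta$ comes from, and no Lipschitz bound for the square root is used.

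\textbf{Your route.} You linearize at $\mu^*$, and your Steps 2--3 amount to Lemma \ref{lem:preconditioned_gradient_expansion} with $Z=\operatorname{Id}$, $\tilde Z=\operatorname T$. The $R_F$-term vanishes because $\nabla_\mu F(\mu^*)=0$ and $\operatorname M_{\mu^*}=0$. With $W:=W_2(\mu^n,\mu^*)$ this gives
\[
W_2(\mu^{n+1},\mu^*)\le\Bigl(1-\frac{\tau\lambda}{\sqrt{\lambda^2+\beta}}\Bigr)W+\tau L_{\operatorname H}\Bigl(\frac{1}{2\sqrt\beta}+\frac{2C_{\operatorname H}}{\pi\beta}\Bigr)W^2 .
\]
This already closes the proof. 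You need neither the $\lambda/2$ device nor operator monotonicity, and the trivial bound $\beta^{-1/2}$ on the transported preconditioner suffices. The whole quadratic term is dominated by the linear slack $\tau\lambda\bigl(\frac{1}{\sqrt{\lambda^2+\beta}}-\frac{1}{\sqrt{\lambda^2+4\beta}}\bigr)W$ once
\[
W\le\alpha_0:=\frac{\lambda}{L_{\operatorname H}}\Bigl(\frac{1}{\sqrt{\lambda^2+\beta}}-\frac{1}{\sqrt{\lambda^2+4\beta}}\Bigr)\Bigl(\frac{1}{2\sqrt\beta}+\frac{2C_{\operatorname H}}{\pi\beta}\Bigr)^{-1}.
\]
For such $W$ you get contraction by $1-\tau\lambda/\sqrt{\lambda^2+4\beta}$, which implies both the stated one-step inequality and the geometric bound for any $\alpha\le\alpha_0$.

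\textbf{Trade-off.} Your route keeps every operator explicitly on $L^2_{\mu^*}$ through the Brenier map of $\mu^*$, which the paper does only implicitly. It also yields the sharper asymptotic factor $1-\tau\lambda/\sqrt{\lambda^2+\beta}$. The cost is the radius: $\alpha_0$ is of order $\beta^2/(\lambda^2L_{\operatorname H}C_{\operatorname H})$ as $\beta\to0$. The paper's route, once its continuity step is quantified with Lemma \ref{lem:lip-curves}, needs only $L_{\operatorname H}\alpha\le\lambda/2$, uniformly in $\beta$, and its Newton-limit remark relies on that.

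\textbf{One point to make explicit.} Your spectral bound on the main term, like the paper's, uses $\operatorname K_{\mu^*}\succeq\lambda$ as an operator inequality on all of $L^2_{\mu^*}$. State it that way rather than ``on the relevant space'', since $u$ is an arbitrary displacement.
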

\begin{remark}[Relation with Newton's method]
The local rate in Theorem \ref{thm:local_rate_from_expansion_lemma} is linear for every fixed $\beta>0$. This is consistent with the fact that the scheme uses the regularized preconditioner $(\operatorname{H}_\mu^2+\beta \operatorname{I_{d \times d}})^{-1/2}$ rather than the inverse Hessian. Indeed, near a non-degenerate minimizer one has $\operatorname{H}_{\mu^*}=\operatorname{K}_{\mu^*}$ and $K_{\mu^*} \succ 0$, so $(\operatorname{K}_{\mu^*}^2+\beta \operatorname{I_{d \times d}})^{-1/2} \to \operatorname{K}_{\mu^*}^{-1}$ as $\beta\to 0$. Thus, in the formal limit $\beta\to 0$ and with unit stepsize $\tau=1$, the first-order term in the preceding estimate disappears. In this regime, we recover the local radius $\alpha < \frac{\lambda_{\min}K_{\mu^*}}{2L_H}$, as well as the quadratic convergence rate of the Wasserstein Newton method; see Theorem \ref{thm:quadratic-newton}.
\end{remark}

\begin{remark}[Quadratic rate in a degenerate flat region]
The linear rate in Theorem \ref{thm:local_rate_from_expansion_lemma} is driven by the non-zero linear part of the WSFN update near a non-degenerate minimizer. In contrast, if the minimizer is degenerate in the sense that it exhibits flat regions, i.e., $\lambda_{\min}K_{\mu^*}=0$, then the leading linear contraction factor may vanish along the corresponding flat directions. In this case, when the unit stepsize $\tau=1$ is used, the local estimate is only governed by the quadratic term, and WSFN can have a quadratic local rate; see Corollary \ref{cor:local_rate_from_expansion_lemma}).
\end{remark}

\section{Conclusion}
\label{conclusion}
We introduced Wasserstein Saddle-Free Newton (WSFN), a second-order method for non-convex optimization on Wasserstein space that combines a regularized Hessian preconditioner with Hessian-guided perturbations to escape saddle regions and accelerate convergence near minimizers. Several directions remain for future work. On the computational side, it would be important to develop efficient numerical approximations of the preconditioner in high-dimensional settings. On the theoretical side, it would be interesting to extend the analysis beyond benign landscapes, and investigate whether stronger local rates can be obtained through alternative second-order regularizations.

\section*{Acknowledgements}
TS was partially supported by JSPS KAKENHI (24K02905) and JST CREST (JPMJCR2015). This research is supported by the National Research Foundation, Singapore and the Ministry of Digital Development and Information under the AI Visiting Professorship Programme (award number AIVP-2024-004). Any opinions, findings and conclusions or recommendations expressed in this material are those of the author(s) and do not reflect the views of National Research Foundation, Singapore and the Ministry of Digital Development and Information.
    
\bibliographystyle{abbrv}
\bibliography{references} 

\begin{thebibliography}{10}

\bibitem{NIST:DLMF}
\textit{NIST Digital Library of Mathematical Functions}.
\newblock \url{https://dlmf.nist.gov/}, Release 1.2.6 of 2026-03-15.
\newblock F.~W.~J. Olver, A.~B. {Olde Daalhuis}, D.~W. Lozier, B.~I. Schneider, R.~F. Boisvert, C.~W. Clark, B.~R. Miller, B.~V. Saunders, H.~S. Cohl, and M.~A. McClain, eds.

\bibitem{agazzi2021global}
A.~Agazzi and J.~Lu.
\newblock Global optimality of softmax policy gradient with single hidden layer neural networks in the mean-field regime.
\newblock In {\em International Conference on Learning Representations}, 2021.

\bibitem{Aleksandrov_2016}
A.~B. Aleksandrov and V.~V. Peller.
\newblock Operator {L}ipschitz functions.
\newblock {\em Russian Mathematical Surveys}, 71(4):605, 2016.

\bibitem{ambrosio2008gradient}
L.~Ambrosio, N.~Gigli, and G.~Savare.
\newblock {\em Gradient Flows: In Metric Spaces and in the Space of Probability Measures}.
\newblock Lectures in Mathematics. ETH Z{\"u}rich. Birkh{\"a}user Basel, 2008.

\bibitem{arbel}
M.~Arbel, A.~Korba, A.~Salim, and A.~Gretton.
\newblock Maximum mean discrepancy gradient flow.
\newblock In {\em Advances in Neural Information Processing Systems}, volume~32, 2019.

\bibitem{arveson2001short}
W.~Arveson.
\newblock {\em A Short Course on Spectral Theory}.
\newblock Graduate Texts in Mathematics. Springer New York, 2001.

\bibitem{Blei03042017}
D.~M. Blei, A.~Kucukelbir, and J.~D. McAuliffe.
\newblock {V}ariational {I}nference: A review for statisticians.
\newblock {\em Journal of the American Statistical Association}, 112(518):859--877, 2017.

\bibitem{bonet2024mirror}
C.~Bonet, T.~Uscidda, A.~David, P.-C. Aubin-Frankowski, and A.~Korba.
\newblock Mirror and preconditioned gradient descent in {W}asserstein space.
\newblock In {\em The Thirty-eighth Annual Conference on Neural Information Processing Systems}, 2024.

\bibitem{benoit}
B.~Bonnet.
\newblock A {Pontryagin} {Maximum} {Principle} in {Wasserstein} spaces for constrained optimal control problems.
\newblock {\em ESAIM: Control, Optimisation and Calculus of Variations}, 25, 2019.

\bibitem{boufadene}
S.~Boufadene and F.-X. Vialard.
\newblock On the global convergence of {W}asserstein gradient flow of the {C}oulomb discrepancy.
\newblock {\em SIAM Journal on Mathematical Analysis}, 57(4):4556--4587, 2025.

\bibitem{cardaliaguet2019master}
P.~Cardaliaguet, F.~Delarue, J.~Lasry, and P.~Lions.
\newblock {\em The Master Equation and the Convergence Problem in Mean Field Games}.
\newblock Annals of Mathematics Studies. Princeton University Press, 2019.

\bibitem{Carmona2018ProbabilisticTO}
R.~A. Carmona and F.~Delarue.
\newblock {\em Probabilistic Theory of Mean Field Games with Applications I: Mean Field FBSDEs, Control, and Games}.
\newblock Springer International Publishing, 2018.

\bibitem{CheNilRig25OT}
S.~Chewi, J.~Niles-Weed, and P.~Rigollet.
\newblock {\em Statistical optimal transport}, volume 2364 of {\em Lecture Notes in Mathematics}.
\newblock Springer, Cham, 2025.
\newblock \'Ecole d'\'Et\'e{} de Probabilit\'es de Saint-Flour XLIX -- 2019.

\bibitem{chizat2022meanfield}
L.~Chizat.
\newblock Mean-field {L}angevin dynamics: Exponential convergence and annealing.
\newblock {\em Transactions on Machine Learning Research}, 2022.

\bibitem{Chizat2018OnTG}
L.~Chizat and F.~R. Bach.
\newblock On the global convergence of gradient descent for over-parameterized models using optimal transport.
\newblock In {\em NeurIPS}, 2018.

\bibitem{chizat2026quantitativeconvergencewassersteingradient}
L.~Chizat, M.~Colombo, R.~Colombo, and X.~Fernández-Real.
\newblock Quantitative convergence of {W}asserstein gradient flows of {K}ernel {M}ean {D}iscrepancies, 2026.
\newblock arXiv:2603.01977.

\bibitem{chizat2025convergencedriftdiffusionpdesarising}
L.~Chizat, M.~Colombo, and X.~Fernández-Real.
\newblock Convergence of drift-diffusion {PDE}s arising as {W}asserstein gradient flows of convex functions, 2025.
\newblock arXiv:2507.12385.

\bibitem{pmlr-v97-chu19a}
C.~Chu, J.~Blanchet, and P.~Glynn.
\newblock Probability functional descent: A unifying perspective on {GAN}s, {V}ariational {I}nference, and {R}einforcement {L}earning.
\newblock In {\em Proceedings of the 36th International Conference on Machine Learning}, volume~97 of {\em Proceedings of Machine Learning Research}, pages 1213--1222. PMLR, 09--15 Jun 2019.

\bibitem{conway1994course}
J.~Conway.
\newblock {\em A Course in Functional Analysis}.
\newblock Graduate Texts in Mathematics. Springer New York, 1994.

\bibitem{NIPS2014_dauphin}
Y.~N. Dauphin, R.~Pascanu, C.~Gulcehre, K.~Cho, S.~Ganguli, and Y.~Bengio.
\newblock Identifying and attacking the saddle point problem in high-dimensional non-convex optimization.
\newblock In {\em Advances in Neural Information Processing Systems}, 2014.

\bibitem{Davies1988}
E.~B. Davies.
\newblock Lipschitz continuity of functions of operators in the schatten classes.
\newblock {\em Journal of the London Mathematical Society}, 37(1):148--157, 1988.

\bibitem{Dudley_2002}
R.~M. Dudley.
\newblock {\em Real Analysis and Probability}.
\newblock Cambridge Studies in Advanced Mathematics. Cambridge University Press, 2002.

\bibitem{figalli}
A.~Figalli and F.~Glaudo.
\newblock {\em An Invitation to Optimal Transport, Wasserstein Distances, and Gradient Flows}.
\newblock EMS Press, Berlin, 2023.

\bibitem{Ge15}
R.~Ge, F.~Huang, C.~Jin, and Y.~Yuan.
\newblock Escaping from saddle points -- online stochastic gradient for tensor decomposition.
\newblock In {\em Proceedings of The 28th Conference on Learning Theory}, volume~40 of {\em Proceedings of Machine Learning Research}, pages 797--842. PMLR, 03--06 Jul 2015.

\bibitem{gustafson2020mathematical}
S.~Gustafson and I.~Sigal.
\newblock {\em Mathematical Concepts of Quantum Mechanics}.
\newblock Universitext. Springer International Publishing, 2020.

\bibitem{10.1214/20-AIHP1140}
K.~Hu, Z.~Ren, D.~\v{S}i{\v{s}}ka, and {\L}.~Szpruch.
\newblock {Mean-field Langevin dynamics and energy landscape of neural networks}.
\newblock {\em Annales de l'Institut Henri Poincaré, Probabilités et Statistiques}, 57(4):2043 -- 2065, 2021.

\bibitem{huang2024generativemodelingminimizingwasserstein2}
Y.-J. Huang and Z.~Malik.
\newblock Generative modeling by minimizing the {W}asserstein-2 loss, 2024.
\newblock arXiv:2406.13619.

\bibitem{jin17a}
C.~Jin, R.~Ge, P.~Netrapalli, S.~M. Kakade, and M.~I. Jordan.
\newblock How to escape saddle points efficiently.
\newblock In {\em Proceedings of the 34th International Conference on Machine Learning}, volume~70 of {\em Proceedings of Machine Learning Research}, pages 1724--1732. PMLR, 06--11 Aug 2017.

\bibitem{jko}
R.~Jordan, D.~Kinderlehrer, and F.~Otto.
\newblock The variational formulation of the {F}okker--{P}lanck equation.
\newblock {\em SIAM Journal on Mathematical Analysis}, 29(1):1--17, 1998.

\bibitem{kato1995perturbation}
T.~Kato.
\newblock {\em Perturbation Theory for Linear Operators}.
\newblock Classics in Mathematics. Springer Berlin Heidelberg, 1995.

\bibitem{kim2024transformers}
J.~Kim and T.~Suzuki.
\newblock Transformers learn nonlinear features in context.
\newblock In {\em ICLR 2024 Workshop on Mathematical and Empirical Understanding of Foundation Models}, 2024.

\bibitem{KissinShulman2005}
E.~Kissin and V.~S. Shulman.
\newblock Classes of operator-smooth functions. {I}. operator-lipschitz functions.
\newblock {\em Proceedings of the Edinburgh Mathematical Society}, 48(1):151--173, 2005.

\bibitem{korba21a}
A.~Korba, P.-C. Aubin-Frankowski, S.~Majewski, and P.~Ablin.
\newblock Kernel {S}tein discrepancy descent.
\newblock In {\em Proceedings of the 38th International Conference on Machine Learning}. PMLR, 2021.

\bibitem{lambert2022variational}
M.~Lambert, S.~Chewi, F.~R. Bach, S.~Bonnabel, and P.~Rigollet.
\newblock Variational inference via {W}asserstein gradient flows.
\newblock In {\em Advances in Neural Information Processing Systems}, volume~35, pages 14434--14447, 2022.

\bibitem{lanzetti}
N.~Lanzetti, S.~Bolognani, and F.~D\"{o}rfler.
\newblock First-order conditions for optimization in the {Wasserstein} space.
\newblock {\em SIAM Journal on Mathematics of Data Science}, 7(1):274--300, 2025.

\bibitem{lascu2025nonconvex}
R.-A. Lascu and M.~B. Majka.
\newblock Non-convex entropic mean-field optimization via {B}est {R}esponse flow.
\newblock In {\em The Thirty-ninth Annual Conference on Neural Information Processing Systems}, 2025.

\bibitem{lascu2024linearconvergenceproximaldescent}
R.-A. Lascu, M.~B. Majka, D.~Šiška, and Łukasz Szpruch.
\newblock Linear convergence of proximal descent schemes on the {W}asserstein space, 2024.
\newblock arXiv:2411.15067.

\bibitem{leahy}
J.-M. Leahy, B.~Kerimkulov, D.~\v{S}i\v{s}ka, and {\L}.~Szpruch.
\newblock Convergence of policy gradient for entropy regularized {MDP}s with neural network approximation in the mean-field regime.
\newblock In {\em Proceedings of the 39th International Conference on Machine Learning}, volume 162 of {\em Proceedings of Machine Learning Research}, pages 12222--12252. PMLR, 17--23 Jul 2022.

\bibitem{NEURIPS2019_Li}
Z.~Li.
\newblock {SSRGD}: Simple stochastic recursive gradient descent for escaping saddle points.
\newblock In {\em Advances in Neural Information Processing Systems}, volume~32. Curran Associates, Inc., 2019.

\bibitem{lu2020meanfield}
Y.~Lu, C.~Ma, J.~Lu, and L.~Ying.
\newblock A mean-field analysis of {Deep} {ResNet} and {Beyond}: {Towards} {Provable} {Optimization} {Via} {Overparameterization} {From} {Depth}.
\newblock In {\em Proceedings of the 37th International Conference on Machine Learning}, volume 119 of {\em Proceedings of Machine Learning Research}, pages 6426--6436. PMLR, 2020.

\bibitem{malo2024convexregularizationconvergencepolicy}
P.~Malo, L.~Viitasaari, A.~Suominen, E.~Vilkkumaa, and O.~Tahvonen.
\newblock Convex regularization and convergence of policy gradient flows under safety constraints, 2024.
\newblock arXiv:2411.19193.

\bibitem{Mei2018AMF}
S.~Mei, A.~Montanari, and P.-M. Nguyen.
\newblock A mean field view of the landscape of two-layer neural networks.
\newblock {\em Proceedings of the National Academy of Sciences of the United States of America}, 115:E7665 -- E7671, 2018.

\bibitem{Nitanda2022ConvexAO}
A.~{Nitanda}, D.~{Wu}, and T.~{Suzuki}.
\newblock {Convex Analysis of the Mean Field Langevin Dynamics}.
\newblock {\em arXiv e-prints}, page arXiv:2201.10469, Jan. 2022.

\bibitem{OTTO2000361}
F.~Otto and C.~Villani.
\newblock Generalization of an inequality by talagrand and links with the logarithmic sobolev inequality.
\newblock {\em Journal of Functional Analysis}, 173(2):361--400, 2000.

\bibitem{rasmussen}
C.~E. Rasmussen and C.~K.~I. Williams.
\newblock {\em Gaussian {P}rocesses for Machine Learning}.
\newblock The MIT Press, 2005.

\bibitem{ReedSimon1980}
M.~Reed and B.~Simon.
\newblock {\em Methods of Modern Mathematical Physics. {I}: Functional Analysis}.
\newblock Academic Press, New York, 1980.

\bibitem{rotskoff}
G.~Rotskoff and E.~Vanden-Eijnden.
\newblock Trainability and accuracy of artificial neural networks: An interacting particle system approach.
\newblock {\em Communications on Pure and Applied Mathematics}, 75(9):1889--1935, 2022.

\bibitem{korbaproximal}
A.~Salim, A.~Korba, and G.~Luise.
\newblock The {W}asserstein proximal gradient algorithm.
\newblock In {\em Advances in Neural Information Processing Systems}, volume~33, pages 12356--12366. Curran Associates, Inc., 2020.

\bibitem{santambrogio2015optimal}
F.~Santambrogio.
\newblock {\em Optimal Transport for Applied Mathematicians: Calculus of Variations, PDEs, and Modeling}.
\newblock Progress in Nonlinear Differential Equations and Their Applications. Springer International Publishing, 2015.

\bibitem{SIRIGNANO20201820}
J.~Sirignano and K.~Spiliopoulos.
\newblock Mean field analysis of neural networks: A central limit theorem.
\newblock {\em Stochastic Processes and their Applications}, 130(3):1820--1852, 2020.

\bibitem{alma9912197933502466}
C.~Villani.
\newblock {\em Topics in optimal transportation}.
\newblock Graduate studies in mathematics. American Mathematical Society, 2003.

\bibitem{villani2008optimal}
C.~Villani.
\newblock {\em Optimal Transport: Old and New}.
\newblock Grundlehren der mathematischen Wissenschaften. Springer Berlin Heidelberg, 2008.

\bibitem{yifei-projected}
Y.~Wang, P.~Chen, and W.~Li.
\newblock Projected {W}asserstein gradient descent for high-dimensional {B}ayesian inference.
\newblock {\em SIAM/ASA Journal on Uncertainty Quantification}, 10(4):1513--1532, 2022.

\bibitem{wang2020informationnewtonsflowsecondorder}
Y.~Wang and W.~Li.
\newblock Information {N}ewton's flow: second-order optimization method in probability space, 2020.
\newblock arXiv:2001.04341.

\bibitem{pmlr-v75-wibisono18a}
A.~Wibisono.
\newblock Sampling as optimization in the space of measures: The {L}angevin dynamics as a composite optimization problem.
\newblock In {\em Proceedings of the 31st Conference On Learning Theory}, volume~75 of {\em Proceedings of Machine Learning Research}, pages 2093--3027. PMLR, 06--09 Jul 2018.

\bibitem{yamamoto24a}
K.~Yamamoto, K.~Oko, Z.~Yang, and T.~Suzuki.
\newblock Mean field {L}angevin actor-critic: Faster convergence and global optimality beyond lazy learning.
\newblock In {\em Proceedings of the 41st International Conference on Machine Learning}, volume 235 of {\em Proceedings of Machine Learning Research}, pages 55706--55738. PMLR, 21--27 Jul 2024.

\bibitem{yamamoto2025hessianguided}
N.~Yamamoto, J.~Kim, and T.~Suzuki.
\newblock Hessian-guided perturbed {W}asserstein gradient flows for escaping saddle points.
\newblock In {\em The Thirty-ninth Annual Conference on Neural Information Processing Systems}, 2025.

\bibitem{JMLR:rentian-proximal}
R.~Yao, X.~Chen, and Y.~Yang.
\newblock {W}asserstein proximal coordinate gradient algorithms.
\newblock {\em Journal of Machine Learning Research}, 25(269):1--66, 2024.

\bibitem{yao2023meanfieldvariationalinferencewasserstein}
R.~Yao and Y.~Yang.
\newblock Mean-field variational inference via {W}asserstein gradient flow, 2023.
\newblock arXiv:2207.08074.

\bibitem{zhu2025convergenceanalysiswassersteinproximal}
S.~Zhu and X.~Chen.
\newblock Convergence analysis of the {W}asserstein proximal algorithm beyond geodesic convexity, 2025.
\newblock arXiv:2501.14993.

\end{thebibliography}
\section{Appendix}
The appendices are organized to guide the reader from motivating examples and implementation to background material and, finally, the technical analysis. Appendix \ref{section:examples-benign} presents benign non-convex objectives on Wasserstein space that motivate the saddle-property assumption, and Appendix \ref{section:practical implementation} gives the particle-based implementation of WSFN. Appendix \ref{section:additional notation} collects operator-theoretic notation on $L_\mu^2$, and Appendix \ref{L2 valued GPs} introduces the $L_\mu^2$-valued Gaussian process machinery used for the perturbation step. Appendix \ref{section:background on OT} recalls the optimal transport background, Appendix \ref{sec:wass-geom} develops the differential calculus on Wasserstein space, and Appendix \ref{section:app_conv_in_W_space} reviews the main convexity notions in Wasserstein space. Appendix \ref{optimality conditions} gathers first and second-order optimality conditions, Appendix \ref{first and second order dynamics} recalls the corresponding first and second-order dynamics, Appendix \ref{Calculus along curves and flow approximations} provides calculus along curves together with flow approximation results linking discrete dynamics to their continuous-time limits, and Appendix \ref{section:wasserstein-proximal-taylor} places WSFN alongside related methods in a common Wasserstein Proximal Taylor framework. Finally, Appendix \ref{appendix:aux-results} collects the auxiliary lemmas used throughout the analysis, and additional local convergence results. Appendix \ref{proofs of main results} contains the proofs of the main results.
\tableofcontents
\appendix
\section{Examples of benign objectives on Wasserstein space}
\label{section:examples-benign}
In this section, we present several examples of non-convex functionals on Wasserstein space that motivate the saddle property assumption used in the paper. These examples illustrate situations in which the landscape is benign in the sense that non-global critical points are saddle points, while local minimizers coincide with global minimizers. Therefore they provide natural settings in which the convergence guarantees of WSFN are relevant.
\begin{example}[Coulomb MMD]
\label{example:Coulomb-MMD}
Let $d\ge 3$ and $\mu^* \in \mathcal{P}_2(\mathbb R^d)$ be a target probability measure. The Maximum Mean Discrepancy (MMD) associated with the Coulomb kernel is given by
\[
F(\mu) = \frac{1}{d-2}\iint_{\mathbb{R}^d \times \mathbb{R}^d}
\frac{1}{\|x-y\|^{d-2}} \mathrm{d}(\mu-\mu^*)(x)\mathrm{d}(\mu-\mu^*)(y).
\]
\cite{boufadene} showed that, although this functional is not geodesically convex in Wasserstein space, it has no local minima other than the global minimizer $\mu^*$. Hence, every local minimizer of $F$ is global. Thus, the Coulomb MMD provides a natural example of a non-convex functional with a benign optimization landscape. Moreover, they proved that any critical point coincides with $\mu^*$ on the interior of its support/ If $\mu$ is a critical point of $F$, then $\mu|_{\operatorname{Int}(\operatorname{supp}\mu)} = \mu^*|_{\operatorname{Int}(\operatorname{supp}\mu)}$. In particular, if for some open set $U \subset \operatorname{Int}(\operatorname{supp}\mu)$ the measures $\mu$ and $\mu^*$ are absolutely continuous, i.e., admit densities $\rho$ and $\rho^*$ on $U$, then $\rho=\rho^*$ a.e. on $U$. Thus any discrepancy between $\mu$ and $\mu^*$ could only remain in the atomic part or on the boundary of the support. 
\end{example}
\begin{example}[Three-layer mean-field network]
\label{example:ICL}
Consider a three-layer teacher-student network in which the first two layers are expressed as a mean-field two-layer NN
\[
h_\mu(x)=\int_\Theta h_\theta(x)\mathrm d\mu(\theta),
\]
and the third layer is a linear map $T\in\mathbb{R}^{k\times k}$. Here, $h_\theta(x):= a \sigma(w^\top x)\in\mathbb{R}^k$ for $\theta=(a,w)\in\mathbb{R}^k\times\mathbb{R}^d$, input data point $x \sim \mathcal{D}$, and activation function $\sigma$, whereas the integer $k$ denotes the feature dimension of the hidden representation. The $L^2$ loss with
respect to a teacher network $x\mapsto \hat{T} h_{\hat{\mu}}(x)$ is
\[
L_{\mathrm{NN}}(\mu,T) = \mathbb{E}_{x\sim \mathcal{D}}\left[\|\hat{T} h_{\hat{\mu}}(x)-Th_\mu(x)\|^2\right].
\]
Define $\Sigma_{\mu,\nu}:=\mathbb{E}_{x\sim \mathcal{D}}[h_\mu(x)h_\nu(x)^\top]$ and $\mu^* := \hat{T}_{\#}\hat{\mu}$, so that $h_{\mu^*}(x)=\hat{T}h_{\hat{\mu}}(x)$. In the two-timescale regime where the linear layer updates infinitely fast, the optimal third-layer parameter is $T=\Sigma_{\mu^*,\mu}\Sigma_{\mu,\mu}^{-1}$, and the problem reduces optimizing the functional
\[
F(\mu) = \frac12\mathbb{E}_{x\sim \mathcal{D}}\left[
\left\|
h_{\mu^*}(x)-\Sigma_{\mu^*,\mu}\Sigma_{\mu,\mu}^{-1}h_\mu(x)
\right\|^2
\right].
\]
\cite{kim2024transformers} show that, although functional $F$ is non-convex, its landscape is benign on the non-degenerate region where $\Sigma_{\mu,\mu}$ is invertible, i.e., every local minimizer is global, and every non-global critical point is a saddle.
\end{example}

\begin{example}[Mean-field deep ResNet]
Let $\rho$ be the distribution of the input data $x$, and let $y(x)$ denote the target function. Consider the mean-field continuum model of a deep residual network
\[
\frac{\mathrm{d}}{\mathrm{d}t} X_\mu(x,t)=\int_\Omega f(X_\mu(x,t),\theta)\mu(\theta,t)\mathrm d\theta,
\quad
X_\mu(x,0)=\langle w_2,x\rangle.
\]
Here $X_\mu(x,t)$ is the hidden feature at depth $t\in[0,1]$ for input $x$, $\theta\in\Omega \subset \mathbb R^{d_2 \times d_2}$ is the parameter of the residual block, where $\Omega$ is the parameter space, $\rho(\theta,t)$ is a probability distribution over residual-block parameters and depth, $w_2 \in \mathbb R^{d_1 \times d_2}$ represents the first convolution layer in the ResNet, which extracts features before sending them to the residual blocks, $w_1 \in \mathbb R^{d_1}$ is a fixed linear pooling operator that transfers the final feature $X_\mu(x,1)$ to the classification result, and $f(\cdot,\theta)$ is the residual block with parameter $\theta$ that aims to learn a feature transformation from $\mathbb R^{d_2}$ to $\mathbb R^{d_2}$. For instance, $f(z,\theta)=\sigma(\theta z)$, where $\sigma:\mathbb R \to \mathbb R$ is an activation function applied component-wise. The associated $L^2$ loss is
\[
F(\mu)
=
\mathbb{E}_{x\sim\rho}\left[
\frac12\left(\langle w_1,X_\mu(x,1)\rangle-y(x)\right)^2
\right].
\]
\cite{lu2020meanfield} prove that whenever $F(\mu)>0$ there exists $\nu \in \mathcal{P}_2(\Omega)$ such that
\[
\int_\Omega \frac{\delta F}{\delta \mu}(\mu, \theta, t)\mathrm{d}(\nu-\mu)(\theta) < 0.
\]
Thus, every local minimizer of $F$ must satisfy $F(\mu)=0$. Since the loss $F$ is non-negative, every local minimizer is therefore a global minimizer.
\end{example}
\begin{example}[Matrix decomposition]
\label{eg:matrix_decomposition}
    Inspired by the strict benignity of finite-dimensional tensor decomposition \cite{Ge15}, \cite{yamamoto2025hessianguided} study the following mean-field version. Let
    \begin{align*}
        h_{\mu}(z)=\int a\sigma(w^\top z)\mathrm{d}\mu(a,w)
    \end{align*}
    be a mean-field two-layer neural network, where $z \in \mathbb R^l$ is sampled from a given input distribution $\mathcal{D}$, $x=(a,w)\in\mathbb{R}^{k+l}$ denotes the parameter of the network, and $\sigma$ is an activation function. Given a target measure $\mu^*$, define
    \begin{align*}
        F(\mu) = \mathbb{E}_{z\sim \mathcal{D}}\left[
            \left\| h_{\mu^*}(z)h_{\mu^*}(z)^\top - h_{\mu}(z)h_{\mu}(z)^\top\right\|_{\mathrm{F}}^2
        \right],
    \end{align*}
    where $\|\cdot\|_{\mathrm{F}}$ denotes the Frobenius norm. This functional corresponds to learning the rank-one matrix generated by the target representation $h_{\mu^*}(z)h_{\mu^*}(z)^\top$. \cite[Appendix G]{yamamoto2025hessianguided} shows that this objective is benign.
\end{example}

\section{Particle Implementation of WSFN}
\label{section:practical implementation}
In this appendix, we describe a particle-based realization of the WSFN scheme \eqref{eq:sfn}. The goal is to translate the measure-valued iteration into an explicit algorithm acting on finitely many particles. This provides a practical approximation of the dynamics analyzed in the paper.

The construction is motivated by Proposition \ref{discrete_decrease_around_saddle}, which shows that, when the iterate is close to an $(\varepsilon,\delta)$-saddle, injecting a perturbation and then running the WSFN dynamics for a suitable number
of steps yields a decrease of the objective with high probability. In the particle implementation below, this mechanism is approximated by perturbing each particle according to a sample from the Gaussian process used in the theoretical analysis, and then evolving the perturbed ensemble using the WSFN update.

More precisely, at iteration $n$, we approximate a probability measure by the empirical measure associated with $N$ particles,
\[
\mu^n = \frac1N \sum_{j=1}^N \delta_{x_j^n}.
\]
The perturbation and WSFN transport steps are then applied particlewise. Given a realization $\xi$ of the Gaussian process with covariance $C_{\mu^n}$, the perturbation step reads
\[
x_j^n \gets x_j^n + \eta \,\xi(x_j^n),
\quad j=1,\dots,N,
\]
while the WSFN step is given by
\[
x_j^{n+1} \gets x_j^n - \tau
\bigl(\operatorname{H}_{\mu^n}^2+\beta \operatorname{I_{d\times d}}\bigr)^{-1/2}
\nabla_\mu F(\mu^n,x_j^n),
\quad j=1,\dots,N.
\]
The algorithm alternates between these two phases. Whenever the iterate appears to be close to a first-order stationary point, in the sense that
$\|\nabla_\mu F(\mu^n,\cdot)\|_{L_{\mu^n}^2}\le \varepsilon$, and enough iterations have elapsed since the previous perturbation, a new perturbation is injected. After this perturbation, the method performs $n_{\mathrm{out}}$ WSFN steps. If the objective has not decreased by at least $F_0$ at the end of this window, the procedure terminates and returns the perturbed iterate. In view of the analysis of saddle-point episodes, such a failure indicates that the iterate is no longer
behaving like an $(\varepsilon,\delta)$-saddle, and is instead likely close to a region of approximate second-order criticality.

This particle-based scheme should be understood as a finite-dimensional surrogate of the measure-valued WSFN iteration. In practice, its implementation requires three main ingredients: (i) evaluation of the Wasserstein gradient $\nabla_\mu F(\mu^n,\cdot)$ at the
particle locations, (ii) approximation of the action of the operator
$\bigl(\operatorname{H}_{\mu^n}^2+\beta \operatorname{I_{d\times d}}\bigr)^{-1/2}$ on these gradients, and (iii) sampling of the Gaussian process $\mathrm{GP}(0,C_{\mu^n})$ at the particle
locations. We summarize the resulting particle method in Algorithm \ref{alg_WSFN_particles}.
\begin{algorithm}
     \caption{Particle-based implementation of WSFN}
     \LinesNumbered
     \label{alg_WSFN_particles}
     \KwIn{Set hyperparameters $N\in \mathbb N$, $\beta,\varepsilon,\delta > 0$, $\tau = O(1)$, $n_\mathrm{out}=\tilde O(\tilde{\delta}^{-1})$, $F_0=\tilde O(\tilde{\delta}^3)$ and $\eta = \tilde O \left(\frac{\tilde \delta^3}{\varepsilon+\tilde \delta^{\frac{3}{2}}}\right)$}
         Initialize $\{x_j^{0}\}_{j=1}^N$. Set $\mu^0 = \frac{1}{N} \sum_{j=1}^N \delta_{x_j^0}$ and $n_{\mathrm{pert}} = n_{\mathrm{out}}$;\\
         \For{$n = 0,1,...$}{
             \If {$\|\nabla_\mu F(\mu^n)\|_{L_{\mu^n}^2} \leq \varepsilon$, $n - n_{\mathrm{pert}} > n_{\mathrm{out}} $}{
                  $\xi \sim \mathrm{GP}(0,C_{\mu^n})$;\\
                  \For{$j = 1,...,N$}{
                  $x_j^n = x_j^n + \eta \xi(x_j^n)$;}
                  $\mu^n = \frac{1}{N} \sum_{j=1}^N \delta_{x_j^n}$;\\
                  $n_{\mathrm{pert}} = n$;
             } 
             \If {$n = n_{\mathrm{pert}} + n_{\mathrm{out}}$, $F(\mu^{n_{\mathrm{pert}}}) - F(\mu^n) \leq F_0$}{
                  \Return $\mu^{n_{\mathrm{pert}}}$;
             }
             \For{$j = 1,...,N$}{
              $x_j^{n+1} = x_j^n - \tau \left(\operatorname{H}_{\mu^n}^2 +\beta \operatorname{I_{d \times d}}\right)^{-1/2}\nabla_\mu F(\mu^n,x_j^n)$;}
             $\mu^{n+1} = \frac{1}{N} \sum_{j=1}^N \delta_{x_j^{n+1}}$;
         }
 \end{algorithm}

\subsection{Numerical experiments}
We now illustrate the particle implementation of WSFN on a set of numerical experiments. These are not intended as a comprehensive empirical evaluation or as evidence of state-of-the-art practical performance. Instead, they illustrate the qualitative mechanisms captured by the theory. Hessian-guided perturbations move the iterate away from saddle regions, while the saddle-free preconditioner accelerates progress after perturbation by preserving attraction in positive curvature directions and repulsion in negative curvature directions. A detailed study of scalable approximations to the WSFN preconditioner is left for future work.
\\

\textbf{Experiment 1: in-context feature learning.}
We first consider the in-context feature learning objective from Example \ref{example:ICL}. The teacher network is fixed, while the student network is represented by an empirical measure over $N=400$ particles. We use input dimension $d=15$, feature dimension $k=5$, and $300$ training prompts. The empirical loss is the finite-sample version of the feature learning objective
\[
F(\mu) = \frac{1}{2}\operatorname{Tr}(\Sigma_{\mu^*,\mu^*}) - \frac{1}{2}\operatorname{Tr}
\left(\Sigma_{\mu^*,\mu}\Sigma_{\mu,\mu}^{-1}\Sigma_{\mu,\mu^*}\right).
\]
We compare four methods: plain Wasserstein gradient flow (WGF), WGF with isotropic (standard Gaussian) perturbations, perturbed Wasserstein gradient flow (PWGF) with Hessian-guided perturbations, and WSFN. All methods are initialized from the same random initialization and are run for $n=3000$ epochs over five independent trials. For WGF, isotropic WGF, and PWGF, we use a base learning rate of $\tau=10^{-7}$. Perturbations are injected when the loss stagnates over a window of $n_{\mathrm{out}}=100$ iterations. For WSFN, we use $\beta=10^{-3}$. The action of $(\operatorname{H}_\mu^2+\beta \operatorname{I_{d \times d}})^{-1/2}$ is computed by a Lanczos approximation using $10$ Lanczos steps, avoiding the need to form or diagonalize the full Hessian matrix.

Figure \ref{fig:icl-wsfn} reports the mean loss over the five trials, with shaded regions corresponding to one standard deviation. Plain WGF and isotropic WGF make only slow progress over the time horizon considered. PWGF improves over these baselines, reflecting the benefit of Hessian-informed perturbations, but its decrease remains gradual. In contrast, WSFN rapidly decreases the loss during the early phase of training and reaches a substantially lower objective value. This behavior is consistent with the role of the saddle-free preconditioner. The method uses second-order information not only to perturb away from saddle regions, but also to rescale the descent direction throughout the optimization trajectory.
\begin{figure}[t]
    \centering
    \includegraphics[width=0.7\linewidth]{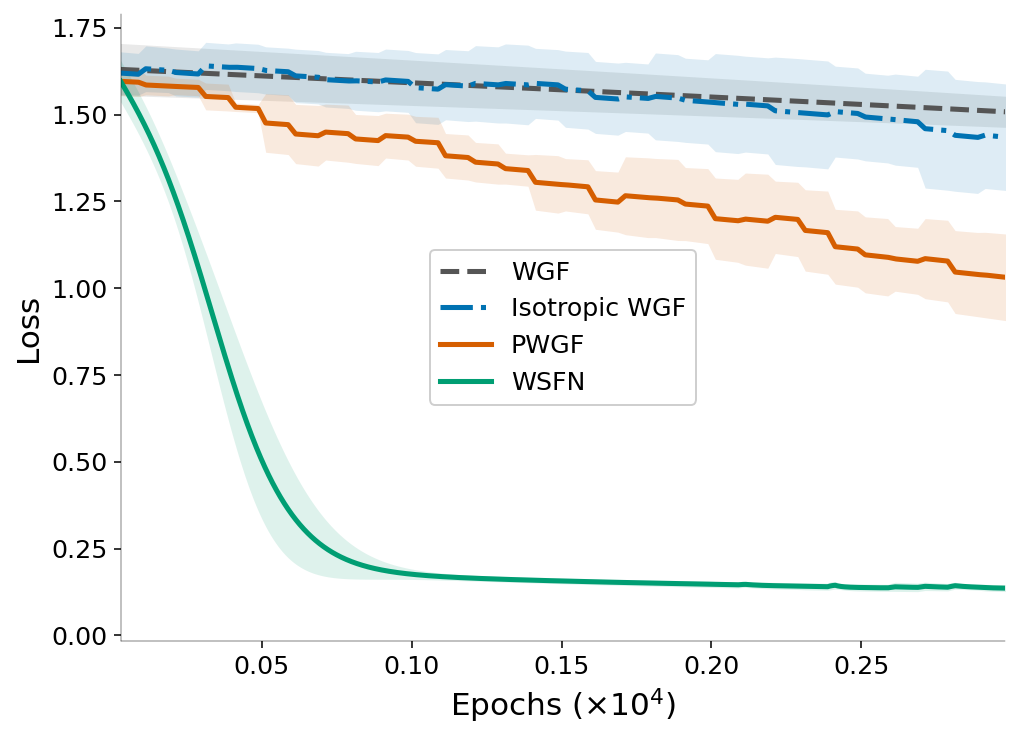}
    \caption{
    In-context feature learning experiment. We compare WGF, isotropic WGF, PWGF, and WSFN over five independent trials. The solid curves show the mean loss and the shaded regions show one standard deviation. WSFN decreases the loss much faster than the first-order baselines and reaches a substantially lower final value.
    }
    \label{fig:icl-wsfn}
\end{figure}
\\

\textbf{Experiment 2: matrix decomposition.}
We next consider the matrix decomposition objective from Example \ref{eg:matrix_decomposition}. The goal is to match the rank-one feature matrix generated by a teacher network using a student mean-field two-layer network. Given teacher features $h_{\mu^*}$ and student features $h_\mu$, the empirical objective used in the experiment is
\[
F(\mu) = \frac{1}{n}\sum_{i=1}^n\left\|h_\mu(x_i)h_\mu(x_i)^\top - h_{\mu^\ast}(x_i)h_{\mu^\ast}(x_i)^\top\right\|_F^2 .
\]
We use input dimension $d=15$, feature dimension $k=5$, $300$ samples, and $N=300$ particles. The methods are run for $n=3000$ epochs over five independent trials. For WGF, isotropic WGF, and PWGF, we use learning rate $\tau = 5\times 10^{-6}$. Isotropic WGF injects Gaussian perturbations when the loss stagnates over a window of $n_{\mathrm{out}}=100$ iterations, while PWGF uses Hessian-guided perturbations under the same stagnation criterion. For WSFN, we use $\tau=5\times 10^{-6}$ and $\beta=10^{-4}$. The inverse square-root preconditioner is applied by a Lanczos approximation with $12$ Lanczos steps.

Figure \ref{fig:matrix-decomp-wsfn} shows the mean loss over the five trials, with shaded regions corresponding to one standard deviation. All methods eventually decrease the objective, but the first-order methods exhibit a long plateau before making significant progress. PWGF improves over WGF and isotropic WGF by using Hessian-shaped perturbations, but its descent remains relatively gradual. In contrast, WSFN exits the plateau much earlier and rapidly decreases the loss to a substantially smaller value. This experiment again illustrates the advantage of using second-order information directly in the transport direction, rather than only through perturbations near stagnation.
\begin{figure}[t]
    \centering
    \includegraphics[width=0.7\linewidth]{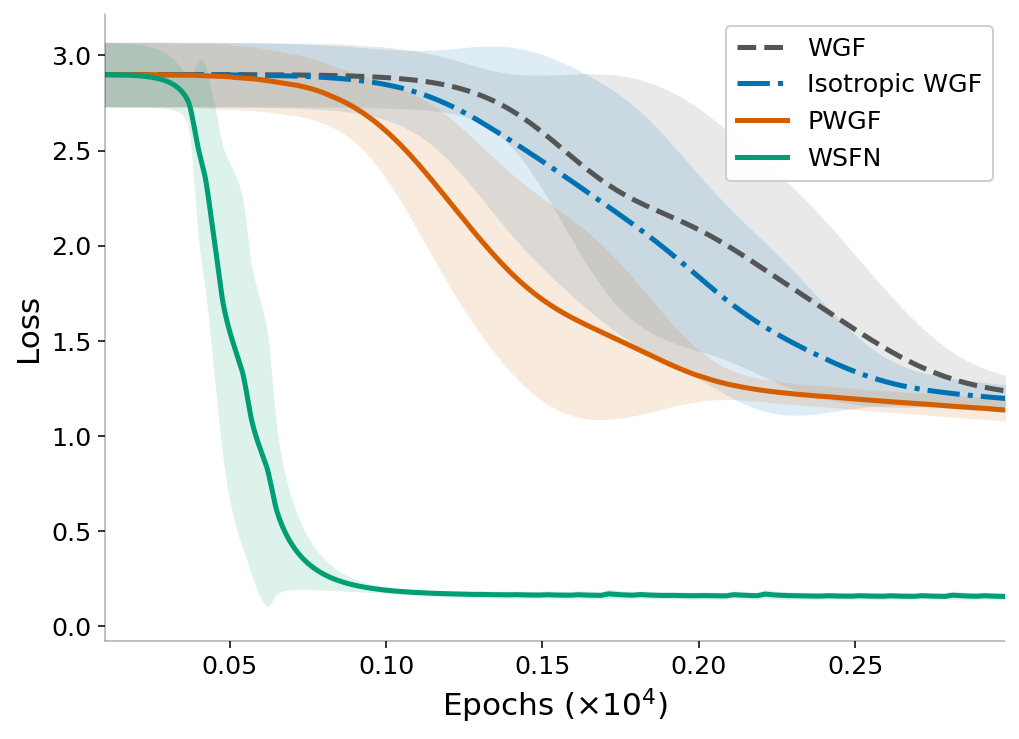}
    \caption{
    Matrix decomposition experiment. We compare WGF, isotropic WGF, PWGF, and WSFN over five independent trials. The solid curves show the mean loss and the shaded regions show one standard deviation. WSFN escapes the initial plateau much earlier than the first-order baselines and reaches a substantially lower final loss.
    }
    \label{fig:matrix-decomp-wsfn}
\end{figure}
\\

\textbf{Experiment 3: Coulomb MMD.}
Finally, we consider the Coulomb MMD objective from Example \ref{example:Coulomb-MMD}. The target distribution $\mu^*$ is sampled in dimension $d=3$ from a symmetric two-mode distribution, with modes centered at $(2,0,0)$ and $(-2,0,0)$ and Gaussian noise of scale $0.25$. The trainable measure $\mu$ is represented by $N=500$ particles, while the target measure is approximated using $400$ samples. The particles are initialized near the origin, which creates a symmetric plateau region where perturbations and curvature information are especially relevant.

The empirical objective is the finite-particle Coulomb MMD energy
\begin{align*}
F_N(\mu) &= \frac{1}{d-2}
\Big[\frac{1}{N(N-1)}\sum_{i\neq j}\frac{1}{\|x_i-x_j\|^{d-2}} - \frac{2}{NM}\sum_{i,\ell}\frac{1}{\|x_i-y_\ell\|^{d-2}}\\ 
&+ \frac{1}{M(M-1)}\sum_{\ell\neq r}\frac{1}{\|y_\ell-y_r\|^{d-2}}\Big],
\end{align*}
where $\{x_i\}_{i=1}^N$ are the particles of the current measure and $\{y_\ell\}_{\ell=1}^M$ are target samples. In the implementation, the Coulomb kernel is stabilized by replacing squared distances with $\max\{\|x-y\|^2,\varepsilon_{\mathrm{ker}}^2\}$, with $\varepsilon_{\mathrm{ker}}=5\times 10^{-2}$.

We compare WGF, isotropic WGF, PWGF, and WSFN over five independent trials, each run for $3000$ epochs. All methods use the same step size $\tau = 10^{-6}$. Isotropic WGF and PWGF inject perturbations when the loss stagnates over a window of $n_{\mathrm{out}}=20$ iterations, with stagnation tolerance $F_0=10^{-2}$ and perturbation scale $\eta=10^{-1}$. PWGF uses an RMS-normalized Hessian-guided perturbation. WSFN uses the regularized saddle-free update with $\tau=10^{-6}$ and $\beta=10^{-5}$. As in the previous experiments, the inverse square-root preconditioner is applied by a Lanczos approximation, here using $12$ Lanczos steps.

Figure \ref{fig:coulomb-mmd-wsfn} reports the loss curves. WGF decreases the objective steadily but slowly. Isotropic WGF and PWGF improve substantially over plain WGF after perturbations are activated, with PWGF and isotropic WGF eventually reaching a similar loss level. By contrast, WSFN decreases the objective sharply from the beginning and reaches the low loss regime much earlier than the first-order baselines. This behavior is consistent with the theoretical motivation of WSFN. The saddle-free preconditioner uses curvature information directly in the descent direction, allowing the method to move rapidly away from the symmetric plateau and toward the target distribution.

\begin{figure}[t]
    \centering
    \includegraphics[width=0.7\linewidth]{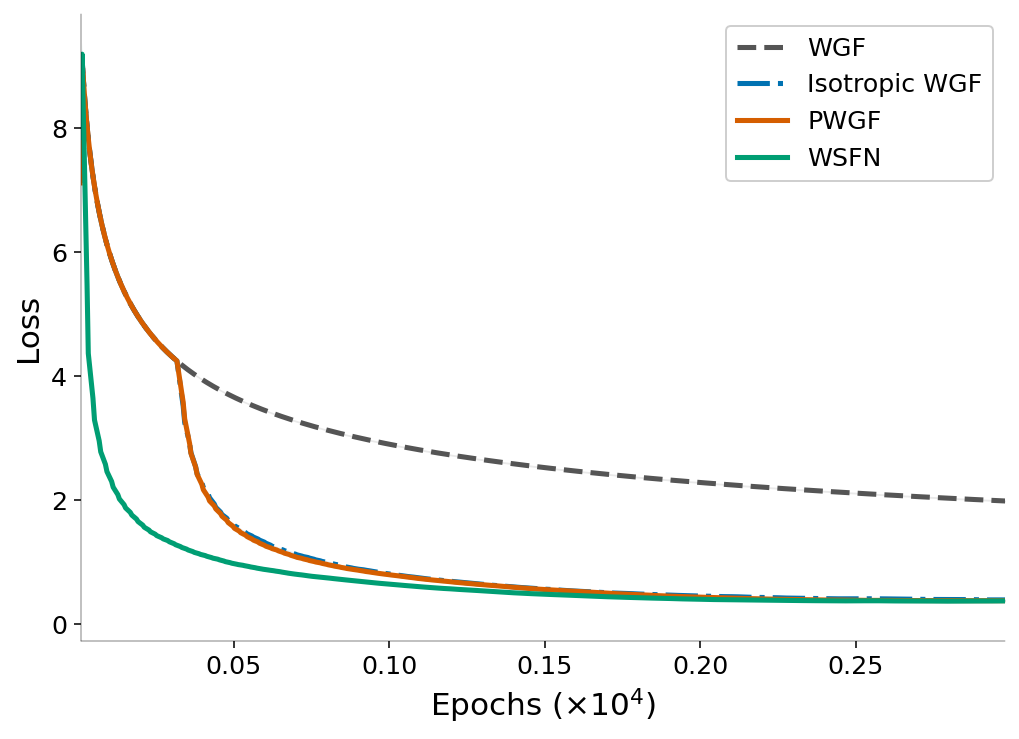}
    \caption{
    Coulomb MMD experiment. We compare WGF, isotropic WGF, PWGF, and WSFN over five independent trials. WSFN rapidly decreases the loss and reaches the low loss regime significantly earlier than the first-order baselines.
    }
    \label{fig:coulomb-mmd-wsfn}
\end{figure}

\section{Additional notation for operators on $L_\mu^2$}
\label{section:additional notation}
In this section, we collect several standard notions and conventions for bounded linear operators on $L_\mu^2$ that will be used throughout the paper. For an operator $A_\mu:L_\mu^2 \to L_\mu^2$, the operator norm is defined as $\|A_\mu\|_{\mathrm{op}} = \sup_{v \neq 0} \frac{\|A_\mu v\|_{L_\mu^2}}{\|v\|_{L_\mu^2}}$. The operator $A_\mu$ is called \textit{positive} (respectively, \textit{non-negative}) and denoted by $A_\mu \succ 0$ (respectively, $A_\mu \succeq 0$) if $\langle A_\mu v,v \rangle_{L_\mu^2} > 0$ (respectively, $\langle A_\mu v,v \rangle_{L_\mu^2} \geq 0$), for all $v \in L^2_\mu$. The operator $A_\mu$ is called \textit{uniformly positive} and denoted by $A_\mu \succeq \lambda \operatorname{I_{d \times d}}$, where $\operatorname{I_{d\times d}}$ is the identity operator, if there exists $\lambda > 0$ such that $\langle A_\mu v,v \rangle_{L_\mu^2} \geq \lambda \|v\|_{L_\mu^2}^2$ for all $v \in L^2_\mu$. For a bounded linear operator $A_\mu:L_\mu^2 \to L_\mu^2$, its adjoint, denoted by $A_\mu^*$, is the bounded linear operator $A_\mu^*:L_\mu^2 \to L_\mu^2$ satisfying $\langle A_\mu v,w \rangle_{L_\mu^2} = \langle v,A_\mu^*w \rangle_{L_\mu^2}$, for all $v,w \in L_\mu^2$. A bounded linear operator $A_\mu$ is called self-adjoint if $A_\mu = A_\mu^*$. A Hilbert--Schmidt (HS) operator $A_\mu:L_\mu^2 \to L_\mu^2$ is a bounded operator that has finite HS norm $\|A_\mu\|_{\mathrm{HS}}^2:= \sum_{n=1}^\infty \|A_\mu e_n\|_{L_\mu^2}^2$, where $\{e_n\}_{n =1}^\infty$ is an orthonormal basis. We denote by $\mathcal{LS}(L_\mu^2)$ and $\mathcal{B}(L_\mu^2)$ the spaces of linear symmetric and linear bounded operators from $L_{\mu}^2$ to $L_{\mu}^2$, respectively. If $A_\mu \in \mathcal{B}(L_\mu^2)$, then $\|A_\mu\|_{\mathrm{op}} \leq \|A_\mu\|_{\text{HS}}$.

\section{$L_\mu^2$-valued Gaussian Processes}
\label{L2 valued GPs}
In this section, we recall the definition of Gaussian processes (GPs) and the notion of the trace of an operator, which together allow us to define $L_\mu^2$-valued Gaussian processes. Related material can also be found in \cite[Appendix D]{yamamoto2025hessianguided}. We adopt the following definition of a Gaussian process from \cite[Chapter 2]{rasmussen}.
\begin{definition}[Gaussian process]
    Let $I$ be a finite set. The random variable $\xi : \mathbb R^d \to \mathbb R^d$ is said to be a Gaussian process if the collection of random variables $\left\{\xi(x_i)\right\}_{i \in I}$ is jointly normally distributed. A Gaussian process is completely determined by its mean function $m$ and covariance function $K,$ which are defined by
    \begin{align*}
        m(x) &= \mathbb{E}[\xi(x)], \quad K(x,y) = \mathbb{E}[(\xi(x) - m(x))(\xi(y) - m(y))^\top]. 
    \end{align*}
    In this case, we write $\xi(x) \sim \mathrm{GP}(m(x),K(x,y))$.
\end{definition}
\begin{lemma}[$L^2_\mu$-valued GP]
\label{lem:Gaussian-L2}
Let $\mu\in\mathcal P_2(\mathbb R^d)$ and assume $\nabla_\mu^2F(\mu,\cdot,\cdot) \in L_{\mu \otimes \mu}^2$. Consider the function $C_\mu:\mathbb R^d \times \mathbb R^d \to \mathbb R^{d \times d}$ defined by
\begin{equation*}
    C_\mu(x,y) = \int_{\mathbb R^d} \nabla_\mu^2 F(\mu, x, z) \nabla_\mu^2 F(\mu, z, y)\mathrm{d}\mu(z).
\end{equation*}
Then there exists $\xi \sim \mathrm{GP}(0, C_\mu)$ such that $\xi \in L_\mu^2$ almost surely and $\|\xi\|_{L^2_\mu} \sim \mathcal N(0,\|\nabla_\mu^2F(\mu,\cdot,\cdot)\|_{L^2_{\mu \otimes \mu}}^2)$.
\end{lemma}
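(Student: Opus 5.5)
The plan is to build $\xi$ explicitly from the spectral decomposition of the integral part of the Hessian, $\operatorname{K}_\mu$, that is, a Karhunen--Lo\`eve construction. The covariance kernel $C_\mu$ is exactly the kernel of $\operatorname{K}_\mu^2$, so the natural candidate is $\xi=\operatorname{K}_\mu g$ with $g$ a formal white noise on $L^2_\mu$. To make this rigorous we expand along the eigenbasis of $\operatorname{K}_\mu$.

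\textbf{Step 1 (spectral data).} Since $\nabla_\mu^2F(\mu,\cdot,\cdot)\in L^2_{\mu\otimes\mu}$, the operator $\operatorname{K}_\mu$ is Hilbert--Schmidt with $\|\operatorname{K}_\mu\|_{\mathrm{HS}}=\|\nabla_\mu^2F(\mu,\cdot,\cdot)\|_{L^2_{\mu\otimes\mu}}$, by \cite[Theorem VI.23]{ReedSimon1980} as recalled in Section~\ref{section:wass-geom-for-second-order}. We assume the symmetry $\nabla^2_\mu F(\mu,x,y)=\nabla^2_\mu F(\mu,y,x)^\top$ from Wasserstein regularity. Then $\operatorname{K}_\mu$ is self-adjoint and compact, so the spectral theorem gives an orthonormal family $(e_k)_{k\ge1}\subset L^2_\mu$ and real eigenvalues $(\lambda_k)$ such that $\operatorname{K}_\mu=\sum_k\lambda_k\, e_k\langle e_k,\cdot\rangle_{L^2_\mu}$ and $\sum_k\lambda_k^2=\|\operatorname{K}_\mu\|_{\mathrm{HS}}^2<\infty$. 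The kernel then has the $L^2_{\mu\otimes\mu}$-convergent expansion $\nabla^2_\mu F(\mu,x,y)=\sum_k\lambda_k e_k(x)e_k(y)^\top$.

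\textbf{Step 2 (construction and integrability).} Take i.i.d.\ standard Gaussians $(Z_k)$ and set $\xi:=\sum_k\lambda_k Z_k e_k$. The partial sums are Cauchy in $L^2(\Omega;L^2_\mu)$, since
\[
\mathbb E\Big\|\sum_{k=m}^{n}\lambda_kZ_ke_k\Big\|_{L^2_\mu}^2=\sum_{k=m}^n\lambda_k^2\to0 .
\]
Moreover, $\|\xi_n\|_{L^2_\mu}^2=\sum_{k\le n}\lambda_k^2Z_k^2$ is a monotone sum with finite expectation, so it converges almost surely. Hence $\xi\in L^2_\mu$ almost surely, with
\[
\|\xi\|_{L^2_\mu}^2=\sum_k\lambda_k^2Z_k^2,\qquad \mathbb E\|\xi\|_{L^2_\mu}^2=\|\nabla^2_\mu F(\mu,\cdot,\cdot)\|_{L^2_{\mu\otimes\mu}}^2 .
\]
This second-moment identity is the precise content behind the distributional claim about $\|\xi\|_{L^2_\mu}$ in the statement. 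Read literally, a norm cannot be Gaussian, so I would prove and state the result in the form ``$\|\xi\|_{L^2_\mu}^2$ is a weighted chi-square sum with mean $\|\nabla^2_\mu F\|^2_{L^2_{\mu\otimes\mu}}$''.

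\textbf{Step 3 (Gaussianity and covariance).} Each finite linear functional of $\xi$ is an $L^2$-limit of Gaussian variables, hence Gaussian, and the mean is zero. For the covariance,
\[
\mathbb E[\xi(x)\xi(y)^\top]=\sum_k\lambda_k^2e_k(x)e_k(y)^\top .
\]
This is the kernel of $\operatorname{K}_\mu^2$. Computing $\int\nabla^2_\mu F(\mu,x,z)\nabla^2_\mu F(\mu,z,y)\,\mathrm d\mu(z)$ with the expansion of Step 1 and orthonormality of $(e_k)$ gives the same series, so the covariance equals $C_\mu(x,y)$ as an element of $L^1_{\mu\otimes\mu}$.

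\textbf{Main obstacle.} The delicate point is that $\xi$ is only defined $\mu$-a.e., so pointwise finite-dimensional marginals $\{\xi(x_i)\}$ and the identity $\mathbb E[\xi(x)\xi(y)^\top]=C_\mu(x,y)$ hold only $\mu\otimes\mu$-a.e. I would handle this by interpreting the GP in the $L^2_\mu$-valued (weak) sense. Concretely, I would show that for all $f,g\in L^2_\mu$ the pair $(\langle\xi,f\rangle,\langle\xi,g\rangle)$ is jointly Gaussian with
\[
\operatorname{Cov}\big(\langle\xi,f\rangle,\langle\xi,g\rangle\big)=\langle \operatorname{K}_\mu^2 f,g\rangle_{L^2_\mu}=\iint f(x)^\top C_\mu(x,y)\,g(y)\,\mathrm d\mu(x)\,\mathrm d\mu(y).
\]
The interchange of sum and integral here is justified by Fubini, using the absolute bound $\sum_k\lambda_k^2<\infty$.
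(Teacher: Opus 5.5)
Your proof is correct, but it takes a different route from the paper.

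\textbf{The paper's route.} The paper never diagonalizes $\operatorname{K}_\mu$. It invokes a Kolmogorov-type existence theorem for Gaussian processes after checking two things: that $C_\mu(x,y)=C_\mu(y,x)^\top$, and that $\sum_{i,j}u_i^\top C_\mu(x_i,x_j)u_j=\int\|\sum_i\nabla_\mu^2F(\mu,z,x_i)u_i\|^2\,\mathrm d\mu(z)\ge0$. It then computes $\mathbb E\|\xi\|_{L^2_\mu}^2=\int\mathrm{Tr}\,C_\mu(x,x)\,\mathrm d\mu(x)=\|\nabla_\mu^2F(\mu,\cdot,\cdot)\|^2_{L^2_{\mu\otimes\mu}}$ by Tonelli, and gets $\xi\in L^2_\mu$ a.s.\ from Markov. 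This is shorter and avoids the spectral theorem. It also formally yields a process indexed by every $x\in\mathbb R^d$, which matches the paper's finite-dimensional-marginal definition of a GP.

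\textbf{What your construction buys.} Your Karhunen--Lo\`eve construction is more rigorous exactly where the paper is thin.
\begin{itemize}
\item Kolmogorov's theorem gives no joint measurability in $(x,\omega)$, so the paper's Tonelli step needs a measurable modification it does not supply. Your $\xi$ is a measurable $L^2_\mu$-valued Gaussian element by construction.
\item $C_\mu(x,y)$, and the symmetry used in the positivity check, are only available $\mu$-a.e., whereas the paper's check is done at arbitrary points $x_i$. Your weak-sense formulation avoids this.
\item You can also recover pointwise values if you want them. Fixing representatives of the $e_k$, the series $\sum_k\lambda_kZ_ke_k(x)$ converges in $L^2(\mathbb P)$ for every $x$ in the $\mu$-full set where $\sum_k\lambda_k^2\|e_k(x)\|^2<\infty$. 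This gives a pointwise process on a full-measure set.
\end{itemize}

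\textbf{The distributional claim.} Your reading is right. The paper's proof also establishes only the second-moment identity, never a Gaussian law for $\|\xi\|_{L^2_\mu}$. Your representation $\|\xi\|_{L^2_\mu}^2=\sum_k\lambda_k^2Z_k^2$ is the correct object from which to derive the tail bound on $\|\xi\|_{L^2_{\mu^*}}$ used later in Proposition~\ref{discrete_decrease_around_saddle}.

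\textbf{Downstream lemmas.} Working in the eigenbasis of $\operatorname{K}_\mu$ makes Lemma~\ref{lem:lin-func-GP} and Lemma~\ref{lem:independence} nearly immediate. If you choose the basis to contain $q_0$, then $\langle q_0,\xi\rangle_{L^2_\mu}=\lambda_0Z_0$, which is independent of $\xi_\perp=\sum_{k\neq0}\lambda_kZ_ke_k$.
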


\begin{proof}
For the existence of $\xi \sim \mathrm{GP}(0, C_\mu)$, it suffices (cf. \cite[Theorem 12.1.3.]{Dudley_2002}) to show that $C_\mu(x,y)=C_\mu(y,x)^\top$, $\mu\otimes\mu$-a.e., and $C_\mu$ is positive semi-definite in the matrix sense, i.e., for any
$n\in\mathbb N$, any $x_1,...,x_n\in\mathbb R^d$, and any $u_1,...,u_n\in\mathbb R^d$,
\begin{equation*}
\sum_{i,j=1}^n u_i^\top C_\mu(x_i,x_j)u_j  \ge 0.
\end{equation*}
Using the rule $(AB)^\top=B^\top A^\top$,
\[
C_\mu(y,x)^\top
=
\left(\int_{\mathbb R^d}\nabla_\mu^2 F(\mu, y,z)\nabla_\mu^2 F(\mu, z,x)\mathrm d\mu(z)\right)^\top
=
\int_{\mathbb R^d} \nabla_\mu^2 F(\mu, z,x)^\top \nabla_\mu^2 F(\mu, y,z)^\top\mathrm d\mu(z).
\]
By regularity of $F$, we have $\nabla_\mu^2 F(\mu, z,x)^\top=\nabla_\mu^2 F(\mu,x,z)$ and $\nabla_\mu^2 F(\mu, y,z)^\top=\nabla_\mu^2 F(\mu, z,y)$, $\mu$-a.e. Hence
\[
C_\mu(y,x)^\top
=
\int_{\mathbb R^d} \nabla_\mu^2 F(\mu,x,z)\nabla_\mu^2 F(\mu, z,y)\mathrm d\mu(z)
=
C_\mu(x,y),
\]
which proves $C_\mu(x,y)=C_\mu(y,x)^\top$.

Fix $n\in\mathbb N$, $x_1,...,x_n\in\mathbb R^d$ and $u_1,...,u_n\in\mathbb R^d$. Then
\begin{align*}
\sum_{i,j=1}^n u_i^\top C_\mu(x_i,x_j)u_j
&=
\sum_{i,j=1}^n
\int_{\mathbb R^d} u_i^\top \nabla_\mu^2 F(\mu, x_i,z)\nabla_\mu^2 F(\mu,z,x_j)u_j\mathrm d\mu(z)\\
&=
\int_{\mathbb R^d}
\sum_{i,j=1}^n u_i^\top \nabla_\mu^2 F(\mu, x_i,z)\nabla_\mu^2 F(\mu,z,x_j)u_j\mathrm d\mu(z).
\end{align*}
Using the symmetry of $\nabla_\mu^2 F$, we can rewrite the integrand as
\begin{align*}
u_i^\top \nabla_\mu^2 F(\mu, x_i,z) \nabla_\mu^2 F(\mu,z,x_j)u_j
&=
u_i^\top \nabla_\mu^2 F(\mu,z,x_i)^\top \nabla_\mu^2 F(\mu,z,x_j)u_j\\
&=
\big(\nabla_\mu^2 F(\mu,z,x_i)u_i\big)^\top \big(\nabla_\mu^2 F(\mu,z,x_j)u_j\big).
\end{align*}
Therefore,
\begin{align*}
\sum_{i,j=1}^n u_i^\top C_\mu(x_i,x_j)u_j
&=
\int_{\mathbb R^d}
\sum_{i,j=1}^n
\big(\nabla_\mu^2 F(\mu,z,x_i)u_i\big)^\top \big(\nabla_\mu^2 F(\mu,z,x_j)u_j\big)\mathrm d\mu(z)\\
&=
\int_{\mathbb R^d}
\left\|\sum_{i=1}^n \nabla_\mu^2 F(\mu,z,x_i)u_i\right\|^2\mathrm d\mu(z) \ge 0,
\end{align*}
which proves positive semi-definiteness.

It remains to show that $\xi \in L_\mu^2$ almost surely. Using the definition of $C_\mu$ and Tonelli,
\[
\mathrm{Tr}\ C_\mu(x,x)
=
\mathrm{Tr}\left(\int_{\mathbb R^d} \nabla_\mu^2 F(\mu,x,z)\nabla_\mu^2 F(\mu,z,x)\mathrm d\mu(z)\right)
=
\int_{\mathbb R^d}  \mathrm{Tr}\big(\nabla_\mu^2 F(\mu,x,z)\nabla_\mu^2 F(\mu,z,x)\big)\mathrm d\mu(z).
\]
By symmetry $\nabla_\mu^2 F(\mu,z,x)=\nabla_\mu^2 F(\mu,x,z)^\top$, hence
\[
\mathrm{Tr}\big(\nabla_\mu^2 F(\mu,x,z)\nabla_\mu^2 F(\mu,z,x)\big)
=
\mathrm{Tr}\big(\nabla_\mu^2 F(\mu,x,z)\nabla_\mu^2 F(\mu,x,z)^\top\big)
=
\|\nabla_\mu^2 F(\mu,x,z)\|^2.
\]
Therefore,
\[
\int_{\mathbb R^d} \mathrm{Tr}\,C_\mu(x,x)\mathrm d\mu(x)
=
\int_{\mathbb R^d} \int_{\mathbb R^d}  \|\nabla_\mu^2 F(\mu,x,z)\|^2\mathrm d\mu(z)\mathrm d\mu(x) 
<\infty.
\]
For each fixed $x \in \mathbb R^d$, $\xi(x)$ is a centered Gaussian vector in $\mathbb R^d$ with covariance matrix $C_\mu(x,y)$, hence
\[
\mathbb E\|\xi(x)\|^2=\mathrm{Tr}\big(\mathbb E[\xi(x)\xi(x)^\top]\big)=\mathrm{Tr}\,C_\mu(x,x).
\]
By Tonelli's theorem,
\begin{align*}
\mathbb E\|\xi\|_{L^2_\mu}^2
&=
\mathbb E\int_{\mathbb R^d}\|\xi(x)\|^2\mathrm d\mu(x)
=
\int_{\mathbb R^d}\mathbb E\|\xi(x)\|^2\mathrm d\mu(x)\\
&=
\int_{\mathbb R^d}\mathrm{Tr}\,C_\mu(x,x)\mathrm d\mu(x)=\|\nabla_\mu^2F(\mu,\cdot,\cdot)\|_{L^2_{\mu \otimes \mu}}^2.
\end{align*}
Hence $\mathbb E\|\xi\|_{L^2_\mu}^2<\infty$. Since
$\|\xi\|_{L^2_\mu}^2\ge 0$, by Markov's inequality, for any $a > 0$
\begin{equation*}
    \mathbb P(\|\xi\|_{L^2_\mu}^2 \geq a) \leq \frac{\mathbb E\|\xi\|_{L^2_\mu}^2}{a}.
\end{equation*}
Sending $a \to \infty$ implies $\mathbb P(\|\xi\|_{L^2_\mu}^2 <\infty) = 1$, so $\|\xi\|_{L^2_\mu}<\infty$ a.s. and $\xi \in L^2_\mu$ a.s.
\end{proof}
We next recall a standard result, adapted from \cite[Theorem~VI.18]{ReedSimon1980}.
\begin{theorem}
\label{thm:trace-operator}
    Let $\mu\in\mathcal P_2(\mathbb R^d)$ and $\{\psi_n\}_{n=1}^\infty$ be an orthonormal basis in $L_\mu^2$. Then for any non-negative operator $A_\mu \in \mathcal{B}(L_\mu^2)$ we define $\mathrm{Tr} A = \sum_{n=1}^\infty \langle \psi_n, A_\mu\psi_n \rangle_{L_\mu^2}$. The number $\mathrm{Tr} A_\mu$ is called the trace of $A_\mu$ and is independent of the orthonormal basis chosen. 
\end{theorem}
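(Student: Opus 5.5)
The approach is the standard square-root and Parseval argument. The one subtlety specific to our setting is that $L_\mu^2$ is a \emph{real} Hilbert space. There, $A_\mu\succeq 0$ does not force $A_\mu$ to be self-adjoint. I handle this first by passing to the symmetric part $S_\mu:=\tfrac12(A_\mu+A_\mu^*)$. For every $v\in L_\mu^2$ we have $\langle v,S_\mu v\rangle_{L_\mu^2}=\langle v,A_\mu v\rangle_{L_\mu^2}$. Hence $S_\mu$ is bounded, self-adjoint and non-negative, and every diagonal entry $\langle\psi_n,A_\mu\psi_n\rangle_{L_\mu^2}$ equals $\langle\psi_n,S_\mu\psi_n\rangle_{L_\mu^2}$. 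It therefore suffices to prove the statement for $S_\mu$.

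Next I invoke the continuous functional calculus (square root lemma, as in \cite{ReedSimon1980}) for the bounded non-negative self-adjoint operator $S_\mu$. This gives a bounded self-adjoint $B_\mu\succeq 0$ with $B_\mu^2=S_\mu$. Then each term of the series is a norm:
\[
\langle\psi_n,S_\mu\psi_n\rangle_{L_\mu^2}=\|B_\mu\psi_n\|_{L_\mu^2}^2\ge 0 .
\]
So the series defining $\mathrm{Tr}\,A_\mu$ has non-negative terms and has a well-defined value in $[0,+\infty]$.

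Now let $\{\varphi_m\}_{m=1}^\infty$ be a second orthonormal basis. A countable basis exists because $L_\mu^2$ is separable, the Borel $\sigma$-algebra of $\mathbb R^d$ being countably generated. Parseval's identity and self-adjointness of $B_\mu$ give
\[
\sum_{n}\|B_\mu\psi_n\|^2=\sum_n\sum_m\langle B_\mu\psi_n,\varphi_m\rangle^2=\sum_m\sum_n\langle \psi_n,B_\mu\varphi_m\rangle^2=\sum_m\|B_\mu\varphi_m\|^2 .
\]
The interchange of the two sums is justified by Tonelli's theorem for series with non-negative terms, and it holds even when the common value is $+\infty$. The last equality is Parseval's identity in the basis $\{\psi_n\}$. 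Hence the trace does not depend on the basis.

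I expect the only genuine obstacle to be this real-scalar issue: the existence of a self-adjoint square root requires self-adjointness, which the hypothesis ``non-negative'' does not provide over $\mathbb R$. The symmetrization step resolves it. Everything after that is routine.
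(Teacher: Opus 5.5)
Your proof is correct and is essentially the standard square-root/Parseval argument of \cite[Theorem VI.18]{ReedSimon1980}, which the paper cites rather than reproving. The symmetrization $S_\mu=\tfrac12(A_\mu+A_\mu^*)$ is a valid and worthwhile refinement, because $L_\mu^2$ is a real Hilbert space in which $\langle A_\mu v,v\rangle_{L_\mu^2}\ge 0$ does not imply self-adjointness; the square root lemma, which Reed--Simon state over $\mathbb{C}$, carries over to the real case by the same power-series proof or by complexification.
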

\begin{definition}
\label{def:trace-operator}
    Let $\mu\in\mathcal P_2(\mathbb R^d)$. An operator $A_\mu \in \mathcal{B}(L_\mu^2)$ is called trace-class if and only if $\operatorname{Tr} |A_\mu| < \infty$.
\end{definition}
The next result shows that the integral operator associated with the kernel $C_\mu$ is precisely the squared operator $\operatorname{K}_\mu^2$, which is well-defined since $\operatorname{K}_\mu$ is self-adjoint.
\begin{lemma}[Trace-class covariance operator]
\label{lem:KHS-implies-Ctrace}
Let $\mu\in\mathcal P_2(\mathbb R^d)$ and let Assumption \ref{assumption:smooth-F} hold. Define the matrix-valued function $C_\mu: \mathbb R^d  \times \mathbb R^d \to \mathbb R^{d \times d}$ by
\begin{equation*}
    C_\mu(x,y) = \int_{\mathbb R^d} \nabla_\mu^2 F(\mu, x, z) \nabla_\mu^2 F(\mu, z, y)\mathrm{d}\mu(z).
\end{equation*}
Then
\begin{equation*}
    \operatorname{K}_\mu^2[v](x) =\int_{\mathbb R^d} C_\mu(x,y)v(y)\mathrm d\mu(y),
\end{equation*}
for all $v \in L_\mu^2$. Moreover, $\operatorname{K}_\mu^2$ is non-negative, symmetric and trace-class, in which case 
\begin{equation*}
\mathrm{Tr}(\operatorname{K}_\mu^2)=\|\nabla_\mu^2F(\mu,\cdot,\cdot)\|_{L^2_{\mu \otimes \mu}}^2.
\end{equation*}
\end{lemma}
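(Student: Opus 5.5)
The plan is to compute $\operatorname{K}_\mu^2$ directly from the definition of $\operatorname{K}_\mu$ by Fubini, and then to read off non-negativity, symmetry and the trace from the structure $\operatorname{K}_\mu^2=\operatorname{K}_\mu^*\operatorname{K}_\mu$. Write $k(x,y):=\nabla_\mu^2F(\mu,x,y)$. By Assumption \ref{assumption:smooth-F}, $k\in L^2_{\mu\otimes\mu}$, so $\operatorname{K}_\mu$ is a bounded Hilbert--Schmidt operator, as recalled after Proposition \ref{proposition:Wasserstein-hessian-appendix}. For $v\in L^2_\mu$ we have
\[
\operatorname{K}_\mu^2[v](x)=\int k(x,z)\int k(z,y)v(y)\,\mathrm d\mu(y)\,\mathrm d\mu(z).
\]
To exchange the order of integration, we first check absolute integrability for $\mu$-a.e. $x$. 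By Cauchy--Schwarz in $y$,
\[
\iint \|k(x,z)\|\,\|k(z,y)\|\,\|v(y)\|\,\mathrm d\mu(y)\,\mathrm d\mu(z)\le \|v\|_{L^2_\mu}\int \|k(x,z)\|\,\|k(z,\cdot)\|_{L^2_\mu}\,\mathrm d\mu(z).
\]
Applying Cauchy--Schwarz in $z$, the right-hand side is at most
\[
\|v\|_{L^2_\mu}\,\|k(x,\cdot)\|_{L^2_\mu}\,\|k\|_{L^2_{\mu\otimes\mu}},
\]
which is finite for $\mu$-a.e. $x$ by Fubini applied to $k\in L^2_{\mu\otimes\mu}$. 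Fubini then yields
\[
\operatorname{K}_\mu^2[v](x)=\int C_\mu(x,y)v(y)\,\mathrm d\mu(y).
\]

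Next I establish symmetry and non-negativity. The symmetry $k(x,y)=k(y,x)^\top$ from Definition \ref{def:regularity-F} gives
\[
\langle \operatorname{K}_\mu u,w\rangle_{L^2_\mu}=\langle u,\operatorname{K}_\mu w\rangle_{L^2_\mu}
\]
by Fubini, so $\operatorname{K}_\mu$ is bounded and self-adjoint. Hence $\operatorname{K}_\mu^2$ is self-adjoint, and
\[
\langle \operatorname{K}_\mu^2 v,v\rangle=\|\operatorname{K}_\mu v\|_{L^2_\mu}^2\ge0.
\]
Since $\operatorname{K}_\mu^2\succeq0$, we have $|\operatorname{K}_\mu^2|=\operatorname{K}_\mu^2$. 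Trace-class membership therefore reduces, by Definition \ref{def:trace-operator}, to showing $\operatorname{Tr}\operatorname{K}_\mu^2<\infty$.

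For the trace, fix an orthonormal basis $\{\psi_n\}$ of $L^2_\mu$ and use Theorem \ref{thm:trace-operator}:
\[
\operatorname{Tr}\operatorname{K}_\mu^2=\sum_n\langle\psi_n,\operatorname{K}_\mu^2\psi_n\rangle=\sum_n\|\operatorname{K}_\mu\psi_n\|_{L^2_\mu}^2=\|\operatorname{K}_\mu\|_{\mathrm{HS}}^2.
\]
It remains to prove $\|\operatorname{K}_\mu\|_{\mathrm{HS}}^2=\|k\|_{L^2_{\mu\otimes\mu}}^2$; this equality is the main technical point. I would prove it directly, since the vector-valued setting should be handled carefully. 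For $\mu$-a.e. $x$ and each $i\le d$, the row $z\mapsto k(x,z)^\top e_i$ belongs to $L^2_\mu$, and
\[
(\operatorname{K}_\mu\psi_n)(x)\cdot e_i=\langle k(x,\cdot)^\top e_i,\psi_n\rangle_{L^2_\mu}.
\]
Parseval in $L^2_\mu$ gives
\[
\sum_n|(\operatorname{K}_\mu\psi_n)(x)\cdot e_i|^2=\|k(x,\cdot)^\top e_i\|_{L^2_\mu}^2.
\]
Summing over $i$ and integrating in $x$, with Tonelli justifying the exchange of the sum over $n$ and the integral, yields
\[
\sum_n\|\operatorname{K}_\mu\psi_n\|^2=\iint\|k(x,z)\|_F^2\,\mathrm d\mu\,\mathrm d\mu=\|k\|_{L^2_{\mu\otimes\mu}}^2\le C_{\operatorname K}^2<\infty.
\]
This agrees with the diagonal computation $\int\operatorname{Tr}C_\mu(x,x)\,\mathrm d\mu(x)$ carried out in Lemma \ref{lem:Gaussian-L2}. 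I expect the main obstacle to be these measure-theoretic exchanges: the Fubini step for the kernel composition, and the Parseval/Tonelli exchange for the Hilbert--Schmidt norm. Both go through as shown because every bound reduces to $k\in L^2_{\mu\otimes\mu}$.
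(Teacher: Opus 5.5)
Your proof is correct and follows essentially the same route as the paper: you expand $\operatorname{K}_\mu^2$ and swap the integrals by Fubini, obtain non-negativity from $\langle \operatorname{K}_\mu^2 v,v\rangle_{L^2_\mu}=\|\operatorname{K}_\mu v\|_{L^2_\mu}^2$, and identify $\mathrm{Tr}(\operatorname{K}_\mu^2)=\|\operatorname{K}_\mu\|_{\mathrm{HS}}^2$ with the squared $L^2_{\mu\otimes\mu}$-norm of the kernel. The only differences are that you check the absolute integrability needed for Fubini explicitly and prove the Hilbert--Schmidt identity for the matrix-valued kernel directly via Parseval, whereas the paper simply cites Reed--Simon (Theorem VI.23) for it.
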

\begin{proof}
We have that $\operatorname{K}_\mu$ is self-adjoint since it is symmetric and bounded. Then, for all $v \in L_\mu^2$,
\begin{align*}
    \operatorname{K}_\mu^2[v](x) = \operatorname{K}_\mu[\operatorname{K}_\mu[v]](x) &= \int_{\mathbb R^d} \nabla_\mu^2F(\mu, x, z)\operatorname{K}_\mu[v](z)\mathrm d\mu(z)\\ &= \int_{\mathbb R^d} \nabla_\mu^2F(\mu, x, z)\int_{\mathbb R^d} \nabla_\mu^2F(\mu, z, y)v(y)\mathrm d\mu(y)\mathrm d\mu(z)\\ &= \int_{\mathbb R^d} \int_{\mathbb R^d} \nabla_\mu^2F(\mu, x, z) \nabla_\mu^2F(\mu, z, y)v(y) \mathrm d\mu(y)\mathrm d\mu(z)\\ &\underbrace{=}_{\text{Fubini}} \int_{\mathbb R^d} \int_{\mathbb R^d} \nabla_\mu^2F(\mu, x, z) \nabla_\mu^2F(\mu, z, y)v(y) \mathrm d\mu(z)\mathrm d\mu(y)\\ 
    &= \int_{\mathbb R^d} C_\mu(x,y)v(y)\mathrm d\mu(y).
\end{align*}
Moreover, 
\begin{align*}
     \langle v, \operatorname{K}_\mu^2 v\rangle_{L_\mu^2}
    &= \int_{\mathbb R^d}\left\langle v(x), \operatorname{K}_\mu^2 v(x) \right\rangle\mathrm{d}\mu(x)= \|\operatorname{K}_\mu v\|_{L_\mu^2}^2 \geq 0,
\end{align*}
which implies that $\operatorname{K}_\mu^2$ defines a non-negative operator on $L_\mu^2$. The symmetry of $\operatorname{K}_\mu^2$ follows from the symmetry of $\operatorname{K}_\mu$.

By \cite[Theorem VI.23]{ReedSimon1980}, since $\nabla_\mu^2F(\mu,\cdot,\cdot) \in L^2_{\mu \otimes \mu}$, the integral operator $\operatorname{K}_\mu$ is Hilbert--Schmidt with $\|\operatorname{K}_\mu\|_{\text{HS}} = \|\nabla_\mu^2F(\mu,\cdot,\cdot)\|_{L^2_{\mu \otimes \mu}}$. Since $\operatorname{K}_\mu$ is HS, belongs to $\mathcal{B}(L_\mu^2)$, and is self-adjoint, it follows from \cite[Definition, p. 210]{ReedSimon1980} that $\mathrm{Tr} (\operatorname{K}_\mu^2) < \infty$. Since $\operatorname{K}_\mu^2 \geq 0$, we have $|\operatorname{K}_\mu^2| = \operatorname{K}_\mu^2$, hence by Definition \ref{def:trace-operator}, $\operatorname{K}_\mu^2$ is trace-class.

Lastly, let $\{\psi_n\}_{n=1}^\infty$ be an orthonormal basis in $L_\mu^2$. Then 
\begin{align*}
    \mathrm{Tr} (\operatorname{K}_\mu^2) =  \sum_{n=1}^\infty \langle \psi_n, \operatorname{K}_\mu^2\psi_n \rangle_{L_\mu^2} = \sum_{n=1}^\infty \|\operatorname{K}_\mu\psi_n \|_{L_\mu^2}^2 = \|\operatorname{K}_\mu\|_{\text{HS}}^2=\|\nabla_\mu^2F(\mu,\cdot,\cdot)\|_{L^2_{\mu \otimes \mu}}^2,
\end{align*}
where the last equality follows from the definition of the HS norm.
\end{proof}

\begin{lemma}[Linear functionals of a Gaussian process are Gaussian]
\label{lem:lin-func-GP}
Let $\mu\in\mathcal P_2(\mathbb R^d)$. Let Assumption \ref{assumption:smooth-F} hold and let $\xi \in L^2_\mu$ a.s. be an $\mathbb R^d$-valued Gaussian process with mean function zero and covariance function
$C_\mu:\mathbb R^d\times\mathbb R^d\to\mathbb R^{d\times d}$, i.e., for all $x,y\in\mathbb R^d$, $\mathbb E[\xi(x)]=0$, $\mathbb E[\xi(x)\xi(y)^\top]=C_\mu(x,y)$. Then for any $\psi\in L^2_\mu$ the random variable $\langle \psi,\xi\rangle_{L^2_\mu}\sim \mathcal N\big(0,\langle \psi,\operatorname{K}_\mu^2\psi\rangle_{L^2_\mu}\big)$.
\end{lemma}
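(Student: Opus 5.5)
The plan is to reduce the statement to finite-dimensional Gaussian vectors and then pass to a limit. First, I will check that $X:=\langle \psi,\xi\rangle_{L^2_\mu}=\int_{\mathbb R^d}\langle\psi(x),\xi(x)\rangle\,\mathrm d\mu(x)$ is a well-defined random variable. Since $\xi\in L^2_\mu$ almost surely (Lemma \ref{lem:Gaussian-L2}), Cauchy--Schwarz shows the integral is finite a.s. Measurability will follow from joint measurability of $(\omega,x)\mapsto\xi(\omega,x)$. I will take a measurable version of the process, which exists because we already used Tonelli with $\mathbb E\|\xi\|^2_{L^2_\mu}<\infty$ in the proof of Lemma \ref{lem:Gaussian-L2}. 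Moreover, $\mathbb E|X|^2\le \|\psi\|^2_{L^2_\mu}\,\mathbb E\|\xi\|^2_{L^2_\mu}<\infty$.

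Second, I will approximate the integral by Gaussian sums. Let $x_1,x_2,\dots$ be i.i.d. samples from $\mu$, independent of $\xi$, or alternatively use simple-function approximations of $\psi$. A cleaner route avoids sampling entirely. Choose simple functions $\psi_k=\sum_{i}c_{i}\mathbbm 1_{A_i}$ with $\psi_k\to\psi$ in $L^2_\mu$. The set $\mathcal G$ of all $L^2(\mathbb P)$-limits of finite linear combinations $\sum_i u_i^\top\xi(x_i)$ is a closed Gaussian Hilbert space, meaning every element is centered Gaussian, because $L^2$-limits of centered Gaussians are centered Gaussian (characteristic functions converge). I will show $X\in\mathcal G$. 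For this, note that $\langle \phi,\xi\rangle_{L^2_\mu}$ is orthogonal in $L^2(\mathbb P)$ to $\mathcal G^\perp$. Indeed, if $Y\perp\xi(x)$ componentwise for every $x$, then Fubini (justified by $\mathbb E\int|Y|\|\phi\|\|\xi\|\,\mathrm d\mu<\infty$) gives $\mathbb E[YX]=\int\langle\phi(x),\mathbb E[Y\xi(x)]\rangle\,\mathrm d\mu(x)=0$. Hence $X\in\mathcal G$, so $X$ is centered Gaussian. This orthogonality/Fubini argument is the step I expect to be the main technical obstacle. The difficulty is ensuring the measurable version and the Fubini justification are legitimate; the rest is bookkeeping.

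Third, I will compute the variance. By Fubini, again justified by square integrability,
\[
\mathbb E[X^2]=\int\!\!\int \psi(x)^\top\mathbb E[\xi(x)\xi(y)^\top]\psi(y)\,\mathrm d\mu(x)\,\mathrm d\mu(y)=\int\!\!\int\psi(x)^\top C_\mu(x,y)\psi(y)\,\mathrm d\mu(y)\,\mathrm d\mu(x).
\]
By Lemma \ref{lem:KHS-implies-Ctrace}, the inner integral equals $\operatorname{K}_\mu^2[\psi](x)$, so $\mathbb E[X^2]=\langle\psi,\operatorname{K}_\mu^2\psi\rangle_{L^2_\mu}$. This equals $\|\operatorname{K}_\mu\psi\|^2_{L^2_\mu}\ge 0$ and is finite under Assumption \ref{assumption:smooth-F}. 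Combining this with the Gaussianity from the second step gives $X\sim\mathcal N(0,\langle\psi,\operatorname{K}_\mu^2\psi\rangle_{L^2_\mu})$.
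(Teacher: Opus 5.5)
Your proof is correct, and it takes a genuinely different, self-contained route: the paper gives no argument of its own for this lemma and simply defers to \cite[Lemma E.9]{kim2024transformers}.

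Your key step places $X=\langle\psi,\xi\rangle_{L^2_\mu}$ in the closed Gaussian span $\mathcal G\subset L^2(\mathbb P)$ of $\{u^\top\xi(x)\}$ by checking $\mathbb E[YX]=0$ for all $Y\in\mathcal G^\perp$. This is sound. The Fubini interchange is justified by $\mathbb E\int|Y|\,\|\psi\|\,\|\xi\|\,\mathrm d\mu\le\|\psi\|_{L^2_\mu}(\mathbb E Y^2)^{1/2}(\mathbb E\|\xi\|_{L^2_\mu}^2)^{1/2}<\infty$. Your variance computation is the same Fubini-plus-Lemma~\ref{lem:KHS-implies-Ctrace} calculation that the paper itself carries out in the proof of Lemma~\ref{lem:independence} (take $u=v=\psi$ there).

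Your approach also gives more than this lemma. Since $\mathcal G$ is a linear space of centered Gaussians, any finite family $(\langle\psi_1,\xi\rangle_{L^2_\mu},\dots,\langle\psi_m,\xi\rangle_{L^2_\mu})$ is jointly Gaussian by Cram\'er--Wold. That is exactly the joint Gaussianity Lemma~\ref{lem:independence} asserts without proof (``each component is a continuous linear functional of the Gaussian random element $\xi$''). The citation is shorter, but it leaves the reader to check that the external setting (kernel, integrability, measurability) matches this one.

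One point to tighten is the jointly measurable version. Your justification (``because we already used Tonelli in Lemma~\ref{lem:Gaussian-L2}'') is circular: that use of Tonelli presupposes joint measurability rather than proving it. Moreover, passing to a modification could a priori change the pathwise integral $\langle\psi,\xi\rangle_{L^2_\mu}$. Instead, read the hypothesis ``$\xi\in L^2_\mu$ a.s.'' as including joint measurability of $(\omega,x)\mapsto\xi(\omega,x)$, or that $\xi$ is an $L^2_\mu$-valued random element. This is needed anyway, both for $\langle\psi,\xi\rangle_{L^2_\mu}$ to be a random variable and for every Fubini step. It is also what the paper tacitly uses in Lemmas~\ref{lem:Gaussian-L2} and~\ref{lem:independence}.

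Finally, the sentence introducing the simple functions $\psi_k$ is never used and can be dropped.
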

\begin{proof}
See the proof of \cite[Lemma E.9]{kim2024transformers}.
\end{proof}
\begin{lemma}
\label{lem:independence}
Let $\mu^* \in \mathcal{P}_2^{\mathrm{ac}}(\mathbb R^d)$ be an $(\varepsilon,\delta)$-saddle point, i.e., $\|\nabla_\mu F (\mu^*)\|_{L^2_{\mu^*}} \leq \varepsilon$ and $\lambda_{0} := \lambda_{\mathrm{min}} \operatorname{K}_{\mu^*} \leq -\delta$. Let $\xi \sim \mathrm{GP}(0,C_{\mu^*})$ and $q_0 \in L_{\mu^*}^2$ be the eigenvector of the operator $\operatorname{K}_{\mu^*}$ corresponding to the eigenvalue $\lambda_0$. Define the orthogonal projection of $\xi$ onto the orthogonal complement of the span of $q_0$ by $\xi_\perp := \xi - \langle q_0,\xi\rangle_{L^2_{\mu^*}} q_0$. Then $\langle q_0,\xi\rangle_{L^2_{\mu^*}} \sim \mathcal N(0,\lambda_0^2)$, and $\langle q_0,\xi\rangle_{L^2_{\mu^*}}$ and $\xi_\perp$ are independent.
\end{lemma}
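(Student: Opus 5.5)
The plan is to use the joint Gaussianity of linear functionals of $\xi$. Lemma \ref{lem:lin-func-GP} applies to every fixed $\psi\in L^2_{\mu^*}$, so the pair $(\langle q_0,\xi\rangle_{L^2_{\mu^*}},\langle \psi,\xi\rangle_{L^2_{\mu^*}})$ is jointly Gaussian. To see this, apply the lemma to $a q_0+b\psi$ and use linearity: every linear combination $a\langle q_0,\xi\rangle+b\langle\psi,\xi\rangle=\langle aq_0+b\psi,\xi\rangle$ is centered Gaussian. Polarization of the variance formula in Lemma \ref{lem:lin-func-GP} then gives the covariance
\[
\mathbb E\big[\langle \phi,\xi\rangle_{L^2_{\mu^*}}\langle \psi,\xi\rangle_{L^2_{\mu^*}}\big]=\langle \phi,\operatorname{K}_{\mu^*}^2\psi\rangle_{L^2_{\mu^*}},
\]
which uses the self-adjointness of $\operatorname{K}_{\mu^*}^2$ from Lemma \ref{lem:KHS-implies-Ctrace}.

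The first claim is immediate from this. Normalize $\|q_0\|_{L^2_{\mu^*}}=1$. Then $\operatorname{K}^2_{\mu^*}q_0=\lambda_0^2q_0$, so the variance of $\langle q_0,\xi\rangle$ is $\langle q_0,\operatorname{K}^2_{\mu^*}q_0\rangle=\lambda_0^2$, and therefore $\langle q_0,\xi\rangle\sim\mathcal N(0,\lambda_0^2)$.

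For independence, I first reduce to scalar projections. For any $\psi\in L^2_{\mu^*}$,
\[
\langle\psi,\xi_\perp\rangle=\langle\psi-\langle q_0,\psi\rangle q_0,\ \xi\rangle .
\]
So every linear functional of $\xi_\perp$ has the form $\langle\phi,\xi\rangle$ with $\phi\perp q_0$. By the covariance formula, $\mathbb E[\langle q_0,\xi\rangle\langle\phi,\xi\rangle]=\langle \operatorname{K}^2_{\mu^*}q_0,\phi\rangle=\lambda_0^2\langle q_0,\phi\rangle=0$. Now fix finitely many $\phi_1,\dots,\phi_m\perp q_0$. The vector $(\langle q_0,\xi\rangle,\langle\phi_1,\xi\rangle,\dots,\langle\phi_m,\xi\rangle)$ is jointly Gaussian, again by applying Lemma \ref{lem:lin-func-GP} to arbitrary linear combinations. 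Its first coordinate is uncorrelated with the others, hence independent of them.

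The remaining step, and the main obstacle, is passing from finite-dimensional independence to independence of $\langle q_0,\xi\rangle$ and the $L^2_{\mu^*}$-valued random variable $\xi_\perp$. Here I use that $L^2_{\mu^*}$ is separable: take an orthonormal basis $\{\psi_n\}$ of the orthogonal complement of $q_0$. The Borel $\sigma$-algebra of the closed subspace $q_0^\perp$ is generated by the coordinate maps $v\mapsto\langle\psi_n,v\rangle$. Hence $\sigma(\xi_\perp)=\sigma(\langle\psi_n,\xi\rangle:n\ge1)$. Independence of $\langle q_0,\xi\rangle$ from each finite family $\{\langle \psi_n,\xi\rangle\}_{n\le m}$ extends to the whole generated $\sigma$-algebra by a $\pi$–$\lambda$ (Dynkin) argument, since cylinder sets form a $\pi$-system. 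This concludes the proof plan.
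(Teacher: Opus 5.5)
Your proposal is correct and follows the same route as the paper. Both arguments establish $\mathbb E\big[\langle\phi,\xi\rangle_{L^2_{\mu^*}}\langle\psi,\xi\rangle_{L^2_{\mu^*}}\big]=\langle\phi,\operatorname{K}_{\mu^*}^2\psi\rangle_{L^2_{\mu^*}}$, deduce that $\langle q_0,\xi\rangle_{L^2_{\mu^*}}$ is uncorrelated with every coordinate of $\xi_\perp$, obtain independence from finitely many coordinates by joint Gaussianity, and then pass to $\xi_\perp$ through its coordinates in an orthonormal basis. The differences are minor: you derive the covariance by polarizing Lemma \ref{lem:lin-func-GP} rather than by the paper's Fubini computation with the kernel $C_{\mu^*}$, and you make explicit (via Cram\'er--Wold and a $\pi$--$\lambda$ argument on the separable space $q_0^\perp$) the joint Gaussianity and the infinite-dimensional extension that the paper only asserts.
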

\begin{proof}
By Lemma \ref{lem:lin-func-GP}, for every \(u \in L^2_{\mu^*}\), the scalar random variable \(\langle \xi,u\rangle_{L^2_{\mu^*}}\) is $\mathcal{N}(0, \langle u,\operatorname{K}_{\mu^*}^2u\rangle_{L^2_{\mu^*}})$. In particular, by 
\[
\mathbb E\big[\langle \xi,q_0\rangle_{L^2_{\mu^*}}^2\big]
= \langle \operatorname{K}_{\mu^*}^2 q_0,q_0\rangle_{L^2_{\mu^*}}=\lambda_0^2 \langle q_0,q_0\rangle_{L^2_{\mu^*}}
= \lambda_0^2,
\]
since \(\|q_0\|_{L^2_{\mu^*}} = 1\). Therefore,
\[
\langle \xi,q_0\rangle_{L^2_{\mu^*}} \sim \mathcal N(0,\lambda_0^2).
\]
Next, we show that \[
\mathbb E\big[\langle \xi,u\rangle_{L^2_{\mu^*}} \langle \xi,v\rangle_{L^2_{\mu^*}}\big]
= \langle \operatorname{K}_{\mu^*}^2 u,v\rangle_{L^2_{\mu^*}},
\]
for every $u,v \in L^2_{\mu^*}$. Indeed,
\begin{align*}
\mathbb E\big[\langle \xi,u\rangle_{L^2_{\mu^*}} \langle \xi,v\rangle_{L^2_{\mu^*}}\big]
&=
\mathbb E\left[
\left(\int_{\mathbb R^d} \xi(x)u(x)\mathrm d\mu^*(x)\right)
\left(\int_{\mathbb R^d} \xi(y)v(y)\mathrm d\mu^*(y)\right)
\right] \\
&=
\int_{\mathbb R^d}\int_{\mathbb R^d}
\mathbb E[\xi(x)\xi(y)]\,u(x)v(y)\mathrm d\mu^*(x)\mathrm d\mu^*(y) \\
&=
\int_{\mathbb R^d}\int_{\mathbb R^d}
C_{\mu^*}(x,y) u(x)v(y)\mathrm d\mu^*(x)\mathrm d\mu^*(y).
\end{align*}
On the other hand, by Lemma \ref{lem:KHS-implies-Ctrace},
\[
\operatorname{K}_{\mu^*}^2[u](x) =
\int_{\mathbb R^d} C_{\mu^*}(x,y)u(y)\mathrm d\mu^*(y).
\]
Therefore,
\begin{align*}
\langle \operatorname{K}_{\mu^*}^2u,v\rangle_{L^2_{\mu^*}}
&=
\int_{\mathbb R^d} \operatorname{K}_{\mu^*}^2[u](x)v(x)\mathrm d\mu^*(x) \\
&=
\int_{\mathbb R^d}
\left(\int_{\mathbb R^d} C_{\mu^*}(x,y)u(y)\mathrm d\mu^*(y)\right)
v(x)\mathrm d\mu^*(x) \\
&=
\int_{\mathbb R^d}\int_{\mathbb R^d}
C_{\mu^*}(x,y)u(y)v(x)\mathrm d\mu^*(y)\mathrm d\mu^*(x).
\end{align*}
Renaming the dummy variables \(x\) and \(y\), we obtain
\[
\langle \operatorname{K}_{\mu^*}^2u,v\rangle_{L^2_{\mu^*}}
=
\int_{\mathbb R^d}\int_{\mathbb R^d}
C_{\mu^*}(x,y) u(x)v(y)\mathrm d\mu^*(x)\mathrm d\mu^*(y).
\]
Hence
\[
\mathbb E\big[\langle \xi,u\rangle_{L^2_{\mu^*}} \langle \xi,v\rangle_{L^2_{\mu^*}}\big]
=
\langle \operatorname{K}_{\mu^*}^2u,v\rangle_{L^2_{\mu^*}}.
\]
Let \(P_\perp : L^2_{\mu^*} \to L^2_{\mu^*}\) denote the orthogonal projection onto the orthogonal complement of the span of $q_0$, that is,
\[
P_\perp v := v - \langle q_0,v\rangle_{L^2_{\mu^*}} q_0,
\quad v \in L^2_{\mu^*}.
\]
Fix \(v \in L^2_{\mu^*}\). Since \(\xi_\perp = P_\perp \xi\), we have
\[
\langle \xi_\perp, v\rangle_{L^2_{\mu^*}} = \langle \xi, P_\perp v\rangle_{L^2_{\mu^*}},
\]
due to the fact that orthogonal projections are symmetric operators. Hence
\begin{align*}
\operatorname{Cov}\bigl(\langle \xi,q_0\rangle_{L^2_{\mu^*}},\langle \xi,P_\perp v\rangle_{L^2_{\mu^*}}\bigr) = \langle \operatorname{K}_{\mu^*}^2 q_0, P_\perp v\rangle_{L^2_{\mu^*}} = \lambda_0^2 \langle q_0, P_\perp v\rangle_{L^2_{\mu^*}} = 0,
\end{align*}
because $P_\perp v$ and $q_0$ are orthogonal. Now let \((e_k)_{k\geq 1}\) be an orthonormal basis of \(L^2_{\mu^*}\). For every \(m \geq 1\), the random vector
\[
\bigl(\langle \xi,q_0\rangle_{L^2_{\mu^*}},\langle \xi_\perp,e_1\rangle_{L^2_{\mu^*}},...,\langle \xi_\perp,e_m\rangle_{L^2_{\mu^*}}\bigr)
\]
is jointly Gaussian, since each component is a continuous linear functional of the Gaussian random element \(\xi\). The computation above shows that
\[
\operatorname{Cov}\bigl(\langle \xi,q_0\rangle_{L^2_{\mu^*}},\langle \xi_\perp,e_j\rangle_{L^2_{\mu^*}}\bigr)=0,
\quad j=1,\dots,m.
\]
Therefore, by the characterization of Gaussian vectors, \(\langle \xi,q_0\rangle_{L^2_{\mu^*}}\) is independent of
\[
\bigl(\langle \xi_\perp,e_1\rangle_{L^2_{\mu^*}},...,\langle \xi_\perp,e_m\rangle_{L^2_{\mu^*}}\bigr)
\]
for every \(m\geq 1\). Since \(\xi_\perp\) is completely determined by its coordinates \(\bigl(\langle \xi_\perp,e_k\rangle_{L^2_{\mu^*}}\bigr)_{k\geq 1}\), it follows that \(\langle \xi,q_0\rangle_{L^2_{\mu^*}}\) is independent of \(\xi_\perp\). This proves the claim.
\end{proof}

\section{Background on Optimal Transport}
\label{section:background on OT}
In this appendix, we recall several standard results from optimal transport that are used throughout the paper. We begin with Brenier's theorem, which ensures the existence and structure of optimal
transport maps when the source measure is absolutely continuous.
\begin{theorem}[Brenier's theorem; \protect{\cite[Theorem 6.2.4]{ambrosio2008gradient}}]
\label{thm:existence-optimal-coupling}
Let $\mu \in \mathcal{P}_2^{\mathrm{ac}}(\mathbb R^d)$ and $\nu \in \mathcal{P}_2(\mathbb R^d).$ Then:
\begin{itemize}
\item there exists a unique optimal coupling $\gamma^* := \left(\operatorname{Id}, \operatorname{T}_{\mu}^{\nu}\right)_{\#} \mu$ such that
\begin{equation*}
    W_2^2(\mu,\nu) = \int_{\mathbb R^d} \left|x-\operatorname{T}_{\mu}^{\nu}(x)\right|^2\mathrm{d}\mu(x),
\end{equation*}
where $\operatorname{T}_{\mu}^{\nu}:\mathbb R^d \to \mathbb R^d$ is the unique $\mu$-almost everywhere (a.e.) optimal transport map from $\mu$ to $\nu.$
\item Moreover, $\operatorname{T}_{\mu}^{\nu}(x) = \nabla \varphi(x)$ $\mu\text{-a.e.}$ for a convex function $\varphi: \mathbb R^d \to \mathbb R$ with $\varphi(x) := \frac{1}{2}|x|^2-\psi(x),$ where $\psi$ is a c-concave function.\footnote{A function $\xi:\mathbb R^d \to \mathbb R$ is c-concave if and only if $x \mapsto \frac{1}{2}|x|^2 - \xi(x)$ is convex and lower semi-continuous.}
\item If $\nu \in \mathcal{P}_2^{\lambda}(\mathbb R^d),$ then $\gamma^* = \left(\operatorname{T}_{\nu}^{\mu}, \operatorname{Id}\right)_{\#} \nu,$ where $\operatorname{T}_{\nu}^{\mu}:\mathbb R^d \to \mathbb R^d$ is the unique $\nu$-a.e. optimal transport map such that 
\begin{equation*}
\operatorname{T}_{\nu}^{\mu} \circ \operatorname{T}_{\mu}^{\nu} = \operatorname{Id}, \quad \mu\text{-a.e. and } \operatorname{T}_{\mu}^{\nu} \circ \operatorname{T}_{\nu}^{\mu} = \operatorname{Id}, \quad \nu\text{-a.e.}
\end{equation*}
\end{itemize}  
\end{theorem}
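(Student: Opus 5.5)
The plan is the classical route through Kantorovich duality and cyclical monotonicity; the paper only cites this result from \cite{ambrosio2008gradient}. First, existence of an optimal coupling $\gamma^*\in\Pi_o(\mu,\nu)$: $\Pi(\mu,\nu)$ is tight, hence weakly compact by Prokhorov, and $\gamma\mapsto\int\|x-y\|^2\mathrm d\gamma$ is weakly lower semicontinuous since the cost is continuous and bounded below. Next, I will show that $\operatorname{supp}\gamma^*$ is $c$-cyclically monotone for $c(x,y)=\frac12\|x-y\|^2$. If some finite cycle in the support could be rearranged at lower cost, moving a small amount of mass along the rearranged cycle would strictly decrease the cost. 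Expanding the square, $c$-cyclical monotonicity is equivalent to ordinary cyclical monotonicity of $\operatorname{supp}\gamma^*\subset\mathbb R^d\times\mathbb R^d$. Rockafellar's theorem then gives a proper, convex, lower semicontinuous $\varphi$ with $\operatorname{supp}\gamma^*\subset\partial\varphi$. Setting $\psi:=\frac12|x|^2-\varphi$ makes $\psi$ $c$-concave, matching the footnote; equivalently, $\psi$ is the Kantorovich potential from duality.

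The key step uses absolute continuity of $\mu$. The domain of $\varphi$ has full $\mu$-measure because the first marginal of $\gamma^*$ is $\mu$. The boundary of this convex domain is Lebesgue-null, and a convex function is differentiable Lebesgue-a.e. in the interior of its domain (Rademacher, via local Lipschitzness). Hence $\varphi$ is differentiable $\mu$-a.e., and at such points $\partial\varphi(x)=\{\nabla\varphi(x)\}$. Therefore $\gamma^*$ is concentrated on the graph of $\nabla\varphi$, that is, $\gamma^*=(\operatorname{Id},\nabla\varphi)_\#\mu$ with $\operatorname{T}_\mu^\nu:=\nabla\varphi$, and the displayed formula for $W_2^2$ follows.

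\textbf{Uniqueness.} The argument above applies to \emph{every} optimal coupling, so every optimal coupling is induced by a map. Suppose $\gamma_1=(\operatorname{Id},\operatorname{T}_1)_\#\mu$ and $\gamma_2=(\operatorname{Id},\operatorname{T}_2)_\#\mu$ are both optimal. Then $\frac12(\gamma_1+\gamma_2)$ is optimal, by linearity of the cost and convexity of $\Pi(\mu,\nu)$, so it is also induced by a map. This forces $\operatorname{T}_1=\operatorname{T}_2$ $\mu$-a.e., since otherwise the disintegration at $\mu$-a.e. $x$ would be $\frac12(\delta_{\operatorname{T}_1(x)}+\delta_{\operatorname{T}_2(x)})$, which is not a Dirac mass. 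Uniqueness of the optimal plan follows, and uniqueness of $\nabla\varphi$ $\mu$-a.e. with it.

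\textbf{Third item.} When $\nu$ is also absolutely continuous, apply the same argument with the roles of $\mu$ and $\nu$ exchanged. The cyclically monotone set $\operatorname{supp}\gamma^*$, with coordinates swapped, lies in $\partial\varphi^*$, where $\varphi^*$ is the Legendre conjugate, because $y\in\partial\varphi(x)\iff x\in\partial\varphi^*(y)$. Hence $\gamma^*=(\nabla\varphi^*,\operatorname{Id})_\#\nu$, and by the uniqueness already proved $\operatorname{T}_\nu^\mu=\nabla\varphi^*$ $\nu$-a.e. Since $\gamma^*$-a.e. pair $(x,y)$ satisfies both $y=\nabla\varphi(x)$ and $x=\nabla\varphi^*(y)$, the identities $\nabla\varphi^*\circ\nabla\varphi=\operatorname{Id}$ $\mu$-a.e. and $\nabla\varphi\circ\nabla\varphi^*=\operatorname{Id}$ $\nu$-a.e. follow.

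I expect the main obstacle to be the passage from cyclical monotonicity to a convex potential, namely Rockafellar's construction $\varphi(x)=\sup\sum_i\langle y_i,x_{i+1}-x_i\rangle$ over chains, together with the verification that $\varphi$ is finite on a set of full $\mu$-measure. I would handle the latter using the finite second moments of $\mu$ and $\nu$, or by taking Kantorovich duality as given.
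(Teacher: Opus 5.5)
Your proposal is correct. The paper gives no proof of its own beyond citing \cite[Theorem 6.2.4]{ambrosio2008gradient}, and your argument is essentially the standard one behind that result: cyclical monotonicity of optimal supports, a convex potential via Rockafellar (equivalently Kantorovich duality), $\mu$-a.e.\ differentiability of convex functions, and the averaging trick for uniqueness. Two minor remarks. First, your $\varphi$ is proper, convex and lower semicontinuous with values in $\mathbb R\cup\{+\infty\}$, and this is the correct generality: the statement's $\varphi:\mathbb R^d\to\mathbb R$ can fail, e.g.\ for $\mu$ uniform on $[0,1]$ and $\nu$ standard Gaussian, $\operatorname{T}_\mu^\nu$ is unbounded on $(0,1)$, which is impossible for the derivative of a finite convex function on $\mathbb R$. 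Second, finiteness of $\varphi$ on the first projection of $\operatorname{supp}\gamma^*$ is automatic from Rockafellar's construction, since $\varphi(x)\le\langle y,x-x_0\rangle$ for $(x,y)\in\operatorname{supp}\gamma^*$, so your closing worry needs no moment argument.
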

The next result gives a useful sufficient condition ensuring that a map is optimal.
\begin{theorem}[\protect{\cite[Theorem 1.48]{santambrogio2015optimal}}]
\label{thm:sufficient-cond-optimal-convexity}
Suppose $\mu \in \mathcal{P}_2(\mathbb R^d)$, $u: \mathbb R^d \to \mathbb R$ is a convex and differentiable $\mu$-a.e. and $\int_{\mathbb R^d} |\nabla u(x)|^2 \mu(\mathrm{d}x) < \infty.$ Then $\nabla u$ is an optimal transport map from $\mu$ to $(\nabla u)_{\#}\mu.$
\end{theorem}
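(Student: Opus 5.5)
This is Theorem 1.48 of \cite{santambrogio2015optimal}, so citing it would suffice. If we want a self-contained argument, the plan is to show that the coupling $\gamma_u := (\operatorname{Id}, \nabla u)_\#\mu$ is optimal between $\mu$ and $\nu := (\nabla u)_\#\mu$. We do this with the subgradient inequality for $u$ and its Legendre transform $u^*$, i.e.\ Kantorovich duality in its simplest form.

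\textbf{Finiteness of moments.} Since $\mu \in \mathcal P_2(\mathbb R^d)$ and $\int \|\nabla u\|^2 \mathrm d\mu < \infty$, the measure $\nu$ has finite second moment. Hence $\gamma_u \in \Pi(\mu,\nu)$ has finite transport cost.

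\textbf{Reduction to maximal correlation.} For any $\gamma \in \Pi(\mu,\nu)$,
\[
\int \|x-y\|^2 \mathrm d\gamma = \int \|x\|^2 \mathrm d\mu + \int \|y\|^2 \mathrm d\nu - 2\int \langle x,y\rangle \mathrm d\gamma .
\]
The first two terms are the same for every coupling. So it suffices to show that $\int \langle x,y\rangle \mathrm d\gamma \le \int \langle x,\nabla u(x)\rangle \mathrm d\mu(x)$ for all $\gamma \in \Pi(\mu,\nu)$.

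\textbf{Fenchel--Young argument.} Let $u^*(y) = \sup_x \{\langle x,y\rangle - u(x)\}$. The Fenchel--Young inequality gives $\langle x,y\rangle \le u(x) + u^*(y)$ for all $x,y$. Equality holds when $y = \nabla u(x)$, which is available at every differentiability point of $u$, hence $\mu$-a.e.

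Integrating against $\gamma$ and using that its marginals are $\mu$ and $\nu$:
\[
\int \langle x,y\rangle \mathrm d\gamma \le \int u \,\mathrm d\mu + \int u^* \mathrm d\nu = \int \bigl(u(x) + u^*(\nabla u(x))\bigr) \mathrm d\mu = \int \langle x,\nabla u(x)\rangle \mathrm d\mu .
\]
This is the desired inequality.

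\textbf{Main obstacle: integrability.} The functions $u$ and $u^*$ may fail to be integrable, or may take the value $+\infty$, so splitting the integral as above needs justification. I would handle this as follows.
\begin{itemize}
\item Apply the argument to the nonnegative function $u(x) + u^*(y) - \langle x,y\rangle \ge 0$, whose integral is defined in $[0,\infty]$. The term $\langle x,y\rangle$ is integrable because both marginals have finite second moments.
\item Avoid splitting the integral. Instead use the inequality $u(x) + u^*(y) - \langle x,y\rangle \ge 0$ together with $\mu$-a.e.\ equality along the graph of $\nabla u$.
\item To make this rigorous, lower-bound $u$ by an affine function and upper-bound it pointwise on the relevant sets. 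Alternatively, truncate by replacing $u$ with $\max(u,\ell)$ for an affine minorant $\ell$, and pass to the limit by monotone convergence.
\item If a fully clean argument proves too delicate, I would fall back on the standard cyclical monotonicity characterization: the support of $\gamma_u$ lies in the graph of $\partial u$, which is cyclically monotone. Optimality then follows from the theorem that, for quadratic cost with finite second moments, cyclically monotone support implies optimality.
\end{itemize}
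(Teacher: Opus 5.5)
The paper gives no argument of its own here; it imports the statement directly from \cite[Theorem 1.48]{santambrogio2015optimal}. Your Fenchel--Young proof is correct and self-contained. Two of your bullets on the integrability step do not work as stated, but your third one is right and closes the gap.

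First, the splitting of the integral cannot be avoided. For $h(x,y)=u(x)+u^*(y)-\langle x,y\rangle$, knowing $\int h\,\mathrm d\gamma\ge 0$ and $\int h\,\mathrm d\gamma_u=0$ compares nothing by itself. You must pass through the coupling-independent quantity $I:=\int u\,\mathrm d\mu+\int u^*\,\mathrm d\nu$.

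Second, this splitting is harmless once you use the affine minorants. Fix $x_0$ and $p\in\partial u(x_0)$, which is nonempty since $u$ is finite and convex. Then $u(x)\ge u(x_0)+\langle p,x-x_0\rangle$, and directly from the definition $u^*(y)\ge\langle x_0,y\rangle-u(x_0)$. Since $u^*$ is l.s.c.\ and hence Borel, and both marginals have finite first moments, this gives $u^-\in L^1(\mu)$ and $(u^*)^-\in L^1(\nu)$. Consequently $\int\bigl(u(x)+u^*(y)\bigr)\,\mathrm d\pi=I\in(-\infty,+\infty]$ for every $\pi\in\Pi(\mu,\nu)$.

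Now take $\pi=\gamma_u$. There $u(x)+u^*(y)=\langle x,y\rangle$ holds $\gamma_u$-a.e., and $\langle x,y\rangle$ is $\gamma_u$-integrable by Cauchy--Schwarz. So $I$ is finite and equals $\int\langle x,\nabla u(x)\rangle\,\mathrm d\mu$. For any other $\gamma\in\Pi(\mu,\nu)$, integrating the pointwise inequality $\langle x,y\rangle\le u(x)+u^*(y)$ gives $\int\langle x,y\rangle\,\mathrm d\gamma\le I$. Alternatively, the upper bound $u(x)\le u(x_0)+\langle\nabla u(x),x-x_0\rangle$ at differentiability points gives $u^+\in L^1(\mu)$ directly, which is the "upper-bound pointwise" step you gestured at.

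Third, the truncation $\max(u,\ell)$ does nothing when $\ell\le u$ is an affine minorant, since it equals $u$. The cyclical-monotonicity fallback is also unnecessary. In the quadratic case it amounts to producing a convex potential via Rockafellar's theorem and then running this same Fenchel--Young argument, whereas here $u$ is already given.
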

We conclude with a standard characterization of the tangent velocity of an absolutely continuous curve in Wasserstein space in terms of the infinitesimal behavior of optimal transport.
\begin{proposition}[\protect{\cite[Proposition 8.4.6]{ambrosio2008gradient}}]
    \label{prop_infinitesimal_accurve}
    Let $T > 0$ and let $\mu:[0,T] \to \mathcal{P}_2(\mathbb R^d)$ be an absolutely continuous curve satisfying $\partial_t \mu_t + \nabla \cdot \left(v_t \mu_t\right) = 0$, $t \in (0,T)$, $\mu|_{t=0} := \mu_0$, with vector field $v_t \in \mathcal{T}_{\mu_t} \mathcal{P}_2(\mathbb R^d)$. Then, for a.e. $t \in (0,T)$, we have
    \begin{align*}
        \lim_{h \to 0} \frac{W_2(\mu_{t+h},(\operatorname{Id} + h v_t)_{\#} \mu_t)}{h} = 0. 
    \end{align*}
\end{proposition}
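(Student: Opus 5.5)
The plan is to follow the classical argument of \cite[Proposition 8.4.6]{ambrosio2008gradient}. We compare $\mu_{t+h}$ with $(\operatorname{Id}+hv_t)_\#\mu_t$ through a rescaled optimal plan between $\mu_t$ and $\mu_{t+h}$, and show that this rescaled plan converges strongly to the deterministic plan $(\operatorname{Id},v_t)_\#\mu_t$.

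\textbf{Step 1: good times.} We fix a time $t$ at which three things hold:
\begin{itemize}
\item the metric derivative $|\mu'|(t)=\lim_{h\to0}W_2(\mu_t,\mu_{t+h})/|h|$ exists and equals $\|v_t\|_{L^2_{\mu_t}}$, which holds for a.e.\ $t$ because $v_t$ lies in the tangent space and is therefore the minimal-norm velocity \cite[Theorem 8.3.1]{ambrosio2008gradient};
\item $t$ is a Lebesgue point of $s\mapsto\|v_s\|^2_{L^2_{\mu_s}}$;
\item $t$ is a Lebesgue point of $s\mapsto\int\langle\nabla\varphi,v_s\rangle\mathrm d\mu_s$ for every $\varphi$ in a countable family $\mathcal D\subset C_c^\infty(\mathbb R^d)$ that is dense in $C^1$.
\end{itemize}
For such $t$ and $h\neq0$ we take $\gamma_h\in\Pi_o(\mu_t,\mu_{t+h})$ and define the rescaled plan $\tilde\gamma_h:=(\pi_0,(\pi_1-\pi_0)/h)_\#\gamma_h$. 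Its first marginal is $\mu_t$ and its second moment in $y$ equals $W_2^2(\mu_t,\mu_{t+h})/h^2\to\|v_t\|^2_{L^2_{\mu_t}}$. So the family $(\tilde\gamma_h)$ is tight, and along any sequence $h_k\to0$ we can extract a narrow limit $\tilde\gamma$ satisfying $\int|y|^2\mathrm d\tilde\gamma\le\|v_t\|^2_{L^2_{\mu_t}}$ by lower semicontinuity.

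\textbf{Step 2: identify the limit.} For $\varphi\in\mathcal D$, a first-order Taylor expansion gives
\[
\int\varphi\,\mathrm d\mu_{t+h}-\int\varphi\,\mathrm d\mu_t=h\int\langle\nabla\varphi(x),y\rangle\mathrm d\tilde\gamma_h+O(h^2)\int|y|^2\mathrm d\tilde\gamma_h .
\]
By the continuity equation, the left-hand side equals $\int_t^{t+h}\int\langle\nabla\varphi,v_s\rangle\mathrm d\mu_s\mathrm ds$. At the Lebesgue point $t$ this yields
\[
\int\langle\nabla\varphi(x),y\rangle\mathrm d\tilde\gamma=\int\langle\nabla\varphi,v_t\rangle\mathrm d\mu_t .
\]
Passing to the limit on the left uses uniform integrability of $y$, which follows from the bounded second moments. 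Let $b(x):=\int y\,\mathrm d\tilde\gamma_x(y)$ be the barycenter of the disintegration. The identity says $\langle b-v_t,\nabla\varphi\rangle_{L^2_{\mu_t}}=0$ for all $\varphi$, hence by density for every element of the tangent space. Since $v_t$ belongs to the tangent space, $\langle b,v_t\rangle_{L^2_{\mu_t}}=\|v_t\|^2_{L^2_{\mu_t}}$. Therefore
\[
\int|y-v_t(x)|^2\mathrm d\tilde\gamma=\int|y|^2\mathrm d\tilde\gamma-2\langle b,v_t\rangle_{L^2_{\mu_t}}+\|v_t\|^2_{L^2_{\mu_t}}\le 0,
\]
so $\tilde\gamma=(\operatorname{Id},v_t)_\#\mu_t$. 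Since the limit does not depend on the subsequence, the whole family converges, and the second moments converge to $\|v_t\|^2_{L^2_{\mu_t}}$.

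\textbf{Step 3: strong convergence.} I expect the main obstacle to be upgrading narrow convergence to $\int|y-v_t(x)|^2\mathrm d\tilde\gamma_h\to0$. The difficulty is that $v_t$ is merely in $L^2_{\mu_t}$, so $|y-v_t(x)|^2$ is not a continuous test function. First approximate $v_t$ in $L^2_{\mu_t}$ by a continuous, compactly supported $w$. For fixed $w$, expand $\int|y-w(x)|^2\mathrm d\tilde\gamma_h$. The cross term $\int\langle y,w(x)\rangle\mathrm d\tilde\gamma_h$ converges because the integrand is continuous with quadratic growth and the second moments converge. The term $\int|w(x)|^2\mathrm d\tilde\gamma_h=\|w\|^2_{L^2_{\mu_t}}$ does not depend on $h$. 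The $x$-marginal of $\tilde\gamma_h$ is always $\mu_t$, so replacing $w$ by $v_t$ costs at most $2\|w-v_t\|_{L^2_{\mu_t}}\bigl(\int|y|^2\mathrm d\tilde\gamma_h\bigr)^{1/2}+\|w-v_t\|^2_{L^2_{\mu_t}}$. A $3\varepsilon$-argument then gives $\int|y-v_t(x)|^2\mathrm d\tilde\gamma_h\to0$.

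\textbf{Conclusion.} The plan $(x+hy,\;x+hv_t(x))_\#\tilde\gamma_h$ couples $\mu_{t+h}$ with $(\operatorname{Id}+hv_t)_\#\mu_t$. Hence
\[
W_2\bigl(\mu_{t+h},(\operatorname{Id}+hv_t)_\#\mu_t\bigr)\le |h|\Bigl(\int|y-v_t(x)|^2\mathrm d\tilde\gamma_h\Bigr)^{1/2}=o(|h|),
\]
which is the claim.
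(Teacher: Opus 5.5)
Your proposal is correct and follows the argument of \cite[Proposition 8.4.6]{ambrosio2008gradient}, which the paper cites without reproducing: you rescale optimal plans between $\mu_t$ and $\mu_{t+h}$, identify the narrow limit as $(\operatorname{Id},v_t)_\#\mu_t$ using the continuity equation, the tangency of $v_t$ and $|\mu'|(t)=\|v_t\|_{L^2_{\mu_t}}$, and then upgrade to $L^2$ convergence via convergence of second moments. Two small fixes are needed, neither affecting the conclusion. In Step 3, the cost of replacing $w$ by $v_t$ should be $2\|w-v_t\|_{L^2_{\mu_t}}\bigl(\int|y-w(x)|^2\mathrm d\tilde\gamma_h\bigr)^{1/2}+\|w-v_t\|^2_{L^2_{\mu_t}}$; alternatively, apply Minkowski directly. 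With $\bigl(\int|y|^2\mathrm d\tilde\gamma_h\bigr)^{1/2}$ in place of that factor, your stated bound is false in general. Also, the Lebesgue-point condition on $s\mapsto\|v_s\|^2_{L^2_{\mu_s}}$ is never used and should be dropped, since for a merely absolutely continuous curve this function need not be locally integrable.
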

\section{Differential calculus on Wasserstein space}
\label{sec:wass-geom}
In this appendix, we collect the differential notions on Wasserstein space used throughout the paper. We begin with Wasserstein subdifferentials, Wasserstein differentiability, and the tangent space of $\mathcal{P}_2(\mathbb R^d)$. We then recall how these first-order objects can be expressed in terms of first variations, and how the corresponding second-order structure is described through the Wasserstein Hessian and the second variation of the functional.

\subsection{Wasserstein subdifferentials and differentiability}
\label{subsec:wass-diff}
We recall the notions of Wasserstein sub-differentiability and differentiability; see \cite[Chapter 10]{ambrosio2008gradient}, \cite[Chapter 5]{Carmona2018ProbabilisticTO}, and \cite{benoit}.

\begin{definition}[Wasserstein sub- and super-differential]
    Let $F:\mathcal{P}_2(\mathbb R^d)\to \mathbb R$ and let $\mu \in \mathcal{P}_2(\mathbb R^d)$. Then
    \begin{itemize}
        \item a map $\xi\in L^2_\mu$ belongs to the sub-differential $\partial^-F(\mu)$ of $F$ at $\mu$ if for all $\nu\in\mathcal{P}_2(\mathbb R^d)$, 
    \begin{equation*}
        F(\nu) \ge F(\mu) + \sup_{\gamma \in \Pi_o(\mu,\nu)} \int_{\mathbb R^d \times \mathbb R^d} \langle \xi(x), y-x\rangle\ \mathrm{d}\gamma(x,y) + o(W_2(\mu,\nu)).
    \end{equation*}
    If $\partial^-F(\mu) \neq \emptyset,$ we say the function $F$ is Wasserstein sub-differentiable at $\mu.$
    \item A map $\xi \in L_\mu^2$ belongs to the super-differential $\partial^+F(\mu)$ of $F$ at $\mu$ if $-\xi\in\partial^-(-F)(\mu).$ If $\partial^+F(\mu) \neq \emptyset,$ we say the function $F$ is Wasserstein super-differentiable at $\mu.$
    \end{itemize}
\end{definition}
A functional is said to be Wasserstein differentiable when its sub- and super-differentials intersect.
\begin{definition}[Wasserstein differentiability]
    Let $F:\mathcal{P}_2(\mathbb R^d)\to \mathbb R $ be a functional and let $\mu\in \mathcal{P}_2(\mathbb R^d)$. We say $F$ is Wasserstein differentiable at $\mu$ if $\partial^-F(\mu)\cap \partial^+F(\mu) \neq \emptyset$. In this case, we say $\nabla_\mu F(\mu) \in \partial^-F(\mu)\cap \partial^+F(\mu)$ is a Wasserstein gradient of $F$ at $\mu \in \mathcal{P}_2(\mathbb R^d),$ satisfying for any $\nu \in \mathcal{P}_2(\mathbb R^d),$ and $\gamma \in \Pi_o(\mu,\nu),$
    \begin{equation*}
        F(\nu) = F(\mu) + \int_{\mathbb R^d \times \mathbb R^d} \langle \nabla_\mu F(\mu, x), y-x\rangle\ \mathrm{d}\gamma(x,y) + o(W_2(\mu,\nu)).
    \end{equation*}
\end{definition}
Let $C_c^{\infty}(\mathbb R^d)$ denote the space of smooth functions with compact support in $\mathbb R^d.$ The tangent space of $\mathcal{P}_2(\mathbb R^d)$ at $\mu \in \mathcal{P}_2(\mathbb R^d)$ is defined as 
\begin{equation*}
\mathcal{T}_{\mu}\mathcal{P}_2(\mathbb R^d) = \overline{\left\{\nabla \psi: \psi \in C_c^{\infty}(\mathbb R^d) \right\} } \subset L_\mu^2,
\end{equation*}
where the closure is taken in $L_\mu^2$ \cite[Definition 8.4.1]{ambrosio2008gradient}, \cite[Section 5.4.3]{Carmona2018ProbabilisticTO}. Moreover, \cite[Proposition 2.11]{lanzetti} shows that whenever $F$ is Wasserstein differentiable,
there exists a unique Wasserstein gradient in $\mathcal T_\mu\mathcal P_2(\mathbb R^d)$, and we may therefore restrict attention to this canonical representative.

The next result, due to \cite[Proposition 2.12]{lanzetti}, shows that Wasserstein gradients provide a first-order expansion even along perturbations that are not induced by optimal transport plans.
\begin{proposition}[\protect{\cite[Proposition 2.12]{lanzetti}}] \label{prop:strong_diff_w}
    Let $\mu,\nu\in\mathcal{P}_2(\mathbb R^d)$, $\gamma\in\Pi(\mu,\nu)$ and let $F:\mathcal{P}_2(\mathbb{R}^d)\to\mathbb R$ be Wasserstein differentiable at $\mu$ with Wasserstein gradient $\nabla_\mu F(\mu)\in \mathcal{T}_\mu\mathcal{P}_2(\mathbb R^d)$. Then,
    \begin{equation*}
        F(\nu) = F(\mu) + \int_{\mathbb R^d \times \mathbb R^d} \langle \nabla_\mu F(\mu, x), y-x \rangle\mathrm{d}\gamma(x,y) + o\left(\sqrt{\int_{\mathbb R^d\times\mathbb R^d} \|x-y\|_2^2\ \mathrm{d}\gamma(x,y)}\right).
    \end{equation*}
\end{proposition}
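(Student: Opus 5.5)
The plan is to reduce the arbitrary coupling $\gamma\in\Pi(\mu,\nu)$ to an optimal one, for which the expansion is exactly Definition \ref{def:wass_diff}. The reduction uses the fact that the canonical gradient $\xi:=\nabla_\mu F(\mu)$ lies in $\mathcal T_\mu\mathcal P_2(\mathbb R^d)$. That fact lets us replace $\xi$ by smooth gradients $\nabla\psi$, for which the linear term depends on $\gamma$ only up to second order.

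Write $c(\gamma):=\big(\int\|y-x\|^2\mathrm d\gamma\big)^{1/2}$. Note that $W_2(\mu,\nu)\le c(\gamma)$. If $c(\gamma)=0$, then $\nu=\mu$ and there is nothing to prove, so assume $c(\gamma)>0$. Fix some $\gamma_o\in\Pi_o(\mu,\nu)$. By Definition \ref{def:wass_diff},
\[
F(\nu)-F(\mu)-\int\langle\xi(x),y-x\rangle\mathrm d\gamma=o(W_2(\mu,\nu))+D(\gamma,\gamma_o),
\]
where
\[
D(\gamma,\gamma_o):=\int\langle\xi(x),y-x\rangle\mathrm d\gamma_o-\int\langle\xi(x),y-x\rangle\mathrm d\gamma .
\]
The first term is also $o(c(\gamma))$, since $o(W_2)/c(\gamma)\le o(W_2)/W_2$ when $W_2>0$, and it vanishes when $W_2=0$. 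So everything reduces to showing $D(\gamma,\gamma_o)=o(c(\gamma))$.

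Fix $\epsilon>0$. By the definition of the tangent space, there is $\psi\in C_c^\infty(\mathbb R^d)$ with $\|\xi-\nabla\psi\|_{L^2_\mu}\le\epsilon$. For any coupling $\pi\in\Pi(\mu,\nu)$, a second-order Taylor expansion of $\psi$ gives
\[
\Big|\int\langle\nabla\psi(x),y-x\rangle\mathrm d\pi-\Big(\int\psi\,\mathrm d\nu-\int\psi\,\mathrm d\mu\Big)\Big|\le\tfrac12\|\nabla^2\psi\|_\infty\,c(\pi)^2 .
\]
The bracket $\int\psi\,\mathrm d\nu-\int\psi\,\mathrm d\mu$ is independent of $\pi$. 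Apply this bound with $\pi=\gamma$ and with $\pi=\gamma_o$, and use $c(\gamma_o)=W_2\le c(\gamma)$. This shows that the $\nabla\psi$-version of $D$ is bounded by $\|\nabla^2\psi\|_\infty c(\gamma)^2$. The error from replacing $\xi$ by $\nabla\psi$ is controlled by Cauchy--Schwarz in each coupling, since both have first marginal $\mu$:
\[
\Big|\int\langle\xi-\nabla\psi,y-x\rangle\mathrm d\pi\Big|\le\epsilon\, c(\pi)\le\epsilon\, c(\gamma).
\]
Altogether,
\[
|D(\gamma,\gamma_o)|\le 2\epsilon\, c(\gamma)+\|\nabla^2\psi\|_\infty\, c(\gamma)^2 .
\]

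Dividing by $c(\gamma)$ and letting $c(\gamma)\to0$ gives $\limsup |D|/c(\gamma)\le2\epsilon$. Since $\epsilon$ is arbitrary, $D=o(c(\gamma))$, which is the claim.

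The main obstacle is the order of quantifiers. The constant $\|\nabla^2\psi\|_\infty$ blows up as $\epsilon\to0$. So $\psi$ must be chosen first, depending only on $\epsilon$ and $\mu$ and not on $\nu$ or $\gamma$, and only then does one send $c(\gamma)\to0$. The remaining care is interpreting the $o(\cdot)$ uniformly over couplings with small cost, which the bound above does, since it depends on $\gamma$ only through $c(\gamma)$.
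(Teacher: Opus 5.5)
Your argument is correct and complete. The paper gives no proof of this statement; it imports it directly from \cite[Proposition 2.12]{lanzetti}, so there is no in-paper route to compare against. What you have written is a valid self-contained proof of the cited fact, by a natural density argument:
\begin{itemize}
\item the definition of Wasserstein differentiability handles one optimal plan $\gamma_o$;
\item the gap $D(\gamma,\gamma_o)$ is controlled by approximating $\nabla_\mu F(\mu)$ in $L^2_\mu$ by $\nabla\psi$ with $\psi\in C_c^\infty(\mathbb R^d)$;
\item for such $\psi$, $\int\langle\nabla\psi(x),y-x\rangle\,\mathrm d\pi$ equals $\int\psi\,\mathrm d\nu-\int\psi\,\mathrm d\mu$ up to $O(c(\pi)^2)$, uniformly over every $\pi\in\Pi(\mu,\nu)$.
\end{itemize}
You also handle the order of quantifiers correctly: $\psi$ is fixed from $\epsilon$ and $\mu$ alone, and only then does $c(\gamma)\to 0$.

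Your proof makes explicit exactly where tangency of the gradient is used. It also shows that the remainder depends on $(\nu,\gamma)$ only through $c(\gamma)$, so it is uniform over all couplings with small transport cost. That uniformity is what the paper silently relies on when it applies the result to the non-optimal couplings $(\pi_t,\pi_{t+h})_\#\gamma$, $(\pi_t,\pi_{t+h})_\#\mu$ and $(\operatorname{Id},\pi_t)_\#\mu$ in Lemmas \ref{lemma_for_second_order}, \ref{lemma_for_second_order_mu} and \ref{lem:1storder-expansion}.

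One point deserves an explicit line. Fixing an arbitrary $\gamma_o\in\Pi_o(\mu,\nu)$ requires the $o(W_2(\mu,\nu))$ term in the definition to be uniform over the choice of optimal plan. This holds under the formulation in Appendix \ref{subsec:wass-diff}. There the subdifferential inequality involves the supremum over $\Pi_o(\mu,\nu)$, and the superdifferential inequality, via $-\xi\in\partial^-(-F)(\mu)$, involves the infimum. Together they place the linear term of every optimal plan within $o(W_2(\mu,\nu))$ of $F(\nu)-F(\mu)$.
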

Together, these notions provide the first-order differential structure used throughout the paper. 

\subsection{Examples of Wasserstein differentiable functionals}
\label{Examples of Wasserstein differentiable functionals}
We now give examples of Wasserstein (sub)-differentiable functionals.
\begin{example}[The $L^2$-Wasserstein distance; \protect{\cite[Proposition 2.17]{lanzetti}, \cite[Section 9.3, 10.4]{ambrosio2008gradient}}]
    Fix $\nu \in \mathcal{P}_2(\mathbb R^d)$, and for any $\mu \in \mathcal{P}_2(\mathbb R^d)$ consider the functional $\mathcal{W}(\mu) = \frac{1}{2}W_2^2(\mu,\nu)$. If $\mu \in \mathcal{P}_2^{\mathrm{ac}}(\mathbb R^d)$, the functional $\mathcal{W}$ is Wasserstein differentiable at $\mu$ with Wasserstein gradient given by $\nabla_\mu \mathcal{W}(\mu) = \operatorname{Id}-\operatorname{T}_{\mu}^\nu$, where $\operatorname{T}_{\mu}^\nu$ is the (unique) optimal transport map from $\mu$ to $\nu$.
\end{example}
\begin{example}[Potential and interaction energies; \protect{\cite[Proposition 2.20 and 2.21]{lanzetti}}]
    The potential energy $\mathcal{V}(\mu) = \int_{\mathbb R^d} V(x)\mathrm{d}\mu(x)$ and interaction energy $\mathcal{U}(\mu) = \frac12\int_{\mathbb R^d}\int_{\mathbb R^d} U(x-y) \mathrm{d}\mu(x)\mathrm{d}\mu(y)$ with $V:\mathbb R^d\to\mathbb R$ and $U:\mathbb R^d\to\mathbb R$ in $C^2(\mathbb R^d)$ and with uniformly bounded Hessian are Wasserstein differentiable. In particular, their Wasserstein gradients are $\nabla_\mu \mathcal{V}(\mu)=\nabla V$ and $\nabla_\mu \mathcal{U}(\mu) = \nabla U\star \mu:=\int_{\mathbb R^d} U(\cdot-y)\mathrm{d}\mu(y)$, provided that $\nabla U\star \mu \in L^2_\mu$. The Maximum mean Discrepancy (MMD) functional, which may expressed as a sum of the potential energy $\mathcal{V}$  and interaction energy $\mathcal{U}$ is also Wasserstein differentiable \cite{arbel}.
\end{example}
\begin{example}[The entropy functional; \protect{\cite[Proposition 2.25]{lanzetti}}]
    The entropy functional defined as $\mathcal{H}(\mu)=\int_{\mathbb R^d} \frac{\mathrm{d}\mu}{\mathrm{d}x}(x)\log \frac{\mathrm{d}\mu}{\mathrm{d}x}(x)\mathrm{d}x$ for $\mu \in \mathcal{P}_2^{\mathrm{ac}}(\mathbb R^d)$ is not Wasserstein differentiable but sub-differentiable \cite[Proposition 2.25]{lanzetti}. To guarantee that the Wasserstein sub-gradient of $\mathcal{H}$ is non-empty, $\frac{\mathrm{d}\mu}{\mathrm{d}x}$ is required to satisfy Sobolev regularity cf. \cite[Theorem 10.4.13]{ambrosio2008gradient}. Then, if $\nabla\log \frac{\mathrm{d}\mu}{\mathrm{d}x} \in L^2_\mu$, the only sub-gradient of $\mathcal{H}$ in the tangent space is $\nabla_\mu \mathcal{H}(\mu)=\nabla \log \frac{\mathrm{d}\mu}{\mathrm{d}x}$; see \cite[Theorem 10.4.17]{ambrosio2008gradient}. Thus, the relative entropy $\operatorname{KL}(\cdot|\rho)$ with respect to a fixed target measure $\rho \propto e^{-V}$, which is the sum of the potential energy $\mathcal{V}$ and entropy term $\mathcal{H}$, is Wasserstein sub-differentiable \cite[Example 2.29]{lanzetti}.
\end{example}
\subsection{Wasserstein Hessian along geodesics}
We next introduce the Wasserstein Hessian along geodesics, which plays the role of the second-order differential of $F$ in Wasserstein space and characterizes the curvature of $F$ along geodesic curves.
\begin{definition}[Wasserstein Hessian along geodesics]
\label{def:wass_hessian}
    The Wasserstein Hessian of $F$ is an operator over $\mathcal{T}_{\mu}\mathcal{P}_2(\mathbb R^d) \subset L_\mu^2$ defined by
    \begin{equation*}
    \operatorname{H}_\mu[v_0](x) := \nabla \nabla_\mu F(\mu,x)v_0(x) + \int_{\mathbb R^d} \nabla_\mu^2 F(\mu, x, \bar{x})v_0(\bar{x})\mathrm d\mu(\bar{x}),
    \end{equation*}
    verifying $\frac{\mathrm{d}^2}{\mathrm{d}t^2}\big|_{t=0}F(\mu_t) = \langle\operatorname{H}_\mu v_0, v_0\rangle_{L^2_\mu}$ if $(\mu_t,v_t)_{t\in [0,1]}$ is a Wasserstein geodesic starting at $\mu \in \mathcal{P}_2(\mathbb R^d)$, with $v_t \in \mathcal{T}_{\mu_t}\mathcal{P}_2(\mathbb R^d)$ satisfying the flow equation $\partial_t \mu_t + \nabla \cdot (v_t \mu_t) = 0$.
\end{definition} 

\subsection{First and second variation}
\label{subsection:First and second variation}
Under suitable regularity assumptions, the Wasserstein gradient and Hessian of a functional $F:\mathcal P_2(\mathbb R^d)\to\mathbb R$ can be expressed in terms of its first and second variations. In this subsection, we recall these notions and the assumptions under which the
Wasserstein differential structure of $F$ can be identified with the Euclidean derivatives of $\frac{\delta F}{\delta\mu}$ and $\frac{\delta^2 F}{\delta\mu^2}$. We begin with the first variation \cite[Definition 7.12]{santambrogio2015optimal}.
\begin{definition}[First variation]
\label{def:flat-derivative}
We say $F \in \mathcal{C}^1$, i.e., it admits a first variation at $\mu \in \mathcal{P}_2(\mathbb R^d)$, if there exists a jointly continuous function $\frac{\delta F}{\delta \mu}:\mathcal{P}_2(\mathbb R^d)\times \mathbb R^d \to \mathbb{R},$ unique up to an additive constant, called the first variation of $F,$ for which there exists $\kappa_1 > 0$ such that, for all $(\mu,x) \in \mathcal{P}_2(\mathbb R^d) \times \mathbb R^d,$ $\|\frac{\delta F}{\delta \mu}(\mu,x)\| \leq \kappa_1\left(1+\|x\|^2\right),$ and $\nu \in \mathcal{P}_2(\mathbb R^d),$
\begin{equation*}
\label{eq:flat-der}
\lim_{t \searrow 0 }\frac{F(\mu + t (\nu - \mu))- F(\mu)}{t} =
\int_{\mathbb R^d} \frac{\delta F}{\delta \mu} (\mu, x) \mathrm{d}(\nu-\mu)(x),
\end{equation*}
and $\int_{\mathbb R^d} \frac{\delta F}{\delta \mu} (\mu, x) \mathrm{d}\mu(x)=0.$
\end{definition}
The next assumption ensures that the Wasserstein gradient of $F$ can be identified with the Euclidean gradient of its first variation.
\begin{assumption}
\label{assumption:L-derivative}
Assume that $F \in \mathcal{C}^1$, the map $\mathbb R^d \ni x \mapsto \frac{\delta F}{\delta \mu}(\mu, x) \in \mathbb R$ is differentiable, there exists $\kappa_3 > 0$ such that $\|\nabla_x\frac{\delta F}{\delta \mu}(\mu, x)\| \leq \kappa_3(1+\|x\|)$, for all $(\mu,x)$, and the map $\mathcal{P}_2(\mathbb R^d) \times \mathbb R^d \ni (\mu,x) \mapsto \nabla \frac{\delta F}{\delta \mu}(\mu, x) \in \mathbb R^d$ is jointly continuous.
\end{assumption}
Under Assumption \ref{assumption:L-derivative}, it follows from
\cite[Proposition 5.48; Theorem 5.64]{Carmona2018ProbabilisticTO}
that $F$ is Wasserstein differentiable, with $\nabla_\mu F(\mu,\cdot)=\nabla \frac{\delta F}{\delta\mu}(\mu,\cdot)$.
See also \cite[Proposition 2.2.3]{cardaliaguet2019master}
and \cite[Proposition 5.10]{CheNilRig25OT}.

The same idea extends to second order. If, for each fixed $x\in\mathbb R^d$, the map $\mu \mapsto \frac{\delta F}{\delta\mu}(\mu,x)$
admits a first variation, then one may define the second variation of $F$ \cite[Section 2.2.2]{cardaliaguet2019master}.
\begin{definition}[Second variation]
\label{def:second-variation}
    We say $F \in \mathcal{C}^2,$ i.e., it admits a second variation $\mu \in \mathcal{P}_2(\mathbb R^d)$, if there exists a jointly continuous function $\frac{\delta^2 F}{\delta \mu^2}:\mathcal{P}_2(\mathbb R^d)\times \mathbb R^d \times \mathbb R^d \to \mathbb R$, unique up to an additive term $\phi(x) \in \mathbb R$, called the second variation $F$, for which there exists $\kappa_2(x) > 0$ such that, for all $(\mu,x') \in \mathcal{P}_2(\mathbb R^d) \times \mathbb R^d,$ $\left|\frac{\delta F}{\delta \mu}(\mu,x,x')\right| \leq \kappa_2(x)\left(1+\|x'\|^2\right),$ and $\nu \in \mathcal{P}_2(\mathbb R^d),$
\begin{equation*}
\label{def:2FlatDerivative}
\lim_{t \searrow 0}\frac{1}{t}\left(\frac{\delta F}{\delta \mu}(\mu + t (\nu - \mu),x)-\frac{\delta F}{\delta \mu}(\mu,x)\right) = \int_{\mathbb R^d}\frac{\delta^2 F}{\delta \mu^2}(\mu,x, x')\mathrm{d}(\nu- \mu)(x'),
\end{equation*}
\end{definition}
The next assumption guarantees that the Wasserstein Hessian of $F$ can be computed from the Euclidean derivatives of its first and second variations. Let $C^2$ denote the space of twice continuously differentiable functions.
\begin{assumption}
\label{assumption:2nd-Wasserstein-F}
   Assume that $F \in \mathcal{C}^2$, the maps $\mathbb R^d \ni x\mapsto\frac{\delta F}{\delta\mu}(\mu,x)$, $\mathbb R^d \times \mathbb R^d \ni (x,x')\mapsto \frac{\delta^2 F}{\delta\mu^2}(\mu,x,x')$ are $C^2$, $\|\nabla^2\frac{\delta F}{\delta\mu}(\mu,\cdot)\|_{\mu, \infty} < \infty$, $\nabla_{x'}\nabla_x\frac{\delta^2 F}{\delta\mu^2}(\mu,\cdot,\cdot) \in L^2_{\mu \otimes \mu}$, and, for every $\mu\in\mathcal{P}_2(\mathbb R^d)$, $\frac{\delta}{\delta\mu}\left(\nabla \frac{\delta F}{\delta\mu}(\mu,\cdot)\right)(\cdot) = \nabla \frac{\delta^2 F}{\delta\mu^2}(\mu,\cdot,\cdot)$.
\end{assumption}
The identity $\frac{\delta}{\delta\mu}\!\left(\nabla \frac{\delta F}{\delta\mu}(\mu,\cdot)\right)(\cdot) = \nabla \frac{\delta^2 F}{\delta\mu^2}(\mu,\cdot,\cdot)$ is used when differentiating the Wasserstein gradient $\nabla_\mu F(\mu_t,\cdot)=\nabla_x\frac{\delta F}{\delta\mu}(\mu_t,\cdot)$ along a transport curve $\mu_t=(\operatorname{Id}+tv)_\#\mu$. Under the $C^2$ regularity assumptions above, Schwarz's theorem also implies that the resulting Hessian operator is symmetric on $L_\mu^2$.
\begin{proposition}[Wasserstein Hessian via first and second variations of $F$]
\label{prop:wass-hessian-variations}
Let Assumption \ref{assumption:L-derivative}, \ref{assumption:2nd-Wasserstein-F} hold. Let $\mu \in\mathcal{P}_2(\mathbb R^d)$, $v \in L_\mu^2$ and for $t \in [0,1]$ define the curve $\mu_t = (\pi_t)_{\#}\mu$, with $\pi_t := \operatorname{Id}+tv$. Then, for any $t\in[0,1]$, 
\begin{align*}
\frac{\mathrm d}{\mathrm d t}\nabla_z\frac{\delta F}{\delta\mu}(\mu_t,\pi_t(x))
&= \underbrace{\nabla_z^2\frac{\delta F}{\delta\mu}(\mu_t,\pi_t(x))v(x)}_{=\widetilde{\operatorname{M}}_{\mu, t} [v](x)}\\
&+ \underbrace{\int_{\mathbb R^d} \nabla_{\bar{z}} \nabla_{z} \frac{\delta^2 F}{\delta\mu^2}(\mu_t, \pi_t(x), \pi_t(\bar{x}))v(\bar{x})\mathrm d\mu(\bar{x})}_{=\widetilde{\operatorname{K}}_{\mu, t}[v](x)}.
\end{align*}
Hence,
\begin{equation*}
    \frac{\mathrm d^2}{\mathrm d t^2} F(\mu_t) = \left\langle (\widetilde{\operatorname{M}}_{\mu, t} + \widetilde{\operatorname{K}}_{\mu, t})v,v\right\rangle_{L_\mu^2} = \left\langle \widetilde{\operatorname{H}}_{\mu, t}v,v\right\rangle_{L_\mu^2}.
\end{equation*}
\end{proposition}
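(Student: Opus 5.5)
The plan is a chain-rule computation along the pushforward curve $\mu_t=(\pi_t)_\#\mu$. I first establish the derivative of the Wasserstein gradient along particle trajectories, and then differentiate $F(\mu_t)$ twice, using the first identity for the second derivative.

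\emph{Step 1: first derivative of $F(\mu_t)$.} Under Assumption \ref{assumption:L-derivative}, $\nabla_\mu F=\nabla\frac{\delta F}{\delta\mu}$. I compare $\mu_{t+h}$ and $\mu_t$ through the (not necessarily optimal) coupling $(\pi_t,\pi_{t+h})_\#\mu$. Its cost is $h^2\|v\|_{L^2_\mu}^2$. Proposition \ref{prop:strong_diff_w} then gives
\[
F(\mu_{t+h})-F(\mu_t)=h\int\langle\nabla_z\tfrac{\delta F}{\delta\mu}(\mu_t,\pi_t(x)),v(x)\rangle\,\mathrm d\mu(x)+o(h).
\]
Hence $\frac{\mathrm d}{\mathrm dt}F(\mu_t)=\langle \nabla_z\frac{\delta F}{\delta\mu}(\mu_t,\pi_t),v\rangle_{L^2_\mu}$.

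\emph{Step 2: derivative of $g_t(x):=\nabla_z\frac{\delta F}{\delta\mu}(\mu_t,\pi_t(x))$.} For fixed $x$, I split the increment as
\[
\Big[\nabla_z\tfrac{\delta F}{\delta\mu}(\mu_{t+h},\pi_{t+h}(x))-\nabla_z\tfrac{\delta F}{\delta\mu}(\mu_{t+h},\pi_t(x))\Big]+\Big[\nabla_z\tfrac{\delta F}{\delta\mu}(\mu_{t+h},\pi_t(x))-\nabla_z\tfrac{\delta F}{\delta\mu}(\mu_t,\pi_t(x))\Big].
\]
For the first bracket, the mean value theorem in $z$ together with joint continuity of $\nabla_z^2\frac{\delta F}{\delta\mu}$ (from the $C^2$ hypothesis) gives $h\,\nabla_z^2\frac{\delta F}{\delta\mu}(\mu_t,\pi_t(x))v(x)+o(h)$.

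For the second bracket, I fix the point $y=\pi_t(x)$. I regard $\nu\mapsto\nabla_z\frac{\delta F}{\delta\mu}(\nu,y)$ as a functional whose first variation is $\nabla_z\frac{\delta^2F}{\delta\mu^2}(\nu,y,\cdot)$; this is the commutation identity in Assumption \ref{assumption:2nd-Wasserstein-F}. Its Wasserstein gradient is therefore $\nabla_{\bar z}\nabla_z\frac{\delta^2F}{\delta\mu^2}(\nu,y,\cdot)$. Applying the Step 1 argument to this vector-valued functional, componentwise and along the same coupling, gives
\[
h\int\nabla_{\bar z}\nabla_z\tfrac{\delta^2F}{\delta\mu^2}(\mu_t,\pi_t(x),\pi_t(\bar x))v(\bar x)\,\mathrm d\mu(\bar x)+o(h).
\]
Summing the two brackets yields $\partial_t g_t=\widetilde{\operatorname M}_{\mu,t}[v]+\widetilde{\operatorname K}_{\mu,t}[v]$ pointwise.

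\emph{Step 3: second derivative.} I differentiate $\langle g_t,v\rangle_{L^2_\mu}$ under the integral sign. To justify this, I use dominated convergence in $L^2_\mu$ on the difference quotients $(g_{t+h}-g_t)/h$. These are controlled by $\|\nabla^2\frac{\delta F}{\delta\mu}\|_{\mu,\infty}\|v\|$ plus the $L^2_{\mu\otimes\mu}$ bound on the mixed second variation, via Cauchy--Schwarz. This gives $\frac{\mathrm d^2}{\mathrm dt^2}F(\mu_t)=\langle\widetilde{\operatorname H}_{\mu,t}v,v\rangle_{L^2_\mu}$.

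\emph{Main obstacle.} The hard step is the uniformity needed in Steps 2--3. First, the $o(h)$ remainders must be uniform enough in $x$ to pass to $L^2_\mu$. Second, the bounds stated at $\mu$ must be transferred to $\mu_t$. I would handle this by working with integral-form remainders, $\int_0^1$ of derivatives along the segment. Joint continuity then makes the remainders vanish pointwise, and the assumed integrability bounds supply the domination.
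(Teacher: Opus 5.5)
Your proposal follows essentially the same route as the paper's proof. The paper also splits $\frac{\mathrm d}{\mathrm dt}\partial_{z_i}\frac{\delta F}{\delta\mu}(\mu_t,\pi_t(x))$ into a spatial part, which yields $\widetilde{\operatorname{M}}_{\mu,t}$, and a measure part, obtained by applying Lemma~\ref{lemma_for_second_order_mu} to $\nu\mapsto\partial_{z_i}\frac{\delta F}{\delta\mu}(\nu,\pi_t(x))$ together with the commutation identity in Assumption~\ref{assumption:2nd-Wasserstein-F}, which yields $\widetilde{\operatorname{K}}_{\mu,t}$; it then differentiates $\frac{\mathrm d}{\mathrm dt}F(\mu_t)=\langle\nabla_z\frac{\delta F}{\delta\mu}(\mu_t,\pi_t),v\rangle_{L^2_\mu}$ under the integral sign. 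Your justifications are more explicit than the paper's, but two things you invoke are not provided by the stated assumptions: joint continuity of $\nabla_z^2\frac{\delta F}{\delta\mu}$ in $(\nu,z)$, and domination bounds uniform along $(\mu_s)_{s}$ (the assumptions give $C^2$ regularity in $x$ and $\mu$-a.e.\ bounds only at each fixed measure). The paper's chain-rule and interchange steps rely on the same extra regularity implicitly.
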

\begin{proof}
Applying Lemma \ref{lemma_for_second_order_mu} to the map $\mu \mapsto \partial_{z_i}\frac{\delta F}{\delta\mu}(\mu_t,\pi_t(x))$ for $1 \leq i \leq d$ with fixed $\pi_t(x)$ gives
\begin{align*}
    \frac{\mathrm d}{\mathrm dt} \left(\partial_{z_i}\frac{\delta F}{\delta\mu}(\mu,\pi_t(x))\right)\Bigg|_{\mu=\mu_t} &= \int_{\mathbb R^d} \left\langle\nabla_\mu \left(\partial_{z_i}\frac{\delta F}{\delta\mu}(\mu_t,\pi_t(x))\right)(\pi_t(\bar{x})),v(\bar{x})\right\rangle\mathrm d\mu(\bar{x})\\
    &= \int_{\mathbb R^d} \left\langle\nabla_{\bar{z}}\frac{\delta}{\delta \mu} \left(\partial_{z_i}\frac{\delta F}{\delta\mu}(\mu_t,\pi_t(x))\right)(\pi_t(\bar{x})),v(\bar{x})\right\rangle\mathrm d\mu(\bar{x})\\
    &=\int_{\mathbb R^d} \left\langle\nabla_{\bar{z}}\, \partial_{z_i} \frac{\delta^2 F}{\delta\mu^2}(\mu_t, \pi_t(x),\pi_t(\bar{x})), v(\bar{x})\right\rangle\mathrm d\mu(\bar{x}).
\end{align*}
Then, noting that $\partial_t \pi_t(x) = v(x)$ and using the chain rule, 
\begin{align*}
    \frac{\mathrm d}{\mathrm d t} \partial_{z_i}\frac{\delta F}{\delta\mu}(\mu_t,\pi_t(x))&= \frac{\mathrm d}{\mathrm dt} \left(\partial_{z_i}\frac{\delta F}{\delta\mu}(\mu_t,u(x))\right)\Bigg|_{u=\pi_t} + \frac{\mathrm d}{\mathrm dt} \left(\partial_{z_i}\frac{\delta F}{\delta\mu}(\mu,\pi_t(x))\right)\Bigg|_{\mu=\mu_t}\\
    &=\left\langle \nabla_z \partial_{z_i}\frac{\delta F}{\delta\mu}(\mu_t,\pi_t(x)), v(x)\right\rangle\\ 
    &+ \int_{\mathbb R^d} \left\langle\nabla_{\bar{z}}\, \partial_{z_i} \frac{\delta^2 F}{\delta\mu^2}(\mu_t,\pi_t(x),\pi_t(\bar{x}), v(\bar{x})\right\rangle\mathrm d\mu(\bar{x}).
\end{align*}
Hence, 
\begin{align*}
\frac{\mathrm d}{\mathrm d t}\nabla_z\frac{\delta F}{\delta\mu}(\mu_t,\pi_t(x))
&= \nabla_z^2\frac{\delta F}{\delta\mu}(\mu_t,\pi_t(x))v(x) + \int_{\mathbb R^d} \nabla_{\bar{z}} \nabla_{z} \frac{\delta^2 F}{\delta\mu^2}(\mu_t, \pi_t(x),\pi_t(\bar{x}))v(\bar{x})\mathrm d\mu(\bar{x}).
\end{align*}
Therefore, by Lemma \ref{lemma_for_second_order_mu},
\begin{align*}
    \frac{\mathrm d^2}{\mathrm d t^2} F(\mu_t) &= \frac{\mathrm d}{\mathrm d t}\int_{\mathbb R^d}\left\langle \nabla_z\frac{\delta F}{\delta\mu}(\mu_t,\pi_t(x)) , v(x)\right\rangle \mathrm{d}\mu(x)\\
    &= \int_{\mathbb R^d}\left\langle \frac{\mathrm d}{\mathrm d t}\nabla_z\frac{\delta F}{\delta\mu}(\mu_t,\pi_t(x)) , v(x)\right\rangle \mathrm{d}\mu(x)\\
    &= \left\langle ((\widetilde{\operatorname{M}}_{\mu, t} + \widetilde{\operatorname{K}}_{\mu, t})v,v\right\rangle_{L_\mu^2}=\left\langle \widetilde{\operatorname{H}}_{\mu, t}v,v\right\rangle_{L_\mu^2}.
\end{align*}
\end{proof}
We conclude with two examples from \cite{bonet2024mirror} for which the assumptions above hold and Proposition \ref{prop:wass-hessian-variations} yields an explicit expression for the Wasserstein Hessian.
\begin{example}[Potential energy; \protect{\cite{bonet2024mirror}}] \label{example:hessian_potential}
    Let $\mathcal{V}(\mu) = \int_{\mathbb R^d} V(x)\mathrm{d}\mu(x)$ with $V \in C^2$ with uniformly bounded Hessian. Then, we have $\frac{\delta\mathcal{V}}{\delta\mu}(\mu,x) = V(x)$ and $\frac{\delta^2\mathcal{V}}{\delta\mu^2}(\mu,x,x') = 0$ (using Definition \ref{def:flat-derivative}, \ref{def:second-variation}). Thus, applying Proposition \ref{prop:wass-hessian-variations}, we recover for $\mu_t = (\pi_t)_\#\mu$,
    \begin{equation*}
        \frac{\mathrm{d}^2}{\mathrm{d} t^2}\mathcal{V}(\mu_t) = \int_{\mathbb R^d} \left\langle \nabla^2 V(\pi_t(x))v(x),v(x)\right\rangle \mathrm{d}\mu(x).
    \end{equation*}
\end{example}
\begin{example}[Interaction energy; \protect{\cite{bonet2024mirror}}] \label{example:hessian_interaction}
    Let $\mathcal{U}(\mu) = \frac12\int_{\mathbb R^d}\int_{\mathbb R^d} U(x-y)\ \mathrm{d}\mu(x)\mathrm{d}\mu(y)$ with $U \in C^2$ symmetric and with uniformly bounded Hessian. Then, we have for all $x,y\in\mathbb R^d$, $\frac{\delta\mathcal{U}}{\delta\mu}(\mu, x) = (U \star \mu)(x)$ and $\frac{\delta^2\mathcal {U}}{\delta\mu^2}(\mu, x, y)= U(x-y)$. Therefore,
    \begin{equation*}
        \nabla_x^2 \frac{\delta\mathcal{U}}{\delta\mu}(\mu, x) = \nabla_x^2 (U \star \mu)(x) = \int_{\mathbb R^d} \nabla_x^2 U(x-y)\mathrm{d}\mu(y), 
    \end{equation*}
    \begin{equation*}
       \nabla_y\nabla_x \frac{\delta^2\mathcal {U}}{\delta\mu^2}(\mu, x, y) = -\nabla_x^2 U(x-y).
    \end{equation*}
    Thus, applying Proposition \ref{prop:wass-hessian-variations}, for $\mu_t=(\pi_t)_\#\mu$, the Hessian operator is
    \begin{equation*}
        \widetilde{\operatorname{H}}_{\mu, t}[v](x) = \int_{\mathbb R^d} \nabla^2 U(\pi_t(x)-\pi_t(\bar{x}))(v(x)-v(\bar{x})) \mathrm{d}\mu(\bar{x}),
    \end{equation*}
    and
    \begin{equation*}
        \frac{\mathrm{d}^2}{\mathrm{d} t^2}\mathcal{U}(\mu_t) = \int_{\mathbb R^d} \int_{\mathbb R^d} \left\langle \nabla^2 U(\pi_t(x)-\pi_t(\bar{x}))(v(x)-v(\bar{x})),v(x) \right\rangle \mathrm{d}\mu(\bar{x})\mathrm{d}\mu(x).
    \end{equation*}
\end{example}

\section{Convexity in Wasserstein space}\label{section:app_conv_in_W_space}
In this appendix, we recall the two notions of convexity on Wasserstein space that are most relevant to the paper: (i) convexity along Wasserstein geodesics and (ii) convexity along linear interpolations of measures. Although both notions play an important role in optimization over probability measures, they are generally distinct and lead to different classes of objective functionals.

\subsection{Geodesic convexity}
We begin with the notion of convexity along Wasserstein geodesics, which is the natural analogue of convexity induced by the geometry of the metric space $(\mathcal P_2(\mathbb R^d),W_2)$ \cite[Definition 9.1.1]{ambrosio2008gradient}.

\begin{definition}\label{def:lambda_convex}
We say a functional $F:\mathcal{P}_2(\mathbb R^d) \to \mathbb R$ is $\alpha$-convex along geodesics, for $\alpha \geq 0$, if for all $\mu,\nu \in \mathcal{P}_2(\mathbb R^d)$, there exists an optimal transport plan $\gamma^* \in \Pi_o(\mu, \nu)$ such that
\begin{equation} \label{eq:geod_curve}
    F(\mu_t) \le (1-t)F(\mu) +tF(\nu) - \alpha \frac{t(1-t)}{2} W_2^2(\mu,\nu),
\end{equation}
for all $t \in [0,1]$, where $\mu_t = (\pi_t)_\#\gamma^* := \left((1-t)x+ty\right)_\# \gamma^*$ is a Wasserstein constant-speed geodesic between $\mu$ and $\nu$. 
\end{definition}
A canonical example of geodesic convexity is provided by the relative entropy with respect to a log-concave reference measure.
\begin{example}[The relative entropy]
Suppose $\rho(\mathrm{d}x) \propto e^{-U(x)}\mathrm{d}x \in \mathcal{P}_2(\mathbb R^d)$, with a convex potential $U:\mathbb{R}^d \to \mathbb{R}$. In this case, we say $\rho$ is a log-concave measure. Thus by \cite[Theorem 9.4.10]{ambrosio2008gradient} the relative entropy $\operatorname{KL}(\cdot|\rho)$ is geodesically convex, and hence
\begin{equation*}
\operatorname{KL}\left(\mu_t| \rho\right) \leq (1-t)\operatorname{KL}(\mu|\rho) + t\operatorname{KL}(\nu|\rho).
\end{equation*}
\end{example}
The following proposition, adapted from \cite[Proposition 16.2]{villani2008optimal}, collects several equivalent ways of characterizing geodesic convexity.
\begin{proposition} \label{prop:equivalences_w_convex}
    Assume $F:\mathcal{P}_2(\mathbb R^d)\to \mathbb R$ is Wasserstein regular. Let $\mu,\nu \in\mathcal{P}_2(\mathbb R^d)$, $\gamma^* \in \Pi_o(\mu, \nu)$ and $\mu_t = (\pi_t)_\#\gamma^*$, for all $t\in [0,1]$, where $\pi_t = (1-t)x+ty$. Then, the following statements are equivalent:
    \begin{enumerate}
        \item For all $t \in [0,1]$, $F(\mu_t) \le (1-t)F(\mu) +t F(\nu)$, i.e., $F$ is convex along the geodesic $t\mapsto \mu_t$. \label{equivalence:c1}
        \item For all $\mu, \nu \in\mathcal{P}_2(\mathbb R^d)$, we have \label{equivalence:c2}
        \begin{equation*}
            F(\nu) \geq F(\mu) + \int_{\mathbb R^d \times \mathbb R^d} \left\langle \nabla_\mu F(\mu,x),y-x \right\rangle \mathrm{d}\gamma^*(x,y).
        \end{equation*}
        \item For all $\mu, \nu \in\mathcal{P}_2(\mathbb R^d)$, we have \label{equivalence:c3}
        \begin{equation*}
            \int_{\mathbb R^d \times \mathbb R^d} \left\langle \nabla_\mu F(\nu, y) - \nabla_\mu F(\mu,x),y-x \right\rangle \mathrm{d}\gamma^*(x,y) \ge 0.
        \end{equation*}
        \item For all $s\in [0,1]$, $\frac{\mathrm{d}^2}{\mathrm{d}t^2}\Big|_{t=s} F(\mu_t) \ge 0$. \label{equivalence:c4}
    \end{enumerate}
\end{proposition}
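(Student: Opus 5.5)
We would prove the equivalences by reducing everything to the scalar function $\varphi(t):=F(\mu_t)$, $t\in[0,1]$, along the geodesic $\mu_t=(\pi_t)_\#\gamma^*$. We read (1) and (4) as holding for every pair $\mu,\nu$ and every optimal plan, so that all four statements quantify over the same family of geodesics.

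\textbf{Derivatives of $\varphi$.} The first step is to compute $\varphi'$ and $\varphi''$. For $s,t\in[0,1]$, the plan $(\pi_s,\pi_t)_\#\gamma^*$ couples $\mu_s$ and $\mu_t$ with cost $|t-s|^2W_2^2(\mu,\nu)$. It is in fact optimal, since sub-segments of constant-speed geodesics are geodesics. Applying Proposition \ref{prop:strong_diff_w} at $\mu_s$ with this plan gives
\[
\varphi'(s)=\int_{\mathbb R^d\times\mathbb R^d}\langle\nabla_\mu F(\mu_s,\pi_s(x,y)),\,y-x\rangle\,\mathrm d\gamma^*(x,y).
\]
Differentiating once more in the spirit of Proposition \ref{proposition:Wasserstein-hessian-appendix}, now with the plan $\gamma^*$ in place of $\mu$ and velocity $y-x$, gives the Hessian form: $\varphi''(s)$ equals the Hessian quadratic form evaluated on $y-x$. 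By Wasserstein regularity, $\varphi\in C^2([0,1])$.

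\textbf{The cycle of implications.}

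(1)$\Rightarrow$(2): From $\varphi(t)\le(1-t)\varphi(0)+t\varphi(1)$ we get $\frac{\varphi(t)-\varphi(0)}{t}\le\varphi(1)-\varphi(0)$. Letting $t\downarrow 0$ and using the formula for $\varphi'(0)$ yields (2).

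(2)$\Rightarrow$(3): Apply (2) to $(\mu,\nu,\gamma^*)$ and to $(\nu,\mu,\tilde\gamma^*)$, where $\tilde\gamma^*$ is the swapped plan, which is optimal from $\nu$ to $\mu$. The second inequality reads
\[
F(\mu)\ge F(\nu)+\int\langle\nabla_\mu F(\nu,y),x-y\rangle\,\mathrm d\gamma^*.
\]
Adding the two inequalities gives (3).

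(3)$\Rightarrow$(4): Apply (3) to the pair $(\mu_s,\mu_{s+h})$ with the optimal plan $(\pi_s,\pi_{s+h})_\#\gamma^*$, whose displacement is $h(y-x)$. This gives $h\,(\varphi'(s+h)-\varphi'(s))\ge0$, so $\varphi'$ is nondecreasing on $[0,1]$ and hence $\varphi''\ge0$.

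(4)$\Rightarrow$(1): If $\varphi''\ge0$ on $[0,1]$, then $\varphi$ is convex, and the chord inequality is exactly (1).

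\textbf{Main obstacle.} The delicate step is the first one: justifying that $\varphi$ is $C^1$, and in fact $C^2$, when $\gamma^*$ is a general optimal plan rather than one induced by a map. We would handle this by working on the product space. We view $\mu_t$ as the pushforward of $\gamma^*$ under $\pi_t$, which is an affine curve $\operatorname{Id}+t(\cdot)$ in $L^2_{\gamma^*}$. We then repeat the proof of Proposition \ref{proposition:Wasserstein-hessian-appendix} with $\mu$ replaced by $\gamma^*$ and $v(x,y)=y-x$. The continuity needed to pass to limits comes from the boundedness and symmetry of $\nabla\nabla_\mu F$ and $\nabla_\mu^2F$ contained in Wasserstein regularity.

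The second point to check is optimality of the sub-plans $(\pi_s,\pi_t)_\#\gamma^*$. This follows from the triangle inequality: their cost is $|t-s|^2W_2^2(\mu,\nu)$, while $W_2(\mu,\nu)\le W_2(\mu,\mu_s)+W_2(\mu_s,\mu_t)+W_2(\mu_t,\nu)$ forces equality in each term.
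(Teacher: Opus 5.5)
Your proof is correct and follows the paper's own cycle (1)$\Rightarrow$(2)$\Rightarrow$(3)$\Rightarrow$(4)$\Rightarrow$(1) with the same ingredients: the difference quotient at $t=0$ via the derivative formula along $(\pi_t)_\#\gamma^*$, adding the inequality for the swapped plan, applying (3) on sub-segments of the geodesic, and convexity from $\varphi''\ge 0$ (where the paper writes out a Green-function representation); your explicit check that the sub-plans $(\pi_s,\pi_t)_\#\gamma^*$ are optimal also fills in a step the paper uses silently. Your monotonicity-of-$\varphi'$ argument for (3)$\Rightarrow$(4) is a cleaner form of the paper's rescaling-plus-dominated-convergence step, and both it and the standard ``$\varphi''\ge0$ implies convex'' fact used in (4)$\Rightarrow$(1) need only pointwise twice differentiability of $\varphi$. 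That matters because your assertion that Wasserstein regularity yields $\varphi\in C^2([0,1])$ is not justified: boundedness and symmetry of the kernels give no continuity in $t$.
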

\begin{proof}
    $\eqref{equivalence:c1} \implies \eqref{equivalence:c2}.$ Let $t\in (0,1)$. Then
    \begin{equation*}
        F(\mu_t) \le (1-t)F(\mu) + t F(\nu) \iff \frac{F(\mu_t)-F(\mu)}{t} \le F(\nu)-F(\mu).
    \end{equation*}
    Passing to the limit $t\to 0$ and using Lemma \ref{lemma_for_second_order}, we get
    \begin{equation*}
        \frac{\mathrm{d}}{\mathrm{d}t}\Bigg|_{t=0} F(\mu_t) = \int_{\mathbb R^d \times \mathbb R^d} \left\langle \nabla_\mu F(\mu, x) , y-x\right\rangle \mathrm{d}\gamma^*(x, y) \le F(\nu) - F(\mu).
    \end{equation*}
    $\eqref{equivalence:c2} \implies \eqref{equivalence:c3}$. Replace the geodesic $\mu_t$ with the geodesic $\mu_{1-t}$ in \eqref{equivalence:c2} to get
    \begin{equation*}
        F(\mu) \geq F(\nu) + \int_{\mathbb R^d \times \mathbb R^d} \left\langle \nabla_\mu F(\nu,y),x-y \right\rangle \mathrm{d}\gamma^*(x,y).
    \end{equation*}
    By adding up the inequality in \eqref{equivalence:c2}, we obtain \eqref{equivalence:c3}.
    \newline
    
    \noindent $\eqref{equivalence:c3} \implies \eqref{equivalence:c4}$. First, we have
    \begin{align*}
        \int_0^1 \frac{\mathrm{d}^2}{\mathrm{d}t^2} F(\mu_t)\mathrm{d}t &= \frac{\mathrm{d}}{\mathrm{d}t}\Bigg|_{t=1} F(\mu_t) - \frac{\mathrm{d}}{\mathrm{d}t}\Bigg|_{t=0} F(\mu_t)\\ &= \int_{\mathbb R^d \times \mathbb R^d} \left\langle \nabla_\mu F(\nu, y) - \nabla_\mu F(\mu,x),y-x \right\rangle \mathrm{d}\gamma^*(x,y) \ge 0.
    \end{align*}
    Fix $s\in(0,1)$ and let $\varepsilon>0$ be such that $s+\varepsilon\le 1$. Define the rescaled curve $\nu_t^\varepsilon := \mu_{s+\varepsilon t}$, for $t\in[0,1]$. Then
    \begin{equation}\label{eq:nonneg-renorm}
        \int_0^1 \frac{\mathrm{d}^2}{\mathrm{d}t^2} F(\nu_t^\varepsilon) \mathrm{d}t \geq 0.
    \end{equation}
Because $t \mapsto \frac{\mathrm{d}^2}{\mathrm{d}t^2} F(\nu_t^\varepsilon)$ is continuous on $[0,1]$, it is bounded in a neighborhood of $s$, thus there exists $M>0$ such that $\big|\frac{\mathrm{d}^2}{\mathrm{d}t^2} F(\nu_t^\varepsilon)\big|\le M$ for all $t\in[0,1]$ and all sufficiently small $\varepsilon>0$. Moreover, for each fixed $t\in[0,1]$ we have $\frac{\mathrm{d}^2}{\mathrm{d}t^2} F(\nu_t^\varepsilon) \to \frac{\mathrm{d}^2}{\mathrm{d}t^2} F(\mu_s)$ as $\varepsilon\to 0$. Hence, by dominated convergence applied to \eqref{eq:nonneg-renorm},
\begin{equation}\label{eq:dct-limit}
0 \leq \lim_{\varepsilon\to 0}\int_0^1 \frac{\mathrm{d}^2}{\mathrm{d}t^2} F(\nu_t^\varepsilon) \mathrm{d}t =\int_0^1 \frac{\mathrm{d}^2}{\mathrm{d}t^2} F(\mu_s) \mathrm{d}t = \frac{\mathrm{d}^2}{\mathrm{d}t^2} F(\mu_s),
\end{equation}
as claimed.
\newline

\noindent $\eqref{equivalence:c4} \implies \eqref{equivalence:c1}$. Let $\varphi(t) = F(\mu_t)$ for all $t\in [0,1]$. From \cite[Equation 16.5]{villani2008optimal}, for all $t\in [0,1]$,
    \begin{equation*}
         \varphi(t) = (1-t)\varphi(0) + t \varphi(1) - \int_0^1 G(s,t)\frac{\mathrm{d}^2}{\mathrm{d}t^2}\varphi(s)\mathrm{d}s,
    \end{equation*}
    where $G$ is the Green function defined as $G(s,t) = s(1-t)\mathbbm{1}_{\{s\le t\}} + t(1-s)\mathbbm{1}_{\{t\le s\}} \ge 0$ in \cite[Equation 16.6]{villani2008optimal}. Then, $\frac{\mathrm{d}^2}{\mathrm{d}t^2}F(\mu_s) \ge 0$ implies that $\int_0^1 G(s,t)\frac{\mathrm{d}^2}{\mathrm{d}t^2}\varphi(s) \mathrm{d}s \ge 0$, and thus
    \begin{equation*}
        \varphi(t) = F(\mu_t) \le (1-t)\varphi(0) + t\varphi(1) = (1-t)F(\mu) + tF(\nu).
    \end{equation*}
\end{proof}
Geodesic convexity is intrinsic to the Wasserstein geometry, but it is not the only convexity notion that appears in optimization over measures. In many mean-field and learning problems, one instead works with convexity along linear interpolations of probability measures.

\subsection{Linear convexity}
We now turn to convexity along linear interpolations of measures, which is widely used in the mean-field optimization literature and is generally different from geodesic convexity.
\begin{definition}[Linear convexity of $F$]
\label{ass:convexity}
We say a functional $F:\mathcal{P}_2(\mathbb R^d) \to \mathbb R$ is linearly convex, if for any $\mu, \nu \in \mathcal{P}_2(\mathbb R^d)$ and any $t \in [0,1]$,
\begin{equation}\label{eq:convexity_classical}
    F((1-t)\mu + t \nu) \leq (1-t)F(\mu) + t F(\nu).
\end{equation}
\end{definition}
This notion is standard in the mean-field optimization literature; see, for instance, \cite{10.1214/20-AIHP1140,Nitanda2022ConvexAO,chizat2022meanfield,lascu2024linearconvergenceproximaldescent}. Moreover, if $F\in\mathcal C^1$, then linear convexity implies the first-order inequality
\[
F(\nu)-F(\mu)\geq \int_{\mathbb R^d}\frac{\delta F}{\delta\mu}(\mu,x)\,\mathrm d(\nu-\mu)(x),
\]
see \cite[Lemma 4.1]{10.1214/20-AIHP1140}.

The following examples illustrate that linear convexity and geodesic convexity need not coincide. In particular, some objectives of interest in generative modeling are linearly convex without being geodesically convex.
\begin{example}[$L^2$-Wasserstein and MMD distances]
    Examples of optimization problems that are linearly convex but not geodesically convex include minimizing distances with respect to a fixed target distribution, a setting common in generative modeling \cite{huang2024generativemodelingminimizingwasserstein2, arbel}. One example is the minimization of the squared $L^2$-Wasserstein distance $F(\mu) = \frac{1}{2}\mathcal{W}_2^2(\mu, \mu^*)$, studied in \cite{huang2024generativemodelingminimizingwasserstein2}, which can be interpreted as training a Generative Adversarial Network (GAN) generator to match a target measure $\mu^*$ in $\mathcal{W}_2$. While $F$ in this case fails to be geodesically convex (see, e.g., \cite[Example 9.1.5]{ambrosio2008gradient}), it is linearly convex. Another example, given in \cite{arbel}, is the minimization of the Maximum Mean Discrepancy (MMD) $F(\mu) = \frac{1}{2}\operatorname{MMD}^2(\mu, \mu^*)$. Here, again, $F$ is linearly convex but not geodesically convex \cite[Section 3.1]{arbel}.
\end{example}

\section{Optimality conditions in the Wasserstein space}
\label{optimality conditions}
In this appendix, we collect the first and second-order optimality notions used in the paper. We begin with the standard first-order characterization of minimizers in terms of the Wasserstein gradient, and then turn to second-order conditions involving the operator $\operatorname{K}_\mu$. We conclude by recording the consequences of the saddle-property assumption, which links second-order criticality to global minimality.

\subsection{First-order optimality conditions}
We start with the first-order notions of minimality and stationarity. These are the Wasserstein counterparts of the familiar first-order optimality conditions from finite-dimensional optimization.
\begin{definition}[Local, global and strict minimizers]\label{def:minima unconstrained}
A probability measure $\mu^*\in\mathcal{P}_2(\mathbb R^d)$ is a local minimizer of $F$ if there exists $r>0$ such that $F(\mu)\geq F(\mu^*)$ for all $\mu \in B_{2}(r, \mu^*)$. If $F(\mu)>F(\mu^*)$ for all $\mu\in B_{2}(r, \mu^*),$ $\mu\neq \mu^*$, then $\mu^*$ is the strict local minimizer of $F$. If this holds for any $r>0$, then $\mu^*$ is a (strict) global minimizer.
\end{definition}
\begin{definition}[First-order stationary point]
\label{def:first-order-stationary}
    We say $\mu^*\in\mathcal{P}_2(\mathbb R^d)$ is a first-order stationary point of $F$ if $\nabla_\mu F(\mu^*) = 0$ in $L_{\mu^*}^2$.
\end{definition}
\begin{definition}[$\varepsilon$-first-order stationary point]
    We say $\mu^*\in\mathcal{P}_2(\mathbb R^d)$ is an $\varepsilon$-first-order stationary point of $F$ if $\|\nabla_\mu F(\mu^*)\|_{L_{\mu^*}^2} \leq \varepsilon$.
\end{definition}
The following theorem, due to \cite{lanzetti}, gives the basic first-order necessary condition for local optimality.
\begin{theorem}[First-order necessary optimality condition; \protect{\cite[Theorem 3.2]{lanzetti}}]
\label{thm:first-order-optimality-minimizers}
    Let $\mu^*$ be a local minimizer of $F$ and assume $F$ is Wasserstein differentiable at $\mu^*$.  Then, the Wasserstein gradient of $F$ vanishes at $\mu^*$, i.e., $\nabla_\mu F(\mu^*) = 0$ in $L_{\mu^*}^2$.
\end{theorem}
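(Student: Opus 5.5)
We argue by contradiction, following the standard Euclidean proof while using the first-order expansion of Proposition~\ref{prop:strong_diff_w}. Suppose $\mu^*$ is a local minimizer with radius $r>0$, $F$ is Wasserstein differentiable at $\mu^*$, and $g:=\nabla_\mu F(\mu^*,\cdot)\neq 0$ in $L^2_{\mu^*}$. By the convention adopted after Definition~\ref{def:wass-differentiability}, $g$ is the canonical representative in $\mathcal T_{\mu^*}\mathcal P_2(\mathbb R^d)$, so Proposition~\ref{prop:strong_diff_w} applies along arbitrary couplings, not only optimal ones.

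\textbf{Test curve.} For $t>0$ set $\nu_t:=(\operatorname{Id}-tg)_\#\mu^*$ and take the coupling $\gamma_t:=(\operatorname{Id},\operatorname{Id}-tg)_\#\mu^*\in\Pi(\mu^*,\nu_t)$. Its cost is
\[
\int\|x-y\|^2\,\mathrm d\gamma_t=t^2\|g\|_{L^2_{\mu^*}}^2 .
\]
Hence, for $t<r/\|g\|_{L^2_{\mu^*}}$, the measure $\nu_t$ lies in $B_2(r,\mu^*)$. This is exactly how the paper defines balls, via the existence of some coupling of cost below $r^2$.

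\textbf{Expansion and contradiction.} Proposition~\ref{prop:strong_diff_w} with the coupling $\gamma_t$ gives
\[
F(\nu_t)=F(\mu^*)-t\|g\|^2_{L^2_{\mu^*}}+o\big(t\|g\|_{L^2_{\mu^*}}\big).
\]
Dividing by $t$ and letting $t\downarrow 0$, we get $F(\nu_t)<F(\mu^*)$ for all sufficiently small $t$. This contradicts local minimality, so $g=0$ in $L^2_{\mu^*}$.

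The only delicate point is that $\gamma_t$ need not be optimal, so the defining expansion~\eqref{eq:wass_diff} cannot be applied directly. This is precisely why we invoke Proposition~\ref{prop:strong_diff_w} together with the tangent-space representative. If one preferred to avoid that result, one could instead approximate $g$ in $L^2_{\mu^*}$ by $\nabla\psi$ with $\psi\in C_c^\infty$. For small $t$ the map $x\mapsto \tfrac12\|x\|^2-t\psi(x)$ is convex, so Theorem~\ref{thm:sufficient-cond-optimal-convexity} makes $\operatorname{Id}-t\nabla\psi$ an optimal map. Then~\eqref{eq:wass_diff} yields a first-order term $-t\langle g,\nabla\psi\rangle_{L^2_{\mu^*}}<0$ once the approximation is close enough, and the same contradiction follows.
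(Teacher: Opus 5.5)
Your proof is correct. The paper gives no argument of its own for this statement; it is quoted from \cite[Theorem 3.2]{lanzetti}. Your main route is to push $\mu^*$ along $-\nabla_\mu F(\mu^*)$ and to control the non-optimal plan $(\operatorname{Id},\operatorname{Id}-tg)_\#\mu^*$ via Proposition~\ref{prop:strong_diff_w}, which applies because $g$ is the tangent-space representative. This is the natural proof using the expansion along arbitrary couplings that the paper itself takes from that reference.

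Your fallback route is also sound and slightly more economical. Approximate $g$ by $\nabla\psi$ with $\psi\in C_c^\infty(\mathbb R^d)$, and invoke Theorem~\ref{thm:sufficient-cond-optimal-convexity} for $t\,\sup_x\|\nabla^2\psi(x)\|\le 1$, so that $\operatorname{Id}-t\nabla\psi$ is an optimal map. Then only the defining expansion \eqref{eq:wass_diff} along optimal plans is needed. The tangent-space hypothesis enters through density of smooth gradients rather than through Proposition~\ref{prop:strong_diff_w}.
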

Under an additional geodesic convexity assumption, first-order stationarity also becomes sufficient for global optimality.
\begin{theorem}[First-order sufficient optimality condition; \protect{\cite[Theorem 3.3]{lanzetti}}]
Assume that $F$ is $\alpha$-geodesically convex with $\alpha \geq 0$. Suppose there exists $\mu^*\in \mathcal{P}_2(\mathbb R^d)$ such that $F$ is Wasserstein differentiable at $\mu^*$ and the Wasserstein gradient of $F$ vanishes at $\mu^*$, i.e., $\nabla_\mu F(\mu^*)=0$, $\mu^*$-a.e. Then, $\mu^*$ is a global minimizer of $F$. If $\alpha > 0$, $\mu^*$ is the unique global minimizer of $F$.
\end{theorem}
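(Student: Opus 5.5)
The statement just before this point is the first-order sufficient optimality condition (\cite[Theorem 3.3]{lanzetti}): if $F$ is $\alpha$-geodesically convex with $\alpha\ge 0$ and $\nabla_\mu F(\mu^*)=0$ $\mu^*$-a.e., then $\mu^*$ is a global minimizer, and the unique one when $\alpha>0$. The plan is to derive a first-order lower bound for $F$ from geodesic convexity and then plug in the vanishing gradient.

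\textbf{Step 1: a lower bound from convexity.} Fix $\nu\in\mathcal P_2(\mathbb R^d)$. Definition \ref{def:lambda_convex} provides an optimal plan $\gamma^*\in\Pi_o(\mu^*,\nu)$ and the geodesic $\mu_t=(\pi_t)_\#\gamma^*$ along which
\[
F(\mu_t)\le (1-t)F(\mu^*)+tF(\nu)-\alpha\tfrac{t(1-t)}{2}W_2^2(\mu^*,\nu).
\]
Subtract $F(\mu^*)$ from both sides and divide by $t\in(0,1)$. This gives
\[
\frac{F(\mu_t)-F(\mu^*)}{t}\le F(\nu)-F(\mu^*)-\alpha\tfrac{1-t}{2}W_2^2(\mu^*,\nu).
\]

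\textbf{Step 2: identify the limit $t\downarrow0$.} Along the geodesic, $W_2(\mu^*,\mu_t)=tW_2(\mu^*,\nu)$, and $\gamma_t:=(\pi_0,\pi_t)_\#\gamma^*$ is an optimal plan between $\mu^*$ and $\mu_t$. Wasserstein differentiability at $\mu^*$ (Definition \ref{def:wass-differentiability}, or Proposition \ref{prop:strong_diff_w}, which applies to any coupling) then yields
\[
F(\mu_t)-F(\mu^*)=t\int\langle\nabla_\mu F(\mu^*,x),y-x\rangle\,\mathrm d\gamma^*(x,y)+o(t).
\]
Dividing by $t$ and letting $t\to0$ in Step 1 gives
\[
F(\nu)\ge F(\mu^*)+\int\langle\nabla_\mu F(\mu^*,x),y-x\rangle\,\mathrm d\gamma^*+\tfrac{\alpha}{2}W_2^2(\mu^*,\nu).
\]
This is the strong-convexity version of item (2) in Proposition \ref{prop:equivalences_w_convex}.

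\textbf{Step 3: use the vanishing gradient.} Since $\nabla_\mu F(\mu^*,x)=0$ for $\mu^*$-a.e. $x$ and the first marginal of $\gamma^*$ is $\mu^*$, the integral in Step 2 vanishes. Hence $F(\nu)\ge F(\mu^*)+\frac{\alpha}{2}W_2^2(\mu^*,\nu)$ for every $\nu$, so $\mu^*$ is a global minimizer. If $\alpha>0$ and $\nu\ne\mu^*$, then $W_2(\mu^*,\nu)>0$ gives $F(\nu)>F(\mu^*)$, which proves uniqueness.

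The only delicate point is Step 2: the differentiability expansion has to be applied along the geodesic, with its error controlled by $o(W_2(\mu^*,\mu_t))=o(t)$. This is routine once one notes that the restricted plan $\gamma_t$ is optimal, or alternatively by invoking Proposition \ref{prop:strong_diff_w}, which does not require optimality of the coupling.
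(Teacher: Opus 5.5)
Your proof is correct. The paper gives no proof of its own here (it cites \cite[Theorem 3.3]{lanzetti}), and your route is the standard one: divide the $\alpha$-convexity inequality by $t$, pass to the limit using differentiability along the restricted optimal plan $\gamma_t$ with $W_2(\mu^*,\mu_t)=tW_2(\mu^*,\nu)$, then drop the linear term since $\nabla_\mu F(\mu^*)=0$ $\mu^*$-a.e. This is exactly the $(1)\Rightarrow(2)$ step of Proposition~\ref{prop:equivalences_w_convex} with the $\tfrac{\alpha}{2}W_2^2(\mu^*,\nu)$ term kept; note that you only use the lower half of the expansion, so the conclusion already holds whenever $0\in\partial^-F(\mu^*)$.
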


\subsection{Second-order optimality conditions}
We next introduce the second-order optimality notions used in the paper. As explained in the main text, when $\mu^*$ is a critical point, the second-order behavior is determined by the operator $\operatorname{K}_{\mu^*}$, which motivates the definitions below.
\begin{definition}[Second-order critical and saddle point]
\label{def:second-order-stationary-strict-saddle}
    We say $\mu^*\in\mathcal{P}_2^{\mathrm{ac}}(\mathbb R^d)$ is:
    \begin{itemize}
        \item a second-order critical point of $F$ if $\nabla_\mu F(\mu^*,\cdot) = 0$ in $L_{\mu^*}^2$ and $\langle\operatorname{K}_{\mu^*}v,v\rangle_{L_{\mu^*}^2} \geq 0$, for all $v \in L_{\mu^*}^2$,
        \item a saddle point of $F$ if $\nabla_\mu F(\mu^*,\cdot) = 0$ in $L_{\mu^*}^2$ and $\lambda_{\mathrm{min}}\operatorname{K}_{\mu^*}< 0$. 
    \end{itemize}
\end{definition}
The next result provides the second-order necessary condition satisfied by local minimizers.
\begin{proposition}[Second-order necessary optimality condition; \protect{\cite[Proposition 3.3]{yamamoto2025hessianguided}}]
\label{prop:necessary-second-order-optimality-minimizers}
    Assume $F:\mathcal{P}_2(\mathbb R^d) \to \mathbb R$ is Wasserstein regular (cf. Definition \ref{def:regularity-F}). Suppose $\mu^* \in \mathcal{P}_2^{\mathrm{ac}}(\mathbb R^d)$ is a local minimizer of $F$, i.e., there exists $r > 0$ such that $F(\mu) \geq F(\mu^*)$, for all $\mu \in B_2(r,\mu^*)$. Assume further that the maps $(\mu ,x )\mapsto \nabla \nabla_\mu F(\mu,x)$ and $(\mu ,x,\bar{x})\mapsto \nabla_\mu^2 F(\mu,x,\bar{x})$ are jointly continuous. Then $\operatorname{K}_{\mu^*} \succeq 0$. 
\end{proposition}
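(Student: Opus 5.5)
Suppose, for contradiction, that $\operatorname{K}_{\mu^*}$ has a negative eigenvalue $\lambda<0$. Since $\operatorname{K}_{\mu^*}$ is compact and self-adjoint, the Riesz--Schauder theorem provides a unit eigenvector $e\in L^2_{\mu^*}$ with $\operatorname{K}_{\mu^*}e=\lambda e$. If no eigenvalue is negative, then $\operatorname{K}_{\mu^*}\succeq 0$ by the spectral theorem for compact self-adjoint operators, and there is nothing to prove. The plan is to perturb $\mu^*$ along the transport curve $\mu_t=(\operatorname{Id}+te)_\#\mu^*$ and show that $F(\mu_t)<F(\mu^*)$ for small $t>0$. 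Along the way, $W_2(\mu_t,\mu^*)\le \|te\|_{L^2_{\mu^*}}=t$ via the coupling $(\operatorname{Id},\operatorname{Id}+te)_\#\mu^*$, so $\mu_t\in B_2(r,\mu^*)$ once $t<r$. This contradicts local minimality.

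The key steps are as follows. First, set $\varphi(t):=F(\mu_t)$. Using Proposition \ref{proposition:Wasserstein-hessian-appendix} together with the first-order chain rule along the curve, we get $\varphi'(0)=\langle\nabla_\mu F(\mu^*,\cdot),e\rangle_{L^2_{\mu^*}}=0$ by Theorem \ref{thm:first-order-optimality-minimizers}. We also get $\varphi''(s)=\langle\widetilde{\operatorname{H}}_{\mu^*,s}e,e\rangle_{L^2_{\mu^*}}$. Second, we identify $\varphi''(0)$. Since $\nabla_\mu F(\mu^*,\cdot)=0$ holds $\mu^*$-a.e. and $\mu^*$ is absolutely continuous, the gradient vanishes on a set of full Lebesgue measure in the support. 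The paper's observation then gives $\operatorname{M}_{\mu^*}=0$ $\mu^*$-a.e., hence $\varphi''(0)=\langle\operatorname{K}_{\mu^*}e,e\rangle=\lambda<0$. The justification is that $\nabla\nabla_\mu F(\mu^*,x)$ vanishes at density points where the differentiable map $x\mapsto\nabla_\mu F(\mu^*,x)$ is identically zero nearby. Third, Taylor's formula with integral remainder gives $\varphi(t)-\varphi(0)=\int_0^t(t-s)\varphi''(s)\,\mathrm ds$. Hence if $\varphi''$ is continuous at $0$, then $\varphi''(s)\le\lambda/2$ for $s\in[0,t_0]$, and so $F(\mu_t)\le F(\mu^*)+\lambda t^2/4<F(\mu^*)$. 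This is the desired contradiction.

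The main obstacle is the continuity of $s\mapsto\langle\widetilde{\operatorname{H}}_{\mu^*,s}e,e\rangle$ at $s=0$. This is where the joint continuity hypothesis enters. We write
\[
\langle\widetilde{\operatorname{H}}_{\mu^*,s}e,e\rangle=\int\langle\nabla\nabla_\mu F(\mu_s,x+se(x))e(x),e(x)\rangle\,\mathrm d\mu^*(x)+\iint\langle\nabla_\mu^2F(\mu_s,x+se(x),\bar x+se(\bar x))e(\bar x),e(x)\rangle\,\mathrm d\mu^*(\bar x)\,\mathrm d\mu^*(x).
\]
As $s\to0$ we have $\mu_s\to\mu^*$ in $W_2$ and $x+se(x)\to x$ pointwise, so joint continuity gives pointwise convergence of both integrands. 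To pass to the limit, one needs domination. I would first try to truncate $e$ to a bounded, compactly supported eigen-approximation. A cleaner alternative is to replace $e$ by a bounded $v$ with $\langle\operatorname{K}_{\mu^*}v,v\rangle<0$, which is possible by density of bounded compactly supported fields and continuity of the quadratic form. Then $x+sv(x)$ ranges in a compact set, and local continuity on the compact set $\{\mu_s\}_{s\in[0,1]}\times\overline{B}$ gives local boundedness of the Hessian kernels. Dominated convergence then applies with dominating functions $C\|v\|^2$. If local boundedness fails for the kernel $\nabla^2_\mu F$ on $\mu^*\otimes\mu^*$-unbounded sets, I would instead invoke Vitali's theorem using the $L^2_{\mu\otimes\mu}$ control from Wasserstein regularity.
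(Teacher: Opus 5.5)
Your argument is correct and follows the same skeleton as the paper's proof. Both expand $F$ to second order along a transport curve from $\mu^*$, use first-order optimality to remove the linear term, use $\operatorname{M}_{\mu^*}=0$ $\mu^*$-a.e.\ so the quadratic term is $\langle\operatorname{K}_{\mu^*}v,v\rangle_{L^2_{\mu^*}}$, and use local minimality to fix its sign. Two of your choices differ from the paper, and both are improvements.

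First, the paper tests only displacement geodesics $((1-t)\operatorname{Id}+t\nabla\varphi)_\#\mu^*$ toward points of $B_2(r,\mu^*)$. It therefore only probes directions $v=\nabla\varphi-\operatorname{Id}$ with $\varphi$ convex, and then asserts the inequality for all $v\in L^2_{\mu^*}$. Scaling and density extend that only to the tangent space $\mathcal T_{\mu^*}\mathcal P_2(\mathbb R^d)$, not to its orthogonal complement in $L^2_{\mu^*}$. Your curves $(\operatorname{Id}+tv)_\#\mu^*$ stay in the ball via the non-optimal coupling bound $W_2(\mu_t,\mu^*)\le t\|v\|_{L^2_{\mu^*}}$, and they reach every direction. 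That is exactly what $\operatorname{K}_{\mu^*}\succeq 0$ means in the paper.

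Second, the paper hands the remainder to Lemma~\ref{lem:2ndorder-expansion}. That lemma deduces $\frac1t\int_0^t|\langle\widetilde{\operatorname{H}}_{\mu,s}v,v\rangle_{L^2_\mu}-\langle\operatorname{H}_\mu v,v\rangle_{L^2_\mu}|\,\mathrm ds\to 0$ from joint continuity alone, with no dominating function. Your reduction to a bounded, compactly supported $v$ with $\langle\operatorname{K}_{\mu^*}v,v\rangle_{L^2_{\mu^*}}<0$ supplies the missing domination:
\begin{itemize}
\item the integrands vanish unless $x,\bar x\in\operatorname{supp}v$;
\item the spatial arguments $x+sv(x)$ then lie in a fixed compact set;
\item $\{\mu_s\}_{s\in[0,1]}$ is $W_2$-compact, so joint continuity gives a bound uniform in $s\in[0,1]$.
\end{itemize}
This uniform bound also justifies the integral Taylor formula, and it makes your Vitali fallback unnecessary.

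One phrase needs correcting. The zero set $Z$ of $\nabla_\mu F(\mu^*,\cdot)$ has full $\mu^*$-measure but need not contain a neighborhood of any point, so the map is not ``identically zero nearby''. The argument that works uses Lebesgue density points. At a density point $x\in Z$, points of $Z$ approach $x$ from every direction. Hence the differential of $y\mapsto\nabla_\mu F(\mu^*,y)$ at $x$ vanishes on every unit vector. Lebesgue-a.e.\ point of $Z$ is a density point and $\mu^*$ is absolutely continuous, so $\operatorname{M}_{\mu^*}=0$ holds $\mu^*$-a.e.
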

\begin{proof}
Since $\mu^* \in \mathcal{P}_2^{\mathrm{ac}}(\mathbb R^d)$, $\nabla_\mu F(\mu^*) =0$ implies $\operatorname{M}_{\mu^*} = 0$ $\mu^*$-a.e. \cite[Lemma C.10.]{yamamoto2025hessianguided}, hence $\operatorname{H}_{\mu^*} = \operatorname{K}_{\mu^*}$. Let $\mu \in B_2(r,\mu^*)$. Then $\mu^* \in \mathcal{P}_2^{\mathrm{ac}}(\mathbb R^d)$ implies that the curve $\mu_t := ((1-t)\operatorname{Id}+t\nabla \varphi)_\#\mu^*$ is the unique minimizing geodesic between $\mu^*$ and $\mu$, where $\nabla \varphi$ is the OT map from $\mu^*$ to $\mu$. 
Then
\[
W_2^2(\mu_t,\mu^*)= \int_{\mathbb R^d\times\mathbb R^d}\|x-((1-t)x+t \nabla \varphi(x))\|^2\,\mathrm d\mu^*(x)
= t^2\|\nabla \varphi -\operatorname{Id}\|_{L^2_{\mu^*}}^2 \leq t^2W_2^2(\mu,\mu^*) < r^2,
\]
which ensures $\mu_t\in B_2(r,\mu^*)$, for all $t \in [0,1)$. By local minimality, $F(\mu_t) \geq F(\mu^*)$ for all $t \in [0,1)$, and moreover $\nabla_\mu F(\mu^*, \cdot) = 0$ in $L_{\mu*}^2$. Hence, by Lemma \ref{lem:2ndorder-expansion}, for small enough $t$,
\begin{equation*}
    0 \leq F(\mu_t) -F(\mu^*) = \frac{t^2}{2}\left\langle \operatorname{H}_{\mu^*} v,v\right\rangle_{L^2_{\mu^*}} +o(t^2) = \frac{t^2}{2}\left\langle \operatorname{K}_{\mu^*} v,v\right\rangle_{L^2_{\mu^*}} +o(t^2).
\end{equation*}
Dividing by $t^2$ and letting $t \to 0$ gives
\begin{equation*}
    \left\langle \operatorname{K}_{\mu^*} v,v\right\rangle_{L^2_{\mu^*}} \geq 0,
\end{equation*}
for all $v\in L^2_{\mu^*}$. 
\end{proof}

\subsection{Consequences under the saddle property}
We now combine the optimality conditions above with the saddle property assumption from the main text. This allows us to relate second-order critical points to global minimizers and, in the approximate setting, to quantify proximity to the global minimum set. We first record the exact characterization of second-order critical points under the saddle property.
\begin{lemma}[Second-order critical points and global minimizers]
\label{second-order stationary is local}
    Assume $F:\mathcal{P}_2(\mathbb R^d) \to \mathbb R$ is Wasserstein regular (cf. Definition \ref{def:regularity-F}). Let Assumption \ref{ass:ss} hold. Then $\mu^* \in \mathcal{P}_2^{\mathrm{ac}}(\mathbb R^d)$ is a second-order critical point of $F$ if and only if it is a global minimizer.
\end{lemma}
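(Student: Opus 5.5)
The equivalence splits into two implications.

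\emph{Global minimizer $\Rightarrow$ second-order critical.} This direction is immediate from results already stated. A global minimizer $\mu^*\in\mathcal P_2^{\mathrm{ac}}(\mathbb R^d)$ is in particular a local minimizer. Since $F$ is Wasserstein regular, it is Wasserstein differentiable at $\mu^*$. Theorem \ref{thm:first-order-optimality-minimizers} then gives $\nabla_\mu F(\mu^*,\cdot)=0$ in $L^2_{\mu^*}$. Proposition \ref{prop:necessary-second-order-optimality-minimizers} gives $\operatorname{K}_{\mu^*}\succeq 0$, that is, $\langle \operatorname{K}_{\mu^*}v,v\rangle_{L^2_{\mu^*}}\ge 0$ for all $v$. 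This is exactly Definition \ref{def:second-order-stationary-strict-saddle}. Proposition \ref{prop:necessary-second-order-optimality-minimizers} requires joint continuity of $(\mu,x)\mapsto\nabla\nabla_\mu F$ and of $\nabla^2_\mu F$. I would either import that hypothesis tacitly, as the paper does elsewhere, or note that it is part of the standing regularity.

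\emph{Second-order critical $\Rightarrow$ global minimizer.} Here I use the saddle property, Assumption \ref{ass:ss}, which holds for every choice of $(\varepsilon,\delta,\alpha)$ for which $F$ is an $(\varepsilon,\delta,\alpha)$-saddle. Let $\mu^*$ be second-order critical. Then $\|\nabla_\mu F(\mu^*,\cdot)\|_{L^2_{\mu^*}}=0\le\varepsilon$, so the first alternative fails. Also $\langle\operatorname{K}_{\mu^*}v,v\rangle\ge0$ for all $v$, so every eigenvalue of $\operatorname{K}_{\mu^*}$ is nonnegative and $\lambda_{\min}\operatorname{K}_{\mu^*}\ge 0>-\delta$, so the second alternative fails. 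Hence the third holds: there is a global minimizer $\hat\mu$ with $W_2(\mu^*,\hat\mu)\le\alpha$.

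The main obstacle is upgrading ``within $\alpha$ of a global minimizer'' to ``is a global minimizer.'' The assumption is stated with fixed parameters, so this step is not automatic. I would read the lemma as asserting the conclusion under the saddle property holding for arbitrarily small $\alpha$ with fixed $\varepsilon,\delta>0$; this is the natural exact version, mirroring the Euclidean strict-saddle setting. Then the argument above produces, for each $k$, a global minimizer $\hat\mu_k$ with $W_2(\mu^*,\hat\mu_k)\le 1/k$. Since all $\hat\mu_k$ have the same value $F_\star:=\inf F$ and $F$ is $W_2$-continuous (it is Wasserstein differentiable everywhere), we get $F(\mu^*)=\lim_k F(\hat\mu_k)=F_\star$, so $\mu^*$ is a global minimizer.

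If one insists on a single fixed $\alpha$, I would instead combine this with local analysis at $\hat\mu$: $F(\mu^*)\ge F_\star$ trivially, and equality would have to come from benignity. I would state explicitly that the exact characterization is understood in the limit $\alpha\to0$. The approximate version is deferred to Lemma \ref{second-order stationary close to local}.
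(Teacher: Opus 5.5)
Your forward direction matches the paper's: a global minimizer is a local minimizer, and then Theorem \ref{thm:first-order-optimality-minimizers} and Proposition \ref{prop:necessary-second-order-optimality-minimizers} apply. Your remark about the missing joint-continuity hypothesis applies equally to the paper, which also invokes that proposition under Wasserstein regularity alone.

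The converse is where you genuinely differ, and your version is the more careful one. The paper argues by contradiction in one line: if $\mu^*$ is second-order critical but not a global minimizer, then ``by Assumption \ref{ass:ss}'' $\lambda_{\min}\operatorname{K}_{\mu^*}<0$. As you noticed, that does not follow, because a non-minimizer can satisfy the third alternative (a global minimizer within distance $\alpha$).

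For a single fixed $\alpha$ the statement is in fact false. Take $F(\mu)=\int V\,\mathrm d\mu$, so that $\operatorname{K}_\mu=0$. Let $V$ be nonnegative and smooth with bounded Hessian, equal to $0$ on one ball, to $1$ on another, and to $\|x\|^2$ outside a large ball. The uniform measure on the second ball is second-order critical but not a global minimizer. Meanwhile, any $\mu$ with $\|\nabla V\|_{L^2_\mu}\le\varepsilon$ has bounded second moment, so the $(\varepsilon,\delta,\alpha)$-saddle property holds once $\alpha$ is large.

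The paper's proof is therefore silently using an exact strict-saddle property: every non-global critical point has negative curvature. You instead make the needed strengthening explicit, namely the saddle property for arbitrarily small $\alpha$, and recover the exact statement as a limit. Your continuity step is fine and even simpler than you state: since $\nabla_\mu F(\mu^*,\cdot)=0$, \eqref{eq:wass_diff} reduces to $F(\hat\mu_k)=F(\mu^*)+o(W_2(\mu^*,\hat\mu_k))$. The paper's route is shorter but rests on an unstated hypothesis; yours is correct under a stated one.

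Drop your final paragraph about a single fixed $\alpha$. No local analysis at $\hat\mu$ can rescue a statement that admits counterexamples.
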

\begin{proof}
    Let $\mu^* \in \mathcal{P}_2(\mathbb R^d)$ be a global minimizer of $F$. Of course, by definition, any global minimizer of a function is also a local minimizer. Then Theorem \ref{thm:first-order-optimality-minimizers} and Proposition \ref{prop:necessary-second-order-optimality-minimizers} imply
    $\nabla_\mu F(\mu^*) = 0$ in $L^2_{\mu^*}$ and $\langle\operatorname{K}_{\mu^*}v,v\rangle_{L_{\mu^*}^2} \geq 0$, for all $v \in L_{\mu^*}^2$. Hence, by Definition \ref{def:second-order-stationary-strict-saddle}, $\mu^*$ is a second-order critical point. For the converse, suppose $\mu^*$ is a second-order critical point, i.e., $\nabla_\mu F(\mu^*) = 0$ in $L^2_{\mu^*}$ and $\langle\operatorname{K}_{\mu^*}v,v\rangle_{L_{\mu^*}^2} \geq 0$, for all $v \in L_{\mu^*}^2$, but not a global minimizer. Then, by Assumption \ref{ass:ss} $\lambda_{\mathrm{min}}\operatorname{K}_{\mu^*} < 0$, which gives a contradiction. Hence, $\mu^*$ must be a global minimizer.
\end{proof}
We then pass to the approximate setting and show that $(\varepsilon,\delta)$-second-order critical points must lie within the prescribed $\alpha$-neighborhood of a global minimizer.
\begin{lemma}[$(\varepsilon,\delta)$-second-order critical point is $\alpha$-close to a global minimizer]
\label{second-order stationary close to local}
    Assume $F:\mathcal{P}_2(\mathbb R^d) \to \mathbb R$ is Wasserstein regular (cf. Definition \ref{def:regularity-F}). Let Assumption \ref{ass:ss} hold and let $\mu^* \in \mathcal{P}_2^{\mathrm{ac}}(\mathbb R^d)$ be an $(\varepsilon,\delta)$-second-order critical point. Then there exists a global minimizer $\bar{\mu} \in \mathcal{P}_2(\mathbb R^d)$ such that $W_2(\mu^*, \bar{\mu}) \leq \alpha$ for some $\alpha > 0$.
\end{lemma}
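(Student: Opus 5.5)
The lemma follows directly from the trichotomy in Assumption \ref{ass:ss}. We apply the saddle property at the point $\mu=\mu^*$ and rule out its first two alternatives using the hypothesis that $\mu^*$ is an $(\varepsilon,\delta)$-second-order critical point. The remaining alternative then produces the required global minimizer.

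First, by definition of an $(\varepsilon,\delta)$-second-order critical point, $\|\nabla_\mu F(\mu^*,\cdot)\|_{L^2_{\mu^*}}\le\varepsilon$. Hence the first alternative of Assumption \ref{ass:ss}, $\|\nabla_\mu F(\mu^*,\cdot)\|_{L^2_{\mu^*}}>\varepsilon$, fails.

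Second, the definition also gives $\lambda_{\min}\operatorname{K}_{\mu^*}\ge-\delta$. This is where some care is needed. The second alternative of Assumption \ref{ass:ss} is the non-strict inequality $\lambda_{\min}\operatorname{K}_{\mu^*}\le-\delta$, so the two conditions overlap at the boundary value $\lambda_{\min}\operatorname{K}_{\mu^*}=-\delta$.

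I would handle this boundary case in one of two ways.
\begin{itemize}
\item Read the definition of $(\varepsilon,\delta)$-saddle point, which uses the strict inequality $<-\delta$, together with the saddle property as intended to partition the cases. Under this reading, a point with $\lambda_{\min}\operatorname{K}_{\mu^*}\ge -\delta$ is not in the negative-curvature regime.
\item Alternatively, apply Assumption \ref{ass:ss} with $\delta$ replaced by a slightly larger value. Since the gradient condition $\|\nabla_\mu F\|\le\varepsilon$ is unaffected, this moves the boundary case strictly outside the second alternative.
\end{itemize}
Either way, I would state explicitly that the second alternative is excluded, interpreting the boundary consistently with the strict inequality in the definition of an $(\varepsilon,\delta)$-saddle point.

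Third, since the first two alternatives are excluded, Assumption \ref{ass:ss} forces the third one. That is, there exists a global minimizer $\bar\mu$ with $W_2(\mu^*,\bar\mu)\le\alpha$, where $\alpha$ is the parameter of the $(\varepsilon,\delta,\alpha)$-saddle property. This is exactly the claim.

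No regularity beyond the standing Wasserstein regularity is needed. That regularity only ensures that $\operatorname{K}_{\mu^*}$ is a well-defined compact self-adjoint operator, so that $\lambda_{\min}\operatorname{K}_{\mu^*}$ makes sense. The main obstacle is therefore only the boundary case $\lambda_{\min}\operatorname{K}_{\mu^*}=-\delta$ in the second step; the rest is a direct case elimination.
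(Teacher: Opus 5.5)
Your case elimination is exactly the paper's proof. The paper records $\|\nabla_\mu F(\mu^*,\cdot)\|_{L^2_{\mu^*}}\le\varepsilon$ and $\lambda_{\min}\operatorname{K}_{\mu^*}\ge-\delta$ and then invokes Assumption~\ref{ass:ss} in one line. You are right that this does not exclude the second alternative, which reads $\lambda_{\min}\operatorname{K}_\mu\le-\delta$. The paper silently skips the case $\lambda_{\min}\operatorname{K}_{\mu^*}=-\delta$, so you have found a real hole in its argument, not a quirk of your write-up.

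Neither of your remedies closes it, though.
\begin{itemize}
\item Your second remedy fails outright, because the saddle property is monotone in the other direction. For $\delta'>\delta$, $\lambda_{\min}\operatorname{K}_\mu\le-\delta'$ implies $\lambda_{\min}\operatorname{K}_\mu\le-\delta$. So an $(\varepsilon,\delta',\alpha)$-saddle is an $(\varepsilon,\delta,\alpha)$-saddle, but not conversely, and you cannot apply Assumption~\ref{ass:ss} with a larger $\delta$. Applying it with a smaller $\delta''$ is legitimate, but then values $\lambda_{\min}\operatorname{K}_{\mu^*}\in[-\delta,-\delta'']$ still fall in the second alternative.
\item Your first remedy is not a derivation at all; it changes the hypothesis.
\end{itemize}

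No other argument can close the case either. Reading $\alpha$ as the parameter of Assumption~\ref{ass:ss}, as you do, the statement is false at the boundary. Here is a counterexample.
\begin{itemize}
\item Take $F(\mu)=f(m_\mu)$ on $\mathcal P_2(\mathbb R^2)$, with $m_\mu:=\int x\,\mathrm d\mu(x)$ and $f(x_1,x_2)=h(x_1)+\tfrac12x_2^2$.
\item Choose $h$ with $h''\equiv-\delta$ on $[-a,a]$ for some $a>\varepsilon/\delta$, strongly convex wells at $\pm L$ with $L$ large, and $|h'|>\varepsilon$ everywhere else except close to $\pm L$.
\item Then $F$ is Wasserstein regular with $\operatorname{M}_\mu=0$, $\operatorname{K}_\mu v=\nabla^2 f(m_\mu)\int v\,\mathrm d\mu$ and $\|\nabla_\mu F(\mu,\cdot)\|_{L^2_\mu}=|\nabla f(m_\mu)|$.
\item Hence $\lambda_{\min}\operatorname{K}_\mu=-\delta$ whenever the first coordinate of $m_\mu$ lies in $[-a,a]$. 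The global minimizers are the measures with mean $(\pm L,0)$.
\item Assumption~\ref{ass:ss} holds for a suitable $\alpha<L$. Yet a centred Gaussian is an $(\varepsilon,\delta)$-second-order critical point at $W_2$-distance at least $L$ from every minimizer.
\end{itemize}

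The correct fix is to amend the statement, in one of two ways:
\begin{itemize}
\item make the second alternative of Assumption~\ref{ass:ss} strict, $\lambda_{\min}\operatorname{K}_\mu<-\delta$, which matches the definition of an $(\varepsilon,\delta)$-saddle point and the trigger condition in Theorem~\ref{thm:global_strict_saddle}; or
\item assume $\lambda_{\min}\operatorname{K}_{\mu^*}>-\delta$.
\end{itemize}
With either amendment, your three-step elimination is a complete proof.
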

\begin{proof}
    If $\mu^* \in \mathcal{P}_2^{\mathrm{ac}}(\mathbb R^d)$ is an $(\varepsilon,\delta)$-second-order critical point, then
    \begin{equation*}
        \|\nabla_\mu F(\mu^*)\|_{L_{\mu^*}^2} \leq \varepsilon, \quad \lambda_{\mathrm{min}}\operatorname{K}_{\mu^*} \geq -\delta.
    \end{equation*}
    Therefore, by Assumption \ref{ass:ss}, there exists a global minimizer $\bar{\mu} \in \mathcal{P}_2(\mathbb R^d)$ such that $W_2(\mu^*, \bar{\mu}) \leq \alpha$ for $\alpha > 0$.
\end{proof}

\section{First and second-order dynamics on Wasserstein space}
\label{first and second order dynamics}
In this appendix, we briefly recall the first and second-order transport schemes associated with the minimization of $F$ over Wasserstein space.
\subsection{Wasserstein gradient flow and Wasserstein gradient descent}
We begin with the Benamou--Brenier formula, which characterizes the $L^2$-Wasserstein distance in dynamic form; see, e.g., \cite[Theorem 4.1.3]{figalli}. Given $\mu_0,\mu_1 \in \mathcal{P}_2(\mathbb R^d),$ 
    \begin{equation*}
         W_2^2(\mu_0,\mu_1) = \inf\left\{\int_0^1 \int_{\mathbb R^d} |v_s(x)|^2\mu_s(\mathrm{d}x)\mathrm{d}s: \partial_s \mu_s + \nabla \cdot\left(v_s\mu_s\right) = 0\right\},
     \end{equation*}
where the infimum is taken over all curves $[0,1] \ni t \mapsto (\mu_t,v_t) \in \mathcal{P}_2(\mathbb R^d) \times L_{\mu_t}^2$ such that $t \mapsto \mu_t$ is continuous with respect to weak convergence topology. 

This suggests defining the steepest-descent dynamics for $F$ by choosing the velocity field $v_t=-\nabla_\mu F(\mu_t, \cdot)$, where $\nabla_\mu F$ belongs of the tangent space of $\mathcal{P}_2(\mathbb R^d)$ at $\mu_t$. Let $\mathfrak T_1^F(\mu)[v]$ denote the first-order Taylor expansion of $F$ around $\mu$ in the direction $v \in L_{\mu}^2$. By Lemma \ref{lem:1storder-expansion}, we have
\[
F((\operatorname{Id}+v)_{\#}\mu) = F(\mu) + \langle \nabla_\mu F(\mu,\cdot),v\rangle_{L^2_\mu} + \|v\|_{L^2_\mu}^2 = \mathfrak T_1^F(\mu)[v] + \|v\|_{L^2_\mu}^2.
\]
Accordingly, for $n \in \mathbb N$, $\mu^0 \in \mathcal{P}_2(\mathbb R^d)$ and stepsize $\tau > 0$, define
\begin{align*} 
    v_{\mathrm {GD}}(\mu^n) = \argmin_{v \in L^2_{\mu^n}}\left\{\mathfrak T_1^F(\mu^n)[v] +\frac{1}{2}\|v\|_{L^2_{\mu^n}}^2\right\},\quad \mu^{n+1} = (\operatorname{Id}+\tau v_{\mathrm {GD}}(\mu^n))_\#\mu^n.
\end{align*}
The minimizer is characterized by
\[
v_{\mathrm {GD}}(\mu^n)=-\tau\nabla_\mu F(\mu^n,\cdot),
\]
and therefore the Wasserstein gradient descent scheme takes the form
\begin{equation*}
    \mu^{n+1} = \left(\operatorname{Id} -\tau \nabla_\mu F(\mu^n,\cdot)\right)_{\#}\mu^n, \quad \mu^0 \in \mathcal{P}_2(\mathbb R^d).
\end{equation*}
If $(\mu,x) \mapsto \nabla_\mu F(\mu,x)$ is $L_F$-Lipschitz (cf. Proposition \ref{prop convergence explicit Euler}), then Lemma \ref{lemma:smoothness-of-F-1} implies that, for $\tau \leq L_F^{-1}$,
    \begin{equation*}
        F( \mu^{n+1}) \leq F(\mu^n) - \frac{\tau}{2}\|\nabla_\mu F(\mu^n,\cdot)\|_{L_{\mu^n}^2}^2 \leq F(\mu^n).
    \end{equation*}
Hence Wasserstein gradient descent is a descent method and becomes stationary exactly at first-order critical points, that is, when $\nabla_\mu F(\mu,\cdot)=0$ in $L_\mu^2$ (cf. Definition \ref{def:first-order-stationary}). By Proposition \ref{prop convergence explicit Euler}, passing to the limit $\tau\to 0$ yields the Wasserstein gradient flow
    \begin{equation}
    \label{eq:wassgf}
        \partial_t \mu_t = \nabla \cdot \left(\nabla_\mu F(\mu_t, \cdot)  \mu_t\right), \text{ for } t > 0, \quad
        \mu|_{t=0} := \mu^0.
    \end{equation}
    By Proposition \ref{prop_chain_rule}, $F$ dissipates along the flow, i.e.,
    \begin{equation}
    \label{eq:dissip-wass}
        \frac{\mathrm{d}}{\mathrm{d}t} F(\mu_t) = -\|\nabla_\mu F(\mu_t, \cdot)\|_{L_{\mu_t}^2}^2 \leq 0,
    \end{equation}
    for all $t > 0.$ Thus, Wasserstein gradient flow is the natural first-order descent dynamics for $F$ on $\mathcal P_2(\mathbb R^d)$ and becomes stationary precisely at first-order critical points.

\subsection{Wasserstein Newton flow and Wasserstein Newton method}
We now turn to the corresponding second-order methods. Let $\mathfrak T_2^F(\mu)[v]$ denote the second-order Taylor expansion of $F$ around $\mu$ in the direction $v \in L_{\mu}^2$. By Lemma \ref{lem:2ndorder-expansion},
\[
F((\operatorname{Id}+v)_{\#}\mu) = F(\mu) + \langle \nabla_\mu F(\mu,\cdot),v\rangle_{L^2_\mu}
+ \frac{1}{2}\langle \operatorname H_\mu v,v\rangle_{L^2_\mu} + o(1)= \mathfrak T_2^F(\mu)[v]+ o(1) .
\]
This suggests defining the Newton direction by minimizing the quadratic model. Given $n\in\mathbb N_0$, $\mu^0\in\mathcal P_2(\mathbb R^d)$, and $\tau>0$, let
\begin{align*} 
    v_{\mathrm {N}}(\mu^n) = \argmin_{v \in L^2_{\mu^n}}\mathfrak T_2^F(\mu^n)[v],\quad \mu^{n+1} = (\operatorname{Id}+\tau v_{\mathrm {N}}(\mu^n))_\#\mu^n.
\end{align*}
Assume that $\operatorname H_{\mu^n}$ is self-adjoint and uniformly positive on $L^2_{\mu^n}$ with constant $c > 0$, so that $\operatorname H_{\mu^n}^{-1}$ exists. Then the minimizer is characterized by
\[
\operatorname H_{\mu^n} v_{\mathrm {N}}(\mu^n) = - \nabla_\mu F(\mu^n,\cdot),
\]
that is,
\[
v_{\mathrm N}(\mu^n)=-\operatorname H_{\mu^n}^{-1}\nabla_\mu F(\mu^n,\cdot).
\]
The resulting Wasserstein Newton scheme is therefore
\[
\mu^{n+1} = \Bigl(\operatorname{Id}-\tau \operatorname H_{\mu^n}^{-1}\nabla_\mu F(\mu^n,\cdot)\Bigr)_\#\mu^n, \quad \mu^0 \in \mathcal{P}_2(\mathbb R^d).
\]
Since $\operatorname H_\mu$ is uniformly positive, then so is $\operatorname H_\mu^{-1}$, hence by Lemma \ref{lemma:smoothness-of-F-2},
\begin{equation*}
    F(\mu^{n+1}) \leq F(\mu^n) - \frac{\tau c}{2}\|\operatorname{H}_{\mu^n}^{-1}\nabla_\mu F(\mu^n,\cdot)\|_{L_{\mu^n}^2}^2 \leq F(\mu^n).
\end{equation*}
Thus, whenever the inverse Hessian exists and remains uniformly positive, the Newton transport step also decreases the objective.

As in the first-order case, one may view the corresponding continuous-time dynamics as the limit of the explicit transport scheme; see Proposition \ref{prop convergence newton scheme}. This yields the
Wasserstein Newton flow
\[
\partial_t \mu_t = \nabla \cdot \left(\mu_t \operatorname H_{\mu_t}^{-1}\nabla_\mu F(\mu_t,\cdot)\right),\text{ for } t > 0, \quad
                 \mu|_{t=0} := \mu^0.
\]
This coincides with the Information Newton’s flow introduced by \cite{wang2020informationnewtonsflowsecondorder}. By Proposition \ref{prop_chain_rule},
    \begin{equation*}
        \frac{\mathrm{d}}{\mathrm{d}t} F(\mu_t) = -\langle \operatorname \nabla_\mu F(\mu_t, \cdot), \operatorname{H}_{\mu_t}^{-1} \nabla_\mu F(\mu_t, \cdot)\rangle_{L^2_{\mu_t}} \leq -c \|\operatorname{H}_{\mu_t}^{-1}\nabla_\mu F(\mu_t, \cdot)\|_{L_{\mu_t}^2}^2 \leq 0.
    \end{equation*}
The Newton method, however, comes with two important limitations. First, the scheme is only well defined when $\operatorname{H}_\mu^{-1}$ exists at every iteration, which is a strong requirement. Second,
inverse-Hessian preconditioning does not distinguish between local minimizers and saddle points. As discussed in the main text, it may destroy repulsion along directions of negative curvature and make saddle points locally attractive.

\section{Calculus along curves and flow approximations}
\label{Calculus along curves and flow approximations}
In this appendix, we collect two types of auxiliary results used in the paper. First, we recall a chain rule formula for differentiating the objective along absolutely continuous curves in Wasserstein space. We then use this calculus to relate the discrete transport schemes introduced in the paper to their corresponding gradient flows.
\subsection{Chain rule along absolutely continuous curves}
The following result provides the basic differential identity for evaluating the variation of $F$ along an absolutely continuous curve satisfying the continuity equation.
\begin{proposition}
    \label{prop_chain_rule}
    Assume $F:\mathcal P_2(\mathbb R^d) \to \mathbb R$ is Wasserstein differentiable and let $\mu:[0,1] \to \mathcal{P}_2(\mathbb R^d)$ be an absolutely continuous curve satisfying $\partial_t \mu_t + \nabla \cdot \left(v_t \mu_t\right) = 0$, $t \in (0,1]$, $\mu|_{t=0} := \mu_0$, with vector field $v_t \in \mathcal{T}_{\mu_t} \mathcal{P}_2(\mathbb R^d)$.
    \begin{align*}
        \frac{\mathrm{d}}{\mathrm{d} t}F(\mu_t) = \langle \nabla_\mu F(\mu_t), v_t\rangle_{L^2_{\mu_t}}.
    \end{align*}
\end{proposition}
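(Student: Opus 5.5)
We prove the chain rule by differentiating $t\mapsto F(\mu_t)$ directly from the definition of the derivative. The two ingredients are the first-order expansion along arbitrary couplings (Proposition \ref{prop:strong_diff_w}) and the infinitesimal characterization of the tangent velocity (Proposition \ref{prop_infinitesimal_accurve}).

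Fix $t\in(0,1)$ at which the conclusion of Proposition \ref{prop_infinitesimal_accurve} holds; this is true for a.e. $t$. For small $h$, introduce the auxiliary measure $\nu_h:=(\operatorname{Id}+hv_t)_\#\mu_t$ and split
\[
F(\mu_{t+h})-F(\mu_t)=\bigl(F(\nu_h)-F(\mu_t)\bigr)+\bigl(F(\mu_{t+h})-F(\nu_h)\bigr).
\]

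For the first term, apply Proposition \ref{prop:strong_diff_w} with the (generally non-optimal) coupling $\gamma_h=(\operatorname{Id},\operatorname{Id}+hv_t)_\#\mu_t\in\Pi(\mu_t,\nu_h)$. Its cost is $h\|v_t\|_{L^2_{\mu_t}}$, so
\[
F(\nu_h)-F(\mu_t)=h\langle\nabla_\mu F(\mu_t),v_t\rangle_{L^2_{\mu_t}}+o(h).
\]

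For the second term, we need $F(\mu_{t+h})-F(\nu_h)=o(h)$. By Proposition \ref{prop_infinitesimal_accurve}, $W_2(\mu_{t+h},\nu_h)=o(h)$, so a local Lipschitz bound on $F$ near $\mu_t$ would suffice. To obtain one, apply Wasserstein differentiability at $\nu_h$ with an optimal coupling between $\nu_h$ and $\mu_{t+h}$:
\[
|F(\mu_{t+h})-F(\nu_h)|\le \|\nabla_\mu F(\nu_h)\|_{L^2_{\nu_h}}W_2(\mu_{t+h},\nu_h)+o(W_2(\mu_{t+h},\nu_h)).
\]
This estimate needs two things: the gradient norm must stay bounded as $\nu_h\to\mu_t$, and the $o(\cdot)$ remainder must be uniform in the base point. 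This is the main obstacle. The bare hypothesis of Wasserstein differentiability gives neither, so we would invoke the standing regularity of the paper. In particular, Assumption \ref{assumption:smooth-F} gives a bounded Hessian, which makes $\nabla_\mu F$ Lipschitz along couplings and yields a uniform second-order remainder of size $O(W_2^2)$.

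An alternative that avoids uniformity at $\nu_h$ is to expand around $\mu_t$ itself. Compose an optimal plan from $\nu_h$ to $\mu_{t+h}$ with $\gamma_h$ to obtain a coupling $\tilde\gamma_h\in\Pi(\mu_t,\mu_{t+h})$ with cost at most $h\|v_t\|_{L^2_{\mu_t}}+o(h)$. Then apply Proposition \ref{prop:strong_diff_w} once, using only differentiability at $\mu_t$. The linear term becomes $\int\langle\nabla_\mu F(\mu_t,x),y-x\rangle\,\mathrm d\tilde\gamma_h$, and it differs from $h\langle\nabla_\mu F(\mu_t),v_t\rangle$ by at most $\|\nabla_\mu F(\mu_t)\|_{L^2_{\mu_t}}\,o(h)$ by Cauchy--Schwarz, via the gluing lemma. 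We would try this route first, since it needs no extra assumptions.

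Dividing by $h$ and letting $h\to0$ gives the derivative for a.e. $t$. If the derivative is needed at every $t$ rather than almost every $t$, continuity of $t\mapsto\langle\nabla_\mu F(\mu_t),v_t\rangle$ combined with absolute continuity of $t\mapsto F(\mu_t)$ would upgrade the statement.
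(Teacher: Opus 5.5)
Your proposal is correct. Your preferred route, the gluing argument, differs from the paper's proof, and it is the more careful of the two.

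Your first route is exactly what the paper does. The paper applies Definition~\ref{def:wass-differentiability} with base point $\mu_{t+h}$ and target $\nu_h=(\operatorname{Id}+hv_t)_\#\mu_t$. It then writes the linear term with $\nabla_\mu F(\mu_t,\cdot)$, even though the first marginal of the plan is $\mu_{t+h}$, and bounds it by $\|\nabla_\mu F(\mu_t)\|_{L^2_{\mu_t}}W_2(\mu_{t+h},\nu_h)$. It treats the remainder $o(W_2(\mu_{t+h},\nu_h))$ as if it were uniform in the moving base point. So the paper silently uses the two facts you flagged as the obstacle: a bounded gradient near $\mu_t$, and a base-point-uniform remainder. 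Neither follows from bare Wasserstein differentiability.

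Your gluing argument avoids this by expanding only at $\mu_t$.
\begin{itemize}
\item You apply Proposition~\ref{prop:strong_diff_w} once, to $\tilde\gamma_h\in\Pi(\mu_t,\mu_{t+h})$. Its cost is at most $|h|\,\|v_t\|_{L^2_{\mu_t}}+W_2(\nu_h,\mu_{t+h})=O(|h|)$.
\item The linear term splits as $h\langle\nabla_\mu F(\mu_t),v_t\rangle_{L^2_{\mu_t}}$ plus an error of at most $\|\nabla_\mu F(\mu_t)\|_{L^2_{\mu_t}}W_2(\nu_h,\mu_{t+h})$. This error is $o(h)$ by Proposition~\ref{prop_infinitesimal_accurve}.
\item One point to state explicitly: since $\tilde\gamma_h$ changes with $h$, the $o(\cdot)$ in Proposition~\ref{prop:strong_diff_w} must be read as a function of the coupling cost alone, uniform over $(\nu,\gamma)$. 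The paper uses it this way in Lemma~\ref{lemma_for_second_order} as well.
\end{itemize}
This route needs nothing beyond the stated hypotheses and works for $h$ of either sign. The paper's version is shorter but is only valid under extra regularity such as Assumption~\ref{assumption:smooth-F}. That assumption is not a hypothesis of the proposition, so you should drop the first route rather than import it.

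Your direct use of Proposition~\ref{prop:strong_diff_w} for $F(\nu_h)-F(\mu_t)$ is also cleaner than the paper's appeal to Lemma~\ref{lem:1storder-expansion}. That lemma's stated remainder $t^2\|v\|^2$ should really be $o(t)$.

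Finally, both arguments give the identity only for a.e.\ $t$, namely where Proposition~\ref{prop_infinitesimal_accurve} applies; that is the right reading of the statement. Your upgrade to every $t$ would need continuity of $t\mapsto\langle\nabla_\mu F(\mu_t),v_t\rangle_{L^2_{\mu_t}}$ and absolute continuity of $t\mapsto F(\mu_t)$. Neither is provided, so keep it as a remark rather than part of the proof.
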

\begin{proof}
    By applying Definition \ref{def:wass-differentiability} with $\mu = \mu_{t+h}$, $\nu = (\operatorname{Id}+ h v_t)_{\#} \mu_t$, $h > 0$, and $\gamma \in \Pi_0(\mu_{t+h}, (\operatorname{Id}+ h v_t)_{\#} \mu_t)$, we have
    \begin{align*}
        \left|\frac{F(\mu_{t+h}) - F((\operatorname{Id}+ h v_t)_{\#} \mu_t)}{h}\right| &\leq \left|\frac{1}{h} \int \langle \nabla_\mu F(\mu_t, x), y-x\rangle \mathrm d\gamma(x,y)\right| + \frac{o(h)}{h} \\
        &\leq \|\nabla_\mu F(\mu_t)\|_{L^2_{\mu_t}} \frac{W_2(\mu_{t+h},(\operatorname{Id}+ h v_t)_{\#} \mu_t)}{h} + \frac{o(h)}{h}\\
        &\to 0, 
    \end{align*}
    as $h\to 0$, where the second inequality follows from the Cauchy-Schwarz inequality and the optimality of $\gamma$, while the last inequality follows from Proposition \ref{prop_infinitesimal_accurve}. Finally, by Lemma \ref{lem:1storder-expansion}, we have
    \begin{align*}
        \frac{\mathrm{d}}{\mathrm{d} t}F(\mu_t) &= \lim_{h \to 0} \frac{F(\mu_{t+h}) - F(\mu_t)}{h} \\
        &= \lim_{h\to0} \frac{F((\operatorname{Id}+ h v_t)_{\#} \mu_t) - F(\mu_t)}{h} \\
        &= \langle \nabla_\mu F(\mu_t), v_t\rangle_{L^2_{\mu_t}}.
    \end{align*}
\end{proof}
With the chain rule in hand, we next turn to the relation between discrete transport schemes and their associated flows, starting with Wasserstein gradient descent.
\subsection{Convergence of Wasserstein gradient descent to the flow}
We first show that the Wasserstein gradient descent scheme converges to the corresponding Wasserstein gradient flow as the time step tends to zero.
\begin{proposition}[Convergence of Wasserstein gradient descent to the flow]
\label{prop convergence explicit Euler}
Let $T > 0$ and let $\mu:[0,T] \to \mathcal{P}_2(\mathbb R^d)$ be an absolutely continuous curve satisfying $\partial_t \mu_t = \nabla \cdot \left(\nabla_\mu F(\mu_t,\cdot)\mu_t\right)$, $t \in (0,T)$, $\mu|_{t=0} := \mu_0$. For $\tau > 0$, define $E_\tau(\mu):=(\operatorname{Id}-\tau \nabla_\mu F(\mu,\cdot))_\#\mu$. Assume there exists $L_F> 0$ such that, for all $\gamma \in \Pi(\mu, \nu)$,
    \begin{equation}
    \label{eq:b-Lipschitz}
        \|\nabla_\mu F(\nu,\cdot)  - \nabla_\mu F(\mu, \cdot)\|_{L_\gamma^2} \leq L_F \left(\int_{\mathbb R^d \times \mathbb R^d} \|y-x\|^2\mathrm{d}\gamma(x,y)\right)^{1/2}.
    \end{equation}
Let $(\mu^n)_{n\ge 0}$ be defined by $\mu^{n+1}=E_\tau(\mu^n)$, with $\mu^0:=\mu_0\in \mathcal P_2(\mathbb R^d)$, and set $t_n:=n\tau$. Then, for every $n$ such that $t_n \le T$, we have
\[
\lim_{\tau \to 0}\max_{t_n\le T} W_2(\mu^n,\mu_{t_n}) = 0.
\]
\end{proposition}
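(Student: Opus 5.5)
This is the standard convergence of the explicit Euler scheme to the flow, proved with a discrete Grönwall argument on couplings. We set $e_n := W_2(\mu^n,\mu_{t_n})$ and estimate $e_{n+1}$ in terms of $e_n$ and a local consistency error. By the triangle inequality,
\[
e_{n+1} \le W_2\big(E_\tau(\mu^n),E_\tau(\mu_{t_n})\big) + W_2\big(E_\tau(\mu_{t_n}),\mu_{t_{n+1}}\big).
\]
This splits the error into a stability term and a local truncation term.

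\emph{Stability.} Let $\gamma\in\Pi_o(\mu^n,\mu_{t_n})$ and push it forward by the map $(x,y)\mapsto(x-\tau\nabla_\mu F(\mu^n,x),\,y-\tau\nabla_\mu F(\mu_{t_n},y))$. This gives a coupling of the two images, so
\[
W_2\big(E_\tau(\mu^n),E_\tau(\mu_{t_n})\big)\le \|x-y\|_{L^2_\gamma}+\tau\|\nabla_\mu F(\mu^n,x)-\nabla_\mu F(\mu_{t_n},y)\|_{L^2_\gamma}\le (1+\tau L_F)\,e_n .
\]
Here we use the coupling-Lipschitz bound \eqref{eq:b-Lipschitz}, read as a bound on the difference of $\nabla_\mu F(\mu,x)$ and $\nabla_\mu F(\nu,y)$ under $\gamma$.

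\emph{Consistency.} Write $v_t=-\nabla_\mu F(\mu_t,\cdot)$ and let $X_t$ be the flow map, so that $\mu_{t_{n+1}}=(X_{t_{n+1}}\circ X_{t_n}^{-1})_\#\mu_{t_n}$ along characteristics. Coupling $E_\tau(\mu_{t_n})$ with $\mu_{t_{n+1}}$ through the same initial point $x\sim\mu_{t_n}$ gives
\[
W_2\big(E_\tau(\mu_{t_n}),\mu_{t_{n+1}}\big)\le \Big\|\int_{t_n}^{t_{n+1}}\big(v_s(X_s)-v_{t_n}(x)\big)\,\mathrm ds\Big\|_{L^2_{\mu_{t_n}}}.
\]
Applying \eqref{eq:b-Lipschitz} to the coupling $(x,X_s(x))$ bounds the integrand by $L_F\|X_s-x\|_{L^2}$. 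In turn $\|X_s-x\|_{L^2}\le\int_{t_n}^s\|v_r\|_{L^2_{\mu_r}}\,\mathrm dr$. The speed $\|v_r\|_{L^2_{\mu_r}}$ is bounded on $[0,T]$ by some $V$: the dissipation identity \eqref{eq:dissip-wass} makes it square-integrable, and the Lipschitz bound gives $\|v_r\|\le \|v_0\|+L_F W_2(\mu_r,\mu_0)$, which closes by Grönwall. The local error is therefore at most $L_F V\tau^2$, up to a constant.

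\emph{Conclusion.} We obtain $e_{n+1}\le(1+\tau L_F)e_n+C\tau^2$ with $e_0=0$. Iterating gives $e_n\le C\tau^2\sum_{k<n}(1+\tau L_F)^k\le (C/L_F)\,\tau\, e^{L_F T}$ uniformly for $t_n\le T$, and this tends to $0$ as $\tau\to 0$.

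The main obstacle is the consistency step. It needs a Lagrangian representation of the flow, i.e.\ that the curve is transported by the characteristics of $v_t$. To get this we use the superposition principle for absolutely continuous curves, or construct $X_t$ directly, which is possible since $v_t$ is Lipschitz in $x$: $\|\nabla\nabla_\mu F\|_{\mu,\infty}<\infty$ by regularity. If that becomes delicate, the fallback is a weaker local error $o(\tau)$ obtained from Proposition \ref{prop_infinitesimal_accurve}, uniform in $n$. That still suffices, since $\sum_{k<n} o(\tau)\,e^{L_F T}\to 0$.
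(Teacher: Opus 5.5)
\textbf{Comparison with the paper's proof.} Your stability estimate and the discrete Gr\"onwall iteration are the same as the paper's. The difference is the consistency term $W_2(E_\tau(\mu_{t_n}),\mu_{t_{n+1}})$.

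The paper cites Proposition~\ref{prop_infinitesimal_accurve} to call this term $o(\tau)$ and sums $n\le T/\tau$ such terms. That is exactly your fallback. The step needs $\max_{t_n\le T}W_2(E_\tau(\mu_{t_n}),\mu_{t_{n+1}})/\tau\to 0$, i.e.\ uniformity over the grid. Proposition~\ref{prop_infinitesimal_accurve} does not give this. It holds only for a.e.\ fixed $t$, with a $t$-dependent rate, while the grid points $t_n=n\tau$ move with $\tau$ and can land in the exceptional null set.

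Your main route supplies exactly this missing uniformity. Following characteristics and applying \eqref{eq:b-Lipschitz} to the coupling $(x,X_s(x))$ bounds the local error by $\tfrac{L_F V}{2}\tau^2$ uniformly in $n$. This gives an explicit $O(\tau)$ global rate. The cost is a Lagrangian representation of the curve and an a priori speed bound, which the paper's shorter argument avoids. So treat the quantitative argument as the proof itself, not as an alternative to the fallback.

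\textbf{One repair.} Do not justify Lipschitzness of $v_t$ in $x$ by ``$\|\nabla\nabla_\mu F\|_{\mu,\infty}<\infty$ by regularity.'' Wasserstein regularity is not a hypothesis of this proposition. Moreover, a $\mu$-essential bound on the Jacobian need not make $v_t$ Lipschitz on $\mathbb R^d$, for instance when $\operatorname{supp}\mu_t\neq\mathbb R^d$.

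Use the superposition principle instead. It gives a probability measure $\eta$ concentrated on solutions of $\dot\gamma=v_t(\gamma)$ with $(e_t)_\#\eta=\mu_t$, where $e_t(\gamma)=\gamma(t)$. Your computation then goes through with $\eta$-a.e.\ path $\gamma$ in place of $s\mapsto X_s(x)$:
\begin{itemize}
\item couple $E_\tau(\mu_{t_n})$ and $\mu_{t_{n+1}}$ through $\bigl(\gamma(t_n)+\tau v_{t_n}(\gamma(t_n)),\,\gamma(t_{n+1})\bigr)$;
\item apply \eqref{eq:b-Lipschitz} to $(e_{t_n},e_s)_\#\eta\in\Pi(\mu_{t_n},\mu_s)$.
\end{itemize}
No spatial regularity of $v_t$ is needed.

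The only input the superposition principle needs is $\int_0^T\|v_t\|^2_{L^2_{\mu_t}}\,\mathrm dt<\infty$. This already follows from your bound $\|v_r\|_{L^2_{\mu_r}}\le\|v_0\|_{L^2_{\mu_0}}+L_F W_2(\mu_r,\mu_0)$, since $r\mapsto W_2(\mu_r,\mu_0)$ is bounded on $[0,T]$ by continuity of the curve. No Gr\"onwall step or dissipation identity is needed.
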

\begin{proof}
Set $e_n:=W_2(\mu^n,\mu_{t_n})$. We first prove that the Euler map $E_\tau$ is Lipschitz in $W_2$. Let $\mu,\nu\in\mathcal P_2(\mathbb R^d)$, and let $\pi \in\Pi(\mu,\nu)$ be an optimal coupling, so that
\[
W_2^2(\mu,\nu) = \int_{\mathbb R^d\times\mathbb R^d}\|x-y\|^2\mathrm d\pi(x,y).
\]
Consider the coupling of $E_\tau(\mu)$ and $E_\tau(\nu)$ obtained by pushing $\pi$ forward through the map
\[
(x,y)\mapsto \bigl(x-\tau \nabla_\mu F(\mu,x),y-\tau \nabla_\mu F(\nu,y)\bigr).
\]
Then
\begin{align*}
W_2\bigl(E_\tau(\mu),E_\tau(\nu)\bigr)
&\le
\left(
\int \bigl\|x-\tau \nabla_\mu F(\mu,x)-y+\tau \nabla_\mu F(\nu,y)\bigr\|^2\mathrm d\pi(x,y)
\right)^{1/2} \\
&\le
\left(
\int \bigl(\|x-y\|+\tau \|\nabla_\mu F(\mu,y)-\nabla_\mu F(\nu,x)\|\bigr)^2\mathrm d\pi(x,y)
\right)^{1/2}.
\end{align*}
By Minkowski's inequality and \eqref{eq:b-Lipschitz},
\begin{align*}
W_2\bigl(E_\tau(\mu),E_\tau(\nu)\bigr)
&\le W_2(\mu,\nu) + \tau \left(
\int \|\nabla_\mu F(\mu,y)-\nabla_\mu F(\nu,x)\|^2\mathrm d\pi(x,y)
\right)^{1/2} \\
&\le W_2(\mu,\nu) + \tau L_F \left(\int \|x-y\|^2\mathrm d\pi(x,y)\right)^{1/2}\\
&=\bigl(1+\tau L_F\bigr)W_2(\mu,\nu).
\end{align*}
Therefore,
\begin{equation}
\label{eq:E-stability}
W_2\bigl(E_\tau(\mu),E_\tau(\nu)\bigr)\le (1+L_F\tau)W_2(\mu,\nu).
\end{equation}
We now compare one numerical step with one exact step. We have
\begin{align*}
e_{n+1} &= W_2(\mu^{n+1},\mu_{t_{n+1}}) =
W_2\bigl(E_\tau(\mu^n),\mu_{t_n+\tau}\bigr) \\
&\le
W_2\bigl(E_\tau(\mu^n),E_\tau(\mu_{t_n})\bigr)
+
W_2\bigl(E_\tau(\mu_{t_n}),\mu_{t_n+\tau}\bigr).
\end{align*}
By Proposition \ref{prop_infinitesimal_accurve}, we have $W_2\bigl(E_\tau(\mu_{t_n}),\mu_{t_n+\tau}\bigr) = o(\tau)$. Using \eqref{eq:E-stability}, we get
\[
e_{n+1}\le (1+L_F\tau)e_n+o(\tau).
\]
Iterating this inequality gives
\[
e_n \le (1+L_F\tau)^n e_0 +o(\tau)\sum_{k=0}^{n-1}(1+L_F\tau)^k.
\]
Since $(1+L_F\tau)^n\le e^{L_F t_n}$ and
\[
\sum_{k=0}^{n-1}(1+L_F\tau)^k = \frac{(1+L_F\tau)^n-1}{L_F\tau} \le \frac{e^{L_Ft_n}-1}{L_F\tau},
\]
we obtain
\[
e_n \le e^{L_Ft_n}e_0 + \frac{e^{L_Ft_n}-1}{L_F\tau}o(\tau).
\]
Since $\mu^0=\mu_0$, then $e_0=0$, hence
\[
\max_{t_n\le T} e_n
\le
\frac{e^{L_F T}-1}{L_F\tau}o(\tau).
\]
Because $\frac{o(\tau)}{\tau}\to0$, the right-hand side tends to $0$ as
$\tau \to 0$, which proves convergence.
\end{proof}
We now establish the analogous consistency result for the Wasserstein Newton scheme. Compared with the gradient case, the argument requires additional control of the variation of the Hessian and of its inverse along transport.

\subsection{Convergence of the Wasserstein Newton scheme to the flow}
We next consider the second-order transport scheme associated with Wasserstein Newton's method and show that, under suitable regularity and uniform positivity assumptions, it converges to the corresponding Newton flow.
\begin{proposition}[Convergence of the Wasserstein Newton scheme to the flow]
\label{prop convergence newton scheme}
Let Assumption \ref{assumption:Hessian-lipschitz} hold. Moreover, assume
\begin{itemize}
    \item there exists $G > 0$ such that $\|\nabla_\mu F(\mu,\cdot)\|_{L^2_\mu}\le G$,
    \item there exists $L_F> 0$ such that $\|\nabla_\mu F(\nu,\cdot)  - \nabla_\mu F(\mu, \cdot)\|_{L_\gamma^2} \leq L_F \left(\int \|y-x\|^2\mathrm{d}\gamma(x,y)\right)^{1/2}$, for all $\gamma \in \Pi(\mu, \nu)$,
    \item the operator $\operatorname{H}_\mu$ is self-adjoint on $L^2_\mu$ and there exists $\lambda>0$ such that $\langle \operatorname{H}_\mu v,v\rangle_{L^2_\mu}\ge \lambda \|v\|_{L^2_\mu}^2$, for all $\mu\in\mathcal P_2(\mathbb R^d)$ and $v\in L^2_\mu$.
\end{itemize} 
Let $T > 0$ and let $\mu:[0,T] \to \mathcal{P}_2(\mathbb R^d)$ be an absolutely continuous curve satisfying $\partial_t \mu_t = \nabla \cdot \left(\mu_t\operatorname{H}_{\mu_t}^{-1}\nabla_\mu F(\mu_t,\cdot)\right)$, $t \in (0,T)$, $\mu|_{t=0} := \mu_0$. For $\tau > 0$, define $E_\tau(\mu):=(\operatorname{Id}-\tau \operatorname{H}_{\mu}^{-1}\nabla_\mu F(\mu,\cdot))_\#\mu$.
Let $(\mu^n)_{n\ge 0}$ be defined by $\mu^{n+1}=E_\tau(\mu^n)$, with $\mu^0:=\mu_0\in \mathcal P_2(\mathbb R^d)$, and set $t_n:=n\tau$. Then, for every $n$ such that $t_n \le T$, we have
\[
\lim_{\tau \to 0}\max_{t_n\le T} W_2(\mu^n,\mu_{t_n}) = 0.
\]
\end{proposition}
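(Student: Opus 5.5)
We mimic the proof of Proposition \ref{prop convergence explicit Euler}: a one-step consistency estimate combined with $W_2$-stability of the Euler map, followed by a discrete Gr\"onwall argument. Write $b(\mu):=\operatorname{H}_\mu^{-1}\nabla_\mu F(\mu,\cdot)\in L^2_\mu$ and $e_n:=W_2(\mu^n,\mu_{t_n})$. By the triangle inequality,
\[
e_{n+1}\le W_2\bigl(E_\tau(\mu^n),E_\tau(\mu_{t_n})\bigr)+W_2\bigl(E_\tau(\mu_{t_n}),\mu_{t_n+\tau}\bigr).
\]
The second term is $o(\tau)$ by Proposition \ref{prop_infinitesimal_accurve}, since the flow has velocity field $-b(\mu_t)$. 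To make the $o(\tau)$ uniform in $n$, we use the fact that $t\mapsto b(\mu_t)$ is bounded by $G/\lambda$ and Lipschitz along the curve; this follows from the coupling-Lipschitz estimate below. With uniformity in hand, it suffices to prove the stability bound $W_2(E_\tau(\mu),E_\tau(\nu))\le(1+L_b\tau)W_2(\mu,\nu)$ for some constant $L_b$. The conclusion then follows exactly as before:
\[
e_n\le \frac{e^{L_bT}-1}{L_b\tau}\,o(\tau)\to 0 .
\]

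\textbf{Coupling-Lipschitz property of $b$.} This is the main step. Fix an optimal coupling $\pi\in\Pi_o(\mu,\nu)$ and push it forward by $(x,y)\mapsto(x-\tau b(\mu)(x),\,y-\tau b(\nu)(y))$. As in the gradient case, Minkowski's inequality reduces the stability bound to showing
\[
\|b(\nu)(y)-b(\mu)(x)\|_{L^2_\pi}\le L_b\,W_2(\mu,\nu).
\]
Both $b(\mu)(x)$ and $b(\nu)(y)$ can be viewed as elements of $L^2_\pi$. The Hessians act on $L^2_\pi$ functions depending only on $x$ (resp.\ only on $y$), and we lift $\operatorname{H}_\mu,\operatorname{H}_\nu$ to operators $\hat{\operatorname H}_\mu,\hat{\operatorname H}_\nu$ on $L^2_\pi$ using the multiplication kernel $\nabla\nabla_\mu F$ and the integral kernel $\nabla_\mu^2F$ integrated against $\pi$. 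The lifts keep the lower bound $\lambda$ on the pulled-back subspace. We then use the resolvent identity
\[
\hat{\operatorname H}_\nu^{-1}g_\nu-\hat{\operatorname H}_\mu^{-1}g_\mu=\hat{\operatorname H}_\nu^{-1}(g_\nu-g_\mu)+\hat{\operatorname H}_\nu^{-1}\bigl(\hat{\operatorname H}_\mu-\hat{\operatorname H}_\nu\bigr)\hat{\operatorname H}_\mu^{-1}g_\mu ,
\]
where $g_\mu=\nabla_\mu F(\mu,x)$ and $g_\nu=\nabla_\mu F(\nu,y)$. This gives
\[
\|b(\nu)-b(\mu)\|_{L^2_\pi}\le \lambda^{-1}L_F W_2+\lambda^{-2}\,\|\hat{\operatorname H}_\mu-\hat{\operatorname H}_\nu\|_{\mathrm{op}}\,G .
\]
Assumption \ref{assumption:Hessian-lipschitz} bounds the operator difference by $(L_{\operatorname M}+L_{\operatorname K})W_2=L_{\operatorname H}W_2$, since both pieces are controlled in $\|\cdot\|_{\gamma,\infty}$ and $L^2_{\gamma\otimes\gamma}$ respectively. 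Hence one can take
\[
L_b=\frac{L_F}{\lambda}+\frac{L_{\operatorname H}G}{\lambda^2}.
\]

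\textbf{Expected obstacle.} The delicate point is making this lifting rigorous. Invertibility of the lifted operator on the relevant subspace of $L^2_\pi$ must be justified by checking that uniform positivity transfers: for functions of $x$ alone, $\langle\hat{\operatorname H}_\mu u,u\rangle_{L^2_\pi}=\langle \operatorname{H}_\mu u,u\rangle_{L^2_\mu}$. One must also verify that the mixed term $(\hat{\operatorname H}_\mu-\hat{\operatorname H}_\nu)\hat{\operatorname H}_\mu^{-1}g_\mu$ is estimated by the coupling-Lipschitz constants when the input function depends only on $x$ but is compared with operators built from $y$. If this route is too technical, the fallback is to avoid the lifting altogether. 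In that case we would compare $b(\mu_{t_n})$ and $b(\mu^n)$ through the transport map $E_\tau$ along each step, and control the operator difference via Lemma \ref{lem:lip-curves}.
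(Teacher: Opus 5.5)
Your architecture matches the paper's. Both prove a $W_2$-stability bound $W_2(E_\tau(\mu),E_\tau(\nu))\le(1+C\tau)W_2(\mu,\nu)$ from a resolvent identity, with the same constant $C=L_F/\lambda+L_{\operatorname H}G/\lambda^2$, then add one-step consistency and a discrete Gr\"onwall argument. The difference is where the two Newton directions are compared, and that is where your argument breaks. The paper pulls $\operatorname H_\nu$ back to $L^2_\mu$ through an optimal map $\operatorname T_\mu^\nu$, so $b(\nu)\circ\operatorname T_\mu^\nu$ and $b(\mu)$ live in one space and only $\widetilde{\operatorname H}_\nu^{\operatorname T_\mu^\nu}$ has to be inverted on $L^2_\mu$. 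You lift both operators to $L^2_\pi$ for a general plan. Your resolvent identity then applies $\hat{\operatorname H}_\nu^{-1}$ to $g_\nu-g_\mu$ and to $(\hat{\operatorname H}_\mu-\hat{\operatorname H}_\nu)\hat{\operatorname H}_\mu^{-1}g_\mu$, neither of which depends on $y$ alone. The vector you want to bound, $b(\nu)(y)-b(\mu)(x)$, is not in the pulled-back subspace either. Positivity on the pulled-back subspace says nothing there. Let $P_y$ be conditional expectation onto $y$-measurable functions in $L^2_\pi$. Then $\hat{\operatorname H}_\nu$ commutes with $P_y$, acts as $\operatorname H_\nu$ on the range of $P_y$, and acts as the pure multiplication $u\mapsto\nabla\nabla_\mu F(\nu,y)u$ on its orthogonal complement. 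The component $(\mathrm I-P_y)\bigl(b(\nu)-b(\mu)\bigr)=-(\mathrm I-P_y)b(\mu)$ is in general nonzero when $\pi$ is not induced by a map $y\mapsto x$. So you actually need $\|\hat{\operatorname H}_\nu^{-1}\|_{\mathrm{op}}\le\lambda^{-1}$ on all of $L^2_\pi$, i.e.\ $\nabla\nabla_\mu F(\nu,\cdot)\succeq\lambda$ $\nu$-a.e. This is not a hypothesis. The check you propose, $\langle\hat{\operatorname H}_\mu u,u\rangle_{L^2_\pi}=\langle\operatorname H_\mu u,u\rangle_{L^2_\mu}$ for $u$ depending on $x$ only, does not give it.

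The missing step can be supplied, but it needs an extra idea. Since $\operatorname K_\nu$ is Hilbert--Schmidt, Weyl's theorem gives $\sigma_{\mathrm{ess}}(\operatorname M_\nu)=\sigma_{\mathrm{ess}}(\operatorname H_\nu)\subset[\lambda,\infty)$. For atomless $\nu$ this forces $\nabla\nabla_\mu F(\nu,\cdot)\succeq\lambda$ a.e. Atoms can then be handled by spreading them out and using the coupling-Lipschitz bound on $\nabla\nabla_\mu F$ from Assumption \ref{assumption:Hessian-lipschitz}. Alternatively, use map-induced plans as the paper does. That needs optimal maps between the iterates and the flow (e.g.\ absolute continuity), and invertibility of the pullback on all of $L^2_\mu$; the latter is immediate when the map is injective, since composition with it is then unitary.

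Your fallback does not work as described. $E_\tau$ transports $\mu^n$ to $\mu^{n+1}$, not to $\mu_{t_n}$, and Lemma \ref{lem:lip-curves} only compares Hessians along a single curve $(\operatorname{Id}+tv)_\#\mu$. Using it would again require a map from $\mu^n$ to $\mu_{t_n}$, which is the paper's construction.

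On the positive side, you are right that the $o(\tau)$ consistency error must be uniform in $n$, a point the paper passes over. Once the coupling-Lipschitz bound on $b$ holds for arbitrary plans, a superposition (Lagrangian) representation of the flow gives an $O(\tau^2)$ local error uniformly in $t$.
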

\begin{proof}
To compare two Newton steps issued from different measures, we first transport the Hessian at $\nu$ back to the reference space $L_\mu^2$. For $\mu,\nu\in\mathcal P_2(\mathbb R^d)$, let $\operatorname{T}_\mu^{\nu}$ be an optimal transport map, and define the transported Hessian acting on $L^2_\mu$ by
\[
\widetilde{\operatorname{H}}_\nu^{\operatorname{T}_\mu^{\nu}}[v](x) := \operatorname{M}_\nu(\operatorname{T}_\mu^{\nu}x)v(x)+\int_{\mathbb R^d}\operatorname{K}_\nu(\operatorname{T}_\mu^{\nu}x,\operatorname{T}_\mu^{\nu}\bar x)v(\bar x)\mathrm d\mu(\bar x).
\]
This is the pullback of $\operatorname{H}_\nu$ onto $L^2_\mu$. Likewise define the transported gradient and Newton direction by
\[
\widetilde g_\nu^T(x):=g_\nu(Tx),
\qquad
\widetilde N_\nu^T(x):=N_\nu(Tx).
\]
We first estimate the variation of the Hessian. For any $v\in L^2_\mu$,
\begin{align*}
\left(\widetilde{\operatorname{H}}_\nu^{\operatorname{T}_\mu^{\nu}}-\operatorname{H}_\mu\right)[v](x)
&=
\bigl(\operatorname{M}_\nu(\operatorname{T}_\mu^{\nu}x)-\operatorname{M}_\mu(x)\bigr)v(x) \\
&
+\int_{\mathbb R^d}\bigl(\operatorname{K}_\nu(\operatorname{T}_\mu^{\nu}x,\operatorname{T}_\mu^{\nu}\bar x)-\operatorname{K}_\mu(x,\bar x)\bigr)v(\bar x)\mathrm d\mu(\bar x).
\end{align*}
Therefore, using $\gamma = (\operatorname{Id}, \operatorname{T}_\mu^{\nu})_{\#}\mu$, by Assumption \ref{assumption:Hessian-lipschitz}, 
\begin{equation}
\label{eq:H-perturb}
\left\|\widetilde{\operatorname{H}}_\nu^{\operatorname{T}_\mu^{\nu}}-\operatorname{H}_\mu\right\|_{\mathrm{op}}
\le
(L_{\operatorname{M}}+L_{\operatorname{K}})W_2(\mu,\nu).
\end{equation}
Next, since $\operatorname{H}_\mu$ is uniformly positive with constant $\lambda$, so is its pullback $\widetilde{\operatorname{H}}_\nu^{\operatorname{T}_\mu^{\nu}}$, and thus
\[
\left\|\left(\widetilde{\operatorname{H}}_\nu^{\operatorname{T}_\mu^{\nu}}\right)^{-1}\right\|_{\mathrm{op}}
\le \lambda^{-1},
\quad
\|\operatorname{H}_\mu^{-1}\|_{\mathrm{op}}\le \lambda^{-1}.
\]
The previous estimate yields a corresponding control on the inverse Hessian through the resolvent identity. Using the resolvent identity,
\[
(\widetilde{\operatorname{H}}_\nu^{\operatorname{T}_\mu^{\nu}})^{-1}-\operatorname{H}_\mu^{-1}
=
(\widetilde{\operatorname{H}}_\nu^{\operatorname{T}_\mu^{\nu}})^{-1}(\operatorname{H}_\mu-\widetilde{\operatorname{H}}_\nu^{\operatorname{T}_\mu^{\nu}})\operatorname{H}_\mu^{-1},
\]
and \eqref{eq:H-perturb}, we obtain
\begin{equation*}
\left\|\left(\widetilde{\operatorname{H}}_\nu^{\operatorname{T}_\mu^{\nu}}\right)^{-1}-\operatorname{H}_\mu^{-1}\right\|_{\mathrm{op}}
\le
\lambda^{-2}(L_{\operatorname{M}}+L_{\operatorname{K}})W_2(\mu,\nu).
\end{equation*}
We may now compare the Newton directions themselves by combining the Lipschitz continuity of the gradient with the stability of the inverse Hessian. We have
\begin{align*}
&\|(\widetilde{\operatorname{H}}_\nu^{\operatorname{T}_\mu^{\nu}})^{-1}\nabla_\mu F(\nu,\cdot)\circ \operatorname{T}_\mu^{\nu}-\operatorname{H}_\mu^{-1}\nabla_\mu F(\mu, \cdot)\|_{L^2_\mu}\\
&\le
\|(\widetilde{\operatorname{H}}_\nu^{\operatorname{T}_\mu^{\nu}})^{-1}\left(\nabla_\mu F(\nu,\cdot)\circ \operatorname{T}_\mu^{\nu}-\nabla_\mu F(\mu,\cdot)\right)\|_{L^2_\mu}
+
\|\left((\widetilde{\operatorname{H}}_\nu^{\operatorname{T}_\mu^{\nu}})^{-1}-\operatorname{H}_\mu^{-1}\right)\nabla_\mu F(\mu, \cdot)\|_{L^2_\mu}
\\
&\le
\lambda^{-1}\|\nabla_\mu F(\nu,\cdot)\circ \operatorname{T}_\mu^{\nu}-\nabla_\mu F(\mu,\cdot)\|_{L^2_\mu}
+
\lambda^{-2}(L_{\operatorname{M}}+L_{\operatorname{K}})W_2(\mu,\nu)\|\nabla_\mu F(\mu,\cdot)\|_{L^2_\mu}.
\end{align*}
Hence,
\begin{equation}
\begin{aligned}
\label{eq:newton-dir-lip}
\|(\widetilde{\operatorname{H}}_\nu^{\operatorname{T}_\mu^{\nu}})^{-1}\nabla_\mu F(\nu,\cdot)\circ \operatorname{T}_\mu^{\nu}-\operatorname{H}_\mu^{-1}\nabla_\mu F(\mu, \cdot)\|_{L^2_\mu}
&\le
\Bigl(\lambda^{-1}L_F+\lambda^{-2}G(L_{\operatorname{M}}+L_{\operatorname{K}})\Bigr)W_2(\mu,\nu)\\
&:= CW_2(\mu,\nu).
\end{aligned}
\end{equation}
This estimate implies that the Newton Euler map is Lipschitz in Wasserstein distance. We now prove stability of the Euler map $E_\tau(\mu)$. Using the optimal coupling $(\operatorname{Id},\operatorname{T}_\mu^{\nu})_\#\mu$ between $\mu$ and $\nu$, we obtain a coupling between $E_\tau(\mu)$ and $E_\tau(\nu)$ by pushing $\mu$ forward through
\[
x\mapsto \bigl(x-\tau \operatorname{H}_\mu^{-1}\nabla_\mu F(\mu,x),\operatorname{T}_\mu^{\nu}(x)-\tau (\widetilde{\operatorname{H}}_\nu^{\operatorname{T}_\mu^{\nu}})^{-1}\nabla_\mu F(\nu,\operatorname{T}_\mu^{\nu}(x))\bigr).
\]
Hence,
\begin{align*}
W_2(E_\tau(\mu),E_\tau(\nu))
&\le
\left(
\int_{\mathbb R^d}
\bigl\|x-\tau \operatorname{H}_\mu^{-1}\nabla_\mu F(\mu,x)-\operatorname{T}_\mu^{\nu}(x)+\tau (\widetilde{\operatorname{H}}_\nu^{\operatorname{T}_\mu^{\nu}})^{-1}\nabla_\mu F(\nu,\operatorname{T}_\mu^{\nu}(x))\bigr\|^2\mathrm d\mu(x)
\right)^{1/2}
\\
&\le
\|\operatorname{Id}-\operatorname{T}_\mu^{\nu}\|_{L^2_\mu}
+
\tau \|(\widetilde{\operatorname{H}}_\nu^{\operatorname{T}_\mu^{\nu}})^{-1}\nabla_\mu F(\nu,\cdot)\circ \operatorname{T}_\mu^{\nu}-\operatorname{H}_\mu^{-1}\nabla_\mu F(\mu, \cdot)\|_{L^2_\mu}
\\
&\le
(1+C\tau)W_2(\mu,\nu).
\end{align*}
Thus,
\begin{equation}
\label{eq:scheme-stability}
W_2(E_\tau(\mu),E_\tau(\nu))\le (1+C\tau)W_2(\mu,\nu).
\end{equation}
Finally, set $e_n:=W_2(\mu^n,\mu_{t_n})$. Then
\begin{align*}
e_{n+1}
&=
W_2(\mu^{n+1},\mu_{t_{n+1}})
=
W_2(E_\tau(\mu^n),\mu_{t_n+\tau})
\\
&\le
W_2(E_\tau(\mu^n),E_\tau(\mu_{t_n}))
+
W_2(E_\tau(\mu_{t_n}),\mu_{t_n+\tau})
\\
&\le
(1+C\tau)e_n+o(\tau),
\end{align*}
by \eqref{eq:scheme-stability} and Proposition \ref{prop_infinitesimal_accurve}. We conclude by comparing one numerical step with one exact step and iterating the resulting error recursion. Iterating this inequality gives
\[
e_n
\le
(1+C\tau)^n e_0
+
o(\tau)\sum_{k=0}^{n-1}(1+C\tau)^k.
\]
Since $(1+C\tau)^n\le e^{Ct_n}$ and
\[
\sum_{k=0}^{n-1}(1+C\tau)^k
=
\frac{(1+C\tau)^n-1}{C\tau}
\le
\frac{e^{Ct_n}-1}{C\tau},
\]
we get
\[
e_n
\le
e^{Ct_n}e_0
+
\frac{e^{Ct_n}-1}{C\tau}\,\varepsilon(\tau).
\]
Since $\mu^0=\mu_0$, then $e_0=0$, hence
\[
\max_{t_n\le T}e_n
\le
\frac{e^{CT}-1}{C\tau}o(\tau)\to0,
\quad\text{as }\tau \to 0.
\]
\end{proof}

\section{The Wasserstein Proximal Taylor (WPT-$k$) method}
\label{section:wasserstein-proximal-taylor}
In this section, we show that several iterative schemes on Wasserstein space can be viewed within a common a so-called proximal Taylor framework. This includes Wasserstein gradient descent, mirror descent,
preconditioned gradient descent, Newton's method, its Levenberg--Marquardt regularization, and the Wasserstein Saddle-Free Newton method. The common principle is to combine a local Taylor model of the
objective with a strongly convex penalty that controls the transport step.

Assume that for every $\mu\in\mathcal P_2(\mathbb R^d)$, the Wasserstein gradient $\nabla_\mu F(\mu)$ and Hessian $\operatorname{H}_\mu$ of $F$ exist. Let $\mathfrak T_k^F(\mu)[v]$ denote the $k$th-order Taylor expansion of $F$ at $\mu$ in the direction $v\in L_\mu^2$, for $k\in\{1,2\}$. Given $n\in\mathbb N$, $\mu^0\in\mathcal P_2(\mathbb R^d)$, and $\tau>0$, the Wasserstein
Proximal Taylor method is defined by
\begin{align}
\label{eq:general_newton}
v^{n+1} &= \argmin_{v\in L_{\mu^n}^2} \left\{
\mathfrak T_k^F(\mu^n)[v]
+
\operatorname{d}_{\mu^n}(\operatorname{Id}+v,\operatorname{Id})
\right\},
\quad
\mu^{n+1}
=
(\operatorname{Id}+\tau v^{n+1})_\#\mu^n,
\end{align}
where $\operatorname{d}_{\mu^n}:L_{\mu^n}^2\times L_{\mu^n}^2\to[0,\infty)$ is a strongly convex penalty. In the
cases considered below, we assume the minimizer is well-defined and unique.

\subsection{Iterates of the WPT-$1$ method}
For $k=1$, Lemma \ref{lem:1storder-expansion} yields
\[
\mathfrak T_1^F(\mu^n)[v] = F(\mu^n) + \langle \nabla_\mu F(\mu^n),v\rangle_{L_{\mu^n}^2}
+ o\bigl(\|v\|_{L_{\mu^n}^2}^2\bigr).
\]
If we choose $\operatorname{d}_{\mu^n}(\operatorname{Id}+v,\operatorname{Id})=\frac12\|v\|_{L_{\mu^n}^2}^2$, then \eqref{eq:general_newton} becomes
\begin{equation}
\label{eq:gd}
\mu^{n+1} = \left(\operatorname{Id}-\tau \nabla_\mu F(\mu^n,\cdot)\right)_\#\mu^n,
\end{equation}
which is Wasserstein gradient descent
\cite{korbaproximal,lascu2024linearconvergenceproximaldescent}.

More generally, let $\phi_{\mu^n}:L_{\mu^n}^2\to\mathbb R$ be convex and continuously G\^ateaux differentiable, with derivative $\phi_{\mu^n}'$, and let $\operatorname{d}^{\phi_{\mu^n}}(\operatorname{Id}+v,\operatorname{Id}) = \phi_{\mu^n}(\operatorname{Id}+v)-\phi_{\mu^n}(\operatorname{Id})-\langle \phi_{\mu^n}'[\operatorname{Id}],v\rangle_{L_{\mu^n}^2}$ be the associated Bregman divergence. Then \eqref{eq:general_newton} becomes
\begin{equation}
\label{wasserstein mirror}
\mu^{n+1} = \left(
(\phi_{\mu^n}^*)'\big[\phi_{\mu^n}'(\operatorname{Id})-\tau \nabla_\mu F(\mu^n)\big]
\right)_\#\mu^n,
\end{equation}
which is Wasserstein mirror descent \cite{bonet2024mirror}. Here
$(\phi_{\mu^n}^*)'=(\phi_{\mu^n}')^{-1}$ provided that $\phi_{\mu^n}$ is strictly convex, lower semi-continuous, and superlinear. In the quadratic case $\phi_{\mu^n}(v)=\frac12\|v\|_{L_{\mu^n}^2}^2$, \eqref{wasserstein mirror} reduces to \eqref{eq:gd}.

A further special case yields Wasserstein preconditioned gradient descent. Let $h:\mathbb R^d\to\mathbb R$ be strictly convex, define $\phi_{\mu^n}^h(v)=\int h\circ v \,\mathrm d\mu^n$, $\operatorname{d}_{\mu^n}(\operatorname{Id}+v,\operatorname{Id}) = \tau \phi_{\mu^n}^h(-v\tau^{-1}) = \int \tau h(-v(x)\tau^{-1})\,\mathrm d\mu^n(x)$. Then \eqref{eq:general_newton} yields
\begin{equation}
\label{eq:pwgf}
\mu^{n+1} = \left(\operatorname{Id}-\tau \nabla h^*\circ \nabla_\mu F(\mu^n)\right)_\#\mu^n,
\end{equation}
which is Wasserstein preconditioned gradient descent \cite{bonet2024mirror}, where $\nabla h^*=(\nabla h)^{-1}$. When $h(z)=\frac12\|z\|^2$, \eqref{eq:pwgf} again coincides with \eqref{eq:gd}.

\subsection{Iterates of the WPT-$2$ method}
For $k=2$, Lemma~\ref{lem:2ndorder-expansion} yields
\[
\mathfrak T_2^F(\mu^n)[v]
=
F(\mu^n)
+
\langle \nabla_\mu F(\mu^n),v\rangle_{L_{\mu^n}^2}
+
\frac12\langle \operatorname{H}_{\mu^n}v,v\rangle_{L_{\mu^n}^2}
+
o(1).
\]
If $\operatorname{d}_{\mu^n}(\operatorname{Id}+v,\operatorname{Id})=0$, then \eqref{eq:general_newton} becomes
\[
\mu^{n+1} = \Bigl(\operatorname{Id}-\tau \operatorname{H}_{\mu^n}^{-1}\nabla_\mu F(\mu^n,\cdot)\Bigr)_\#\mu^n,
\]
which is the Wasserstein Newton method.

If instead $\operatorname{d}_{\mu^n}(\operatorname{Id}+v,\operatorname{Id})=\frac{1}{2\tau}\|v\|_{L_{\mu^n}^2}^2$ and $\mu^{n+1}=(\operatorname{Id}+v^{n+1})_\#\mu^n$, then Proposition \ref{prop:well-posedness} shows that \eqref{eq:general_newton} becomes
\begin{equation}
\label{eq:newton}
\mu^{n+1} = \left(
\operatorname{Id}-\left(\operatorname{H}_{\mu^n}+\tau^{-1}\operatorname{I_{d\times d}}\right)^{-1}\nabla_\mu F(\mu^n)
\right)_\#\mu^n,
\end{equation}
which is the Wasserstein analogue of the Levenberg--Marquardt regularization of Newton's method.

\subsection{Wasserstein Saddle-Free Newton as a proximal Taylor scheme}
The saddle-free Newton construction also fits into this framework. Since the quadratic Hessian term measures the curvature correction added to the first-order Taylor approximation in the direction $v$, a natural choice is
\[
\operatorname{d}_{\mu^n}(\operatorname{Id}+v,\operatorname{Id})
=
\bigl|\mathfrak T_1^F(\mu^n)[v]-\mathfrak T_2^F(\mu^n)[v]\bigr|
=
\frac12\bigl|\langle \operatorname{H}_{\mu^n}v,v\rangle_{L_{\mu^n}^2}\bigr|.
\]
Using Lemma \ref{lem:upper-bound-inner-hessian}, this term admits the upper bounds
\[
\frac12\bigl|\langle \operatorname{H}_{\mu^n}v,v\rangle_{L_{\mu^n}^2}\bigr|
\leq
\frac12\langle |\operatorname{H}_{\mu^n}|v,v\rangle_{L_{\mu^n}^2}
\leq
\frac12
\left\langle
(\operatorname{H}_{\mu^n}^2+\beta \operatorname{I_{d\times d}})^{1/2}v,v
\right\rangle_{L_{\mu^n}^2},
\]
where the first inequality follows from the proof of Lemma \ref{lem:upper-bound-inner-hessian}
(see also \cite[Problem 2.35]{kato1995perturbation}) and the second follows directly from Lemma \ref{lem:upper-bound-inner-hessian}.

Replacing the trust-region penalty $\operatorname{d}_{\mu^n}$ by these upper bounds yields two saddle-free variants. The first is the ideal saddle-free Newton scheme
\[
v^{n+1}
=
\argmin_{v\in L_{\mu^n}^2}
\left\{
F(\mu^n)
+
\langle \nabla_\mu F(\mu^n),v\rangle_{L_{\mu^n}^2}
+
\frac12\langle |\operatorname{H}_{\mu^n}|v,v\rangle_{L_{\mu^n}^2}
\right\},
\quad
\mu^{n+1}=(\operatorname{Id}+\tau v^{n+1})_\#\mu^n,
\]
which gives \eqref{eq:saddle_free_newton}.

The second is the regularized Wasserstein Saddle-Free Newton scheme
\begin{align*}
v^{n+1}
&=
\argmin_{v\in L_{\mu^n}^2}
\left\{
F(\mu^n)
+
\langle \nabla_\mu F(\mu^n),v\rangle_{L_{\mu^n}^2}
+
\frac12
\left\langle
(\operatorname{H}_{\mu^n}^2+\beta \operatorname{I_{d\times d}})^{1/2}v,v
\right\rangle_{L_{\mu^n}^2}
\right\},\\
\mu^{n+1}
&=
(\operatorname{Id}+\tau v^{n+1})_\#\mu^n,
\end{align*}
which yields \eqref{eq:sfn}.

\section{Auxiliary results}
\label{appendix:aux-results}
This appendix collects the technical lemmas used throughout the paper. We first record basic calculus identities for the functional $F$ along transport curves, then derive the second-order identities underlying the Wasserstein Hessian, and finally gather the operator estimates needed for the analysis of the regularized WSFN preconditioner.

\subsection{Calculus along transport curves}
We begin with first-order identities describing how $F$ evolves along curves of the form $\mu_t = (\operatorname{Id}+tv)_\#\mu$. These results will be used repeatedly in the derivation of both the gradient and Hessian expansions.
\begin{lemma}[I. Derivative of $F$ along $W_2$--interpolating curves]
\label{lemma_for_second_order}
    Let $F:\mathcal{P}_2(\mathbb R^d) \to \mathbb R$ be Wasserstein differentiable. Let $\mu, \gamma \in\mathcal{P}_2(\mathbb R^d)$, $v \in L_\gamma^2$ and for $t \in [0,1]$ define the curve $\mu_t = (\pi_t)_{\#}\gamma$, with $\pi_t := \operatorname{Id}+tv$, such that $\mu_0 = \mu$ and $\mu_1 = (\pi_1)_\#\gamma$. Then, for any $t\in[0,1]$, 
    \begin{align*}
        \frac{\mathrm{d}}{\mathrm{d} t} F(\mu_t) = \int_{\mathbb R^d\times\mathbb R^d}\left\langle \nabla_\mu F(\mu_t, \pi_t(x,y)) , v(x,y)\right\rangle \mathrm{d}\gamma(x,y).
    \end{align*}
\end{lemma}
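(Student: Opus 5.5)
We prove the identity by reducing it to the first-order expansion of Definition~\ref{def:wass-differentiability}, in the strong (non-optimal coupling) form of Proposition~\ref{prop:strong_diff_w}, applied at the point $\mu_t$ along the curve. Fix $t\in[0,1)$ and $h$ small with $t+h\in[0,1]$. The plan is to compute the difference quotient $\frac{1}{h}\bigl(F(\mu_{t+h})-F(\mu_t)\bigr)$ using an explicit, possibly non-optimal, coupling between $\mu_t$ and $\mu_{t+h}$. The natural choice is
\[
\gamma_{t,h}:=(\pi_t,\pi_{t+h})_\#\gamma\in\Pi(\mu_t,\mu_{t+h}).
\]
This is a valid coupling because $(\pi_s)_\#\gamma=\mu_s$ for every $s$. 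For this coupling the displacement is $\pi_{t+h}-\pi_t=hv$, so its cost is
\[
\int\|y-x\|^2\,\mathrm d\gamma_{t,h}=h^2\|v\|_{L^2_\gamma}^2.
\]

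With this coupling, Proposition~\ref{prop:strong_diff_w} applied at $\mu_t$, with the canonical tangent representative of $\nabla_\mu F(\mu_t)$, gives
\[
F(\mu_{t+h})=F(\mu_t)+\int\bigl\langle\nabla_\mu F(\mu_t,x'),y'-x'\bigr\rangle\,\mathrm d\gamma_{t,h}(x',y')+o\bigl(h\|v\|_{L^2_\gamma}\bigr).
\]
Changing variables through $\gamma_{t,h}=(\pi_t,\pi_{t+h})_\#\gamma$ turns the integral into
\[
h\int\bigl\langle\nabla_\mu F(\mu_t,\pi_t(x,y)),v(x,y)\bigr\rangle\,\mathrm d\gamma(x,y).
\]
Dividing by $h$ and letting $h\to0^+$ gives the right derivative. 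For the left derivative at $t\in(0,1]$ we argue the same way with $h<0$, using the coupling $(\pi_t,\pi_{t+h})_\#\gamma$, whose displacement is $hv$ with $|h|$ in the remainder. The one-sided derivatives then agree, which proves the formula.

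Two routine points must be checked. First, the integrand must be integrable. This follows from Cauchy--Schwarz, since $\nabla_\mu F(\mu_t)\in L^2_{\mu_t}$ means $\nabla_\mu F(\mu_t)\circ\pi_t\in L^2_\gamma$ by the pushforward. Second, the remainder must be $o(h)$, which holds because $o(\sqrt{\text{cost}})=o(|h|)$ when $v\ne0$; the case $v=0$ is trivial.

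The main obstacle is justifying the use of a non-optimal coupling. Definition~\ref{def:wass-differentiability} only provides the expansion along optimal plans, and $\gamma_{t,h}$ is generally not optimal between $\mu_t$ and $\mu_{t+h}$. We handle this by invoking Proposition~\ref{prop:strong_diff_w} of \cite{lanzetti}, which requires the gradient to lie in the tangent space. This is why we use the canonical tangent representative throughout. Note that the formula as stated does not depend on the choice of representative, but only once non-optimal couplings are allowed. A secondary subtlety is that the $o(\cdot)$ in Proposition~\ref{prop:strong_diff_w} is taken at the fixed base point $\mu_t$, so no uniformity in $t$ is needed for pointwise differentiability.
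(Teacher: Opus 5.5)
Your proposal is correct and follows the paper's proof essentially verbatim. You use the same coupling $(\pi_t,\pi_{t+h})_\#\gamma\in\Pi(\mu_t,\mu_{t+h})$ with displacement $hv$, the same appeal to Proposition~\ref{prop:strong_diff_w} at the base point $\mu_t$, and the same change of variables before dividing by $h$.

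Your treatment of $h<0$ is a small but genuine addition. The paper only takes $h>0$, so strictly it establishes the right derivative and says nothing at $t=1$.

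Your side remark about representatives is garbled. The right-hand side \emph{does} depend on the representative once non-optimal couplings are used, which is exactly why the tangent one is needed.
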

Note that for $v(x,y) = y-x$ we recover \cite[Lemma C.6]{yamamoto2025hessianguided}.
\begin{proof}
Fix $t\in[0,1)$ and let $h > 0$ be such that $t+h\in[0,1]$. Consider the coupling
\[
\gamma_{t,t+h} := (\pi_t,\pi_{t+h})_\#\gamma \in \Pi(\mu_t,\mu_{t+h}).
\]
Applying Proposition \ref{prop:strong_diff_w} with $\mu=\mu_t$, $\nu=\mu_{t+h}$, and $\gamma=\gamma_{t,t+h}$ yields
\[
F(\mu_{t+h})
= F(\mu_t)
+ \int_{\mathbb R^d\times\mathbb R^d}
\left\langle \nabla_\mu F(\mu_t, x), y-x \right\rangle\mathrm d\gamma_{t,t+h}(x,y)
+ o\left(\sqrt{\int_{\mathbb R^d\times\mathbb R^d} \|y-x\|^2\ \mathrm{d}\gamma_{t,t+h}(x,y)}\right).
\]
Since $\gamma_{t,t+h}=(\pi_t,\pi_{t+h})_\#\gamma$, 
\[
\int_{\mathbb R^d\times\mathbb R^d}
\left\langle \nabla_\mu F(\mu_t, x), y-x \right\rangle\mathrm d\gamma_{t,t+h}(x,y) = \int_{\mathbb R^d \times\mathbb R^d}
\left\langle \nabla_\mu F(\mu_t, \pi_t(x,y)),
\pi_{t+h}(x,y)-\pi_t(x,y) \right\rangle\mathrm d\gamma(x,y),
\]
\[
\int_{\mathbb R^d\times\mathbb R^d} \|y-x\|^2\ \mathrm{d}\gamma_{t,t+h}(x,y) = \int_{\mathbb R^d \times\mathbb R^d} \|\pi_{t+h}(x,y)-\pi_t(x,y)\|^2\ \mathrm{d}\gamma(x,y).
\]
Observing that
\[
\pi_{t+h}(x,y)-\pi_t(x,y)=hv(x,y),
\]
we obtain
\[
F(\mu_{t+h}) - F(\mu_t)
= h \int_{\mathbb R^d \times\mathbb R^d}
\left\langle \nabla_\mu F(\mu_t, \pi_t(x,y)), v(x,y) \right\rangle
\,\mathrm d\gamma(x,y)
+ o\left(h\sqrt{\int_{\mathbb R^d \times\mathbb R^d} \|v(x,y)\|^2\ \mathrm{d}\gamma(x,y)}\right).
\]
Dividing by $h$, letting $h\to 0$ and noting that $\frac{o(h)}{h} \to 0$, as $h \to 0$, we conclude that
\[
\frac{\mathrm d}{\mathrm d t}F(\mu_t) =\int_{\mathbb R^d \times\mathbb R^d}
\left\langle \nabla_\mu F(\mu_t, \pi_t(x,y)), v(x,y) \right\rangle\mathrm d\gamma(x,y).
\]
\end{proof}
\begin{lemma}[II. Derivative of $F$ along $W_2$--interpolating curves]
\label{lemma_for_second_order_mu}
    Let $F:\mathcal{P}_2(\mathbb R^d) \to \mathbb R$ be Wasserstein differentiable. Let $\mu\in\mathcal{P}_2(\mathbb R^d)$, $v \in L_\mu^2$ and for $t \in [0,1]$ define the curve $\mu_t = (\pi_t)_{\#}\mu$, with $\pi_t := \operatorname{Id}+tv$. Then, for any $t\in[0,1]$, 
    \begin{align*}
        \frac{\mathrm{d}}{\mathrm{d} t} F(\mu_t) = \int_{\mathbb R^d}\left\langle \nabla_\mu F(\mu_t, \pi_t(x)) , v(x)\right\rangle \mathrm{d}\mu(x).
    \end{align*}
\end{lemma}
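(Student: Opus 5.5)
The plan is to reduce the statement to the one-dimensional chain rule along the curve $t\mapsto \mu_t=(\operatorname{Id}+tv)_\#\mu$, by treating it as a special case of Lemma~\ref{lemma_for_second_order}. Specifically, we apply that lemma with the plan $\gamma:=(\operatorname{Id},\operatorname{Id})_\#\mu$ on $\mathbb R^d\times\mathbb R^d$, or more simply with $\gamma$ replaced by $\mu$ itself viewed on a single copy of $\mathbb R^d$. The displacement field is $v(x,y):=v(x)$ and the interpolating map is $\pi_t(x,y):=x+tv(x)$. The integral identities in Lemma~\ref{lemma_for_second_order} then collapse to integrals against $\mu$, which gives exactly the claimed formula. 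To keep the argument self-contained, however, I would rather repeat its short direct proof in this setting, as follows.

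Fix $t\in[0,1)$ and $h>0$ with $t+h\le 1$. Form the coupling $\gamma_{t,t+h}:=(\pi_t,\pi_{t+h})_\#\mu\in\Pi(\mu_t,\mu_{t+h})$. Its transport cost is
\[
\int\|y-x\|^2\,\mathrm d\gamma_{t,t+h}=h^2\|v\|_{L^2_\mu}^2 .
\]
Apply Proposition~\ref{prop:strong_diff_w}, the first-order expansion valid along arbitrary, not necessarily optimal, couplings, at the base point $\mu_t$. Using that the canonical gradient lies in the tangent space, this yields
\[
F(\mu_{t+h})-F(\mu_t)=h\int\langle\nabla_\mu F(\mu_t,\pi_t(x)),v(x)\rangle\,\mathrm d\mu(x)+o\bigl(h\|v\|_{L^2_\mu}\bigr).
\]
The inner product term is finite by Cauchy--Schwarz, since
\[
\|\nabla_\mu F(\mu_t,\pi_t(\cdot))\|_{L^2_\mu}=\|\nabla_\mu F(\mu_t,\cdot)\|_{L^2_{\mu_t}}<\infty .
\]
Dividing by $h$ and letting $h\downarrow0$ gives the right derivative.

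For $t\in(0,1]$ we also need the left derivative. For this I would use the same expansion at the base point $\mu_t$ toward $\mu_{t-h}$, with the coupling $(\pi_t,\pi_{t-h})_\#\mu$ whose displacement is $-hv$. This produces the same limit, so the two-sided derivative exists and equals the stated integral.

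The main point needing care is that Definition~\ref{def:wass-differentiability} only provides the expansion along optimal couplings, while $(\pi_t,\pi_{t+h})_\#\mu$ need not be optimal. This gap is precisely what Proposition~\ref{prop:strong_diff_w} closes, and it requires working with the tangent-space representative of $\nabla_\mu F$, which is the paper's standing convention. A second subtlety is that the little-$o$ term is taken at a fixed base point $\mu_t$ as $h\to0$, so no uniformity in $t$ is needed. Beyond these two points, the remaining steps are routine pushforward computations.
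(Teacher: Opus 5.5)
Your proof is correct and is essentially the paper's argument: the paper also takes the coupling $(\pi_t,\pi_{t+h})_\#\mu\in\Pi(\mu_t,\mu_{t+h})$, applies Proposition~\ref{prop:strong_diff_w} at the base point $\mu_t$, and divides by $h$, exactly as in the proof of Lemma~\ref{lemma_for_second_order}. Your extra step with the coupling $(\pi_t,\pi_{t-h})_\#\mu$ is a useful addition: the paper only treats $t\in[0,1)$ with $h>0$, so it leaves the left derivative, and hence the endpoint $t=1$, implicit.
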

\begin{proof}
    Fix $t\in[0,1)$ and let $h > 0$ be such that $t+h\in[0,1]$. Consider the coupling
\[
\gamma_{t,t+h} := (\pi_t,\pi_{t+h})_\#\mu \in \Pi(\mu_t,\mu_{t+h}).
\]
Applying Proposition \ref{prop:strong_diff_w} with $\mu=\mu_t$, $\nu=\mu_{t+h}$, $\gamma=\gamma_{t,t+h}$ and following the proof of Lemma \ref{lemma_for_second_order} yields the conclusion.
\end{proof}
\begin{lemma}[First-order expansion along $L^2_\mu$ perturbations]
\label{lem:1storder-expansion}
Let $F:\mathcal{P}_2(\mathbb R^d) \to \mathbb R$ be Wasserstein differentiable. Let $\mu\in\mathcal{P}_2(\mathbb R^d)$ and $\mu_t:=(\pi_t)_\#\mu$, with
$\pi_t:=\operatorname{Id}+tv$, for any $v\in L^2_\mu$. Then, we have the first-order expansion
\begin{equation*}
F(\mu_t)-F(\mu) = t\left\langle \nabla_\mu F(\mu,\cdot), v\right\rangle_{L_\mu^2} + t^2\|v\|_{L_\mu^2}^2.
\end{equation*}
\end{lemma}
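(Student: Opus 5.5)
The plan is to read the identity as a first-order Taylor expansion whose remainder is of order $t^2\|v\|_{L^2_\mu}^2$. Taken literally as an exact equality it cannot hold for a general Wasserstein differentiable $F$; for instance, it fails for $F$ linear in $\mu$ along the curve. I would therefore prove $F(\mu_t)-F(\mu)=t\langle\nabla_\mu F(\mu,\cdot),v\rangle_{L^2_\mu}+R_t$ in two stages. The first stage gives $R_t=o(t)$ under differentiability alone. The second gives the quantitative bound $|R_t|\lesssim t^2\|v\|_{L^2_\mu}^2$ once the gradient is Lipschitz along transport curves, which is the regime in which the lemma is later used.

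\emph{Step 1 (qualitative expansion).} Take the coupling $\gamma_t:=(\operatorname{Id},\pi_t)_\#\mu\in\Pi(\mu,\mu_t)$, which in general is not optimal. Proposition \ref{prop:strong_diff_w} applies to arbitrary couplings and gives
\[
F(\mu_t)=F(\mu)+\int\langle\nabla_\mu F(\mu,x),y-x\rangle\,\mathrm d\gamma_t(x,y)+o\Bigl(\bigl(\textstyle\int\|y-x\|^2\mathrm d\gamma_t\bigr)^{1/2}\Bigr).
\]
Since $y-x=tv(x)$ $\gamma_t$-a.e., the integral equals $t\langle\nabla_\mu F(\mu,\cdot),v\rangle_{L^2_\mu}$, and the remainder is $o(t\|v\|_{L^2_\mu})$.

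\emph{Step 2 (quadratic remainder).} Lemma \ref{lemma_for_second_order_mu} makes $s\mapsto F(\mu_s)$ differentiable with derivative $\langle\nabla_\mu F(\mu_s,\pi_s(\cdot)),v\rangle_{L^2_\mu}$. Integrating over $[0,t]$ gives
\[
R_t=\int_0^t\bigl\langle\nabla_\mu F(\mu_s,\pi_s(\cdot))-\nabla_\mu F(\mu,\cdot),v\bigr\rangle_{L^2_\mu}\,\mathrm ds .
\]
I would control the gradient increment by differentiating it in $s$. Proposition \ref{proposition:Wasserstein-hessian-appendix}, or equivalently Proposition \ref{prop:wass-hessian-variations}, identifies $\frac{\mathrm d}{\mathrm ds}\nabla_\mu F(\mu_s,\pi_s(x))=\widetilde{\operatorname H}_{\mu,s}[v](x)$. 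Under Assumption \ref{assumption:smooth-F} this operator has norm at most $C_{\operatorname M}+C_{\operatorname K}=C_{\operatorname H}$ on $L^2_\mu$. Consequently $\|\nabla_\mu F(\mu_s,\pi_s)-\nabla_\mu F(\mu,\cdot)\|_{L^2_\mu}\le C_{\operatorname H}s\|v\|_{L^2_\mu}$, and Cauchy--Schwarz gives $|R_t|\le\tfrac{C_{\operatorname H}}{2}t^2\|v\|_{L^2_\mu}^2$. Normalizing constants then yields the displayed form with the $t^2\|v\|^2_{L^2_\mu}$ term read as this remainder.

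The main obstacle is justifying that $s\mapsto\nabla_\mu F(\mu_s,\pi_s(\cdot))$ is absolutely continuous as an $L^2_\mu$-valued map, so that the fundamental theorem of calculus and the uniform operator bound can be applied. If that regularity is delicate, I would fall back on a direct argument. Assumption \ref{assumption:Hessian-lipschitz} together with Assumption \ref{assumption:smooth-F}, applied to the coupling $(\operatorname{Id},\pi_s)_\#\mu$, gives a Lipschitz bound of the gradient in the coupling cost $s\|v\|_{L^2_\mu}$, with the multiplication part handled through the $\mu$-essential supremum bound on $\nabla\nabla_\mu F$. This yields the same $O(t^2\|v\|^2_{L^2_\mu})$ remainder without differentiating the gradient.
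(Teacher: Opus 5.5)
You are right that the identity cannot hold as an exact equality: for $F$ constant the left side is $0$ while the right side is $t^2\|v\|_{L^2_\mu}^2$. Your Step 1 is exactly the paper's proof. It applies Proposition \ref{prop:strong_diff_w} to the non-optimal coupling $(\operatorname{Id},\pi_t)_\#\mu$, with the linear term equal to $t\langle\nabla_\mu F(\mu,\cdot),v\rangle_{L^2_\mu}$ and the coupling cost equal to $t^2\|v\|_{L^2_\mu}^2$. The paper then rewrites the remainder $o(t\|v\|_{L^2_\mu})$ as $t^2\|v\|_{L^2_\mu}^2$, which does not follow. So what its argument actually establishes is your qualitative expansion, under Wasserstein differentiability alone. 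That version is also all that is needed in the one place the lemma enters an actual proof, Proposition \ref{prop_chain_rule}. The other citations, which motivate gradient descent and the WPT-$1$ scheme, are heuristic.

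Your Step 2 is an addition rather than a reconstruction, and it needs hypotheses the lemma does not assume, namely Wasserstein regularity and Assumption \ref{assumption:smooth-F}. Integrating the derivative from Lemma \ref{lemma_for_second_order_mu} and bounding the gradient increment via $\|\widetilde{\operatorname H}_{\mu,s}\|_{\mathrm{op}}\le C_{\operatorname H}$ (Lemma \ref{lem:uniform_bound_tildeH}) gives the two-sided bound $|R_t|\le\frac{C_{\operatorname H}}{2}t^2\|v\|_{L^2_\mu}^2$. This is the content of the first part of Lemma \ref{lemma:smoothness-of-F-2}. That lemma writes the same remainder as $\int_0^1(1-s)\langle\widetilde{\operatorname H}_{\mu,s}v,v\rangle_{L^2_\mu}\,\mathrm ds$ and records only the upper bound. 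Both arguments rely on the $L^2_\mu$-differentiability of $s\mapsto\nabla_\mu F(\mu_s,\cdot)\circ\pi_s$ that the paper asserts in Lemma \ref{lem:grad-expansion}.

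Two corrections are needed. First, no normalization of constants produces the coefficient $1$; the correct reading of the $t^2$ term is $R_t=O(t^2\|v\|_{L^2_\mu}^2)$. Second, your fallback is not derivative-free. Assumption \ref{assumption:Hessian-lipschitz} controls increments of $\nabla\nabla_\mu F$ and $\nabla_\mu^2F$, not of $\nabla_\mu F$. Turning it into a bound on $\nabla_\mu F(\mu_s,\cdot)\circ\pi_s-\nabla_\mu F(\mu,\cdot)$ again means integrating $\widetilde{\operatorname H}_{\mu,r}$ along the curve, and Assumption \ref{assumption:smooth-F} alone already suffices for that. A genuinely derivative-free route needs a gradient-Lipschitz hypothesis as in Lemma \ref{lemma:smoothness-of-F-1}, which yields $|R_t|\le\frac{L_F}{2}t^2\|v\|_{L^2_\mu}^2$.
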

\begin{proof}
    Fix $t\in[0,1)$. Consider the coupling
\[
\gamma_t := (\operatorname{Id},\pi_t)_\#\mu \in \Pi(\mu,\mu_t).
\]
Applying Proposition \ref{prop:strong_diff_w} with $\mu=\mu$, $\nu=\mu_t$, and $\gamma=\gamma_t$ yields
\[
F(\mu_t) = F(\mu)
+ \int_{\mathbb R^d\times\mathbb R^d}
\left\langle \nabla_\mu F(\mu, x), y-x \right\rangle\mathrm d\gamma_t(x,y)
+ o\left(\sqrt{\int_{\mathbb R^d\times\mathbb R^d} \|y-x\|^2\ \mathrm{d}\gamma_t(x,y)}\right).
\]
Since $\gamma_t =(\operatorname{Id},\pi_t)_\#\mu$, 
\begin{align*}
\int_{\mathbb R^d\times\mathbb R^d}
\left\langle \nabla_\mu F(\mu, x), y-x \right\rangle\mathrm d\gamma_t(x,y) &= \int_{\mathbb R^d}
\left\langle \nabla_\mu F(\mu_t, x),
\pi_t(x)-x \right\rangle\mathrm d\mu(x)\\ 
&= t\int_{\mathbb R^d}
\left\langle \nabla_\mu F(\mu, x), v(x) \right\rangle\mathrm d\mu(x),
\end{align*}
\[
\int_{\mathbb R^d\times\mathbb R^d} \|y-x\|^2\ \mathrm{d}\gamma_t(x,y) = \int_{\mathbb R^d} \|\pi_t(x)-x\|^2\ \mathrm{d}\mu(x) = t^2\|v\|_{L_\mu^2}^2.
\]
We obtain
\[
F(\mu_t) - F(\mu)
= t\int_{\mathbb R^d}
\left\langle \nabla_\mu F(\mu, x), v(x) \right\rangle\mathrm d\mu(x)
+ t^2\|v\|_{L_\mu^2}^2.
\]
\end{proof}
\begin{lemma}[First-order smoothness of $F$]
    \label{lemma:smoothness-of-F-1}
     Let $F:\mathcal{P}_2(\mathbb R^d) \to \mathbb R$ be Wasserstein differentiable and assume $\nabla_\mu F$ is $L_F$-Lipschitz, i.e., there exists $L_F> 0$ such that, for all $\gamma \in \Pi(\mu, \nu)$,
    \begin{equation*}
        \|\nabla_\mu F(\nu,\cdot)  - \nabla_\mu F(\mu, \cdot)\|_{L_\gamma^2} \leq L_F \left(\int_{\mathbb R^d \times \mathbb R^d} \|y-x\|^2\mathrm{d}\gamma(x,y)\right)^{1/2}.
    \end{equation*}
    Let $\mu\in\mathcal{P}_2(\mathbb R^d)$, $v\in L^2_\mu$ and define $\mu_t:=(\operatorname{Id}+t v)_\#\mu$, for $t \in [0,1]$. Then
    \begin{equation*}
        F\left((\operatorname{Id}+v)_\#\mu\right) \leq  F(\mu) + \left\langle\nabla_\mu F(\mu,\cdot),v\right\rangle_{L_\mu^2} + \frac{L_F}{2}\|v\|_{L_\mu^2}^2.
    \end{equation*}
\end{lemma}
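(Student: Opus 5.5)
The plan is to reduce the statement to a one-dimensional Taylor argument along the interpolating curve $\mu_t=(\operatorname{Id}+tv)_\#\mu$, for $t\in[0,1]$, and then to control the second derivative of $t\mapsto F(\mu_t)$ using the Lipschitz property of the Wasserstein gradient.

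First, set $\varphi(t):=F(\mu_t)$. By Lemma \ref{lemma_for_second_order_mu},
\[
\varphi'(t)=\int_{\mathbb R^d}\langle \nabla_\mu F(\mu_t,x+tv(x)),v(x)\rangle\,\mathrm d\mu(x).
\]
I will first check that $\varphi'$ is continuous, indeed Lipschitz, on $[0,1]$; this comes out of the next step.

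Second, for $0\le s<t\le 1$ I will compare $\varphi'(t)$ with $\varphi'(s)$. The natural coupling is $\gamma_{s,t}:=(\pi_s,\pi_t)_\#\mu\in\Pi(\mu_s,\mu_t)$. Its transport cost is
\[
\int\|y-x\|^2\,\mathrm d\gamma_{s,t}=(t-s)^2\|v\|_{L^2_\mu}^2 .
\]
Writing both integrands over $\mu$ and applying Cauchy--Schwarz, the coupled Lipschitz hypothesis with $\gamma=\gamma_{s,t}$ gives
\[
|\varphi'(t)-\varphi'(s)|\le \|v\|_{L^2_\mu}\,\|\nabla_\mu F(\mu_t,\pi_t)-\nabla_\mu F(\mu_s,\pi_s)\|_{L^2_\mu}\le L_F(t-s)\|v\|_{L^2_\mu}^2 .
\]
This is the key step. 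The only point of care is reading the Lipschitz assumption correctly: the norm $\|\cdot\|_{L^2_\gamma}$ means $\int\|\nabla_\mu F(\nu,y)-\nabla_\mu F(\mu,x)\|^2\,\mathrm d\gamma(x,y)$. After pushing forward by $(\pi_s,\pi_t)$, this becomes exactly the $L^2_\mu$ norm above.

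Third, I apply the fundamental theorem of calculus:
\[
\varphi(1)-\varphi(0)=\varphi'(0)+\int_0^1(\varphi'(t)-\varphi'(0))\,\mathrm dt\le \langle\nabla_\mu F(\mu,\cdot),v\rangle_{L^2_\mu}+L_F\|v\|_{L^2_\mu}^2\int_0^1 t\,\mathrm dt .
\]
The right-hand side equals the claimed bound, and $\varphi(1)=F((\operatorname{Id}+v)_\#\mu)$.

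The main technical obstacle is justifying the fundamental theorem of calculus. Lemma \ref{lemma_for_second_order_mu} gives a derivative at each $t$, one-sided from the right in its proof. I would note that a continuous function whose right derivative is continuous is $C^1$, so $\varphi$ is $C^1$ and the integral formula holds.
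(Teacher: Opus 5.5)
Your proposal is correct and is essentially the paper's proof. You write $F((\operatorname{Id}+v)_\#\mu)-F(\mu)=\int_0^1\varphi'(t)\,\mathrm dt$ via Lemma \ref{lemma_for_second_order_mu}, subtract $\varphi'(0)$, and bound $\varphi'(t)-\varphi'(0)$ by Cauchy--Schwarz and the Lipschitz hypothesis with the coupling $(\operatorname{Id},\pi_t)_\#\mu$ (your $\gamma_{0,t}$), which gives $L_F t\|v\|_{L^2_\mu}^2$ and hence $\frac{L_F}{2}\|v\|_{L^2_\mu}^2$ after integrating.

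Your extra justification of the fundamental theorem of calculus, which the paper skips, is sound once you also note that $\varphi$ is continuous: Wasserstein differentiability makes $F$ continuous for $W_2$, and $W_2(\mu_s,\mu_t)\le|t-s|\,\|v\|_{L^2_\mu}$. Alternatively, since Proposition \ref{prop:strong_diff_w} allows arbitrary couplings, the proof of Lemma \ref{lemma_for_second_order_mu} also works for $h<0$ and gives a two-sided derivative directly.
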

\begin{proof}
    By Lemma \ref{lemma_for_second_order_mu},
    \begin{equation*}
        F((\operatorname{Id}+ v)_\#\mu) -F(\mu) = \int_0^1  \int_{\mathbb R^d}\left\langle \nabla_\mu F(\mu_t, \pi_t(x)) , v(x)\right\rangle \mathrm{d}\mu(x)\mathrm{d}t.
    \end{equation*}
   Take $\gamma = (\operatorname{Id}, \operatorname{Id}+ tv)_{\#}\mu \in \Pi(\mu, \mu_t)$. Subtracting $\left\langle\nabla_\mu F(\mu,\cdot),v\right\rangle_{L_\mu^2}$ from both sides gives
    \begin{align*}
        F((\operatorname{Id}+ v)_\#\mu) -F(\mu) - \left\langle\nabla_\mu F(\mu,\cdot),v\right\rangle_{L_\mu^2} &= \int_0^1  \left\langle \nabla_\mu F(\mu_t) \circ \pi_t - \nabla_\mu F(\mu,\cdot), v\right\rangle_{L_\mu^2}\mathrm{d}t\\
        &\leq \int_0^1  \left\| \nabla_\mu F(\mu_t) \circ \pi_t - \nabla_\mu F(\mu,\cdot)\right\|_{L_\mu^2} \|v\|_{L_\mu^2}\mathrm{d}t\\
        &\leq L_F\int_0^1 t\|v\|_{L_\mu^2}^2\mathrm{d}t = \frac{L_F}{2}\|v\|_{L_\mu^2}^2.
    \end{align*}
\end{proof}

\subsection{Hessian identities and second-order expansions}
\label{subsection: Hessian identities and second-order expansions}
We now turn to the second-order structure of $F$ along transport curves. The results in this subsection justify the form of the Wasserstein Hessian used in the main text and yield the corresponding second-order expansions of both the objective and its gradient.
\begin{proof}[Proof of Proposition \ref{proposition:Wasserstein-hessian-appendix}]
We follow the proofs of \cite[Lemma 11, Proposition 12]{bonet2024mirror}. By Lemma \ref{lemma_for_second_order_mu},
\begin{align*}
        \frac{\mathrm{d}^2}{\mathrm{d}t^2}F(\mu_t) &= \frac{\mathrm{d}}{\mathrm{d}t}\left\langle \nabla_\mu F(\mu_t, \pi_t), v\right\rangle_{L^2_\mu} = \left\langle \frac{\mathrm{d}}{\mathrm{d}t}\nabla_\mu F(\mu_t, \pi_t), v\right\rangle_{L^2_\mu}\\
        &= \left\langle\frac{\mathrm d}{\mathrm dt}\Big|_{\mu=\mu_t} \nabla_\mu F(\mu,\pi_t), v\right\rangle_{L^2_\mu} + \left\langle\frac{\mathrm d}{\mathrm dt}\Big|_{u=\pi_t} \nabla_\mu F(\mu_t,u), v\right\rangle_{L^2_\mu}\\
        &= \int_{\mathbb R^d}\left\langle \int_{\mathbb R^d}\nabla_\mu^2 F(\mu_t, \pi_t(x), \pi_t(\bar x))v(\bar x)\mathrm d\mu(\bar x), v(x)\right\rangle \mathrm{d}\mu(x)\\ 
        &+ \int_{\mathbb R^d}\left\langle \nabla\nabla_\mu F(\mu_t, \pi_t(x))\partial_t\pi_t( x), v(x)\right\rangle \mathrm{d}\mu(x)\\
        &=\int_{\mathbb R^d}\int_{\mathbb R^d}\left\langle \nabla_\mu^2 F(\mu_t, \pi_t(x), \pi_t(\bar x))v(\bar x), v(x)\right\rangle \mathrm d\mu(\bar x)\mathrm{d}\mu(x)\\ 
        &+ \int_{\mathbb R^d}\left\langle \nabla\nabla_\mu F(\mu_t, \pi_t(x))v( x), v(x)\right\rangle \mathrm{d}\mu(x),
\end{align*}
where the last equality follows since $\partial_t \pi_t = v$. Hence,
\begin{equation*}
        \frac{\mathrm{d}^2}{\mathrm{d}t^2}F(\mu_t)= \int_{\mathbb R^d}\left\langle (\widetilde{\operatorname{M}}_{\mu, t} + \widetilde{\operatorname{K}}_{\mu, t})[v](x),v(x)\right\rangle\mathrm{d}\mu(x)= \left\langle \widetilde{\operatorname{H}}_{\mu, t}v, v\right\rangle_{L^2_\mu}.
\end{equation*}
Now we prove that $\widetilde{\operatorname{H}}_{\mu, t} \in \mathcal{LS}(L_\mu^2)$, i.e., for all $v,w \in L_\mu^2$ and $\alpha_1, \alpha_2 \in \mathbb R$,
    \begin{equation*}
        \widetilde{\operatorname{H}}_{\mu, t}[\alpha_1 v+\alpha_2 w] = \alpha_1 \widetilde{\operatorname{H}}_{\mu, t}[v] + \alpha_2 \widetilde{\operatorname{H}}_{\mu, t}[w],
    \end{equation*}
    \begin{equation*}
        \left\langle \widetilde{\operatorname{H}}_{\mu, t}v,w\right\rangle_{L^2_\mu} = \left\langle v,\widetilde{\operatorname{H}}_{\mu, t}w\right\rangle_{L^2_\mu}.
    \end{equation*}
    The linearity follows immediately from a direct calculation. By assumption, $\nabla \nabla_\mu F(\mu,x) \in \mathbb R^{d \times d}$ is symmetric, i.e.,
    \begin{equation*}
        \nabla \nabla_\mu F(\mu,x) = \nabla \nabla_\mu F(\mu,x)^\top.
    \end{equation*}
    Hence, using the identity $\langle u,v \rangle = u^\top v,$ for all $u,v \in \mathbb R^d,$
    \begin{equation}
    \label{1st part hessian sym}
    \begin{aligned}
        \left\langle \widetilde{\operatorname{M}}_{\mu, t}v, w\right\rangle_{L^2_\mu}&=\int_{\mathbb R^d} \left\langle \nabla \nabla_\mu F(\mu,\pi_t(x))v(x), w(x)\right\rangle\ \mathrm{d}\mu(x)\\
        &= \int_{\mathbb R^d} \left(\nabla_\mu F(\mu,\pi_t(x))v(x))\right)^\top w(x) \mathrm{d}\mu(x)\\
        &= \int_{\mathbb R^d} v(x)^\top\nabla_\mu F(\mu,\pi_t(x))^\top w(x) \mathrm{d}\mu(x)\\
        &=\int_{\mathbb R^d} \left\langle v(x), \nabla_\mu F(\mu,\pi_t(x))^\top w(x)\right\rangle\mathrm{d}\mu(x)\\
        &=\int_{\mathbb R^d} \left\langle v(x), \nabla_\mu F(\mu,\pi_t(x))w(x)\right\rangle\mathrm{d}\mu(x)\\ 
        &=\left\langle v, \widetilde{\operatorname{M}}_{\mu, t}w\right\rangle_{L^2_\mu}.
    \end{aligned}
    \end{equation}
    By regularity of $F$, $\nabla_\mu^2 F \in \mathbb R^{d \times d}$ is symmetric, i.e.,
    \begin{equation*}
        \nabla_\mu^2 F(\mu, x,\bar{x})^\top = \nabla_\mu^2 F(\mu, \bar x, x).
    \end{equation*}
    Hence,
    \begin{equation}
    \label{2nd part hessian sym}
    \begin{aligned}
        \left\langle \widetilde{\operatorname{K}}_{\mu, t}v, w\right\rangle_{L^2_\mu}&
        =\int_{\mathbb R^d} \int_{\mathbb R^d} \left\langle \nabla_\mu^2 F(\mu, \pi_t(x), \pi_t(\bar{x}))v(\bar{x}), w(x)\right\rangle\mathrm d\mu(\bar{x})\mathrm{d}\mu(x)\\ 
        &=\int_{\mathbb R^d} \int_{\mathbb R^d} \left(\nabla_\mu^2 F(\mu, \pi_t(x), \pi_t(\bar{x}))v(\bar{x})\right)^\top w(x)\mathrm d\mu(\bar{x})\mathrm{d}\mu(x)\\
        &=\int_{\mathbb R^d} \int_{\mathbb R^d} v(\bar{x})^\top \nabla_\mu^2 F(\mu, \pi_t(x), \pi_t(\bar{x}))^\top w(x)\mathrm d\mu(\bar{x})\mathrm{d}\mu(x)\\
        &=\int_{\mathbb R^d} \int_{\mathbb R^d}v(\bar{x})^\top \nabla_\mu^2 F(\mu, \pi_t(\bar x),\pi_t(x)) w(x)\mathrm d\mu(\bar{x})\mathrm{d}\mu(x)\\
        &=\int_{\mathbb R^d} \int_{\mathbb R^d} \left\langle v(\bar{x}), \nabla_\mu^2 F(\mu, \pi_t(\bar x),\pi_t(x)) w(x)\right\rangle\mathrm{d}\mu(x)\mathrm d\mu(\bar{x})\\
        &=\int_{\mathbb R^d} \left\langle v(\bar{x}), \int_{\mathbb R^d}\nabla_\mu^2 F(\mu, \pi_t(\bar x),\pi_t(x)) w(x)\mathrm{d}\mu(x)\right\rangle\mathrm d\mu(\bar{x})\\
        &=\left\langle v,  \widetilde{\operatorname{K}}_{\mu, t}w\right\rangle_{L^2_\mu},
    \end{aligned}
    \end{equation}
    where the penultimate equality follows by Fubini's theorem. Adding \eqref{1st part hessian sym} and \eqref{2nd part hessian sym} gives the conclusion.
\end{proof}
\begin{proposition}[Wasserstein Hessian of $F$ along $W_2$--curves]
\label{proposition:Wasserstein-hessian}
    Assume $F:\mathcal{P}_2(\mathbb R^d) \to \mathbb R$ is Wasserstein regular. Let $\mu \in\mathcal{P}_2(\mathbb R^d)$, $v \in L_\mu^2$ and for $t \in [0,1]$ define the curve $\mu_t = (\pi_t)_{\#}\mu$, with $\pi_t = \operatorname{Id}+tv$. If $\pi_t$ is invertible $\mu$-a.e., for all $t \in [0,1]$, setting $v_t = v\circ \pi_t^{-1}$, 
\begin{equation*}
\label{eq:hessian_formula}
\frac{\mathrm d^2}{\mathrm d t^2}F(\mu_t)
= \left\langle \operatorname{H}_{\mu_t}v_t, v_t\right\rangle_{L^2_{\mu_t}},
\end{equation*}
where, the Hessian operator $\operatorname{H}_\mu: L_\mu^2 \to L_\mu^2$ is defined by
\begin{equation*}
    \operatorname{H}_\mu[v](x) = \underbrace{\nabla \nabla_\mu F(\mu,x)v(x)}_{=:\operatorname{M}_\mu[v](x)} +\underbrace{\int_{\mathbb R^d} \nabla_\mu^2F(\mu, x, \bar{x})v(\bar{x})\mathrm d\mu(\bar{x})}_{=:\operatorname{K}_\mu[v](x)},
\end{equation*}
and $\operatorname{H}_{\mu} \in \mathcal{L}\mathcal{S}(L_\mu^2)$.
\end{proposition}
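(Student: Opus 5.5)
The plan is to reduce the claim to Proposition \ref{proposition:Wasserstein-hessian-appendix} by a change of variables from $L^2_\mu$ to $L^2_{\mu_t}$, using the invertibility of $\pi_t$. That proposition already gives
\[
\frac{\mathrm d^2}{\mathrm dt^2}F(\mu_t)=\langle \widetilde{\operatorname H}_{\mu,t}v,v\rangle_{L^2_\mu},
\]
where $\widetilde{\operatorname H}_{\mu,t}=\widetilde{\operatorname M}_{\mu,t}+\widetilde{\operatorname K}_{\mu,t}$ involves the kernels evaluated at $(\mu_t,\pi_t(x),\pi_t(\bar x))$. So it suffices to show that $\langle \widetilde{\operatorname H}_{\mu,t}v,v\rangle_{L^2_\mu}=\langle \operatorname H_{\mu_t}v_t,v_t\rangle_{L^2_{\mu_t}}$.

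The key observation is that $v=v_t\circ\pi_t$ holds $\mu$-a.e., since $v_t=v\circ\pi_t^{-1}$. For the multiplication part, I would write
\[
\int\langle\nabla\nabla_\mu F(\mu_t,\pi_t(x))v(x),v(x)\rangle\,\mathrm d\mu(x)=\int\langle\nabla\nabla_\mu F(\mu_t,y)v_t(y),v_t(y)\rangle\,\mathrm d\mu_t(y),
\]
using the change-of-variables formula $\int g\circ\pi_t\,\mathrm d\mu=\int g\,\mathrm d(\pi_t)_\#\mu$. This identifies the term with $\langle\operatorname M_{\mu_t}v_t,v_t\rangle_{L^2_{\mu_t}}$. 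For the integral part, I would apply the same formula on the product space, using that $(\pi_t\otimes\pi_t)_\#(\mu\otimes\mu)=\mu_t\otimes\mu_t$. Together with Fubini, this turns the double integral into $\langle\operatorname K_{\mu_t}v_t,v_t\rangle_{L^2_{\mu_t}}$. Along the way I would check that $v_t\in L^2_{\mu_t}$, which follows because $\|v_t\|_{L^2_{\mu_t}}=\|v\|_{L^2_\mu}$ by the same pushforward identity. Both integrals are finite because Wasserstein regularity gives $\|\nabla\nabla_\mu F(\mu_t,\cdot)\|_{\mu_t,\infty}<\infty$ and $\nabla^2_\mu F(\mu_t,\cdot,\cdot)\in L^2_{\mu_t\otimes\mu_t}$.

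It then remains to show that $\operatorname H_\mu\in\mathcal{LS}(L^2_\mu)$. Well-definedness and boundedness follow from the facts already noted in the main text: $\operatorname M_\mu$ is a bounded multiplication operator, and $\operatorname K_\mu$ is Hilbert--Schmidt with kernel in $L^2_{\mu\otimes\mu}$. Linearity is immediate. Symmetry follows exactly as in \eqref{1st part hessian sym} and \eqref{2nd part hessian sym} with $t=0$, using the matrix symmetry $\nabla\nabla_\mu F=\nabla\nabla_\mu F^\top$ and the kernel symmetry $\nabla^2_\mu F(\mu,x,\bar x)^\top=\nabla^2_\mu F(\mu,\bar x,x)$.

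The only delicate point is the measure-theoretic justification. We need $\pi_t^{-1}$ to be defined $\mu_t$-a.e. and measurable, so that $v_t$ is a genuine element of $L^2_{\mu_t}$ and the identity $v=v_t\circ\pi_t$ holds $\mu$-a.e. I would take this as the content of the hypothesis ``$\pi_t$ invertible $\mu$-a.e.'', meaning a measurable left inverse on a full-measure set. Once that is granted, the remaining steps are routine pushforward computations.
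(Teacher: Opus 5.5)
Your proposal is correct and follows essentially the same route as the paper: you reduce to Proposition \ref{proposition:Wasserstein-hessian-appendix}, use $v=v_t\circ\pi_t$ together with $\mu_t=(\pi_t)_\#\mu$ to transfer $\langle \widetilde{\operatorname H}_{\mu,t}v,v\rangle_{L^2_\mu}$ to $\langle \operatorname H_{\mu_t}v_t,v_t\rangle_{L^2_{\mu_t}}$, and get symmetry of $\operatorname H_\mu$ from the $t=0$ case of that proposition. The only cosmetic difference is that the paper first proves the pointwise identity $\widetilde{\operatorname H}_{\mu,t}[v]=(\operatorname H_{\mu_t}v_t)\circ\pi_t$ and then changes variables once, whereas you change variables separately in the multiplication and integral terms, using the product pushforward for the latter.
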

\begin{proof}
    We follow the calculations presented after of \cite[Proposition 12]{bonet2024mirror}. The relation between $\widetilde{\operatorname{H}}_{\mu, t}$ and $\operatorname{H}_{\mu_t}$ can be established for example if $\pi_t$ is invertible for all $t$. In this case, setting $v_t = v\circ \pi_t^{-1} \in L_{\mu_t}^2$, we can write by Proposition \ref{proposition:Wasserstein-hessian-appendix},
\begin{equation*}
    \begin{aligned}
        \frac{\mathrm{d}^2}{\mathrm{d}t^2}F(\mu_t) &= \langle \widetilde{\operatorname{H}}_{\mu, t} v, v\rangle_{L^2_\mu} \\
        &= \int_{\mathbb R^d} \langle \widetilde{\operatorname{H}}_{\mu, t}[v](x), v(x)\rangle\ \mathrm{d}\mu(x) \\
        &= \int_{\mathbb R^d} \langle \widetilde{\operatorname{H}}_{\mu, t}[v]\big(\pi_t^{-1}(x_t)\big), v_t(x_t)\rangle\ \mathrm{d}\mu_t(x_t) \\
        &=\langle \mathrm{H}_{\mu_t} v_t, v_t\rangle_{L^2_{\mu_t}}.
    \end{aligned}
\end{equation*}
The last equality follows from Proposition \ref{proposition:Wasserstein-hessian-appendix} and that $\mu_t=(\pi_t)_\#\mu$, as for all $x\in\mathrm{supp}(\mu)$, which give
\begin{equation*}
    \begin{aligned}
        \widetilde{\operatorname{H}}_{\mu, t}[v](x) &= \nabla \nabla_\mu F(\mu_t,\pi_t(x))v(x) +\int_{\mathbb R^d} \nabla_\mu^2F(\mu_t, \pi_t(x), \pi_t(\bar{x}))v(\bar{x})\mathrm d\mu(\bar{x}) \\
        &= \nabla \nabla_\mu F(\mu_t,x_t)v_t(x_t) +\int_{\mathbb R^d} \nabla_\mu^2F(\mu_t, x_t, \bar{x}_t))v_t(\bar{x}_t)\mathrm d\mu_t(\bar{x}_t) \\
        &= \mathrm{H}_{\mu_t}[v_t](x_t),
    \end{aligned}
\end{equation*}
and thus $\widetilde{\operatorname{H}}_{\mu, t}[v](\pi_t^{-1}(x_t)) = \mathrm{H}_{\mu_t}[v_t](x_t)$. The fact that $\operatorname{H}_\mu \in \mathcal{LS}(L_\mu^2)$ follows directly from Proposition \ref{proposition:Wasserstein-hessian-appendix}.
\end{proof}
\begin{lemma}[Second-order expansion of $F$ along $L^2_\mu$ perturbations]
\label{lem:2ndorder-expansion}
Assume $F:\mathcal{P}_2(\mathbb R^d) \to \mathbb R$ is Wasserstein regular. Let $\mu\in\mathcal{P}_2(\mathbb R^d)$ and $\mu_t:=(\pi_t)_\#\mu$, with
$\pi_t:=\operatorname{Id}+tv$, for any $v\in L^2_\mu$. Assume further that the maps $(\mu ,x )\mapsto \nabla \nabla_\mu F(\mu,x)$ and $(\mu ,x,\bar{x})\mapsto \nabla_\mu^2 F(\mu,x,\bar{x})$ are jointly continuous. Then, for every $v\in L^2_\mu$, we have the second-order expansion
\begin{equation*}
F(\mu_t)-F(\mu) = t\left\langle \nabla_\mu F(\mu,\cdot), v\right\rangle_{L_\mu^2}
+\frac{t^2}{2}\left\langle \operatorname{H}_\mu v,v\right\rangle_{L^2_\mu}
+o(t^2).
\end{equation*}
\end{lemma}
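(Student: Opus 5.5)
The plan is to reduce the expansion to a one-dimensional Taylor formula for $\varphi(t):=F(\mu_t)$ on $[0,1]$, using the first- and second-derivative identities already established along transport curves.

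First, Lemma \ref{lemma_for_second_order_mu} gives
\[
\varphi'(t)=\int_{\mathbb R^d}\langle \nabla_\mu F(\mu_t,\pi_t(x)),v(x)\rangle\,\mathrm d\mu(x),
\]
so $\varphi'(0)=\langle \nabla_\mu F(\mu,\cdot),v\rangle_{L^2_\mu}$. Second, Proposition \ref{proposition:Wasserstein-hessian-appendix} gives $\varphi''(t)=\langle \widetilde{\operatorname H}_{\mu,t}v,v\rangle_{L^2_\mu}$ for every $t\in[0,1]$, and $\widetilde{\operatorname H}_{\mu,0}=\operatorname M_\mu+\operatorname K_\mu=\operatorname H_\mu$ by definition. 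Hence $\varphi''(0)=\langle \operatorname H_\mu v,v\rangle_{L^2_\mu}$. Taylor's formula with integral remainder then yields
\[
\varphi(t)-\varphi(0)-t\varphi'(0)-\tfrac{t^2}{2}\varphi''(0)=\int_0^t (t-s)\bigl(\varphi''(s)-\varphi''(0)\bigr)\,\mathrm ds.
\]
It therefore suffices to show that $\varphi''$ is continuous at $0$, since the right-hand side is then bounded by $\tfrac{t^2}{2}\sup_{s\le t}|\varphi''(s)-\varphi''(0)|=o(t^2)$.

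The main obstacle is this continuity of $s\mapsto\langle\widetilde{\operatorname H}_{\mu,s}v,v\rangle_{L^2_\mu}$ at $s=0$. I would handle the two parts separately.

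For the multiplication part, $\mu_s\to\mu$ in $W_2$, because $W_2(\mu_s,\mu)\le s\|v\|_{L^2_\mu}$, and $\pi_s(x)\to x$ pointwise. Joint continuity of $(\mu,x)\mapsto\nabla\nabla_\mu F(\mu,x)$ then gives pointwise convergence of the integrand $\langle\nabla\nabla_\mu F(\mu_s,\pi_s(x))v(x),v(x)\rangle$. To pass to the limit I would use dominated convergence, dominating by $\sup_s\|\nabla\nabla_\mu F(\mu_s,\cdot)\|_{\mu_s,\infty}\|v(x)\|^2$. This requires a uniform-in-$s$ bound on the multiplication norm near $s=0$, which I would take from the local-in-$\mu$ boundedness implicit in the regularity hypotheses (Assumption \ref{assumption:smooth-F} wherever it is in force).

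For the integral part, the integrand $\langle\nabla_\mu^2F(\mu_s,\pi_s(x),\pi_s(\bar x))v(\bar x),v(x)\rangle$ converges $\mu\otimes\mu$-a.e. by joint continuity. By Cauchy--Schwarz it is bounded by $\|\nabla^2_\mu F(\mu_s,\cdot,\cdot)\|_{L^2_{\mu_s\otimes\mu_s}}\|v\|^2_{L^2_\mu}$, which plays the role of the HS bound. To conclude I would argue via Vitali's theorem, or via the uniform $L^2$ bound on the kernels. If needed, I would first approximate $v$ by bounded, compactly supported fields in $L^2_\mu$. Since both quadratic forms are uniformly bounded in operator norm near $s=0$, this approximation costs only an $\varepsilon$ uniformly in $s$, and the dominated-convergence step for the truncated $v$ is then immediate.

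Combining the two parts gives $\varphi''(s)\to\varphi''(0)$ as $s\to0$, and substituting into the Taylor formula proves the claimed expansion.
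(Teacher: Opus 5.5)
Your proposal follows the paper's proof: Taylor's formula with integral remainder for $\varphi(t)=F(\mu_t)$, with $\varphi'(0)$ from Lemma~\ref{lemma_for_second_order_mu} and $\varphi''$ from Proposition~\ref{proposition:Wasserstein-hessian-appendix}. In both, the remainder is made $o(t^2)$ by continuity of $s\mapsto\langle\widetilde{\operatorname{H}}_{\mu,s}v,v\rangle_{L^2_\mu}$ at $0$; the paper simply asserts this from joint continuity, in the averaged form $\frac1t\int_0^t|\cdot|\,\mathrm ds\to0$.

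Your domination and truncation steps are more careful than the paper's, but the uniform-in-$s$ bounds they need come from Assumption~\ref{assumption:smooth-F}, not from the lemma's hypotheses. Regularity gives finiteness only for each fixed $s$, and joint continuity gives bounds only on compacts. You can bypass this entirely: Proposition~\ref{proposition:Wasserstein-hessian-appendix} already gives $\varphi''(0)=\langle\operatorname{H}_\mu v,v\rangle_{L^2_\mu}$, so Taylor's theorem with Peano remainder yields the expansion without any continuity of $\varphi''$.
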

\begin{proof}
By Lemma \ref{lemma_for_second_order_mu},
\begin{equation*}
    \frac{\mathrm{d}}{\mathrm{d}t}\Big|_{t=0} F(\mu_t) = \left\langle \nabla_\mu F(\mu, \cdot), v\right\rangle_{L^2_\mu}.
\end{equation*}
Hence, by the fundamental theorem of calculus and integration by parts,
\begin{align*}
       F(\mu_t) -F(\mu) &=  \int_0^t\frac{\mathrm{d}}{\mathrm{d}t}F(\mu_s)\mathrm{d}s = \left(s\frac{\mathrm{d}}{\mathrm{d}t}F(\mu_s)\right)\Big|_{s=0}^{s=t} - \int_0^t s \frac{\mathrm{d}^2}{\mathrm{d}t^2}F(\mu_s)\mathrm{d}s\\
       &=t\frac{\mathrm{d}}{\mathrm{d}t}F(\mu_t)- \int_0^t s \frac{\mathrm{d}^2}{\mathrm{d}t^2}F(\mu_s)\mathrm{d}s\\
       &=t\left(\frac{\mathrm{d}}{\mathrm{d}t}\Bigg|_{s=0}F(\mu_s)+ \int_0^t \frac{\mathrm{d}^2}{\mathrm{d}t^2}F(\mu_s)\mathrm{d}s\right)- \int_0^t s \frac{\mathrm{d}^2}{\mathrm{d}t^2}F(\mu_s)\mathrm{d}s\\
       &= t\left\langle \nabla_\mu F(\mu, \cdot), v\right\rangle_{L^2_\mu} + \int_0^t (t-s)\frac{\mathrm{d}^2}{\mathrm{d}t^2}F(\mu_s)\mathrm{d}s\\
       &= t\left\langle \nabla_\mu F(\mu, \cdot), v\right\rangle_{L^2_\mu} + \int_0^t (t-s)\langle \widetilde{\operatorname{H}}_{\mu, s} v, v\rangle_{L^2_\mu}\mathrm{d}s,
\end{align*}
where the last equality follows from Proposition \ref{proposition:Wasserstein-hessian-appendix}. By joint continuity of $(\mu ,x )\mapsto \nabla \nabla_\mu F(\mu,x)$ and $(\mu ,x,\bar{x})\mapsto \nabla_\mu^2 F(\mu,x,\bar{x})$, since $\mu_t \to \mu$ weakly and $\pi_t \to \operatorname{Id}$ as $t \to 0$,
\begin{equation*}
\lim_{t\to 0}\frac{1}{t}\int_0^t \left|\left\langle\widetilde{\operatorname{H}}_{\mu, s} v, v\right\rangle_{L^2_\mu}-\left\langle \operatorname{H}_{\mu}v,v\right\rangle_{L^2_\mu}\right|\mathrm ds=0.
\end{equation*}
Note that
\[
\int_0^t (t-s)\left\langle \operatorname{H}_{\mu}v,v\right\rangle_{L^2_\mu}\mathrm ds=\frac{t^2}{2}\left\langle \operatorname{H}_{\mu}v,v\right\rangle_{L^2_\mu}.
\]
Hence,
\begin{equation*}
    F(\mu_t) -F(\mu) = \left\langle \nabla_\mu F(\mu, \cdot), v\right\rangle_{L^2_\mu} + \frac{t^2}{2}\left\langle \operatorname{H}_{\mu}v,v\right\rangle_{L^2_\mu} +\int_0^t (t-s)\left(\left\langle\widetilde{\operatorname{H}}_{\mu, s} v, v\right\rangle_{L^2_\mu} - \left\langle \operatorname{H}_{\mu}v,v\right\rangle_{L^2_\mu}\right)\mathrm{d}s.
\end{equation*}
Using $0\le t-s\le t$, 
\begin{align*}
\left|\int_0^t (t-s)\left(\left\langle\widetilde{\operatorname{H}}_{\mu, s} v, v\right\rangle_{L^2_\mu} - \left\langle \operatorname{H}_{\mu}v,v\right\rangle_{L^2_\mu}\right)\mathrm{d}s\right|
&\le
t\int_0^t \left|\left\langle\widetilde{\operatorname{H}}_{\mu, s} v, v\right\rangle_{L^2_\mu} - \left\langle \operatorname{H}_{\mu}v,v\right\rangle_{L^2_\mu}\right|\mathrm ds\\
&=
t^2\left(\frac{1}{t}\int_0^t \left|\left\langle\widetilde{\operatorname{H}}_{\mu, s} v, v\right\rangle_{L^2_\mu}-\left\langle \operatorname{H}_{\mu}v,v\right\rangle_{L^2_\mu}\right|\mathrm ds\right).
\end{align*}
Letting $t \to 0$ shows that the last term in the expansion of $F$ is $o(t^2)$, which gives the conclusion.
\end{proof}

\begin{lemma}[First-order expansion of the Wasserstein gradient of $F$]
\label{lem:grad-expansion}
Assume $F:\mathcal{P}_2(\mathbb R^d) \to \mathbb R$ is Wasserstein regular. Let $\mu\in\mathcal{P}_2(\mathbb R^d)$, $v\in L^2_\mu$ and define the $\mu_t:=(\pi_t)_\#\mu$ with $\pi_t:=\operatorname{Id}+t v$, for $t \in [0,1]$. Assume further that the maps $(\mu ,x )\mapsto \nabla \nabla_\mu F(\mu,x)$ and $(\mu ,x,\bar{x})\mapsto \nabla_\mu^2 F(\mu,x,\bar{x})$ are jointly continuous. Then,
\begin{equation*}
\nabla_\mu F(\mu_t,\cdot)\circ \pi_t - \nabla_\mu F(\mu,\cdot)
= t \operatorname{H}_\mu[v] + o(t), \quad \text{in }L^2_\mu.
\end{equation*}
\end{lemma}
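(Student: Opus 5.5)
The natural route is to write the difference as an integral of its time derivative along the curve and then compare the integrand with its value at $s=0$. For fixed $x$, the chain rule computation already used in the proof of Proposition \ref{proposition:Wasserstein-hessian-appendix} (equivalently Proposition \ref{prop:wass-hessian-variations}) gives
\[
\frac{\mathrm d}{\mathrm ds}\nabla_\mu F(\mu_s,\pi_s(x)) = \widetilde{\operatorname{M}}_{\mu,s}[v](x)+\widetilde{\operatorname{K}}_{\mu,s}[v](x)=\widetilde{\operatorname{H}}_{\mu,s}[v](x).
\]
Integrating over $s\in[0,t]$ then yields the pointwise identity
\[
\nabla_\mu F(\mu_t,\pi_t(x))-\nabla_\mu F(\mu,x)=\int_0^t \widetilde{\operatorname{H}}_{\mu,s}[v](x)\,\mathrm ds .
\]
I will read this as an identity in $L^2_\mu$, i.e.\ a Bochner integral in $s$, which requires $s\mapsto \widetilde{\operatorname{H}}_{\mu,s}[v]$ to be measurable and integrable in $L^2_\mu$. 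Since $\widetilde{\operatorname{H}}_{\mu,0}=\operatorname{H}_\mu$, subtracting $t\operatorname{H}_\mu[v]$ gives
\[
\Big\|\nabla_\mu F(\mu_t,\cdot)\circ\pi_t-\nabla_\mu F(\mu,\cdot)-t\operatorname{H}_\mu[v]\Big\|_{L^2_\mu}\le \int_0^t\big\|(\widetilde{\operatorname{H}}_{\mu,s}-\widetilde{\operatorname{H}}_{\mu,0})[v]\big\|_{L^2_\mu}\,\mathrm ds .
\]
It therefore suffices to show that $\|(\widetilde{\operatorname{H}}_{\mu,s}-\operatorname{H}_\mu)[v]\|_{L^2_\mu}\to0$ as $s\to0$. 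Once that holds, the right-hand side is $t\cdot o(1)=o(t)$.

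I split this convergence into the multiplication part and the integral part.

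\textbf{Multiplication part.} We have
\[
(\widetilde{\operatorname{M}}_{\mu,s}-\operatorname{M}_\mu)[v](x)=\big(\nabla\nabla_\mu F(\mu_s,\pi_s(x))-\nabla\nabla_\mu F(\mu,x)\big)v(x).
\]
Along a subsequence $sv(x)\to0$ for $\mu$-a.e.\ $x$, and $\mu_s\to\mu$ in $W_2$ because $W_2(\mu_s,\mu)\le s\|v\|_{L^2_\mu}$. Joint continuity therefore gives pointwise a.e.\ convergence of the matrix factor to $0$. For dominated convergence, the integrand is bounded by $\big(\|\nabla\nabla_\mu F(\mu_s,\cdot)\|_{\mu_s,\infty}+\|\nabla\nabla_\mu F(\mu,\cdot)\|_{\mu,\infty}\big)^2\|v(x)\|^2$. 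This needs a uniform-in-$s$ bound on the essential sup norm for small $s$, which I expect to be the main technical obstacle under bare Wasserstein regularity. I would resolve it either by invoking Assumption \ref{assumption:smooth-F} ($\le C_{\operatorname{M}}$), or by a local boundedness argument from joint continuity near $(\mu,\cdot)$. The latter is delicate on unbounded supports, so in the proof I would use the uniform bound. Pointwise convergence along every subsequence then upgrades to full convergence as $s\to0$.

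\textbf{Integral part.} Here
\[
\|(\widetilde{\operatorname{K}}_{\mu,s}-\operatorname{K}_\mu)[v]\|_{L^2_\mu}\le \big\|\nabla_\mu^2F(\mu_s,\pi_s(\cdot),\pi_s(\cdot))-\nabla_\mu^2F(\mu,\cdot,\cdot)\big\|_{L^2_{\mu\otimes\mu}}\|v\|_{L^2_\mu},
\]
by Cauchy--Schwarz, i.e.\ the Hilbert--Schmidt bound. The kernel difference converges to $0$ $\mu\otimes\mu$-a.e.\ by joint continuity. To pass to the $L^2_{\mu\otimes\mu}$ norm I would again use domination, i.e.\ uniform integrability of the kernels, supplied by Assumption \ref{assumption:smooth-F} together with Vitali's theorem.

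Alternatively, if Assumption \ref{assumption:Hessian-lipschitz} is available, both parts follow at once using the coupling $\gamma=(\operatorname{Id},\pi_s)_\#\mu$. This gives $\|\widetilde{\operatorname{H}}_{\mu,s}-\operatorname{H}_\mu\|_{\mathrm{op}}\le L_{\operatorname{H}}\,s\|v\|_{L^2_\mu}$, and hence even an $O(t^2)$ remainder.
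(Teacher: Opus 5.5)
Your skeleton matches the paper's proof. You integrate $\widetilde{\operatorname{H}}_{\mu,s}[v]$ over $[0,t]$, subtract $t\operatorname{H}_\mu[v]$, and reduce everything to continuity of $s\mapsto\widetilde{\operatorname{H}}_{\mu,s}$ at $s=0$. The paper finishes by asserting $\sup_{s\le t}\|\widetilde{\operatorname{H}}_{\mu,s}-\operatorname{H}_\mu\|_{\mathrm{op}}\to0$ from joint continuity. You aim only at $\|(\widetilde{\operatorname{H}}_{\mu,s}-\operatorname{H}_\mu)[v]\|_{L^2_\mu}\to0$ for the fixed $v$, and that is the right target, because operator-norm convergence can fail under the stated hypotheses. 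For example, take $F(\mu)=\int V\,\mathrm d\mu$ on $\mathbb R$ with $V''(x)=\sin(x^2)$, $\mu$ Gaussian and $v\equiv1$. Then $\|\widetilde{\operatorname{M}}_{\mu,s}-\operatorname{M}_\mu\|_{\mathrm{op}}=\sup_x|\sin((x+s)^2)-\sin(x^2)|\ge1$ for every $s>0$. Your multiplication-part argument is also fine once a uniform bound is granted. Lemma \ref{lem:esssup_pushforward} turns $C_{\operatorname{M}}$ into a $\mu$-a.e.\ bound on $\nabla\nabla_\mu F(\mu_s,\pi_s(x))$, and dominated convergence applies. No subsequence is needed, since $\pi_s(x)\to x$ for every $x$.

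The gap is in the integral part. Set $k_s(x,\bar x):=\nabla_\mu^2F(\mu_s,\pi_s(x),\pi_s(\bar x))$. Assumption \ref{assumption:smooth-F} gives only $\sup_s\|k_s\|_{L^2_{\mu\otimes\mu}}\le C_{\operatorname{K}}$, i.e.\ boundedness of $|k_s|^2$ in $L^1_{\mu\otimes\mu}$. That is not uniform integrability of $|k_s|^2$, so Vitali does not give $\|k_s-k_0\|_{L^2_{\mu\otimes\mu}}\to0$. A.e.\ convergence plus an $L^2$ bound yields only weak convergence of the kernels. Hence you only get $\widetilde{\operatorname{K}}_{\mu,s}[v]\rightharpoonup\operatorname{K}_\mu[v]$ weakly in $L^2_\mu$; think of $\sqrt n\,\mathbf{1}_{[0,1/n]}$, which tends to $0$ a.e.\ while keeping unit $L^2$ norm. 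Moreover, Assumption \ref{assumption:smooth-F} is not a hypothesis of the lemma, so even a repaired argument of this kind proves a variant of the statement. To close the step you need one of the following.
\begin{itemize}
\item A genuine domination $\sup_{s\le s_0}\|k_s(x,\bar x)\|\le g(x,\bar x)$ with $g\in L^2_{\mu\otimes\mu}$, for instance a uniform pointwise bound on $\nabla_\mu^2F$.
\item The route of your last paragraph. Assumption \ref{assumption:Hessian-lipschitz} with $\gamma=(\operatorname{Id},\pi_s)_\#\mu$ gives $\|\widetilde{\operatorname{H}}_{\mu,s}-\operatorname{H}_\mu\|_{\mathrm{op}}\le L_{\operatorname{H}}\,s\|v\|_{L^2_\mu}$. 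This is Lemma \ref{lem:lip-curves} with its second time parameter set to $0$, and it yields an $O(t^2)$ remainder.
\end{itemize}
The second route is rigorous, and Assumption \ref{assumption:Hessian-lipschitz} is in force where the paper relies on this lemma (Lemma \ref{lem:preconditioned_gradient_expansion}). However, it replaces joint continuity by Lipschitz continuity rather than proving the lemma under the hypotheses as stated.
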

\begin{proof}
Following the proof of Proposition \ref{proposition:Wasserstein-hessian-appendix},
\begin{align*}
        \frac{\mathrm{d}}{\mathrm{d}t}\left\langle \nabla_\mu F(\mu_t, \pi_t), v\right\rangle_{L^2_\mu} = \left\langle \frac{\mathrm{d}}{\mathrm{d}t}\nabla_\mu F(\mu_t, \pi_t), v\right\rangle_{L^2_\mu} = \left\langle\widetilde{\operatorname{H}}_{\mu, t}v, v\right\rangle_{L^2_\mu}.
\end{align*}
Hence,
\begin{equation*}
\frac{\mathrm{d}}{\mathrm{d}t}\nabla_\mu F(\mu_t,\cdot) \circ \pi_t=\widetilde{\operatorname{H}}_{\mu, t}[v]\quad \text{in }L^2_\mu.
\end{equation*}
Integrating from $0$ to $t$, then adding and subtracting $\operatorname{H}_\mu[v]$ gives
\[
\nabla_\mu F(\mu_t,\cdot) \circ \pi_t-\nabla_\mu F(\mu, \cdot)=t\operatorname{H}_\mu[v]+\int_0^t\left(\widetilde{\operatorname{H}}_{\mu, s}-\operatorname{H}_\mu\right)[v]\mathrm ds.
\]
Taking the $L^2_\mu$ norm and using the operator norm,
\[
\left\|\int_0^t \left(\widetilde{\operatorname{H}}_{\mu, s}-\operatorname{H}_\mu\right)[v]\mathrm ds\right\|_{L^2_\mu}
\le
\int_0^{t}\|\widetilde{\operatorname{H}}_{\mu, s}-\operatorname{H}_\mu\|_{\mathrm {op}}\|v\|_{L^2_\mu}\mathrm ds
\le
t\left(\sup_{s\in[0,t]}\|\widetilde{\operatorname{H}}_{\mu, s}-\operatorname{H}_\mu\|_{\mathrm{op}}\right)\|v\|_{L^2_\mu}.
\]
By the assumed joint continuity of $(\mu ,x )\mapsto \nabla \nabla_\mu F(\mu,x)$ and $(\mu ,x,\bar{x})\mapsto \nabla_\mu^2 F(\mu,x,\bar{x})$,
\(\sup_{s\in[0,t]}\|\widetilde{\operatorname{H}}_{\mu, s}-\operatorname{H}_\mu\|_{\mathrm{op}}\to 0\) as \(t\to 0\). Hence the remainder term is \(o(t)\) in \(L^2_\mu\), which yields the conclusion.
\end{proof}

\begin{lemma}[$L_\mu^2$-smoothness of the Wasserstein gradient of $F$]
\label{lemma:smoothness gradient of F}
    Let Assumption \ref{assumption:Hessian-lipschitz} hold. Let $\mu \in\mathcal{P}_2(\mathbb R^d)$, $v \in L_\mu^2$ and for $t \in [0,1]$ define the curve $\mu_t = (\pi_t)_{\#}\mu$, with $\pi_t = \operatorname{Id}+tv$. Then
    \begin{equation*}
        \left\|\nabla_\mu F((\operatorname{Id}+v)_{\#}\mu,\cdot) \circ \pi_1 - \nabla_\mu F(\mu,\cdot) -  \operatorname{H}_\mu v\right\|_{L_\mu^2} \leq \frac{L_{\operatorname{M}}+L_{\operatorname{K}}}{2}\|v\|^2_{L_\mu^2}.
    \end{equation*}
\end{lemma}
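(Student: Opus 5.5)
We will prove Lemma \ref{lemma:smoothness gradient of F} by integrating the derivative of the pulled-back gradient along the curve $\mu_t=(\pi_t)_\#\mu$ and comparing the integrand with its value at $t=0$. Following the proof of Lemma \ref{lem:grad-expansion} (equivalently, Proposition \ref{proposition:Wasserstein-hessian-appendix}), we have, in $L^2_\mu$,
\[
\frac{\mathrm d}{\mathrm dt}\nabla_\mu F(\mu_t,\cdot)\circ\pi_t=\widetilde{\operatorname{H}}_{\mu,t}[v].
\]
Integrating over $[0,1]$ and subtracting $\operatorname{H}_\mu v=\widetilde{\operatorname{H}}_{\mu,0}[v]$ gives
\[
\nabla_\mu F(\mu_1,\cdot)\circ\pi_1-\nabla_\mu F(\mu,\cdot)-\operatorname{H}_\mu v=\int_0^1\bigl(\widetilde{\operatorname{H}}_{\mu,t}-\widetilde{\operatorname{H}}_{\mu,0}\bigr)[v]\,\mathrm dt .
\]
We then take the $L^2_\mu$ norm inside the integral (Minkowski/Bochner).

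The key step is the bound $\|(\widetilde{\operatorname{H}}_{\mu,t}-\operatorname{H}_\mu)[v]\|_{L^2_\mu}\le (L_{\operatorname M}+L_{\operatorname K})\,t\,\|v\|_{L^2_\mu}^2$. We split the difference into its multiplication part and its integral part.

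\textbf{Multiplication part.} It equals $(\nabla\nabla_\mu F(\mu_t,\pi_t(x))-\nabla\nabla_\mu F(\mu,x))v(x)$. We use the coupling $\gamma_t=(\operatorname{Id},\pi_t)_\#\mu\in\Pi(\mu,\mu_t)$, whose cost is $\int\|y-x\|^2\mathrm d\gamma_t=t^2\|v\|^2_{L^2_\mu}$. The first inequality of Assumption \ref{assumption:Hessian-lipschitz} then bounds the matrix difference $\gamma_t$-essentially, hence $\mu$-a.e., by $L_{\operatorname M}t\|v\|_{L^2_\mu}$. Therefore its $L^2_\mu$ norm is at most $L_{\operatorname M}t\|v\|^2_{L^2_\mu}$.

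\textbf{Integral part.} It is the integral operator with kernel $\nabla_\mu^2F(\mu_t,\pi_t(x),\pi_t(\bar x))-\nabla_\mu^2F(\mu,x,\bar x)$ acting on $v$. Its operator norm is bounded by its Hilbert--Schmidt norm, which is the $L^2_{\mu\otimes\mu}$ norm of this kernel, i.e.\ the $L^2_{\gamma_t\otimes\gamma_t}$ difference in Assumption \ref{assumption:Hessian-lipschitz}. This gives the bound $L_{\operatorname K}t\|v\|_{L^2_\mu}$, and applying it to $v$ gives $L_{\operatorname K}t\|v\|^2_{L^2_\mu}$.

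Integrating $\int_0^1 t\,\mathrm dt=\tfrac12$ then yields the claimed constant $\tfrac{L_{\operatorname M}+L_{\operatorname K}}{2}$.

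The main obstacle is justifying the first step, namely that $t\mapsto\nabla_\mu F(\mu_t,\cdot)\circ\pi_t$ is differentiable (or at least absolutely continuous) as an $L^2_\mu$-valued map with derivative $\widetilde{\operatorname{H}}_{\mu,t}[v]$. Lemma \ref{lem:grad-expansion} obtains this only weakly, by pairing with $v$, and only under a continuity hypothesis. We would instead establish the difference-quotient convergence directly at each $t$. Fix $t$ and apply Wasserstein differentiability of $\nabla_\mu F$ (from Definition \ref{def:regularity-F}) together with Proposition \ref{prop:strong_diff_w} to the coupling $(\pi_t,\pi_{t+h})_\#\mu$, combined with the spatial differentiability of $x\mapsto\nabla_\mu F(\mu,x)$. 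Assumption \ref{assumption:Hessian-lipschitz} makes $t\mapsto\widetilde{\operatorname{H}}_{\mu,t}$ Lipschitz in operator norm, so the derivative is continuous and the fundamental theorem of calculus applies in $L^2_\mu$.

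If this direct route proves delicate, the fallback is to pair with an arbitrary $w\in L^2_\mu$. The scalar function $t\mapsto\langle\nabla_\mu F(\mu_t,\cdot)\circ\pi_t,w\rangle_{L^2_\mu}$ has derivative $\langle\widetilde{\operatorname{H}}_{\mu,t}v,w\rangle_{L^2_\mu}$, obtained by the same computation as in Proposition \ref{proposition:Wasserstein-hessian-appendix}. We then apply the scalar fundamental theorem of calculus and take the supremum over $\|w\|_{L^2_\mu}\le1$, which recovers the same bound.
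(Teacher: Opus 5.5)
Your proposal is correct and matches the paper's proof: you integrate $\frac{\mathrm d}{\mathrm dt}\nabla_\mu F(\mu_t,\cdot)\circ\pi_t=\widetilde{\operatorname{H}}_{\mu,t}[v]$ over $[0,1]$ and bound $\|\widetilde{\operatorname{H}}_{\mu,t}-\operatorname{H}_\mu\|_{\mathrm{op}}\le t(L_{\operatorname M}+L_{\operatorname K})\|v\|_{L^2_\mu}$; the paper gets this bound by citing Lemma~\ref{lem:lip-curves} with $s=0$, and you reprove it inline with the same coupling $(\operatorname{Id},\pi_t)_\#\mu$. Your fallback of pairing with an arbitrary $w\in L^2_\mu$, rather than only with $v$, also repairs a step the paper passes over with a bare ``hence'' when it deduces the $L^2_\mu$-valued derivative identity.
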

\begin{proof}
 Following the proof of Proposition \ref{proposition:Wasserstein-hessian-appendix},
\begin{align*}
        \frac{\mathrm{d}}{\mathrm{d}t}\left\langle \nabla_\mu F(\mu_t, \pi_t), v\right\rangle_{L^2_\mu} = \left\langle \frac{\mathrm{d}}{\mathrm{d}t}\nabla_\mu F(\mu_t, \pi_t), v\right\rangle_{L^2_\mu} = \left\langle\widetilde{\operatorname{H}}_{\mu, t}v, v\right\rangle_{L^2_\mu}.
\end{align*}
Hence,
\begin{equation*}
\frac{\mathrm{d}}{\mathrm{d}t}\nabla_\mu F(\mu_t,\cdot) \circ \pi_t=\widetilde{\operatorname{H}}_{\mu, t}[v]\quad \text{in }L^2_\mu.
\end{equation*}
Integrating this equality gives
 \begin{equation*}
     \nabla_\mu F((\operatorname{Id}+v)_{\#}\mu,\cdot)\circ \pi_1 - \nabla_\mu F(\mu,\cdot) = \int_0^1\widetilde{\operatorname{H}}_{\mu, t}[v]\mathrm{d} t \quad \text{in }L^2_\mu.
 \end{equation*} 
 Therefore, by Lemma \ref{lem:lip-curves} with $s=0$,
 \begin{align*}
    \left\| \nabla_\mu F((\operatorname{Id}+v)_{\#}\mu,\cdot)\circ \pi_1 - \nabla_\mu F(\mu,\cdot) - \operatorname{H}_\mu v\right\|_{L^2_\mu}&= \left\|\int_0^1 \widetilde{\operatorname{H}}_{\mu, t}[v]\mathrm{d} t - \operatorname{H}_\mu [v]\right\|_{L^2_\mu}\\
    &\leq \int_0^1 \|(\widetilde{\operatorname{H}}_{\mu, t}-\operatorname{H}_\mu)v\|_{L^2_\mu}\mathrm{d} t\\
    &\leq \|v\|_{L_\mu^2}\int_0^1 \|\widetilde{\operatorname{H}}_{\mu, t}-\operatorname{H}_\mu\|_{\mathrm{op}}\mathrm{d}t\\
    &\leq (L_{\operatorname{M}}+L_{\operatorname{K}})\|v\|_{L_\mu^2}^2\int_0^1t\mathrm{d}t =\frac{L_{\operatorname{M}}+L_{\operatorname{K}}}{2}\|v\|_{L_\mu^2}^2.
 \end{align*}
\end{proof}

\subsection{Operator bounds and preconditioner identities}
We conclude with several technical operator estimates. These bounds control the size and regularity of the Hessian along transport curves and are used to justify the regularized saddle-free preconditioner.
\begin{lemma}
\label{lem:upper-bound-inner-hessian}
    Let Assumption \ref{assumption:smooth-F} hold and let $\beta \geq 0$. Then $(\operatorname{H}_\mu)_{\mu \in \mathcal{P}_2(\mathbb R^d)}$ is self-adjoint and, for all $v \in L_\mu^2$,
    \begin{equation*}
        \left|\langle \operatorname{H}_{\mu}v,v\rangle_{L^2_{\mu}}\right| \leq \left\langle\left(\operatorname{H}_{\mu}^2+\beta\operatorname{I_{d \times d}}\right)^{\frac{1}{2}}v, v\right\rangle_{L^2_{\mu}},
    \end{equation*}
    where $\operatorname{H}_{\mu}^2$ denotes the composition $\operatorname{H}_{\mu}^2[v] := \operatorname{H}_{\mu}[\operatorname{H}_{\mu}[v]]$.
\end{lemma}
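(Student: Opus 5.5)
The plan has two parts: first show that $\operatorname{H}_\mu$ is a bounded self-adjoint operator on $L^2_\mu$, then obtain the inequality from the spectral theorem.

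\textbf{Self-adjointness.} By Proposition \ref{proposition:Wasserstein-hessian-appendix} (at $t=0$), $\operatorname{H}_\mu=\operatorname{M}_\mu+\operatorname{K}_\mu$ is linear and symmetric on $L^2_\mu$. Under Assumption \ref{assumption:smooth-F}, $\|\operatorname{M}_\mu\|_{\mathrm{op}}=\|\nabla\nabla_\mu F(\mu,\cdot)\|_{\mu,\infty}\le C_{\operatorname{M}}$. Since $\operatorname{K}_\mu$ is Hilbert--Schmidt, $\|\operatorname{K}_\mu\|_{\mathrm{op}}\le\|\operatorname{K}_\mu\|_{\mathrm{HS}}\le C_{\operatorname{K}}$. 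Hence $\operatorname{H}_\mu$ is bounded and symmetric, defined on all of $L^2_\mu$. A bounded symmetric operator with full domain is self-adjoint (Hellinger--Toeplitz setting), which gives the first claim. As a consequence, $\operatorname{H}_\mu^2$ is self-adjoint and non-negative, since $\langle \operatorname{H}_\mu^2 v,v\rangle=\|\operatorname{H}_\mu v\|^2\ge0$. So $\operatorname{H}_\mu^2+\beta\operatorname{I_{d\times d}}\succeq0$, and its unique non-negative square root exists by the continuous functional calculus.

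\textbf{The inequality.} Let $E$ be the projection-valued spectral measure of $\operatorname{H}_\mu$, supported on $\sigma(\operatorname{H}_\mu)\subset[-C_{\operatorname H},C_{\operatorname H}]$. Write $\nu_v(\cdot):=\langle E(\cdot)v,v\rangle_{L^2_\mu}$; this is a finite positive measure of total mass $\|v\|^2_{L^2_\mu}$. The functional calculus gives, for any bounded Borel function $f$ on the spectrum,
\[
\langle f(\operatorname{H}_\mu)v,v\rangle_{L^2_\mu}=\int f(\lambda)\,\mathrm d\nu_v(\lambda).
\]
The identification $(\operatorname{H}_\mu^2+\beta\operatorname{I_{d\times d}})^{1/2}=g(\operatorname{H}_\mu)$ with $g(\lambda)=\sqrt{\lambda^2+\beta}$ follows from uniqueness of non-negative square roots: $g(\operatorname{H}_\mu)\succeq0$ and $g(\operatorname{H}_\mu)^2=\operatorname{H}_\mu^2+\beta\operatorname{I_{d\times d}}$ by multiplicativity of the calculus. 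Then
\[
\left|\langle \operatorname{H}_\mu v,v\rangle\right|=\left|\int\lambda\,\mathrm d\nu_v\right|\le\int|\lambda|\,\mathrm d\nu_v\le\int\sqrt{\lambda^2+\beta}\,\mathrm d\nu_v=\langle g(\operatorname{H}_\mu)v,v\rangle .
\]
The middle integral is exactly $\langle|\operatorname{H}_\mu|v,v\rangle$, which also yields the intermediate bound used in the main text.

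\textbf{Main obstacle.} The only delicate point is the identification $g(\operatorname{H}_\mu)=(\operatorname{H}_\mu^2+\beta\operatorname{I_{d\times d}})^{1/2}$, together with self-adjointness of the pieces. Both are handled by boundedness and uniqueness of positive square roots. As an alternative that avoids the spectral measure, decompose $v=v_++v_-$ using the spectral projections onto $\{\lambda\ge0\}$ and $\{\lambda<0\}$, and compare termwise.
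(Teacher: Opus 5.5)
Your proof is correct and follows essentially the same route as the paper: self-adjointness from boundedness plus symmetry of $\operatorname{H}_\mu$, then the comparison $|\lambda|\le\sqrt{\lambda^2+\beta}$ integrated against the positive spectral measure $\langle E(\cdot)v,v\rangle_{L^2_\mu}$. The differences are only in detail. You derive $|\langle \operatorname{H}_\mu v,v\rangle_{L^2_\mu}|\le\langle|\operatorname{H}_\mu|v,v\rangle_{L^2_\mu}$ directly from $|\int\lambda\,\mathrm d\nu_v|\le\int|\lambda|\,\mathrm d\nu_v$ rather than citing Kato. You also explicitly justify $(\operatorname{H}_\mu^2+\beta\operatorname{I_{d\times d}})^{1/2}=g(\operatorname{H}_\mu)$ via uniqueness of positive square roots, which the paper leaves implicit.
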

\begin{proof}
By Proposition \ref{proposition:Wasserstein-hessian}, $(\operatorname{H}_{\mu})_{\mu} \subset \mathcal{L}\mathcal{S}(L_\mu^2)$, i.e., $(\operatorname{H}_{\mu})_{\mu}$ is linear and symmetric, and since $(\operatorname{H}_\mu)_{\mu}$ is uniformly bounded by Assumption \ref{assumption:smooth-F}, it follows by \cite[Theorem 23.22]{gustafson2020mathematical} that $(\operatorname{H}_\mu)_{\mu}$ is self-adjoint, i.e., $\operatorname{H}_{\mu} = \operatorname{H}_{\mu}^*$. 

Since $\operatorname{H}_{\mu}$ is self-adjoint and $\operatorname{H}_{\mu}^*\operatorname{H}_{\mu} = \operatorname{H}_{\mu}^2 \succeq 0$, using \cite[Definition; VI: Bounded Operators, page 196]{ReedSimon1980}, we may define the absolute value operator by $|\operatorname{H}_{\mu}| := \left(\operatorname{H}_{\mu}^2\right)^{\frac{1}{2}}$. By \cite[Problem 2.35]{kato1995perturbation},
\begin{equation*}
    \left|\langle \operatorname{H}_{\mu}v,v\rangle_{L^2_{\mu}}\right| \leq \langle \left|\operatorname{H}_{\mu}\right|v,v\rangle_{L^2_{\mu}},
\end{equation*}
for any $v \in L^2_{\mu}$. 

Using the property of adjoint, $(AB)^* = B^* A^*$, we get
\begin{equation*}
    (\operatorname{H}_{\mu}^2)^* = (\operatorname{H}_{\mu}\operatorname{H}_{\mu})^* = \operatorname{H}_{\mu}^*\operatorname{H}_{\mu}^* = \operatorname{H}_{\mu}\operatorname{H}_{\mu} = \operatorname{H}_{\mu}^2,
\end{equation*}
hence $\operatorname{H}_{\mu}^2$ is self-adjoint. The identity operator $\operatorname{I_{d \times d}}$ is trivially self-adjoint and the sum of two self-adjoint operators is self-adjoint. Thus, $\operatorname{H}_{\mu}^2+\beta\operatorname{I_{d \times d}}$ is self-adjoint. Furthermore, because $\operatorname{H}_{\mu}^2 \succeq 0$, the operator $\operatorname{H}_{\mu}^2+\beta\operatorname{I_{d \times d}}\succeq 0$. Then by \cite[Theorem VI.9 (square root lemma)]{ReedSimon1980} the square root operator $\left(\operatorname{H}_{\mu}^2+\operatorname{I_{d \times d}}\right)^{\frac{1}{2}}$ is well-defined. 

Let $\sigma(\operatorname{H}_\mu)$ be the spectrum of $\operatorname{H}_\mu$ and $E(\lambda)$ be the spectral measure associated to the eigenvalue $\lambda \in \sigma(\operatorname{H}_\mu)$. Since $\operatorname{H}_{\mu}$ is self-adjoint, we have $\sigma(\operatorname{H}_\mu) \subset \mathbb R$.  Then by spectral calculus
\begin{align*}
    \langle\left|\operatorname{H}_{\mu}\right|v,v\rangle_{L^2_{\mu}} &= \int_{\sigma(\operatorname{H}_\mu)} |\lambda| \mathrm{d}\langle v, E(\lambda) v\rangle_{L^2_{\mu}}\\ 
    &= \int_{\sigma(\operatorname{H}_\mu)} |\lambda| \mathrm{d}\langle v, E(\lambda) v\rangle_{L^2_{\mu}}\\ 
    &= \int_{\sigma(\operatorname{H}_\mu)} (\lambda^2)^{\frac{1}{2}} \mathrm{d}\langle v, E(\lambda) v\rangle_{L^2_{\mu}}\\ 
    &\leq \int_{\sigma(\operatorname{H}_\mu)} (\lambda^2+\beta)^{\frac{1}{2}} \mathrm{d}\langle v, E(\lambda) v\rangle_{L^2_{\mu}}\\ 
    &= \left\langle\left(\operatorname{H}_{\mu}^2+\beta\operatorname{I_{d \times d}}\right)^{\frac{1}{2}}v, v\right\rangle_{L^2_{\mu}}.
\end{align*}
\end{proof}
\begin{lemma}[Essential supremum under pushforward]\label{lem:esssup_pushforward}
Let $\mu\in\mathcal P(\mathbb R^d)$, let $T:\mathbb R^d\to\mathbb R^d$ be measurable and set $\nu:=T_\#\mu$. Let $f:\mathbb R^d\to \mathbb R$ be measurable. Then
\begin{equation*}
\|f\circ T\|_{\mu,\infty}=\|f\|_{\nu,\infty},
\end{equation*}
\end{lemma}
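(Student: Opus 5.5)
We prove the two inequalities $\|f\circ T\|_{\mu,\infty}\le \|f\|_{\nu,\infty}$ and $\|f\|_{\nu,\infty}\le \|f\circ T\|_{\mu,\infty}$ separately. Both follow from the defining identity of the pushforward, $\mu(T^{-1}(A))=\nu(A)$ for every Borel set $A\subset\mathbb R^d$, applied to superlevel sets of $|f|$. (Throughout, $\|\cdot\|$ on the scalar $f$ is the absolute value, in line with the paper's definition of $\|\cdot\|_{\mu,\infty}$.)

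The key observation is a set identity. For $M\ge 0$, set $A_M:=\{y\in\mathbb R^d:|f(y)|>M\}$, which is Borel because $f$ is measurable. Then
\[
\{x:|f(T(x))|>M\}=T^{-1}(A_M),
\qquad\text{so}\qquad
\mu\bigl(\{x:|f\circ T(x)|>M\}\bigr)=\nu(A_M).
\]
Consequently, $M$ is an admissible bound for $f\circ T$ $\mu$-a.e. if and only if $M$ is an admissible bound for $f$ $\nu$-a.e. The two sets over which the infima are taken therefore coincide, and the essential suprema agree, including the case where both are $+\infty$ because both sets are empty.

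The only point that needs care is measurability of $f\circ T$, which holds since a composition of measurable maps is measurable. No other step is a genuine obstacle.

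The same argument applies verbatim to vector- or matrix-valued $f$ by replacing $|\cdot|$ with the relevant norm. This form is what will be used when transporting $\nabla\nabla_\mu F$ along the maps $\pi_t$.
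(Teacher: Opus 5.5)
Your proof is correct and follows the paper's argument: both use $\{x:|f(T(x))|>M\}=T^{-1}(\{|f|>M\})$ together with $\nu=T_\#\mu$ to show that the sets of admissible bounds $M$ coincide, and then take infima. Two small differences: you keep the absolute value from the definition of $\|\cdot\|_{\mu,\infty}$ throughout (the paper's write-up drops it), and you address the case where both sets are empty. Also, your opening announcement of two separate inequalities is unnecessary, since you prove equality of the admissible sets directly.
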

\begin{proof}
By definition of essential supremum, for any measurable $f$,
\[
\|f\|_{\mu,\infty}:=\inf\{M\ge 0: |f(x)|\le M \text{ for }\mu\text{-a.e. }x\}.
\]
Fix $M\ge 0$. We claim that $f\le M \ \nu\text{-a.e.} \Longleftrightarrow f\circ T\le M \ \mu\text{-a.e.}$ Indeed,
\[
f\le M\ \nu\text{-a.e.} \Longleftrightarrow
\nu(\{y\in\mathbb R^d: f(y)>M\})=0.
\]
Since $\nu=T_\#\mu$, by definition of pushforward, we have $\nu(\{y: f(y)>M\}) = \mu\big(T^{-1}(\{y: f(y)>M\})\big)$. But $T^{-1}(\{y: f(y)>M\}) = \{x\in\mathbb R^d: f(T(x))>M\} = \{x\in\mathbb R^d: (f\circ T)(x)>M\}$.
Therefore,
\[
\nu(\{y: f(y)>M\})=0 \Longleftrightarrow \mu(\{x: (f\circ T)(x)>M\})=0.
\]
Thus the sets $\{M\ge 0: f\le M\ \nu\text{-a.e.}\}$ and $\{M\ge 0: f\circ T\le M\ \mu\text{-a.e.}\}$ coincide. Taking infimum over $M\ge 0$ yields the conclusion.
\end{proof}
\begin{lemma}[Uniform operator norm bound for Hessian]\label{lem:uniform_bound_tildeH}
Let Assumption \ref{assumption:smooth-F} hold. Let $\mu\in\mathcal P_2(\mathbb R^d)$, $v\in L^2_\mu$ and define $\mu_t:=(\pi_t)_\#\mu$, with $\pi_t = \operatorname{Id}+tv$, for all $t\in[0,1]$. Then, for every $t\in[0,1]$,
\[
\|\widetilde{\operatorname{H}}_{\mu,t}\|_{\mathrm{op}}\le C_{\operatorname{M}}+C_{\operatorname{K}}.
\]
\end{lemma}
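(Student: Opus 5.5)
The plan is to bound the two pieces of $\widetilde{\operatorname{H}}_{\mu,t}=\widetilde{\operatorname{M}}_{\mu,t}+\widetilde{\operatorname{K}}_{\mu,t}$ separately in operator norm on $L^2_\mu$ and then combine them with the triangle inequality $\|\widetilde{\operatorname{H}}_{\mu,t}\|_{\mathrm{op}}\le\|\widetilde{\operatorname{M}}_{\mu,t}\|_{\mathrm{op}}+\|\widetilde{\operatorname{K}}_{\mu,t}\|_{\mathrm{op}}$. In both pieces, the idea is to move the argument $\pi_t(x)$ back to the measure $\mu_t=(\pi_t)_\#\mu$, where Assumption \ref{assumption:smooth-F} applies. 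That assumption is stated uniformly over all of $\mathcal P_2(\mathbb R^d)$, so in particular it holds at $\mu_t$.

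\textbf{Multiplication part.} For $v\in L^2_\mu$ we have
\[
\|\widetilde{\operatorname{M}}_{\mu,t}[v]\|_{L^2_\mu}^2=\int\|\nabla\nabla_\mu F(\mu_t,\pi_t(x))v(x)\|^2\mathrm d\mu(x)\le \|\nabla\nabla_\mu F(\mu_t,\cdot)\circ\pi_t\|_{\mu,\infty}^2\|v\|_{L^2_\mu}^2 .
\]
Lemma \ref{lem:esssup_pushforward}, applied to the matrix norm $f(y)=\|\nabla\nabla_\mu F(\mu_t,y)\|$ with $T=\pi_t$, gives
\[
\|\nabla\nabla_\mu F(\mu_t,\cdot)\circ\pi_t\|_{\mu,\infty}=\|\nabla\nabla_\mu F(\mu_t,\cdot)\|_{\mu_t,\infty}\le C_{\operatorname{M}} .
\]
Hence $\|\widetilde{\operatorname{M}}_{\mu,t}\|_{\mathrm{op}}\le C_{\operatorname{M}}$.

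\textbf{Integral part.} Cauchy--Schwarz in $\bar x$ gives
\[
\|\widetilde{\operatorname{K}}_{\mu,t}[v](x)\|\le\Big(\int\|\nabla_\mu^2F(\mu_t,\pi_t(x),\pi_t(\bar x))\|^2\mathrm d\mu(\bar x)\Big)^{1/2}\|v\|_{L^2_\mu}.
\]
Squaring and integrating in $x$, the operator is bounded by its Hilbert--Schmidt norm
\[
\Big(\iint\|\nabla_\mu^2F(\mu_t,\pi_t(x),\pi_t(\bar x))\|^2\mathrm d\mu(x)\mathrm d\mu(\bar x)\Big)^{1/2}.
\]
Since $(\pi_t\times\pi_t)_\#(\mu\otimes\mu)=\mu_t\otimes\mu_t$, the change-of-variables formula for pushforwards turns this into $\|\nabla_\mu^2F(\mu_t,\cdot,\cdot)\|_{L^2_{\mu_t\otimes\mu_t}}\le C_{\operatorname{K}}$. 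Adding the two bounds gives $\|\widetilde{\operatorname{H}}_{\mu,t}\|_{\mathrm{op}}\le C_{\operatorname{M}}+C_{\operatorname{K}}$, uniformly in $t\in[0,1]$.

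\textbf{Main obstacle.} There is little real difficulty. The one point needing care is that the essential supremum and the $L^2$ norm are taken with respect to the pushed-forward measures $\mu_t$ and $\mu_t\otimes\mu_t$ rather than $\mu$. This is exactly what Lemma \ref{lem:esssup_pushforward} and the pushforward integration identity supply, and the matrix norm used should be the operator (spectral) norm, or any norm dominating it, so that the pointwise bound $\|A v\|\le\|A\|\|v\|$ holds.
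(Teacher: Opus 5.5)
Your proposal is correct and follows the paper's proof essentially step for step: the triangle-inequality split into $\widetilde{\operatorname{M}}_{\mu,t}$ and $\widetilde{\operatorname{K}}_{\mu,t}$, Lemma~\ref{lem:esssup_pushforward} with $T=\pi_t$ to move the essential supremum onto $\mu_t$, and Cauchy--Schwarz in $\bar x$ plus the pushforward identity $(\pi_t\times\pi_t)_\#(\mu\otimes\mu)=\mu_t\otimes\mu_t$ to reduce the integral part to $\|\nabla_\mu^2F(\mu_t,\cdot,\cdot)\|_{L^2_{\mu_t\otimes\mu_t}}\le C_{\operatorname{K}}$. Your remark that the matrix norm should dominate the spectral norm is a useful clarification that the paper leaves implicit.
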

\begin{proof}
Fix $t\in[0,1]$. Let $v\in L^2_\mu$. By definition, $\widetilde {\operatorname{M}}_{\mu,t}[v](x)=\nabla\nabla_\mu F(\mu_t,\pi_t(x))v(x)$. Hence
\[
\|\widetilde {\operatorname{M}}_{\mu,t}[v](x)\|
\le
\big\|\nabla\nabla_\mu F(\mu_t,\pi_t(x))\big\|\|v(x)\|.
\]
Squaring and integrating with respect to $\mu$ yields
\begin{align*}
\|\widetilde {\operatorname{M}}_{\mu,t}v\|_{L^2_\mu}^2
&\le
\left(\mu\operatorname*{-ess\sup}_{x}\big\|\nabla\nabla_\mu F(\mu_t,\pi_t(x))\big\|\right)^2
\|v\|_{L^2_\mu}^2.
\end{align*}
By applying Lemma \ref{lem:esssup_pushforward} with $T=\pi_t$, $\nu=\mu_t$, and $f(y)=\|\nabla\nabla_\mu F(\mu_t,y)\|$, we get
\begin{equation*}
    \mu\operatorname*{-ess\sup}_{x}\big\|\nabla\nabla_\mu F(\mu_t,\pi_t(x))\big\| = \|\nabla\nabla_\mu F(\mu_t,\cdot)\|_{\mu_t,\infty}.
\end{equation*}
By Assumption \ref{assumption:smooth-F},
\[
\|\nabla\nabla_\mu F(\mu_t,\cdot)\|_{\mu_t,\infty}\le C_{\operatorname{M}}.
\]
Therefore
\[
\|\widetilde{\operatorname{M}}_{\mu,t}v\|_{L^2_\mu}\le C_{\operatorname{M}}\|v\|_{L^2_\mu},
\]
and hence
\begin{equation}\label{eq:bound_tildeM}
\|\widetilde{\operatorname{M}}_{\mu,t}\|_{\mathrm{op}}\le C_{\operatorname{M}}.
\end{equation}
By definition 
\[
\widetilde{\operatorname{K}}_{\mu,t}[v](x)=\int_{\mathbb R^d}\nabla_\mu^2F(\mu_t,\pi_t(x),\pi_t(\bar x))v(\bar x)\mathrm d\mu(\bar x).
\]
By Cauchy--Schwarz in $\bar x$,
\begin{align*}
\|\widetilde{\operatorname{K}}_{\mu,t}[v](x)\|^2
&=
\left\|\int_{\mathbb R^d} \nabla_\mu^2F(\mu_t,\pi_t(x),\pi_t(\bar x))v(\bar x)\mathrm d\mu(\bar x)\right\|^2\\
&\le
\left(\int_{\mathbb R^d}\|\nabla_\mu^2F(\mu_t,\pi_t(x),\pi_t(\bar x))\|^2\mathrm d\mu(\bar x)\right)
\left(\int_{\mathbb R^d}\|v(\bar x)\|^2\mathrm d\mu(\bar x)\right).
\end{align*}
Integrating in $x$ gives
\begin{align*}
\|\widetilde{\operatorname{K}}_{\mu,t}v\|_{L^2_\mu}^2
&\le
\left(\int_{\mathbb R^d}\int_{\mathbb R^d}|\nabla_\mu^2F(\mu_t,\pi_t(x),\pi_t(\bar x))|^2\mathrm d\mu(x)\mathrm d\mu(\bar x)\right)
\|v\|_{L^2_\mu}^2.
\end{align*}
Thus
\begin{equation}\label{eq:K_hs_bound}
\|\widetilde{\operatorname{K}}_{\mu,t}\|_{\mathrm{op}}\le \|\nabla_\mu^2F(\mu_t,\cdot,\cdot) \circ \pi_t\|_{L^2_{\mu\otimes\mu}}.
\end{equation}
Now use $\mu_t=(\pi_t)_\#\mu$ in both variables to get
\[
\|\nabla_\mu^2F(\mu_t,\cdot,\cdot) \circ \pi_t\|_{L^2_{\mu\otimes\mu}}
=
\|\nabla_\mu^2F(\mu_t,\cdot,\cdot)\|_{L^2_{\mu_t\otimes\mu_t}}
\le C_{\operatorname{K}}.
\]
by Assumption \ref{assumption:smooth-F}. Combining with \eqref{eq:K_hs_bound},
\begin{equation}\label{eq:bound_tildeK}
\|\widetilde {\operatorname{K}}_{\mu,t}\|_{\mathrm{op}}\le  C_{\operatorname{K}}.
\end{equation}
Since $\widetilde{\operatorname{H}}_{\mu,t}=\widetilde {\operatorname{M}}_{\mu,t}+\widetilde{\operatorname{K}}_{\mu,t}$, we have by the triangle inequality
\[
\|\widetilde{\operatorname{H}}_{\mu,t}\|_{\mathrm{op}}
\le
\|\widetilde{\operatorname{M}}_{\mu,t}\|_{\mathrm{op}}+\|\widetilde{\operatorname{K}}_{\mu,t}\|_{\mathrm{op}}
\le C_{\operatorname{M}}+C_{\operatorname{K}},
\]
using \eqref{eq:bound_tildeM} and \eqref{eq:bound_tildeK}.
\end{proof}
\begin{lemma}[Coupling Lipschitzness implies Lipschitzness along $W_2$--curves]
\label{lem:lip-curves}
    Let Assumption \ref{assumption:Hessian-lipschitz} hold. Let $\mu\in\mathcal{P}_2(\mathbb R^d)$, $v\in L^2_\mu$ and define $\mu_t:=(\operatorname{Id}+t v)_\#\mu$, for $t \in [0,1]$. Then, for any $t,s\in [0,1]$,
    \begin{align*}
        \|\widetilde{\operatorname{H}}_{\mu,t}-\widetilde{\operatorname{H}}_{\mu,s}\|_{\mathrm{op}} \leq (t-s)(L_{\operatorname{M}} + L_{\operatorname{K}})\|v\|_{L_\mu^2}.
    \end{align*}
\end{lemma}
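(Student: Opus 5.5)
The plan is to estimate the multiplication part and the integral part of $\widetilde{\operatorname{H}}_{\mu,t}-\widetilde{\operatorname{H}}_{\mu,s}$ separately, and to apply Assumption \ref{assumption:Hessian-lipschitz} to a well-chosen coupling between $\mu_s$ and $\mu_t$. Without loss of generality take $t\ge s$; otherwise swap the roles and read $(t-s)$ as $|t-s|$. The coupling is the one induced by the two transport maps,
\[
\gamma_{s,t} := (\pi_s,\pi_t)_\#\mu \in \Pi(\mu_s,\mu_t), \qquad \pi_r=\operatorname{Id}+rv,
\]
for which
\[
\int\|y-x\|^2\,\mathrm d\gamma_{s,t}(x,y)=\int\|\pi_t(x)-\pi_s(x)\|^2\,\mathrm d\mu(x)=(t-s)^2\|v\|_{L^2_\mu}^2 .
\]
Thus the right-hand sides in Assumption \ref{assumption:Hessian-lipschitz} become $L_{\operatorname{M}}(t-s)\|v\|_{L^2_\mu}$ and $L_{\operatorname{K}}(t-s)\|v\|_{L^2_\mu}$.

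For the multiplication part, take $w\in L^2_\mu$. Then
\[
\|(\widetilde{\operatorname{M}}_{\mu,t}-\widetilde{\operatorname{M}}_{\mu,s})w\|_{L^2_\mu}\le \Big(\mu\text{-}\operatorname{ess\,sup}_x\|\nabla\nabla_\mu F(\mu_t,\pi_t(x))-\nabla\nabla_\mu F(\mu_s,\pi_s(x))\|\Big)\|w\|_{L^2_\mu}.
\]
By the pushforward argument of Lemma \ref{lem:esssup_pushforward}, applied to the map $x\mapsto(\pi_s(x),\pi_t(x))$ and the function $(x,y)\mapsto\|\nabla\nabla_\mu F(\mu_t,y)-\nabla\nabla_\mu F(\mu_s,x)\|$, this $\mu$-essential supremum equals the $\gamma_{s,t}$-essential supremum. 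I read the quantity $\|\cdot\|_{\gamma,\infty}$ in Assumption \ref{assumption:Hessian-lipschitz} as exactly this $\gamma$-essential supremum. The operator norm of the $\operatorname{M}$-difference is therefore at most $L_{\operatorname{M}}(t-s)\|v\|_{L^2_\mu}$.

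For the integral part, follow the proof of Lemma \ref{lem:uniform_bound_tildeH}. Cauchy--Schwarz in $\bar x$ bounds the operator norm by the Hilbert--Schmidt norm:
\[
\|\widetilde{\operatorname{K}}_{\mu,t}-\widetilde{\operatorname{K}}_{\mu,s}\|_{\mathrm{op}}\le\Big(\iint\|\nabla_\mu^2F(\mu_t,\pi_t(x),\pi_t(\bar x))-\nabla_\mu^2F(\mu_s,\pi_s(x),\pi_s(\bar x))\|^2\,\mathrm d\mu(x)\,\mathrm d\mu(\bar x)\Big)^{1/2}.
\]
Pushing $\mu\otimes\mu$ forward through $(x,\bar x)\mapsto((\pi_s(x),\pi_t(x)),(\pi_s(\bar x),\pi_t(\bar x)))$ produces $\gamma_{s,t}\otimes\gamma_{s,t}$. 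The right-hand side is then precisely the $L^2_{\gamma\otimes\gamma}$ quantity of Assumption \ref{assumption:Hessian-lipschitz}, and so it is at most $L_{\operatorname{K}}(t-s)\|v\|_{L^2_\mu}$. The triangle inequality on $\widetilde{\operatorname{H}}=\widetilde{\operatorname{M}}+\widetilde{\operatorname{K}}$ then gives the claimed bound.

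The main obstacle is interpretive rather than computational. The argument needs the coupling-indexed norms $\|\cdot\|_{\gamma,\infty}$ and $L^2_{\gamma\otimes\gamma}$ in Assumption \ref{assumption:Hessian-lipschitz} to mean that the first measure argument is evaluated at the first marginal coordinate and the second at the second coordinate. With that reading, the change of variables is exact, as in Lemma \ref{lem:esssup_pushforward}. I would state this convention explicitly and check it against the $s=0$ case, which is the one used in Lemma \ref{lemma:smoothness gradient of F}.
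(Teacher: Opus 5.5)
Your proposal is correct and follows the paper's proof: the same split into $\widetilde{\operatorname{M}}$ and $\widetilde{\operatorname{K}}$ parts, the same coupling $(\pi_s,\pi_t)_\#\mu\in\Pi(\mu_s,\mu_t)$ with cost $(t-s)^2\|v\|_{L^2_\mu}^2$, an essential-supremum bound for the multiplication part, a Cauchy--Schwarz (Hilbert--Schmidt) bound for the integral part, and the triangle inequality. Your version is slightly more careful than the paper's in three respects: you separate the test vector $w$ from the curve direction $v$, you make explicit the pushforward identification of the $\mu$-norms with the $\gamma$- and $\gamma\otimes\gamma$-norms of Assumption~\ref{assumption:Hessian-lipschitz}, and you note that $(t-s)$ should be read as $|t-s|$.
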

\begin{proof}
    By Proposition \ref{proposition:Wasserstein-hessian-appendix}, the operator $\widetilde{\operatorname{H}}_{\mu, t}$ is defined by
\begin{equation*}
    \widetilde{\operatorname{H}}_{\mu, t}[v](x) = \nabla \nabla_\mu F(\mu_t,\pi_t(x))v(x) + \int_{\mathbb R^d} \nabla_\mu^2 F(\mu_t, \pi_t(x), \pi_t(\bar{x}))v(\bar{x})\mathrm d\mu(\bar{x}),
\end{equation*}
where
\begin{equation*}
    \widetilde{\operatorname{M}}_{\mu, t}[v](x) = \nabla \nabla_\mu F(\mu_t,\pi_t(x))v(x), \quad \widetilde{\operatorname{K}}_{\mu, t}[v](x)= \int_{\mathbb R^d} \nabla_\mu^2 F(\mu_t, \pi_t(x), \pi_t(\bar{x}))v(\bar{x})\mathrm d\mu(\bar{x}).
\end{equation*}
For any $t,s\in [0,1]$, we have  
\begin{equation*}
    \|\widetilde{\operatorname{H}}_{\mu, t} - \widetilde{\operatorname{H}}_{\mu, s}\|_{\mathrm{op}} \leq \|\widetilde{\operatorname{M}}_{\mu, t} -\widetilde{\operatorname{M}}_{\mu, s}\|_{\mathrm{op}} + \|\widetilde{\operatorname{K}}_{\mu, t} - \widetilde{\operatorname{K}}_{\mu, s}\|_{\mathrm{op}}.
\end{equation*}
Taking $\gamma = (\pi_s,\pi_t)_{\#}\mu \in \Pi(\mu_s, \mu_t)$ gives 
\begin{align*}
    \|(\widetilde{\operatorname{M}}_{\mu, t} -\widetilde{\operatorname{M}}_{\mu, s})v\|_{L_\mu^2}^2 &\leq \int_{\mathbb R^d} \left\| (\widetilde{\operatorname{M}}_{\mu, t}-\widetilde{\operatorname{M}}_{\mu, s})[v](x)\right\|^2\mathrm{d}\mu(x)\\
    &= \int_{\mathbb R^d}\|(\nabla \nabla_\mu F(\mu_t,\pi_t(x))-\nabla \nabla_\mu F(\mu_s,\pi_s(x)))v(x)\|^2\mathrm{d}\mu(x)\\
    &\leq \|\nabla  \nabla_\mu F(\mu_t,\cdot)  - \nabla \nabla_\mu F(\mu_s,\cdot)\|_{\mu ,\infty}^2 \|v\|_{L_\mu^2}^2\\
    &\leq (t-s)^2L_{\operatorname{M}}^2\|v\|_{L_\mu^2}^2\|v\|_{L_\mu^2}^2,
\end{align*}
for all $v \in L_\mu^2$. Thus, $\|\widetilde{\operatorname{M}}_{\mu, t} -\widetilde{\operatorname{M}}_{\mu, s}\|_{\mathrm{op}} \leq (t-s)L_{\operatorname{M}}\|v\|_{L_\mu^2}$. Similarly,
\begin{align*}
    \|(\widetilde{\operatorname{K}}_{\mu, t} -\widetilde{\operatorname{K}}_{\mu, s})v\|_{L_\mu^2}^2 &\leq \int_{\mathbb R^d} \left\| (\widetilde{\operatorname{K}}_{\mu, t}-\widetilde{\operatorname{K}}_{\mu, s})[v](x)\right\|^2\mathrm{d}\mu(x)\\
    &= \int_{\mathbb R^d}\left\|\int_{\mathbb R^d}(\nabla_\mu^2 F(\mu_t,\pi_t(x),\pi_t(\bar x))-\nabla_\mu^2 F(\mu_s,\pi_s(x),\pi_s(\bar x)))v(\bar x)\mathrm{d}\mu(\bar x)\right\|^2\mathrm{d}\mu(x)\\
    &\leq \|\nabla_\mu^2 F(\mu_t,\cdot,\cdot)  -  \nabla_\mu^2 F(\mu_s,\cdot,\cdot)\|_{L_{\mu \otimes \mu}^2} \|v\|_{L_\mu^2}^2\\
    &\leq (t-s)^2L_{\operatorname{K}}^2\|v\|_{L_\mu^2}^2\|v\|_{L_\mu^2}^2,
\end{align*}
where the penultimate inequality follows from Cauchy-Schwarz inequality. Thus, $\|(\widetilde{\operatorname{K}}_{\mu, t} -\widetilde{\operatorname{K}}_{\mu, s})\|_{\mathrm{op}} \leq (t-s)L_{\operatorname{K}}\|v\|_{L_\mu^2}$. Finally,
\begin{align*}
    \|\widetilde{\operatorname{H}}_{\mu, t} - \widetilde{\operatorname{H}}_{\mu, s}\|_{\mathrm{op}} &\leq \|\widetilde{\operatorname{M}}_{\mu, t} -\widetilde{\operatorname{M}}_{\mu, s}\|_{\mathrm{op}} + \|\widetilde{\operatorname{K}}_{\mu, t} - \widetilde{\operatorname{K}}_{\mu, s}\|_{\mathrm{op}}\\
    &\leq (t-s)L_{\operatorname{M}}\|v\|_{L_\mu^2} + (t-s)L_{\operatorname{K}}\|v\|_{L_\mu^2}\\
    &=(t-s)(L_{\operatorname{M}} + L_{\operatorname{K}})\|v\|_{L_\mu^2}.
\end{align*}
\end{proof}

\begin{lemma}[Second-order smoothness of $F$]
\label{lemma:smoothness-of-F-2}
    Assume $F:\mathcal{P}_2(\mathbb R^d) \to \mathbb R$ is Wasserstein regular. Let $\mu \in\mathcal{P}_2(\mathbb R^d)$, $v \in L_\mu^2$ and for $t \in [0,1]$ define the curve $\mu_t = (\pi_t)_{\#}\mu$, with $\pi_t = \operatorname{Id}+tv$. If Assumption \ref{assumption:smooth-F} holds, then 
    \begin{align*}
        F\left((\operatorname{Id}+v)_\#\mu\right) \leq F(\mu) + \left\langle \nabla_\mu F(\mu, \cdot), v\right\rangle_{L_\mu^2} + \frac{C_{\operatorname{M}}+C_{\operatorname{K}}}{2}\|v\|^2_{L^2_\mu},
    \end{align*}
    If Assumption \ref{assumption:Hessian-lipschitz} holds,
    \begin{align*}
        F\left((\operatorname{Id}+v)_\#\mu\right) &\leq  F(\mu) + \left\langle\nabla_\mu F(\mu,\cdot),v\right\rangle_{L_\mu^2} + \frac{1}{2}\left\langle \operatorname{H}_{\mu}v, v\right\rangle_{L^2_\mu} + \frac{L_{\operatorname{M}}+L_{\operatorname{K}}}{6}\|v\|_{L_\mu^2}^3.
    \end{align*}
\end{lemma}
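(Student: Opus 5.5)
Both inequalities come from a Taylor expansion of $\varphi(t):=F(\mu_t)$ along $\mu_t=(\pi_t)_\#\mu$, $\pi_t=\operatorname{Id}+tv$, on $[0,1]$. By Lemma \ref{lemma_for_second_order_mu}, $\varphi'(t)=\langle \nabla_\mu F(\mu_t)\circ\pi_t, v\rangle_{L^2_\mu}$, so $\varphi'(0)=\langle\nabla_\mu F(\mu,\cdot),v\rangle_{L^2_\mu}$. By Proposition \ref{proposition:Wasserstein-hessian-appendix}, $\varphi''(t)=\langle\widetilde{\operatorname{H}}_{\mu,t}v,v\rangle_{L^2_\mu}$. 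Using the integral form of the remainder, exactly as in the proof of Lemma \ref{lem:2ndorder-expansion} but with $t=1$, we get
\[
F((\operatorname{Id}+v)_\#\mu)-F(\mu)-\langle\nabla_\mu F(\mu,\cdot),v\rangle_{L^2_\mu}=\int_0^1(1-s)\langle\widetilde{\operatorname{H}}_{\mu,s}v,v\rangle_{L^2_\mu}\,\mathrm ds .
\]
Both bounds follow by estimating this remainder.

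\emph{First bound (Assumption \ref{assumption:smooth-F}).} By Cauchy--Schwarz and Lemma \ref{lem:uniform_bound_tildeH}, $\langle\widetilde{\operatorname{H}}_{\mu,s}v,v\rangle_{L^2_\mu}\le\|\widetilde{\operatorname{H}}_{\mu,s}\|_{\mathrm{op}}\|v\|^2_{L^2_\mu}\le(C_{\operatorname M}+C_{\operatorname K})\|v\|^2_{L^2_\mu}$ for every $s\in[0,1]$. Since $\int_0^1(1-s)\,\mathrm ds=\tfrac12$, this gives the first inequality.

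\emph{Second bound (Assumption \ref{assumption:Hessian-lipschitz}).} Split $\widetilde{\operatorname{H}}_{\mu,s}=\operatorname{H}_\mu+(\widetilde{\operatorname{H}}_{\mu,s}-\operatorname{H}_\mu)$, using $\widetilde{\operatorname{H}}_{\mu,0}=\operatorname{H}_\mu$. The first piece contributes $\tfrac12\langle\operatorname{H}_\mu v,v\rangle_{L^2_\mu}$. For the second piece, Lemma \ref{lem:lip-curves} with $t=s$ and $s=0$ gives $\|\widetilde{\operatorname{H}}_{\mu,s}-\operatorname{H}_\mu\|_{\mathrm{op}}\le s(L_{\operatorname M}+L_{\operatorname K})\|v\|_{L^2_\mu}$. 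Hence the remainder is at most $(L_{\operatorname M}+L_{\operatorname K})\|v\|^3_{L^2_\mu}\int_0^1 s(1-s)\,\mathrm ds=\tfrac{L_{\operatorname M}+L_{\operatorname K}}{6}\|v\|^3_{L^2_\mu}$, which is the second inequality.

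\emph{Main obstacle.} The delicate step is justifying the integral Taylor formula, namely that $\varphi$ is $C^2$ on $[0,1]$ with $\varphi''$ given by Proposition \ref{proposition:Wasserstein-hessian-appendix}. This requires $s\mapsto\langle\widetilde{\operatorname{H}}_{\mu,s}v,v\rangle_{L^2_\mu}$ to be integrable, or better continuous.
\begin{itemize}
\item In the second case, continuity is immediate from the Lipschitz estimate of Lemma \ref{lem:lip-curves}.
\item In the first case, Wasserstein regularity plus the uniform bound of Lemma \ref{lem:uniform_bound_tildeH} give a bounded derivative of $\varphi'$. I would then use absolute continuity of $\varphi'$, i.e.\ $\varphi'(1)-\varphi'(0)=\int_0^1\varphi''$, applied in the integrated form $\varphi(1)-\varphi(0)-\varphi'(0)=\int_0^1(1-s)\varphi''(s)\,\mathrm ds$.
\end{itemize}
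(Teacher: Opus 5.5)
Your proof is correct and follows the paper's argument essentially step for step: the same integral-remainder expansion $F((\operatorname{Id}+v)_\#\mu)-F(\mu)-\langle\nabla_\mu F(\mu,\cdot),v\rangle_{L^2_\mu}=\int_0^1(1-t)\langle\widetilde{\operatorname{H}}_{\mu,t}v,v\rangle_{L^2_\mu}\,\mathrm dt$, bounded via Lemma~\ref{lem:uniform_bound_tildeH} for the first inequality, and via the split around $\operatorname{H}_\mu=\widetilde{\operatorname{H}}_{\mu,0}$ together with Lemma~\ref{lem:lip-curves} at $s=0$ for the second. Your added justification of the integral Taylor formula (a bounded derivative makes $\varphi'$ Lipschitz, hence absolutely continuous) covers a regularity point the paper does not address.
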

\begin{proof}
We start with the proof of the first statement. By Lemma \ref{lemma_for_second_order_mu},
\begin{equation*}
         \frac{\mathrm{d}}{\mathrm{d} t}\Big|_{t=0} F(\mu_t) = \int_{\mathbb R^d} \left\langle \nabla_\mu F (\mu, x), v(x)\right\rangle \mathrm{d}\mu(x),
    \end{equation*}
By the fundamental theorem of calculus and integration by parts,
\begin{align*}
       F\left((\operatorname{Id}+v)_\#\mu\right) -F(\mu) &=  \int_0^1\frac{\mathrm{d}}{\mathrm{d}t}F(\mu_t)\mathrm{d}t = \left(t\frac{\mathrm{d}}{\mathrm{d}t}F(\mu_t)\right)\Big|_{t=0}^{t=1} - \int_0^1 t \frac{\mathrm{d}^2}{\mathrm{d}t^2}F(\mu_t)\mathrm{d}t\\
       &=\frac{\mathrm{d}}{\mathrm{d}t}\Big|_{t=1}F(\mu_t)- \int_0^1 t \frac{\mathrm{d}^2}{\mathrm{d}t^2}F(\mu_t)\mathrm{d}t\\
       &=\frac{\mathrm{d}}{\mathrm{d}t}\Big|_{t=0}F(\mu_t)+ \int_0^1 \frac{\mathrm{d}^2}{\mathrm{d}t^2}F(\mu_t)\mathrm{d}t- \int_0^1 t \frac{\mathrm{d}^2}{\mathrm{d}t^2}F(\mu_t)\mathrm{d}t\\
       &= \left\langle \nabla_\mu F(\mu, \cdot), v\right\rangle_{L^2_\mu} + \int_0^1 (1-t)\frac{\mathrm{d}^2}{\mathrm{d}t^2}F(\mu_t)\mathrm{d}t.
\end{align*}
By Proposition \ref{proposition:Wasserstein-hessian-appendix}, 
    \begin{equation*}
        \frac{\mathrm d^2}{\mathrm d t^2}F(\mu_t) = \left\langle \widetilde{\operatorname{H}}_{\mu,t}v, v\right\rangle_{L^2_{\mu}},
    \end{equation*}
and so
\begin{equation*}
    F\left((\operatorname{Id}+v)_\#\mu\right) -F(\mu) = \left\langle \nabla_\mu F(\mu, \cdot), v\right\rangle_{L^2_\mu} + \int_0^1 (1-t)\left\langle \widetilde{\operatorname{H}}_{\mu,t}v, v\right\rangle_{L^2_{\mu}}\mathrm{d}t.
\end{equation*}
    Hence, by Lemma \ref{lem:uniform_bound_tildeH},
    \begin{align*}
        F\left((\operatorname{Id}+v)_\#\mu\right) &= F(\mu) + \left\langle \nabla_\mu F(\mu, \cdot), v\right\rangle_{L_\mu^2} + \int_0^1 (1-t)\left\langle \widetilde{\operatorname{H}}_{\mu,t}v, v\right\rangle_{L^2_{\mu}}\mathrm{d}t\\
        &\leq F(\mu) + \left\langle \nabla_\mu F(\mu, \cdot), v\right\rangle_{L_\mu^2} + \sup_{t \in [0,1]}\|\widetilde{\operatorname{H}}_{\mu,t}\|_{\mathrm{op}}\int_0^1 (1-t)\|v\|^2_{L^2_{\mu}}\mathrm{d}t\\
        &\leq F(\mu) + \left\langle \nabla_\mu F(\mu, \cdot), v\right\rangle_{L_\mu^2} + \frac{C_{\operatorname{M}}+C_{\operatorname{K}}}{2}\|v\|^2_{L^2_\mu}.
    \end{align*}
     For the proof of the second statement, we proceed identically until we obtain
    \begin{equation*}
        F\left((\operatorname{Id}+v)_\#\mu\right) = F(\mu) + \left\langle \nabla_\mu F(\mu, \cdot), v\right\rangle_{L_\mu^2} + \int_0^1 (1-t)\left\langle \widetilde{\operatorname{H}}_{\mu, t}v, v\right\rangle_{L^2_{\mu}}\mathrm{d}t.
    \end{equation*}
    Hence, by Lemma \ref{lem:lip-curves} with $s=0$,
    \begin{align*}
        F\left((\operatorname{Id}+v)_\#\mu\right) 
        &=F(\mu) + \left\langle \nabla_\mu F(\mu, \cdot), v\right\rangle_{L_\mu^2} + \frac{1}{2}\left\langle \operatorname{H}_{\mu}v, v\right\rangle_{L^2_\mu} +\int_0^1 (1-t)\left\langle (\widetilde{\operatorname{H}}_{\mu, t} - \operatorname{H}_\mu)v, v \right\rangle_{L_\mu^2}\mathrm{d}t\\
        &\leq F(\mu) + \left\langle \nabla_\mu F(\mu, \cdot), v\right\rangle_{L_\mu^2} + \frac{1}{2}\left\langle \operatorname{H}_{\mu}v, v\right\rangle_{L^2_\mu} + \int_0^1 (1-t)\|\widetilde{\operatorname{H}}_{\mu, t}-\operatorname{H}_\mu\|_{\mathrm{op}}\|v\|_{L^2_\mu}^2\mathrm{d}t\\
        &\leq F(\mu) + \left\langle \nabla_\mu F(\mu, \cdot), v\right\rangle_{L_\mu^2} + \frac{1}{2}\left\langle \operatorname{H}_{\mu}v, v\right\rangle_{L^2_\mu} + \frac{L_{\operatorname{M}}+L_{\operatorname{K}}}{6}\|v\|_{L^2_\mu}^3.
    \end{align*}
\end{proof}

\begin{lemma}[Composition identity for the regularized Hessian]\label{lem:intertwining_preconditioner}
Let Assumption \ref{assumption:smooth-F} hold. Let $\mu^*\in\mathcal P_2(\mathbb R^d)$, let $\operatorname{T}_t:\mathbb R^d\to\mathbb R^d$ be measurable and set $\mu_t:=(\operatorname{T}_t)_\#\mu^*$ for $t \in [0,1)$. Define
\begin{equation*}
    \widetilde {\operatorname{H}}_{\mu,t}[v](x) := \nabla\nabla_\mu F(\mu_t,\operatorname{T}_t(x))v(x) + \int_{\mathbb R^d}\nabla_\mu^2F(\mu_t,\operatorname{T}_t(x),\operatorname{T}_t(y))v(y)\mathrm d\mu^*(y), \quad v \in L_{\mu^*}^2,
\end{equation*}
\begin{equation*}
    \operatorname{H}_{\mu_t}[w](x) := \nabla\nabla_\mu F(\mu_t,x)w(x) + \int_{\mathbb R^d}\nabla_\mu^2F(\mu_t,x,y)w(y)\mathrm d\mu_t(y), \quad w \in L_{\mu_t}^2.
\end{equation*}
Then
\begin{equation*}
(\widetilde {\operatorname{H}}_{\mu,t} ^2+\beta\operatorname{I_{d \times d}})^{-\frac{1}{2}}(\nabla_\mu F(\mu_t)\circ \operatorname{T}_t)
=
\Big((\operatorname{H}_{\mu_t}^2+\beta\operatorname{I_{d \times d}})^{-\frac{1}{2}}\nabla_\mu F(\mu_t)\Big)\circ \operatorname{T}_t, \quad \beta > 0.
\end{equation*}
\end{lemma}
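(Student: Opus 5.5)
The plan is to reduce the identity to an \emph{intertwining relation} between $\widetilde{\operatorname{H}}_{\mu,t}$ and $\operatorname{H}_{\mu_t}$ through the composition operator
\[
\operatorname{C}_t : L^2_{\mu_t}\to L^2_{\mu^*},\qquad \operatorname{C}_t[w] := w\circ \operatorname{T}_t ,
\]
and then to transfer this relation through the functional calculus. Since $\mu_t=(\operatorname{T}_t)_\#\mu^*$, the change-of-variables formula gives $\|w\circ\operatorname{T}_t\|_{L^2_{\mu^*}}=\|w\|_{L^2_{\mu_t}}$. So $\operatorname{C}_t$ is a well-defined linear isometry, although it need not be surjective. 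We will not need it to be surjective.

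\textbf{Step 1.} We first show that $\widetilde{\operatorname{H}}_{\mu,t}\operatorname{C}_t=\operatorname{C}_t\operatorname{H}_{\mu_t}$ on $L^2_{\mu_t}$. For the multiplication part this holds pointwise: $\nabla\nabla_\mu F(\mu_t,\operatorname{T}_t(x))\,w(\operatorname{T}_t(x))=(\nabla\nabla_\mu F(\mu_t,\cdot)w)(\operatorname{T}_t(x))$. For the integral part, apply the pushforward identity $\int g(\operatorname{T}_t(y))\,\mathrm d\mu^*(y)=\int g(z)\,\mathrm d\mu_t(z)$ to $g(z)=\nabla_\mu^2F(\mu_t,\operatorname{T}_t(x),z)w(z)$. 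This yields $\int\nabla_\mu^2F(\mu_t,\operatorname{T}_t(x),\operatorname{T}_t(y))w(\operatorname{T}_t(y))\,\mathrm d\mu^*(y)=\operatorname{K}_{\mu_t}[w](\operatorname{T}_t(x))$. Integrability is ensured by Assumption \ref{assumption:smooth-F}, via the same computation as in Lemma \ref{lem:uniform_bound_tildeH}.

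\textbf{Step 2.} Iterating Step 1 gives $\widetilde{\operatorname{H}}_{\mu,t}^{2}\operatorname{C}_t=\operatorname{C}_t\operatorname{H}_{\mu_t}^{2}$. Hence $p(\widetilde{\operatorname{H}}_{\mu,t}^2+\beta\operatorname{I})\operatorname{C}_t=\operatorname{C}_t\,p(\operatorname{H}_{\mu_t}^2+\beta\operatorname{I})$ for every real polynomial $p$. Both operators $A:=\widetilde{\operatorname{H}}_{\mu,t}^2+\beta\operatorname{I}$ and $B:=\operatorname{H}_{\mu_t}^2+\beta\operatorname{I}$ are self-adjoint, since $\widetilde{\operatorname{H}}_{\mu,t}$ and $\operatorname{H}_{\mu_t}$ are symmetric (Proposition \ref{proposition:Wasserstein-hessian-appendix}) and bounded (Lemma \ref{lem:uniform_bound_tildeH}). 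By the same two results, $\sigma(A),\sigma(B)\subset[\beta,\beta+C_{\operatorname{H}}^2]$, a compact interval that stays away from $0$ because $\beta>0$.

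\textbf{Step 3.} On this interval the function $f(\lambda)=\lambda^{-1/2}$ is continuous. By Weierstrass there are polynomials $p_k$ with $p_k\to f$ uniformly there. Continuous functional calculus then gives $\|p_k(A)-f(A)\|_{\mathrm{op}}\le\sup_{\sigma(A)}|p_k-f|\to0$, and the same for $B$. Because $\operatorname{C}_t$ is bounded, we may pass to the limit in $p_k(A)\operatorname{C}_t=\operatorname{C}_t p_k(B)$ to obtain $A^{-1/2}\operatorname{C}_t=\operatorname{C}_tB^{-1/2}$. Applying this to $w=\nabla_\mu F(\mu_t,\cdot)\in L^2_{\mu_t}$ yields exactly the claimed identity.

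The main point of care is Step 3. The operators live on different Hilbert spaces and $\operatorname{C}_t$ is only an isometry, not a unitary, so one cannot simply conjugate. The polynomial-approximation argument avoids needing invertibility of $\operatorname{C}_t$. What it does require is a uniform spectral bound and self-adjointness, so that the functional calculus is norm-continuous in $f$; this is exactly where Assumption \ref{assumption:smooth-F} enters.
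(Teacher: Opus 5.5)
Your proof is correct and follows the paper's argument: the same composition isometry (the paper's $\operatorname{J}_t$, your $\operatorname{C}_t$), the intertwining $\widetilde{\operatorname{H}}_{\mu,t}\operatorname{J}_t=\operatorname{J}_t\operatorname{H}_{\mu_t}$, its extension to polynomials, and a Weierstrass plus functional-calculus limit that uses only boundedness of $\operatorname{J}_t$. The only difference is cosmetic: the paper approximates $\lambda\mapsto(\beta+\lambda^2)^{-1/2}$ on $[-M,M]$ and applies it to $\widetilde{\operatorname{H}}_{\mu,t}$ and $\operatorname{H}_{\mu_t}$ directly, whereas you approximate $\lambda^{-1/2}$ on $[\beta,\beta+C_{\operatorname{H}}^2]$ and apply it to $\widetilde{\operatorname{H}}_{\mu,t}^2+\beta\operatorname{I}$ and $\operatorname{H}_{\mu_t}^2+\beta\operatorname{I}$.
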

\begin{proof}
Define the composition operator $\operatorname{J}_t:L^2_{\mu_t}\to L^2_{\mu^*}$ by $(\operatorname{J}_t w)(x):=w(\operatorname{T}_t(x))$. For $w\in L^2_{\mu_t}$,
\[
\|\operatorname{J}_t w\|_{L^2_{\mu^*}}^2
=
\int_{\mathbb R^d}|w(\operatorname{T}_t(x))|^2\mathrm d\mu^*(x)
=
\int_{\mathbb R^d}|w(y)|^2\mathrm d\mu_t(y),
\]
because $\mu_t=(\operatorname{T}_t)_\#\mu^*$. Thus
\[
\|\operatorname{J}_t w\|_{L^2_{\mu^*}}=\|w\|_{L^2_{\mu_t}}.
\]
Then
\begin{align*}
\widetilde{\operatorname{H}}_{\mu,t}[\operatorname{J}_t w](x)
=\nabla\nabla_\mu F(\mu_t,\operatorname{T}_t(x))w(\operatorname{T}_t(x))+
\int_{\mathbb R^d}\nabla_\mu^2F(\mu_t,\operatorname{T}_t(x),\operatorname{T}_t(\bar x))w(\operatorname{T}_t(\bar x))\mathrm d\mu^*(\bar x).
\end{align*}
For the first term,
\[
\nabla\nabla_\mu F(\mu_t,\operatorname{T}_t(x))w(\operatorname{T}_t(x))
=
\big(\nabla\nabla_\mu F(\mu_t,\cdot)w(\cdot)\big)(\operatorname{T}_t(x)).
\]
For the second term, use again $\mu_t=(\operatorname{T}_t)_\#\mu^*$ to get
\begin{align*}
\int_{\mathbb R^d}\nabla_\mu^2F(\mu_t,\operatorname{T}_t(x),\operatorname{T}_t(\bar x))w(\operatorname{T}_t(\bar x))\mathrm d\mu^*(\bar x) =
\int_{\mathbb R^d}\nabla_\mu^2F(\mu_t,\operatorname{T}_t(x),\bar y)w(\bar y)\mathrm d\mu_t(\bar y).
\end{align*}
Therefore
\[
\widetilde {\operatorname{H}}_{\mu,t}[\operatorname{J}_t w](x)
=
(\operatorname{H}_{\mu_t}w)(\operatorname{T}_t(x))
=
\operatorname{J}_t(\operatorname{H}_{\mu_t}w)(x).
\]
Hence
\[
\widetilde {\operatorname{H}}_{\mu,t} \operatorname{J}_t = \operatorname{J}_t \operatorname{H}_{\mu_t}.
\]
We prove that for every integer $k\ge 1$,
\begin{equation*}
\widetilde {\operatorname{H}}_{\mu,t}^k \operatorname{J}_t = \operatorname{J}_t \operatorname{H}_{\mu_t}^k.
\end{equation*} 
by induction on $k$, where $\operatorname{H}^k$ denotes the $k$-times composition of the operator $\operatorname{H}$ with itself. The case $k=1$ is already proved. Assume \begin{equation*}
\widetilde {\operatorname{H}}_{\mu,t}^k \operatorname{J}_t = \operatorname{J}_t \operatorname{H}_{\mu_t}^k,
\end{equation*} 
for some $k \geq 1$. Then
\[
\widetilde {\operatorname{H}}_{\mu,t}^{k+1}\operatorname{J}_t
=
\widetilde {\operatorname{H}}_{\mu,t}(\widetilde {\operatorname{H}}_{\mu,t}^{k}\operatorname{J}_t)
=
\widetilde {\operatorname{H}}_{\mu,t}(\operatorname{J}_t \operatorname{H}_{\mu_t}^{k})
=
(\widetilde {\operatorname{H}}_{\mu,t} \operatorname{J}_t)\operatorname{H}_{\mu_t}^{k}
=
\operatorname{J}_t \operatorname{H}_{\mu_t}^{k+1},
\]
which concludes the induction. Let $
p_n(\lambda)=\sum_{j=0}^n a_j\lambda^j$, with $\{a_j\}_{j=0}^n \subset \mathbb R$.
Then
\[
p_n(\widetilde {\operatorname{H}}_{\mu,t} )\operatorname{J}_t
=
\sum_{j=0}^n a_j \widetilde {\operatorname{H}}_{\mu,t} ^{j}\operatorname{J}_t
=
\sum_{j=0}^n a_j \operatorname{J}_t \operatorname{H}_{\mu_t}^{j}
=\operatorname{J}_t\sum_{j=0}^n a_j \operatorname{H}_{\mu_t}^{j}
=
\operatorname{J}_t p_n(\operatorname{H}_{\mu_t}).
\]
Because $\operatorname{H}_{\mu_t}$ and $\widetilde {\operatorname{H}}_{\mu,t} $ are bounded self-adjoint due to Assumption \ref{assumption:smooth-F}, their spectra are compact subsets of $[-M,M]$, where $M:=\max\{\|\operatorname{H}_{\mu_t}\|_{\mathrm{op}},\|\widetilde {\operatorname{H}}_{\mu,t} \|_{\mathrm{op}}\}$. Let $f \in C([-M,M])$. By the Weierstrass theorem, there exists a sequence of polynomials $(p_n)_n$ such that $\sup_{\lambda\in[-M,M]}|p_n(\lambda)-f(\lambda)|\to 0$. Hence, by functional calculus,
\[
\|p_n(\operatorname{H}_{\mu_t})-f(\operatorname{H}_{\mu_t})\|_{\mathrm{op}}\to 0,
\qquad
\|p_n(\widetilde {\operatorname{H}}_{\mu,t} )-f(\widetilde {\operatorname{H}}_{\mu,t} )\|_{\mathrm{op}}\to 0, \quad n \to \infty.
\]
From earlier,
\[
p_n(\widetilde {\operatorname{H}}_{\mu,t} )\operatorname{J}_t = \operatorname{J}_t p_n(\operatorname{H}_{\mu_t}).
\]
Passing to the limit in operator norm and using that $\operatorname{J}_t$ is bounded since it is an isometry, we obtain
\[
f(\widetilde {\operatorname{H}}_{\mu,t} )\operatorname{J}_t = \operatorname{J}_t f(\operatorname{H}_{\mu_t}).
\]
Take $ f(\lambda)=\frac{1}{\sqrt{\beta+\lambda^2}}, \beta > 0$. Then
\[
(\widetilde {\operatorname{H}}_{\mu,t} ^2+\beta\operatorname{I_{d \times d}})^{-\frac{1}{2}}\operatorname{J}_t = \operatorname{J}_t(\operatorname{H}_{\mu_t}^2+\beta\operatorname{I_{d \times d}})^{-\frac{1}{2}}.
\]
Evaluating this identity on $w\in L^2_{\mu_t}$ yields
\[
(\widetilde {\operatorname{H}}_{\mu,t} ^2+\beta\operatorname{I_{d \times d}})^{-\frac{1}{2}}(w\circ \operatorname{T}_t)
=
\Big((\operatorname{H}_{\mu_t}^2+\beta\operatorname{I_{d \times d}})^{-\frac{1}{2}}w\Big)\circ \operatorname{T}_t,
\]
Taking $w=\nabla_\mu F(\mu_t)$ gives the conclusion.
\end{proof}

\begin{lemma}\label{lem:resolvent-lip}
Let $\beta > 0$. If $A,B: L^2 \to L^2$ are bounded self-adjoint operators, then
\[
\|(A^2+\beta\operatorname{I_{d \times d}})^{-\frac{1}{2}}-(B^2+\beta\operatorname{I_{d \times d}})^{-\frac{1}{2}}\|_{\mathrm{op}}
\le \frac{2}{\pi\beta}\|A-B\|_{\mathrm{op}}.
\]
\end{lemma}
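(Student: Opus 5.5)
The plan is to write the inverse square root as an integral of resolvents. Resolvent differences can be controlled by $\|A-B\|_{\mathrm{op}}$ through algebraic identities, and integrating those bounds should produce exactly the constant $\frac{2}{\pi\beta}$.

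\emph{Step 1 (integral representation).} For $c>0$ one has the scalar identity $\int_0^\infty \frac{\mathrm ds}{c+s^2}=\frac{\pi}{2\sqrt c}$, so that
\[
(\lambda^2+\beta)^{-\frac12}=\frac{2}{\pi}\int_0^\infty \frac{\mathrm ds}{\lambda^2+\beta+s^2}.
\]
Apply this with the spectral theorem to the bounded self-adjoint operators $A$ and $B$. Writing $R_A(s):=(A^2+(\beta+s^2)\operatorname{I_{d \times d}})^{-1}$ and $R_B(s)$ analogously, this gives
\[
(A^2+\beta\operatorname{I_{d \times d}})^{-\frac12}=\frac{2}{\pi}\int_0^\infty R_A(s)\,\mathrm ds,
\]
and the same for $B$. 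The integral converges in operator norm because $\|R_A(s)\|_{\mathrm{op}}\le (\beta+s^2)^{-1}$. To justify the operator identity, one can test both sides against $\langle v,\cdot\,v\rangle$ using the spectral measure and apply Fubini, much as in the proof of Lemma \ref{lem:upper-bound-inner-hessian}.

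\emph{Step 2 (resolvent difference).} Fix $s$ and set $c:=\beta+s^2$. The second resolvent identity gives
\[
R_A-R_B=R_A(B^2-A^2)R_B .
\]
The obstacle here is that $B^2-A^2$ is not directly comparable to $A-B$. I will split it as
\[
A^2-B^2=A(A-B)+(A-B)B,
\]
so that
\[
R_A-R_B=-\big[(R_AA)(A-B)R_B+R_A(A-B)(BR_B)\big].
\]
By functional calculus, $\|R_AA\|_{\mathrm{op}}\le\sup_\lambda\frac{|\lambda|}{\lambda^2+c}=\frac{1}{2\sqrt c}$ and $\|R_B\|_{\mathrm{op}}\le\frac1c$, and symmetrically $\|BR_B\|_{\mathrm{op}}\le\frac{1}{2\sqrt c}$ and $\|R_A\|_{\mathrm{op}}\le\frac1c$. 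Hence each of the two terms is bounded by $\frac{1}{2c^{3/2}}\|A-B\|_{\mathrm{op}}$, and
\[
\|R_A(s)-R_B(s)\|_{\mathrm{op}}\le (\beta+s^2)^{-3/2}\|A-B\|_{\mathrm{op}}.
\]

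\emph{Step 3 (integration).} Using Step 1 and the triangle inequality for Bochner integrals,
\[
\big\|(A^2+\beta\operatorname{I_{d \times d}})^{-\frac12}-(B^2+\beta\operatorname{I_{d \times d}})^{-\frac12}\big\|_{\mathrm{op}}\le\frac{2}{\pi}\|A-B\|_{\mathrm{op}}\int_0^\infty(\beta+s^2)^{-3/2}\,\mathrm ds .
\]
The substitution $s=\sqrt\beta\tan\theta$ gives $\int_0^\infty(\beta+s^2)^{-3/2}\,\mathrm ds=\frac1\beta$, which yields the claimed constant $\frac{2}{\pi\beta}$.

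The only genuinely delicate point is Step 2, namely choosing the splitting of $B^2-A^2$ so that each factor involving $A$ or $B$ sits next to its own resolvent. Once that is done, the rest is routine.
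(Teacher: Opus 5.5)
Your proof is correct, and it reaches the same constant $\frac{2}{\pi\beta}$ by a different route from the paper.

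\textbf{The paper's route.} The paper works with the Fourier transform $\hat f_\beta(\xi)=2K_0(\sqrt\beta|\xi|)$ of $f_\beta(\lambda)=(\lambda^2+\beta)^{-1/2}$. It uses the representation $f_\beta(T)=\frac{1}{2\pi}\int\hat f_\beta(\xi)e^{i\xi T}\,\mathrm d\xi$ together with $\|e^{i\xi A}-e^{i\xi B}\|_{\mathrm{op}}\le|\xi|\|A-B\|_{\mathrm{op}}$. It then evaluates $\frac{1}{2\pi}\int|\xi||\hat f_\beta(\xi)|\,\mathrm d\xi$ via the Bessel identity $\int_0^\infty uK_0(u)\,\mathrm du=1$.

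\textbf{Your route.} You write $f_\beta$ as a superposition of resolvents $(\lambda^2+\beta+s^2)^{-1}$. You control each resolvent difference with three tools:
\begin{itemize}
\item the second resolvent identity;
\item the splitting $A^2-B^2=A(A-B)+(A-B)B$;
\item the functional-calculus bounds $\frac{1}{2\sqrt c}$ and $\frac1c$.
\end{itemize}

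\textbf{What each buys.} Your argument is self-contained. It uses no special functions and no appeal to the Fourier-type operator-Lipschitz criterion from the literature. It also avoids the Fourier transform of the non-integrable function $f_\beta$, which in the paper has to be read as an improper cosine integral. The paper's template is more generic: it applies to any $f$ with $\int|\xi||\hat f(\xi)|\,\mathrm d\xi<\infty$. Yours is tailored to functions expressible as integrals of resolvents, such as $(\lambda^2+\beta)^{-\alpha}$ for $0<\alpha<1$.

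\textbf{Why the constants agree.} The Fourier transform of $\lambda\mapsto(\lambda^2+c)^{-1}$ is $\frac{\pi}{\sqrt c}e^{-\sqrt c|\xi|}$, whose Fourier bound is exactly your $c^{-3/2}$. Integrating these positive transforms against $\frac{2}{\pi}\,\mathrm ds$ reproduces $2K_0(\sqrt\beta|\xi|)$. Because they are positive, no cancellation is lost in passing to absolute values.
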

\begin{proof}
Let $f_\beta(x) = (\beta+x^2)^{-\frac{1}{2}}$, for $x  \in \mathbb R$. We want to show that for bounded self-adjoint operators $A,B$ on $L^2$,
\begin{equation*}
    \|f_\beta(A)-f_\beta(B)\|_{\mathrm{op}} \le \frac{2}{\pi\beta}\|A-B\|_{\mathrm{op}}.
\end{equation*}
By the standard Fourier-transform criterion for operator Lipschitzness
(see, e.g., \cite[Example 3.2(ii)]{KissinShulman2005} and \cite{Davies1988}), it is enough to show that
\[
\int_{\mathbb R} |\xi||\hat f_\beta(\xi)|\mathrm d\xi<\infty,
\]
where we use the Fourier transform convention
\[
\hat{f}_\beta(\xi):=\int_{\mathbb R} e^{-ix\xi}f_\beta(x)\mathrm dx,
\quad
f_\beta(x)=\frac1{2\pi}\int_{\mathbb R} e^{ix\xi}\hat{f}_\beta(\xi)\mathrm d\xi.
\]
Since \(f_\beta\) is even, the cosine-transform formula for the modified Bessel function \(K_0\) gives
\[
\int_0^\infty \frac{\cos(\xi x)}{\sqrt{\beta+x^2}}\mathrm dx = K_0(\sqrt{\beta}|\xi|),
\]
and therefore
\[
\hat{f}_\beta(\xi)=2K_0(\sqrt{\beta}|\xi|),
\]
for $\xi\in\mathbb R$; see \cite[Section 10.32, Eq. (10.32.11)]{NIST:DLMF}. Next, the standard asymptotics of \(K_0\) imply
\[
K_0(r)\sim -\log r, \text{ as } r\downarrow 0, \text{ and }
K_0(r)\sim \sqrt{\frac{\pi}{2r}}e^{-r}, \text{ as } r\to\infty,
\]
see \cite[Section 10.30, Eq. (10.30.3)]{NIST:DLMF} and
\cite[Section 10.25, Eq. (10.25.3)]{NIST:DLMF}, respectively. Thus, there exist $C_0, C_\infty > 0$ such that
\begin{equation*}
    |K_0(r)| \leq C_0(1+|\log r|), \quad r\in (0,1],
\end{equation*}
\begin{equation*}
    |K_0(r)| \leq C_\infty e^{-r}r^{-1/2}, \quad r \geq 1.
\end{equation*}
Hence,
\[
\int_{\mathbb R} |\hat f_\beta(\xi)|\mathrm d\xi<\infty,
\quad
\int_{\mathbb R} |\xi||\hat f_\beta(\xi)|\mathrm d\xi<\infty.
\]
Therefore, $A \mapsto f_\beta(A)$ is operator Lipschitz. Now we compute the Lipschitz constant exactly. By continuous functional calculus, for every bounded self-adjoint operator \(T\),
\[
f_\beta(T)=\frac1{2\pi}\int_{\mathbb R}\hat f_\beta(\xi)e^{i\xi T}\mathrm d\xi.
\]
Applying this with \(T=A\) and \(T=B\), we obtain
\[
f_\beta(A)-f_\beta(B)
=\frac1{2\pi}\int_{\mathbb R}\hat f_\beta(\xi)\bigl(e^{i\xi A}-e^{i\xi B}\bigr)\mathrm d\xi.
\]
By \cite[Example 5]{Aleksandrov_2016}, 
\begin{equation*}
    \|e^{i\xi A}-e^{i\xi B}\|_{\mathrm{op}} \leq |\xi| \|A-B\|_{\mathrm{op}}.
\end{equation*}
Hence,
\begin{align*}
    \|f(A)-f(B)\|_{\mathrm{op}} &\leq \frac{1}{2\pi}\|A-B\|_{\mathrm{op}} \int_{\mathbb R}|\xi \hat f_\beta(\xi)|\mathrm{d}\xi = \frac{1}{\pi}\|A-B\|_{\mathrm{op}}\int_{\mathbb R}|\xi|K_0(\sqrt{\beta}|\xi|)\mathrm{d}\xi\\ 
    &= \frac{2}{\pi\sqrt{\beta}}\|A-B\|_{\mathrm{op}}\int_0^\infty\sqrt{\beta}\xi K_0(\sqrt{\beta}\xi)\mathrm{d}\xi = \frac{2}{\pi\beta}\|A-B\|_{\mathrm{op}}\int_0^\infty u K_0(u)\mathrm{d}u\\ 
    &= \frac{2}{\pi\beta}\|A-B\|_{\mathrm{op}},
\end{align*}
where the last equality follows from the fact that $\int_0^\infty u K_0(u)\mathrm{d}u = 1$; see \cite[Section 10.43, Eq. (10.43.19)]{NIST:DLMF}.
\end{proof}

\begin{lemma}[First-order expansion of preconditioned gradient]\label{lem:preconditioned_gradient_expansion}
Let Assumptions \ref{assumption:smooth-F}, \ref{assumption:Hessian-lipschitz} hold. Let $\mu^*\in\mathcal P_2(\mathbb R^d)$ and let $Z,\tilde Z:\mathbb R^d\to\mathbb R^d$ be measurable maps such that $\mu_Z:=Z_\#\mu^*$, $\mu_{\tilde Z}:=\tilde Z_\#\mu^*$. Define
\[
\mathcal F(Z) := \Big((\operatorname{H}_{\mu_Z}^2+\beta\operatorname{I_{d \times d}})^{-\frac{1}{2}}\nabla_\mu F(\mu_Z)\Big)\circ Z, \quad \beta > 0.
\]
Then
\begin{equation*}
\mathcal F(\tilde Z)-\mathcal F(Z) = (\operatorname{K}_{\mu^*}^2+\beta\operatorname{I_{d \times d}})^{-\frac{1}{2}}\operatorname{K}_{\mu^*}(\tilde Z-Z) + R(Z,\tilde Z), \text{ in }L^2_{\mu^*},
\end{equation*}
where the remainder $R(Z,\tilde Z)$ satisfies
\begin{align*}
 \|R(Z,\tilde Z)\|_{L_{\mu^*}^2}
    &\leq \Bigg(L_{\operatorname{H}}\left(\frac{1}{2\sqrt{\beta}}+\frac{2C_{\operatorname{H}}}{\pi\beta}\right)\left(\|Z-\operatorname{Id}\|_{L_{\mu^*}^2}+\|\tilde Z-\operatorname{Id}\|_{L_{\mu^*}^2}\right)\\
    &+\|\nabla_\mu F(\mu^*)\|_{L_{\mu^*}^2}\left(\frac{R_F}{\sqrt{\beta}}+\frac{2L_{\operatorname{H}}}{\pi\beta}\right)\Bigg)\|\tilde Z-Z\|_{L_{\mu^*}^2},
\end{align*}
with $C_{\operatorname{H}} := C_{\operatorname{M}}+C_{\operatorname{K}}$ and $L_{\operatorname{H}} := L_{\operatorname{M}}+L_{\operatorname{K}}$.
\end{lemma}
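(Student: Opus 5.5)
Set $P_\nu:=(\operatorname{H}_\nu^2+\beta\operatorname{I_{d\times d}})^{-1/2}$. The plan is to move everything to the fixed space $L^2_{\mu^*}$ using Lemma \ref{lem:intertwining_preconditioner}. Apply it with $\operatorname{T}_t=Z$ and with $\operatorname{T}_t=\tilde Z$, writing $\widetilde{\operatorname{H}}_Z$ for the pulled-back Hessian on $L^2_{\mu^*}$ along $Z$. This gives
\[
\mathcal F(Z)=(\widetilde{\operatorname{H}}_Z^2+\beta\operatorname{I_{d\times d}})^{-\frac12}g_Z,\qquad g_Z:=\nabla_\mu F(\mu_Z)\circ Z\in L^2_{\mu^*},
\]
and the analogous formula holds for $\tilde Z$. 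Write $Q_Z:=(\widetilde{\operatorname{H}}_Z^2+\beta\operatorname{I_{d\times d}})^{-1/2}$ and $Q^*:=(\operatorname{K}_{\mu^*}^2+\beta\operatorname{I_{d\times d}})^{-1/2}$. I would then split
\[
\mathcal F(\tilde Z)-\mathcal F(Z)=Q_{\tilde Z}(g_{\tilde Z}-g_Z)+(Q_{\tilde Z}-Q_Z)g_Z .
\]

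\textbf{First term.} Following Lemma \ref{lemma:smoothness gradient of F}, differentiate along the linear interpolation $Z_s=Z+s(\tilde Z-Z)$. This gives $g_{\tilde Z}-g_Z=\int_0^1\widetilde{\operatorname{H}}_{Z_s}(\tilde Z-Z)\,\mathrm ds$, using Proposition \ref{proposition:Wasserstein-hessian-appendix} applied with base measure $\mu^*$ and map $Z_s$. Hence
\[
Q_{\tilde Z}(g_{\tilde Z}-g_Z)=Q^*\operatorname{H}_{\mu^*}(\tilde Z-Z)+\Big[(Q_{\tilde Z}-Q^*)\textstyle\int_0^1\widetilde{\operatorname{H}}_{Z_s}\,\mathrm ds+Q^*\int_0^1(\widetilde{\operatorname{H}}_{Z_s}-\operatorname{H}_{\mu^*})\,\mathrm ds\Big](\tilde Z-Z).
\]
Each piece is bounded as follows.
\begin{itemize}
\item $\|Q^*\|_{\mathrm{op}}\le\beta^{-1/2}$.
\item By Lemma \ref{lem:resolvent-lip}, $\|Q_{\tilde Z}-Q^*\|_{\mathrm{op}}\le\frac{2}{\pi\beta}\|\widetilde{\operatorname{H}}_{\tilde Z}-\operatorname{H}_{\mu^*}\|_{\mathrm{op}}$.
\item By the coupling Lipschitz Assumption \ref{assumption:Hessian-lipschitz} with $\gamma=(\operatorname{Id},Z_s)_\#\mu^*$, as in Lemma \ref{lem:lip-curves}, $\|\widetilde{\operatorname{H}}_{Z_s}-\operatorname{H}_{\mu^*}\|_{\mathrm{op}}\le L_{\operatorname{H}}\|Z_s-\operatorname{Id}\|_{L^2_{\mu^*}}$, and $\|Z_s-\operatorname{Id}\|\le(1-s)\|Z-\operatorname{Id}\|+s\|\tilde Z-\operatorname{Id}\|$.
\item By Lemma \ref{lem:uniform_bound_tildeH}, $\|\widetilde{\operatorname{H}}_{Z_s}\|_{\mathrm{op}}\le C_{\operatorname{H}}$.
\end{itemize}
Together these give the $L_{\operatorname{H}}\big(\frac{1}{2\sqrt\beta}+\frac{2C_{\operatorname{H}}}{\pi\beta}\big)\big(\|Z-\operatorname{Id}\|+\|\tilde Z-\operatorname{Id}\|\big)\|\tilde Z-Z\|$ contribution; integrating the affine bound in $s$ produces the $\frac12$. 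Separately, $\operatorname{H}_{\mu^*}=\operatorname{K}_{\mu^*}+\operatorname{M}_{\mu^*}$. Assumption \ref{assumption_regularity_wg} gives $\|\operatorname{M}_{\mu^*}\|_{\mathrm{op}}\le R_F\|\nabla_\mu F(\mu^*)\|$, so $Q^*\operatorname{H}_{\mu^*}$ differs from the claimed main term $Q^*\operatorname{K}_{\mu^*}$ by at most $\frac{R_F}{\sqrt\beta}\|\nabla_\mu F(\mu^*)\|\|\tilde Z-Z\|$. Passing from $Q_{\cdot}$ built on $\operatorname{H}_{\mu^*}$ to $Q^*$ built on $\operatorname{K}_{\mu^*}$ costs $\frac{2}{\pi\beta}\|\operatorname{M}_{\mu^*}\|_{\mathrm{op}}$ by Lemma \ref{lem:resolvent-lip} again. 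I would absorb that cost into the gradient-proportional terms, bookkeeping it against the factor $\frac{2L_{\operatorname{H}}}{\pi\beta}$.

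\textbf{Second term.} This is $(Q_{\tilde Z}-Q_Z)g_Z$, and Lemma \ref{lem:resolvent-lip} bounds it by $\frac{2}{\pi\beta}\|\widetilde{\operatorname{H}}_{\tilde Z}-\widetilde{\operatorname{H}}_Z\|_{\mathrm{op}}\|g_Z\|_{L^2_{\mu^*}}$. Using the coupling $(Z,\tilde Z)_\#\mu^*$, the Hessian difference is at most $L_{\operatorname{H}}\|\tilde Z-Z\|$. Then $\|g_Z\|\le\|g_{\mathrm{Id}}\|+C_{\operatorname{H}}\|Z-\operatorname{Id}\|$, and $\|g_{\mathrm{Id}}\|=\|\nabla_\mu F(\mu^*)\|$. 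The first part gives the $\frac{2L_{\operatorname{H}}}{\pi\beta}\|\nabla_\mu F(\mu^*)\|$ term, and the second part is absorbed into $\frac{2C_{\operatorname{H}}L_{\operatorname{H}}}{\pi\beta}\|Z-\operatorname{Id}\|$.

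\textbf{Main obstacle.} I expect the hardest step to be the constant bookkeeping: several pieces must fit inside the stated coefficients rather than sum to something larger. In particular, the $\operatorname{M}_{\mu^*}$ contributions and the $C_{\operatorname{H}}\|Z-\operatorname{Id}\|$ part of $\|g_Z\|$ both need to land in the displayed bound. If necessary, I would rebalance the split symmetrically, for example by expanding around the midpoint $Q$ or using $g_{\tilde Z}$ in the second term, so that the constants match. A second point to check is that Assumption \ref{assumption_regularity_wg} is available at $\mu^*$; otherwise the $R_F$ term would be replaced by the $\operatorname{M}_{\mu^*}$ bound.
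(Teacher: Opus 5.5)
\textbf{The deferred absorption step does not close.} Your decomposition is the paper's: the same split $Q_{\tilde Z}(g_{\tilde Z}-g_Z)+(Q_{\tilde Z}-Q_Z)g_Z$, the same pull-back via Lemma \ref{lem:intertwining_preconditioner}, and the same resolvent and Lipschitz estimates. The problem is the cost of replacing $\operatorname{H}_{\mu^*}$ by $\operatorname{K}_{\mu^*}$ inside the preconditioner. You must bound $\|Q_{\tilde Z}-Q^*\|_{\mathrm{op}}$ by $\frac{2}{\pi\beta}\|\widetilde{\operatorname{H}}_{\tilde Z}-\operatorname{K}_{\mu^*}\|_{\mathrm{op}}$ and multiply by $\|\int_0^1\widetilde{\operatorname{H}}_{Z_s}\,\mathrm ds\|_{\mathrm{op}}\le C_{\operatorname{H}}$. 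This leaves the extra term $\frac{2C_{\operatorname{H}}R_F}{\pi\beta}\|\nabla_\mu F(\mu^*)\|_{L^2_{\mu^*}}\|\tilde Z-Z\|_{L^2_{\mu^*}}$, and there is no slack to absorb it. In the coefficient $\frac{R_F}{\sqrt\beta}+\frac{2L_{\operatorname{H}}}{\pi\beta}$, the first summand is used up exactly by $Q^*\operatorname{M}_{\mu^*}$ and the second exactly by your second term. Nothing relates $C_{\operatorname{H}}R_F$ to $L_{\operatorname{H}}$.

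Symmetrizing or expanding around a midpoint does not help. Any split of the form (preconditioner difference) times (gradient increment of size $C_{\operatorname{H}}\|\tilde Z-Z\|$) picks up the same term. The paper's proof reaches the displayed constants only because, in its second remainder term, it applies Lemma \ref{lem:resolvent-lip} with $\|\widetilde{\operatorname{H}}_{\tilde Z}-\operatorname{H}_{\mu^*}\|_{\mathrm{op}}$ although the subtracted operator is $(\operatorname{K}_{\mu^*}^2+\beta\operatorname{I_{d\times d}})^{-1/2}$. It silently drops exactly this term. You are also right that Assumption \ref{assumption_regularity_wg} at $\mu^*$ is an unstated hypothesis that both arguments need.

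\textbf{A fix that recovers the stated constants.} Expand around $P:=(\operatorname{H}_{\mu^*}^2+\beta\operatorname{I_{d\times d}})^{-1/2}$ instead of $Q^*$. Write $a=\|Z-\operatorname{Id}\|$, $b=\|\tilde Z-\operatorname{Id}\|$, and measure each cost below as a multiple of $\|\tilde Z-Z\|$. Then:
\begin{itemize}
\item $P\int_0^1(\widetilde{\operatorname{H}}_{Z_s}-\operatorname{H}_{\mu^*})\,\mathrm ds$ costs $\frac{L_{\operatorname H}}{2\sqrt\beta}(a+b)$;
\item $(Q_{\tilde Z}-P)\int_0^1\widetilde{\operatorname{H}}_{Z_s}\,\mathrm ds$ costs $\frac{2C_{\operatorname H}L_{\operatorname H}}{\pi\beta}b$;
\item $(Q_{\tilde Z}-Q_Z)g_Z$ costs $\frac{2L_{\operatorname H}}{\pi\beta}(\|\nabla_\mu F(\mu^*)\|+C_{\operatorname H}a)$.
\end{itemize}
The main term is $\phi(\operatorname{H}_{\mu^*})(\tilde Z-Z)$ with $\phi(x)=x(x^2+\beta)^{-1/2}$.

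Now pass to $\phi(\operatorname{K}_{\mu^*})$ in one step, rather than through $Q^*\operatorname{M}_{\mu^*}+(P-Q^*)\operatorname{H}_{\mu^*}$. The derivative $\phi'(x)=\beta(x^2+\beta)^{-3/2}$ has the nonnegative integrable Fourier transform $2\sqrt\beta|\xi|K_1(\sqrt\beta|\xi|)$, where $K_1$ is the modified Bessel function. Write $\phi(A)-\phi(B)=\frac{1}{2\pi}\int\widehat{\phi'}(\xi)\,\frac{e^{i\xi A}-e^{i\xi B}}{i\xi}\,\mathrm d\xi$ and argue as in Lemma \ref{lem:resolvent-lip}. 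This gives
\[
\|\phi(A)-\phi(B)\|_{\mathrm{op}}\le\tfrac{1}{2\pi}\|\widehat{\phi'}\|_{L^1}\|A-B\|_{\mathrm{op}}=\phi'(0)\|A-B\|_{\mathrm{op}}=\beta^{-1/2}\|A-B\|_{\mathrm{op}}.
\]
With $A-B=\operatorname{M}_{\mu^*}$ this costs exactly $\frac{R_F}{\sqrt\beta}\|\nabla_\mu F(\mu^*)\|\,\|\tilde Z-Z\|$, and the four contributions add up to the stated bound.

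Alternatively, keep your split and state the lemma with the extra $\frac{2C_{\operatorname H}R_F}{\pi\beta}$ in the gradient coefficient. That only changes constants in the admissible-precision condition used downstream.
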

\begin{proof}
For $h\in[0,1]$, define the curve $\nu_h = (Z_h)_{\#}\mu^*$ with $Z_h = (1-h)Z + h\tilde Z$. For each $h\in[0,1]$, define the Hessian on $L^2_{\mu^*}$ by
\[
\widetilde {\operatorname{H}}_{h}[v](x) := \nabla\nabla_\mu F(\nu_h,Z_h(x))v(x) + \int_{\mathbb R^d}\nabla_\mu^2F(\nu_h,Z_h(x),Z_h(y))v(y)\mathrm d\mu^*(y).
\]
For each $h\in[0,1]$, by Lemma \ref{lem:intertwining_preconditioner},
\[
\Big((\operatorname{H}_{\nu_h}^2+\beta\operatorname{I_{d \times d}})^{-\frac{1}{2}}\nabla_\mu F(\nu_h)\Big)\circ Z_h
= \left(\widetilde {\operatorname{H}}_{h}^2+\beta\operatorname{I_{d \times d}}\right)^{-\frac{1}{2}}\left(\nabla_\mu F(\nu_h)\circ Z_h\right).
\]
In particular,
\begin{equation}\label{eq:split_start}
\mathcal F(\tilde Z)-\mathcal F(Z) = \left(\widetilde {\operatorname{H}}_{\tilde Z}^2+\beta\operatorname{I_{d \times d}}\right)^{-\frac{1}{2}}\left(\nabla_\mu F(\nu_1)\circ \tilde Z\right) - \left(\widetilde {\operatorname{H}}_{Z}^2+\beta\operatorname{I_{d \times d}}\right)^{-\frac{1}{2}}\left(\nabla_\mu F(\nu_0)\circ Z\right),
\end{equation}
where we denoted $\widetilde{\operatorname{H}}_1 = \widetilde{\operatorname{H}}_{\tilde Z}$ and $\widetilde{\operatorname{H}}_0 = \widetilde{\operatorname{H}}_{Z}$. Add and subtract $\left(\widetilde {\operatorname{H}}_{\tilde Z}^2+\beta\operatorname{I_{d \times d}}\right)^{-\frac{1}{2}}\left(\nabla_\mu F(\nu_0)\circ Z\right)$ to get
\begin{align*}
&\left(\widetilde {\operatorname{H}}_{\tilde Z}^2+\beta\operatorname{I_{d \times d}}\right)^{-\frac{1}{2}}\left(\nabla_\mu F(\nu_1)\circ \tilde Z\right)-\left(\widetilde {\operatorname{H}}_{Z}^2+\beta\operatorname{I_{d \times d}}\right)^{-\frac{1}{2}}\left(\nabla_\mu F(\nu_0)\circ Z\right)\\ 
&= \left(\widetilde {\operatorname{H}}_{\tilde Z}^2+\beta\operatorname{I_{d \times d}}\right)^{-\frac{1}{2}}\left(\nabla_\mu F(\nu_1)\circ \tilde Z - \nabla_\mu F(\nu_0)\circ Z\right)\\
&+\left(\left(\widetilde {\operatorname{H}}_{\tilde Z}^2+\beta\operatorname{I_{d \times d}}\right)^{-\frac{1}{2}}-\left(\widetilde {\operatorname{H}}_{Z}^2+\beta\operatorname{I_{d \times d}}\right)^{-\frac{1}{2}}\right)\left(\nabla_\mu F(\nu_0)\circ Z\right).
\end{align*}
Then add and subtract $(\operatorname{K}_{\mu^*}^2+\beta\operatorname{I_{d \times d}})^{-\frac{1}{2}}\left(\nabla_\mu F(\nu_1)\circ \tilde Z - \nabla_\mu F(\nu_0)\circ Z\right)$ to get
\begin{equation}\label{eq:split_with_base}
\begin{aligned}
\mathcal F(\tilde Z)-\mathcal F(Z) &= (\operatorname{K}_{\mu^*}^2+\beta\operatorname{I_{d \times d}})^{-\frac{1}{2}}\left(\nabla_\mu F(\nu_1)\circ \tilde Z - \nabla_\mu F(\nu_0)\circ Z\right)\\ 
&+ \left(\left(\widetilde {\operatorname{H}}_{\tilde Z}^2+\beta\operatorname{I_{d \times d}}\right)^{-\frac{1}{2}}-(\operatorname{K}_{\mu^*}^2+\beta\operatorname{I_{d \times d}})^{-\frac{1}{2}}\right)\left(\nabla_\mu F(\nu_1)\circ \tilde Z - \nabla_\mu F(\nu_0)\circ Z\right)\\
&+\left(\left(\widetilde {\operatorname{H}}_{\tilde Z}^2+\beta\operatorname{I_{d \times d}}\right)^{-\frac{1}{2}}-\left(\widetilde {\operatorname{H}}_{Z}^2+\beta\operatorname{I_{d \times d}}\right)^{-\frac{1}{2}}\right)\left(\nabla_\mu F(\nu_0)\circ Z\right).
\end{aligned}
\end{equation}
By the fundamental theorem of calculus and Lemma \ref{lem:grad-expansion},
\[
\nabla_\mu F(\nu_1)\circ \tilde Z - \nabla_\mu F(\nu_0)\circ Z =
\int_0^1 \frac {\mathrm d}{\mathrm dh}\big(\nabla_\mu F(\nu_h)\circ Z_h\big)\mathrm dh = \int_0^1 \widetilde {\operatorname{H}}_h[\tilde Z-Z]\mathrm dh.
\]
Add and subtract $\operatorname{K}_{\mu^*}$ to get
\begin{equation}\label{eq:g_difference_expand}
\nabla_\mu F(\nu_1)\circ \tilde Z - \nabla_\mu F(\nu_0)\circ Z
= \operatorname{K}_{\mu^*}(\tilde Z-Z)
+ \int_0^1(\widetilde {\operatorname{H}}_h-\operatorname{K}_{\mu^*})[\tilde Z-Z]\mathrm dh.
\end{equation}
Inserting \eqref{eq:g_difference_expand} into \eqref{eq:split_with_base} yields
\begin{equation*}
\mathcal F(\tilde Z)-\mathcal F(Z) = (\operatorname{K}_{\mu^*}^2+\beta\operatorname{I_{d \times d}})^{-\frac{1}{2}}\operatorname{K}_{\mu^*}(\tilde Z-Z) + R_0(Z,\tilde Z),
\end{equation*}
where $R(Z,\tilde Z)$ is given by
\begin{equation}
\label{eq:R_formula}
\begin{aligned}
R(Z,\tilde Z) &= (\operatorname{K}_{\mu^*}^2+\beta\operatorname{I_{d \times d}})^{-\frac{1}{2}}\int_0^1(\widetilde {\operatorname{H}}_h-\operatorname{K}_{\mu^*})[\tilde Z-Z]\mathrm dh\\ 
&+ \left(\left(\widetilde {\operatorname{H}}_{\tilde Z}^2+\beta\operatorname{I_{d \times d}}\right)^{-\frac{1}{2}}-(\operatorname{K}_{\mu^*}^2+\beta\operatorname{I_{d \times d}})^{-\frac{1}{2}}\right)\left(\nabla_\mu F(\nu_1)\circ \tilde Z - \nabla_\mu F(\nu_0)\circ Z\right)\\
&+\left(\left(\widetilde {\operatorname{H}}_{\tilde Z}^2+\beta\operatorname{I_{d \times d}}\right)^{-\frac{1}{2}}-\left(\widetilde {\operatorname{H}}_{Z}^2+\beta\operatorname{I_{d \times d}}\right)^{-\frac{1}{2}}\right)\left(\nabla_\mu F(\nu_0)\circ Z\right).
\end{aligned}
\end{equation}
We estimate the three terms in \eqref{eq:R_formula}. First, since $\|(\operatorname{K}_{\mu^*}^2+\beta\operatorname{I_{d \times d}})^{-\frac{1}{2}}\|_{\mathrm{op}}\le \frac{1}{\sqrt{\beta}}$ and $\operatorname{K}_{\mu^*} = \operatorname{H}_{\mu^*} - \operatorname{M}_{\mu^*}$, we have
\begin{align*}
&\left\| (\operatorname{K}_{\mu^*}^2+\beta\operatorname{I_{d \times d}})^{-\frac{1}{2}}\int_0^1(\widetilde {\operatorname{H}}_h-\operatorname{K}_{\mu^*})[\tilde Z-Z]\mathrm dh
\right\|_{L_{\mu^*}^2} \\
&\le
\frac{1}{\sqrt{\beta}}\int_0^1 \left(\|\widetilde {\operatorname{H}}_h-\operatorname{H}_{\mu^*}\|_{\mathrm{op}}+\|\operatorname{M}_{\mu^*}\|_{\mathrm{op}}\right)\|\tilde Z-Z\|_{L_{\mu^*}^2}\mathrm dh \\
&\le \frac{1}{\sqrt{\beta}}\int_0^1 \left((L_{\operatorname{M}}+L_{\operatorname{K}})\|Z_h-\operatorname{Id}\|_{L_{\mu^*}^2} + R_F \|\nabla_\mu F(\mu^*)\|_{L_{\mu^*}^2}\right)\|\tilde Z-Z\|_{L_{\mu^*}^2}\mathrm dh
\\
&\le \frac{1}{\sqrt{\beta}}\int_0^1\left((L_{\operatorname{M}}+L_{\operatorname{K}})\big((1-h)\|Z-\operatorname{Id}\|_{L_{\mu^*}^2}+h\|\tilde Z-\operatorname{Id}\|_{L_{\mu^*}^2}\big) + R_F \|\nabla_\mu F(\mu^*)\|_{L_{\mu^*}^2}\right)\|\tilde Z-Z\|_{L_{\mu^*}^2}\\
&= \frac{1}{\sqrt{\beta}}\left(\frac{L_{\operatorname{M}}+L_{\operatorname{K}}}{2}\big(\|Z-\operatorname{Id}\|_{L_{\mu^*}^2}+\|\tilde Z-\operatorname{Id}\|_{L_{\mu^*}^2}\big)+R_F \|\nabla_\mu F(\mu^*)\|_{L_{\mu^*}^2}\right)\|\tilde Z-Z\|_{L_{\mu^*}^2},
\end{align*}
where the second inequality follows from Assumption \ref{assumption:Hessian-lipschitz} with $\gamma_h = \left(\operatorname{Id}, Z_h\right)_{\#}\mu^* \in \Pi(\mu^*, \nu_h)$ and from Assumption \ref{assumption_regularity_wg}, while the third inequality follows from the triangle inequality since $Z_h-\operatorname{Id} = (1-h)(Z-\operatorname{Id}) + h(\tilde Z - \operatorname{Id})$. Second, 
\begin{align*}
&\left\|\left(\left(\widetilde {\operatorname{H}}_{\tilde Z}^2+\beta\operatorname{I_{d \times d}}\right)^{-\frac{1}{2}}-(\operatorname{K}_{\mu^*}^2+\beta\operatorname{I_{d \times d}})^{-\frac{1}{2}}\right)\left(\nabla_\mu F(\nu_1)\circ \tilde Z - \nabla_\mu F(\nu_0)\circ Z\right)\right\|_{L_{\mu^*}^2}\\
&\le
\left\|\left(\widetilde {\operatorname{H}}_{\tilde Z}^2+\beta\operatorname{I_{d \times d}}\right)^{-\frac{1}{2}}-(\operatorname{K}_{\mu^*}^2+\beta\operatorname{I_{d \times d}})^{-\frac{1}{2}}\right\|_{\mathrm{op}}\|\nabla_\mu F(\nu_1)\circ \tilde Z - \nabla_\mu F(\nu_0)\circ Z\|_{L_{\mu^*}^2}\\
&\le \frac{2}{\pi \beta}\|\widetilde {\operatorname{H}}_{\tilde Z}-\operatorname{H}_{\mu^*}\|_{\mathrm{op}}\|\nabla_\mu F(\nu_1)\circ \tilde Z - \nabla_\mu F(\nu_0)\circ Z\|_{L_{\mu^*}^2}\\
&\leq \frac{2}{\pi \beta}(L_{\operatorname{M}}+L_{\operatorname{K}})\|\tilde Z -\operatorname{Id}\|_{L_{\mu^*}^2}\|\nabla_\mu F(\nu_1)\circ \tilde Z - \nabla_\mu F(\nu_0)\circ Z\|_{L_{\mu^*}^2}.
\end{align*}
From
\[
\nabla_\mu F(\nu_1)\circ \tilde Z - \nabla_\mu F(\nu_0)\circ Z =
\int_0^1 \frac {\mathrm d}{\mathrm dh}\big(\nabla_\mu F(\nu_h)\circ Z_h\big)\mathrm dh = \int_0^1 \widetilde {\operatorname{H}}_h[\tilde Z-Z]\mathrm dh,
\]
we get
\begin{equation*}
    \|\nabla_\mu F(\nu_1)\circ \tilde Z - \nabla_\mu F(\nu_0)\circ Z\|_{L_{\mu^*}^2} \leq (C_{\operatorname{M}}+C_{\operatorname{K}})\|\tilde Z -Z\|_{L_{\mu^*}^2}.
\end{equation*}
Therefore,
\begin{align*}
    &\left\|\left(\left(\widetilde {\operatorname{H}}_{\tilde Z}^2+\beta\operatorname{I_{d \times d}}\right)^{-\frac{1}{2}}-(\operatorname{H}_{\mu^*}^2+\beta\operatorname{I_{d \times d}})^{-\frac{1}{2}}\right)\left(\nabla_\mu F(\nu_1)\circ \tilde Z - \nabla_\mu F(\nu_0)\circ Z\right)\right\|_{L_{\mu^*}^2}\\ 
    &\leq \frac{2}{\pi \beta}(C_{\operatorname{M}}+C_{\operatorname{K}})(L_{\operatorname{M}}+L_{\operatorname{K}})\|\tilde Z -\operatorname{Id}\|_{L_{\mu^*}^2}\|\tilde Z -Z\|_{L_{\mu^*}^2}.
\end{align*}
Third, 
\begin{align*}
&\left\|\left(\left(\widetilde {\operatorname{H}}_{\tilde Z}^2+\beta\operatorname{I_{d \times d}}\right)^{-\frac{1}{2}}-\left(\widetilde {\operatorname{H}}_{Z}^2+\beta\operatorname{I_{d \times d}}\right)^{-\frac{1}{2}}\right)\left(\nabla_\mu F(\nu_0)\circ Z\right)\right\|_{L_{\mu^*}^2}\\
&\le
\left\|\left(\widetilde {\operatorname{H}}_{\tilde Z}^2+\beta\operatorname{I_{d \times d}}\right)^{-\frac{1}{2}}-\left(\widetilde {\operatorname{H}}_{Z}^2+\beta\operatorname{I_{d \times d}}\right)^{-\frac{1}{2}}\right\|_{\mathrm{op}}\|\nabla_\mu F(\nu_0)\circ Z\|_{L_{\mu^*}^2}\\
&\le
\frac{2}{\pi \beta}(L_{\operatorname{M}}+L_{\operatorname{K}})\|\tilde Z - Z\|_{L_{\mu^*}^2}\|\nabla_\mu F(\nu_0)\circ Z\|_{L_{\mu^*}^2}.
\end{align*}
Now from 
\[
\|\nabla_\mu F(\nu_0)\circ Z-\nabla_\mu F(\mu^*)\|_{L_{\mu^*}^2}\le (C_{\operatorname{M}}+C_{\operatorname{K}})\|Z-\operatorname{Id}\|_{L_{\mu^*}^2},
\]
we get
\[
\|\nabla_\mu F(\nu_0)\circ Z\|_{L_{\mu^*}^2}\le \|\nabla_\mu F(\mu^*)\|_{L_{\mu^*}^2}+(C_{\operatorname{M}}+C_{\operatorname{K}})\|Z-\operatorname{Id}\|_{L_{\mu^*}^2}.
\]
Therefore
\begin{align*}
    &\left\|\left(\left(\widetilde {\operatorname{H}}_{\tilde Z}^2+\operatorname{I_{d \times d}}\right)^{-1}-\left(\widetilde {\operatorname{H}}_{Z}^2+\operatorname{I_{d \times d}}\right)^{-1}\right)\left(\nabla_\mu F(\nu_0)\circ Z\right)\right\|_{L_{\mu^*}^2}\\ 
    &\leq \frac{2}{\pi \beta}(L_{\operatorname{M}}+L_{\operatorname{K}})\|\tilde Z - Z\|_{L_{\mu^*}^2}\left(\|\nabla_\mu F(\mu^*)\|_{L_{\mu^*}^2}+(C_{\operatorname{M}}+C_{\operatorname{K}})\|Z-\operatorname{Id}\|_{L_{\mu^*}^2}\right).
\end{align*}
Combining the three bounds, we obtain
\begin{align*}
\|R(Z,\tilde Z)\|_{L_{\mu^*}^2}
&\le \frac{1}{\sqrt{\beta}}\left(\frac{L_{\operatorname{M}}+L_{\operatorname{K}}}{2}\big(\|Z-\operatorname{Id}\|_{L_{\mu^*}^2}+\|\tilde Z-\operatorname{Id}\|_{L_{\mu^*}^2}\big)+R_F \|\nabla_\mu F(\mu^*)\|_{L_{\mu^*}^2}\right)\|\tilde Z-Z\|_{L_{\mu^*}^2}\\ 
&+ \frac{2}{\pi \beta}(C_{\operatorname{M}}+C_{\operatorname{K}})(L_{\operatorname{M}}+L_{\operatorname{K}})\|\tilde Z -\operatorname{Id}\|_{L_{\mu^*}^2}\|\tilde Z -Z\|_{L_{\mu^*}^2}\\ 
&+ \frac{2}{\pi \beta}(L_{\operatorname{M}}+L_{\operatorname{K}})\|\tilde Z - Z\|_{L_{\mu^*}^2}\left(\|\nabla_\mu F(\mu^*)\|_{L_{\mu^*}^2}+(C_{\operatorname{M}}+C_{\operatorname{K}})\|Z-\operatorname{Id}\|_{L_{\mu^*}^2}\right).
\end{align*}
If we denote $a=\|Z-\operatorname{Id}\|_{L_{\mu^*}^2}$, $b = \|\tilde Z-\operatorname{Id}\|_{L_{\mu^*}^2}$, $d = \|\tilde Z-Z\|_{L_{\mu^*}^2}$, $C_{\operatorname{H}} = C_{\operatorname{M}}+C_{\operatorname{K}}$ and $L_{\operatorname{H}} = L_{\operatorname{M}}+L_{\operatorname{K}}$, then
\begin{equation}
\begin{aligned}
\label{R_0}
    \|R(Z,\tilde Z)\|_{L_{\mu^*}^2} &\leq \frac{1}{\sqrt{\beta}}\left(\frac{L_{\operatorname{H}}}{2}(a+b)+R_F \|\nabla_\mu F(\mu^*)\|_{L_{\mu^*}^2}\right)d + \frac{2}{\pi\beta}C_{\operatorname{H}}L_{\operatorname{H}}bd\\ 
    &+ \frac{2}{\pi\beta}L_{\operatorname{H}}d\left(\|\nabla_\mu F(\mu^*)\|_{L_{\mu^*}^2}+C_{\operatorname{H}}a\right)\\
    &\leq d\left(L_{\operatorname{H}}(a+b)\left(\frac{1}{2\sqrt{\beta}}+\frac{2}{\pi\beta}C_{\operatorname{H}}\right)+\|\nabla_\mu F(\mu^*)\|_{L_{\mu^*}^2}\left(\frac{R_F}{\sqrt{\beta}}+\frac{2L_{\operatorname{H}}}{\pi\beta}\right)\right).
\end{aligned}
\end{equation}
\end{proof}

\subsection{Well-posedness and characterization of the optimizer}
In this subsection, we verify that the variational problems defining the Newton-type updates are well-posed. More precisely, we show that each update admits a unique minimizer in the corresponding Hilbert space and derive the associated first-order optimality condition. This yields the explicit form of the update whenever the relevant operator is invertible. We begin with the Levenberg--Marquardt regularization of the Wasserstein Newton step.
\begin{proposition}[Well-posedness and first-order optimality condition of Levenberg--Marquardt algorithm]
\label{prop:well-posedness}
Assume $F:\mathcal{P}_2(\mathbb R^d) \to \mathbb R$ is Wasserstein regular. Let $\mu^0 \in \mathcal{P}_2(\mathbb R^d)$ and $\tau > 0$. Then the sequence of iterates $(v^n)_{n \in \mathbb{N}}$ generated by the update 
\begin{align*} 
    &v^{n+1} = \argmin_{v \in L^2_{\mu^n}}\left\{\langle \nabla_\mu F(\mu^n,\cdot),v\rangle_{L_{\mu^n}^2} +\frac{1}{2}\langle \operatorname{H}_{\mu^n}v,v\rangle_{L_{\mu^n}^2}+\frac{1}{2\tau}\|v\|_{L_{\mu^n}^2}^2\right\},\\ &\mu^{n+1} = (\operatorname{Id}+ v^{n+1})_\#\mu^n,
\end{align*}
is well-defined and remains in $L^2_{\mu^n}$, for all $n$. Moreover, $v^{n+1}$ satisfies
    \begin{equation*}
        \left(\operatorname{H}_{\mu^n} + \tau^{-1}\operatorname{I_{d \times d}}\right)v^{n+1} = -\nabla_\mu F(\mu^n, \cdot)\ \text{ in } L^2_{\mu^n}.
    \end{equation*}
    If there exists $\delta > 0$ such that $\langle \operatorname{H}_{\mu^n}v,v\rangle_{L_{\mu_n}^2} \geq -\delta\|v\|_{L_{\mu_n}^2}^2$, for all $v \in L_{\mu_n}^2$, and $\tau < \delta^{-1}$, or if $\operatorname{H}_{\mu^n}$ is non-negative, for all $n$, then
    \begin{equation*}
      v^{n+1} = - \left(\operatorname{H}_{\mu^n} + \tau^{-1}\operatorname{I_{d \times d}}\right)^{-1}\nabla_\mu F(\mu^n, \cdot)\ \text{ in } L^2_{\mu^n}.
    \end{equation*}
\end{proposition}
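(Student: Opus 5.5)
The approach is to treat each step as a strictly convex quadratic minimization on the Hilbert space $L^2_{\mu^n}$ and apply Lax--Milgram / Riesz representation. Fix $n$ and set $\mu=\mu^n$, $g=\nabla_\mu F(\mu^n,\cdot)\in L^2_\mu$. The objective is
\[
J(v)=\langle g,v\rangle_{L^2_\mu}+\tfrac12\langle (\operatorname{H}_\mu+\tau^{-1}\operatorname{I_{d\times d}})v,v\rangle_{L^2_\mu}.
\]
By Wasserstein regularity, $\operatorname{H}_\mu=\operatorname{M}_\mu+\operatorname{K}_\mu$ is bounded. Indeed, $\|\operatorname{M}_\mu\|_{\mathrm{op}}=\|\nabla\nabla_\mu F(\mu,\cdot)\|_{\mu,\infty}<\infty$, and $\operatorname{K}_\mu$ is Hilbert--Schmidt. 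The operator is also symmetric by Proposition \ref{proposition:Wasserstein-hessian-appendix}, hence self-adjoint. So $J$ is a continuous quadratic functional on $L^2_\mu$.

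Differentiating $J$ in a direction $w$ gives the Gâteaux derivative $\langle g+(\operatorname{H}_\mu+\tau^{-1}\operatorname{I_{d\times d}})v,w\rangle$, using symmetry. Any minimizer therefore satisfies the stated equation $(\operatorname{H}_\mu+\tau^{-1}\operatorname{I_{d\times d}})v^{n+1}=-g$.

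Existence and uniqueness come from coercivity. Under $\operatorname{H}_\mu\succeq-\delta\operatorname{I_{d\times d}}$ with $\tau<\delta^{-1}$, or under $\operatorname{H}_\mu\succeq 0$, we get
\[
\langle(\operatorname{H}_\mu+\tau^{-1}\operatorname{I_{d\times d}})v,v\rangle\ge c\|v\|^2_{L^2_\mu},\qquad c=\tau^{-1}-\delta>0 \text{ (or } c=\tau^{-1}).
\]
So the bilinear form $a(v,w)=\langle(\operatorname{H}_\mu+\tau^{-1}\operatorname{I_{d\times d}})v,w\rangle$ is bounded, symmetric and coercive. Lax--Milgram then yields a unique $v^{n+1}\in L^2_\mu$ with $a(v^{n+1},w)=-\langle g,w\rangle$ for all $w$. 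This $v^{n+1}$ is the unique minimizer, since $J(v^{n+1}+w)-J(v^{n+1})=\tfrac12 a(w,w)\ge \tfrac c2\|w\|^2$. Coercivity of a bounded self-adjoint operator also makes it invertible with $\|(\operatorname{H}_\mu+\tau^{-1}\operatorname{I_{d\times d}})^{-1}\|_{\mathrm{op}}\le c^{-1}$, which gives the explicit formula for $v^{n+1}$. Since $v^{n+1}\in L^2_{\mu^n}$, the map $\operatorname{Id}+v^{n+1}$ lies in $L^2_{\mu^n}$, so $\mu^{n+1}\in\mathcal P_2(\mathbb R^d)$. Inducting on $n$ gives well-definedness for every $n$.

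The main obstacle is the first claim when no lower bound on $\operatorname{H}_{\mu^n}$ is assumed. In that case $J$ may be unbounded below, for instance along an eigenvector with eigenvalue below $-\tau^{-1}$, and the argmin need not exist. I would handle this by reading the general well-posedness claim as conditional on the coercivity hypothesis, since only then is the argmin guaranteed. Without that hypothesis, I would assert only the weaker statement: any minimizer that exists must satisfy the first-order condition.
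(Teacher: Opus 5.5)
Your proposal is correct and follows the paper's route: the first-order condition comes from the directional derivative of $J_n$ using the symmetry of $\operatorname{H}_{\mu^n}$, and the lower bound $\operatorname{H}_{\mu^n}+\tau^{-1}\operatorname{I_{d\times d}}\succeq c\,\operatorname{I_{d\times d}}$ gives invertibility and the explicit formula. Your caveat is also right, and it exposes a genuine flaw in the paper's own proof, which asserts that $J_n$ always has a unique minimizer because of the ``strongly convex regularization term''. That claim fails whenever $\operatorname{H}_{\mu^n}$ has spectrum below $-\tau^{-1}$: for instance, take $F(\mu)=\int V\,\mathrm d\mu$ with $\nabla^2V=-a\operatorname{I_{d\times d}}$, $a>\tau^{-1}$, on a ball containing the support of $\mu^n$; then $\operatorname{H}_{\mu^n}=-a\operatorname{I_{d\times d}}$ and $J_n$ is unbounded below. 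So your Lax--Milgram argument under the coercivity hypothesis is the correct form of the well-posedness claim.
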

\begin{proof}
    For all $v \in L^2_{\mu^n}$ and $\tau > 0$, define 
    \begin{equation*}
        J_n(v) = \left\langle \nabla_\mu F(\mu^n, \cdot), v\right\rangle_{L^2_{\mu^n}} + \frac{1}{2}\langle \operatorname{H}_{\mu^n}v, v\rangle_{L^2_{\mu^n}} +\frac{1}{2\tau}\|v\|_{L^2_{\mu^n}}^2.
    \end{equation*}
    Note that, given $\mu^0 \in \mathcal{P}_2(\mathbb R^d)$, for each $n \in \mathbb N$, the functional $J_n(v)$ admits a unique minimizer, denoted $v^{n+1}$, since it consists of linear and quadratic terms together with a strongly convex regularization term.

    Now, for $\varepsilon \in [0,1]$, and all $v \in L^2_{\mu^n}$, define $v^{n,\varepsilon} := v^{n+1} + \varepsilon(v - v^{n+1}) \in L^2_{\mu^n}$. 
    By linearity of the first term of $J_n$, 
    \begin{align*}
        \lim_{\varepsilon \downarrow 0} \frac{1}{\varepsilon}\left(\left\langle \nabla_\mu F(\mu^n, \cdot), v^{\varepsilon,n}\right\rangle_{L^2_{\mu^n}} - \left\langle \nabla_\mu F(\mu^n, \cdot), v^{n+1}\right\rangle_{L^2_{\mu^n}}\right) = \left\langle \nabla_\mu F(\mu^n, \cdot), v-v^{n+1}\right\rangle_{L^2_{\mu^n}}.
    \end{align*}
    By symmetry of $\operatorname{H}_{\mu^n}$, a straightforward calculation gives
    \begin{align*}
        &\lim_{\varepsilon \downarrow 0} \frac{1}{2\varepsilon}\left(\langle \operatorname{H}_{\mu^n}v^{n,\varepsilon}, v^{n,\varepsilon}\rangle_{L^2_{\mu^n}} - \langle \operatorname{H}_{\mu^n}v^{n+1}, v^{n+1}\rangle_{L^2_{\mu^n}}\right)\\
        &= \lim_{\varepsilon \downarrow 0} \frac{1}{\varepsilon}\left(\varepsilon\langle \operatorname{H}_{\mu^n}v^{n+1}, v-v^{n+1}\rangle_{L^2_{\mu^n}} + \frac{\varepsilon^2}{2} \langle \operatorname{H}_{\mu^n}(v-v^{n+1}), v-v^{n+1}\rangle_{L^2_{\mu^n}}\right)\\
        &=\langle \operatorname{H}_{\mu^n}v^{n+1}, v-v^{n+1}\rangle_{L^2_{\mu^n}}.
    \end{align*}
    Similarly,
    \begin{align*}
        \lim_{\varepsilon \downarrow 0}\frac{1}{2\tau\varepsilon}\left(\|v^{\varepsilon, n}\|_{L^2_{\mu^n}}^2 - \|v^{n+1}\|_{L^2_{\mu^n}}^2\right) = \tau^{-1} \langle v^{n+1}, v-v^{n+1}\rangle_{L^2_{\mu^n}}.
    \end{align*}
    Since $v^{n+1}$ is the unique minimizer of $J_n$,
    \begin{align*}
        0 &\leq \lim_{\varepsilon \downarrow 0}\frac{1}{\varepsilon}\left(J_n(v^{n,\varepsilon}) - J_n(v^{n+1})\right)\\
        &= \left\langle \nabla_\mu F(\mu^n, \cdot), v-v^{n+1}\right\rangle_{L^2_{\mu^n}} + \langle \operatorname{H}_{\mu^n}v^{n+1}, v-v^{n+1}\rangle_{L^2_{\mu^n}} + \tau^{-1} \langle v^{n+1}, v-v^{n+1}\rangle_{L^2_{\mu^n}}\\
        &= \left\langle \nabla_\mu F(\mu^n, \cdot) + \operatorname{H}_{\mu^n}v^{n+1} + \tau^{-1}v^{n+1} , v-v^{n+1}\right\rangle_{L^2_{\mu^n}},
    \end{align*}
    for all $v \in L^2_{\mu^n}$. Hence,
    \begin{align*}
        \nabla_\mu F(\mu^n, \cdot) + \operatorname{H}_{\mu^n}v^{n+1} + \tau^{-1}v^{n+1} = 0\ \text{ in } L^2_{\mu^n}.
    \end{align*}
    Equivalently,
    \begin{equation}
    \label{foc-noninv}
        \left(\operatorname{H}_{\mu^n} + \tau^{-1}\operatorname{I_{d \times d}}\right)v^{n+1} = -\nabla_\mu F(\mu^n, \cdot)\ \text{ in } L^2_{\mu^n}.
    \end{equation}
    If there exists $\delta > 0$ such that $\langle \operatorname{H}_{\mu^n}v,v\rangle_{L_{\mu_n}^2} \geq -\delta\|v\|_{L_{\mu_n}^2}^2$, for all $v \in L_{\mu_n}^2$, and $\tau < \delta^{-1}$, then
    \begin{equation*}
        \operatorname{H}_{\mu^n} + \tau^{-1}\operatorname{I_{d \times d}} \succeq m\operatorname{I_{d \times d}},
    \end{equation*}
    with $m := \tau^{-1} -\delta > 0$. Therefore, $\operatorname{H}_{\mu^n} + \tau^{-1}\operatorname{I_{d \times d}}$ is an invertible operator in $L^2_{\mu^n}$, and by \eqref{foc-noninv}
    \begin{equation*}
        v^{n+1} = - \left(\operatorname{H}_{\mu^n} + \tau^{-1}\operatorname{I_{d \times d}}\right)^{-1}\nabla_\mu F(\mu^n, \cdot)\ \text{ in } L^2_{\mu^n}.
    \end{equation*}
    If $\operatorname{H}_{\mu^n}$ is non-negative, for all $n$, then
    \begin{equation*}
         \operatorname{H}_{\mu^n} + \tau^{-1}\operatorname{I_{d \times d}} \succeq \tau^{-1}\operatorname{I_{d \times d}},
    \end{equation*}
    hence $\operatorname{H}_{\mu^n} + \tau^{-1}\operatorname{I_{d \times d}}$ is again an invertible operator in $L^2_{\mu^n}$ and the same expression for $v^{n+1}$ holds.
    
    Finally, $v^{n+1} \in  L^2_{\mu^n}$, since $\nabla_\mu F(\mu^n,\cdot) \in L^2_{\mu^n}$ and either $\|\left(\operatorname{H}_{\mu^n} + \tau^{-1}\operatorname{I_{d \times d}}\right)^{-1}\|_{\mathrm{op}}\le m^{-1}$ or $\|\left(\operatorname{H}_{\mu^n} + \tau^{-1}\operatorname{I_{d \times d}}\right)^{-1}\|_{\mathrm{op}}\le \tau$.
\end{proof}
We next turn to the regularized saddle-free update. The argument follows the same variational principle as above, but the quadratic term is now induced by the positive operator $(\operatorname{H}_{\mu^n}^2+\beta \operatorname{I_{d\times d}})^{1/2}$, which ensures invertibility for every $\beta>0$.
\begin{proposition}[Well-posedness and first-order optimality condition of \eqref{eq:saddle_free_newton}]
\label{prop:well-posedness-saddle-free}
Let Assumption \ref{assumption:smooth-F} hold. Let $\mu^0 \in \mathcal{P}_2(\mathbb R^d)$ and $\tau, \beta > 0$. Then the sequence of iterates $(v^n)_{n \in \mathbb{N}}$ generated by the update \eqref{eq:saddle_free_newton} is well-defined and remains in $L^2_{\mu^n}$, for all $n$. Moreover, $v^{n+1}$ satisfies
    \begin{equation*}
      v^{n+1} = - \left(\operatorname{H}_{\mu^n}^2 + \beta\operatorname{I_{d \times d}}\right)^{-\frac{1}{2}}\nabla_\mu F(\mu^n, \cdot)\ \text{ in } L^2_{\mu^n}.
    \end{equation*}
\end{proposition}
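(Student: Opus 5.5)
The proof follows the template of Proposition \ref{prop:well-posedness}, with the quadratic form now induced by $P_n := (\operatorname{H}_{\mu^n}^2+\beta\operatorname{I_{d \times d}})^{1/2}$. First I will record the properties of $P_n$. By Lemma \ref{lem:upper-bound-inner-hessian}, under Assumption \ref{assumption:smooth-F} the Hessian $\operatorname{H}_{\mu^n}$ is bounded and self-adjoint on $L^2_{\mu^n}$, with $\|\operatorname{H}_{\mu^n}\|_{\mathrm{op}}\le C_{\operatorname{H}}$. Hence $\operatorname{H}_{\mu^n}^2+\beta\operatorname{I_{d \times d}}$ is bounded, self-adjoint and satisfies $\operatorname{H}_{\mu^n}^2+\beta\operatorname{I_{d \times d}}\succeq \beta \operatorname{I_{d \times d}}$. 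By the square root lemma, $P_n$ is a well-defined bounded self-adjoint operator. The spectral calculus, applied to $\lambda\mapsto(\lambda^2+\beta)^{1/2}\ge\sqrt\beta$ on $\sigma(\operatorname{H}_{\mu^n})\subset[-C_{\operatorname{H}},C_{\operatorname{H}}]$, then gives
\[
\sqrt{\beta}\,\|v\|_{L^2_{\mu^n}}^2\le \langle P_n v,v\rangle_{L^2_{\mu^n}}\le \sqrt{C_{\operatorname{H}}^2+\beta}\,\|v\|_{L^2_{\mu^n}}^2 .
\]
In particular $P_n$ is invertible, with $P_n^{-1}=(\operatorname{H}_{\mu^n}^2+\beta\operatorname{I_{d \times d}})^{-1/2}$ (functional calculus of $\lambda\mapsto(\lambda^2+\beta)^{-1/2}$) and $\|P_n^{-1}\|_{\mathrm{op}}\le\beta^{-1/2}$.

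Next, set $J_n(v)=F(\mu^n)+\langle\nabla_\mu F(\mu^n,\cdot),v\rangle_{L^2_{\mu^n}}+\frac12\langle P_n v,v\rangle_{L^2_{\mu^n}}$. This functional is continuous and $\sqrt\beta$-strongly convex on the Hilbert space $L^2_{\mu^n}$, since $\nabla_\mu F(\mu^n,\cdot)\in L^2_{\mu^n}$ and the quadratic form is bounded below by $\sqrt\beta\|v\|^2$. It is therefore coercive and weakly lower semicontinuous, so it admits a unique minimizer $v^{n+1}$. To derive the optimality condition, I will take the one-sided directional derivative along $v^{n,\varepsilon}=v^{n+1}+\varepsilon(v-v^{n+1})$ exactly as in Proposition \ref{prop:well-posedness}, using the symmetry of $P_n$. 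This gives $\langle \nabla_\mu F(\mu^n,\cdot)+P_n v^{n+1},v-v^{n+1}\rangle\ge0$ for all $v$. Since $v$ is arbitrary, this forces $P_nv^{n+1}=-\nabla_\mu F(\mu^n,\cdot)$, and inverting $P_n$ yields the stated formula.

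It remains to show that the iteration stays well-defined. From $\|v^{n+1}\|_{L^2_{\mu^n}}\le\beta^{-1/2}\|\nabla_\mu F(\mu^n,\cdot)\|_{L^2_{\mu^n}}<\infty$ we get $v^{n+1}\in L^2_{\mu^n}$. Then $\mu^{n+1}=(\operatorname{Id}+\tau v^{n+1})_\#\mu^n$ has finite second moment, because $\int\|x+\tau v^{n+1}(x)\|^2\mathrm d\mu^n\le 2m_2(\mu^n)+2\tau^2\|v^{n+1}\|^2<\infty$. So $\mu^{n+1}\in\mathcal P_2(\mathbb R^d)$, and induction on $n$ closes the argument, Assumption \ref{assumption:smooth-F} supplying the bound $C_{\operatorname{H}}$ uniformly at each step.

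The only delicate point is justifying that $P_n$ is bounded, self-adjoint and invertible with inverse $(\operatorname{H}_{\mu^n}^2+\beta\operatorname{I_{d \times d}})^{-1/2}$. This rests entirely on the self-adjointness of $\operatorname{H}_{\mu^n}$ from Lemma \ref{lem:upper-bound-inner-hessian} together with the continuous functional calculus; everything else is routine.
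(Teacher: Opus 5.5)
Your proposal is correct and follows essentially the same route as the paper: you show that $(\operatorname{H}_{\mu^n}^2+\beta\operatorname{I_{d \times d}})^{1/2}$ is self-adjoint with spectrum in $[\sqrt{\beta},\infty)$, obtain the unique minimizer of the quadratic functional, derive the first-order condition by the directional-derivative argument of Proposition \ref{prop:well-posedness}, and invert using the operator-norm bound $\beta^{-1/2}$. Your explicit existence argument via strong convexity and coercivity, and your check that $\mu^{n+1}\in\mathcal P_2(\mathbb R^d)$ (which the induction needs), usefully fill in details the paper leaves implicit.
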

\begin{proof}
    By Lemma \ref{lem:upper-bound-inner-hessian}, $A_{\mu^n} :=\operatorname{H}_{\mu^n}^2+\beta\operatorname{I_{d \times d}}$ is self-adjoint. Furthermore, because $\operatorname{H}_{\mu^n}^2 \succeq 0$, and $\beta > 0$ the operator $A_{\mu^n} \succeq \beta$. Let $\sigma(A_{\mu^n})$ be the spectrum of $A_{\mu^n}$. Since $A_{\mu^n}$ is self-adjoint, we have $\sigma(A_{\mu^n}) \subset \mathbb R$, and in particular $\sigma(A_{\mu^n}) \subset [\beta, \infty)$.

    Let $f$ be a continuous function over the spectrum $\sigma(A_\mu)$. By \cite[Theorem VII.1 (continuous functional calculus)]{ReedSimon1980},
    \begin{equation*}
        f(A_{\mu^n})^* = \bar{f}(A_{\mu^n}),
    \end{equation*}
    where $\bar{f}$ denotes the complex conjugate of $f$. Take $f(\lambda) = \lambda^{\frac{1}{2}}$ for $\lambda \in \sigma(A_{\mu^n})$. Since $\sigma(A_{\mu^n}) \subset [\beta, \infty)$, it follows that $f(\lambda) = \bar{f}(\lambda)$. Therefore,
    \begin{equation*}
        \left(\left(\operatorname{H}_{\mu^n}^2+\beta\operatorname{I_{d \times d}}\right)^{\frac{1}{2}}\right)^* = \left(\operatorname{H}_{\mu^n}^2+\beta\operatorname{I_{d \times d}}\right)^{\frac{1}{2}},
    \end{equation*}
    i.e., $\left(\operatorname{H}_{\mu^n}^2+\beta\operatorname{I_{d \times d}}\right)^{\frac{1}{2}}$ is self-adjoint. 
    
    For all $v \in L^2_{\mu^n}$, define 
    \begin{equation*}
        J_n(v) = \left\langle \nabla_\mu F(\mu^n, \cdot), v\right\rangle_{L^2_{\mu^n}} + \frac{1}{2}\langle \left(\operatorname{H}_{\mu^n}^2+\beta\operatorname{I_{d \times d}}\right)^{\frac{1}{2}}v, v\rangle_{L^2_{\mu^n}}.
    \end{equation*}
    Therefore, given $\mu^0 \in \mathcal{P}_2(\mathbb R^d)$, for each $n \in \mathbb N$, the functional $J_n(v)$ admits a unique minimizer, denoted $v^{n+1}$, since it is the sum of a linear and a quadratic term. 

    Hence, following the argument in Proposition \ref{prop:well-posedness}, since $\left(\operatorname{H}_{\mu^n}^2+\beta\operatorname{I_{d \times d}}\right)^{\frac{1}{2}}$ is self-adjoint, it is symmetric, so we get
    \begin{equation}
    \label{foc-saddle-free}
       \left(\operatorname{H}_{\mu^n}^2+\beta\operatorname{I_{d \times d}}\right)^{\frac{1}{2}}v^{n+1} = -\nabla_\mu F(\mu^n, \cdot)\ \text{ in } L^2_{\mu^n}.
    \end{equation}
    Note that
    \begin{equation*}
        \left(\operatorname{H}_{\mu^n}^2+\beta\operatorname{I_{d \times d}}\right)^{\frac{1}{2}} \subset [\sqrt{\beta}, \infty)
    \end{equation*}
    It follows that $\left(\operatorname{H}_{\mu^n}^2+\beta\operatorname{I_{d \times d}}\right)^{\frac{1}{2}}$ is invertible and so \eqref{foc-saddle-free} becomes
    \begin{equation*}
        v^{n+1} = -  \left(\operatorname{H}_{\mu^n}^2+\beta\operatorname{I_{d \times d}}\right)^{-\frac{1}{2}}\nabla_\mu F(\mu^n, \cdot)\ \text{ in } L^2_{\mu^n}.
    \end{equation*}
    Finally, $v^{n+1} \in  L^2_{\mu^n}$, since $\nabla_\mu F(\mu^n,\cdot) \in L^2_{\mu^n}$ and $\left\|\left(\operatorname{H}_{\mu^n}^2+\beta\operatorname{I_{d \times d}}\right)^{-\frac{1}{2}}\right\|_{\mathrm{op}}\le \beta^{-\frac{1}{2}}$.
\end{proof}
These two results justify the explicit formulas for the regularized Newton and WSFN updates used in the main text.

\subsection{Additional local convergence results}
\label{subsection:additional-local-rates}
The main local result in Theorem \ref{thm:local_rate_from_expansion_lemma} shows that WSFN converges linearly to a non-degenerate global minimizer. In this subsection, we record two complementary local results. First, we show that the Wasserstein Newton method enjoys the expected quadratic convergence rate to a non-degenerate minimizer. Second, we identify a degenerate case in which the regularized WSFN update also admits a quadratic local rate.
\begin{theorem}[Quadratic convergence of Wasserstein Newton to a non-degenerate global minimizer]
\label{thm:quadratic-newton}
Let Assumption \ref{assumption:Hessian-lipschitz} hold. Let $\mu^*\in \mathcal P_2^{\mathrm{ac}}(\mathbb R^d)$ be a global minimizer of $F$ satisfying the conditions of Proposition \ref{prop:second-order-suff}, i.e., $\nabla_\mu F(\mu^*, \cdot) = 0$ in $L_{\mu^*}^2$ and $\lambda_{\mathrm{min}}\operatorname{K}_{\mu^*} > 0$. Consider the Wasserstein Newton iteration 
\begin{equation*}
    \mu^{n+1} =
\left(\operatorname{Id} - \operatorname{H}_{\mu^n}^{-1}
\nabla_\mu F(\mu^n,\cdot)
\right)_\# \mu^n, \quad \mu^0\in \mathcal P_2(\mathbb R^d).
\end{equation*}
Then there exists $\alpha > 0$ such that 
\begin{equation*}
    W_2(\mu^{n+1},\mu^*) \leq \frac{L_{\operatorname{H}}}{\lambda_{\mathrm{min}}\operatorname{K}_{\mu^*}}W_2^2(\mu^n,\mu^*),
\end{equation*}
for all $\mu^n \in B_2(\alpha,\mu^*)$, for all $n \geq 0$. If $W_2(\mu^0,\mu^*) < \alpha <\frac{\lambda_{\mathrm{min}}\operatorname{K}_{\mu^*}}{2L_{\operatorname{H}}}$, then for all $n \geq 0$, we have $\mu^n \in B_2(\alpha,\mu^*)$ and 
\begin{equation*}
    W_2(\mu^n,\mu^*) \leq \frac{\lambda_{\min}K_{\mu^*}}{L_{\operatorname H}} \left(\frac{1}{2}\right)^{2^n}.
\end{equation*}
\end{theorem}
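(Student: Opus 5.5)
The idea is to follow the standard Euclidean Newton argument, written in the transport-map language the paper already uses for the WSFN local result. Since $\mu^*\in\mathcal P_2^{\mathrm{ac}}(\mathbb R^d)$, Brenier's theorem (Theorem \ref{thm:existence-optimal-coupling}) provides an optimal map $\operatorname{T}:=\operatorname{T}_{\mu^*}^{\mu^n}$ with $\mu^n=\operatorname{T}_\#\mu^*$ and $\|\operatorname{T}-\operatorname{Id}\|_{L^2_{\mu^*}}=W_2(\mu^n,\mu^*)$. The Newton step then gives
\[
\mu^{n+1}=\Big(\operatorname{T}-\big(\operatorname{H}_{\mu^n}^{-1}\nabla_\mu F(\mu^n)\big)\circ\operatorname{T}\Big)_\#\mu^*,
\]
so $(\operatorname{Id},\text{this map})_\#\mu^*$ is a coupling of $\mu^*$ and $\mu^{n+1}$. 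Hence
\[
W_2(\mu^{n+1},\mu^*)\le \big\|\operatorname{T}-\operatorname{Id}-(\operatorname{H}_{\mu^n}^{-1}\nabla_\mu F(\mu^n))\circ\operatorname{T}\big\|_{L^2_{\mu^*}}.
\]
Next I pull everything back to $L^2_{\mu^*}$. Let $\widetilde{\operatorname{H}}$ be the transported Hessian built from $\operatorname{T}$ as in Lemma \ref{lem:intertwining_preconditioner}. The same functional calculus argument, applied to $f(\lambda)=1/\lambda$ on the spectrum, which stays away from zero once invertibility is established, gives $(\operatorname{H}_{\mu^n}^{-1}g)\circ\operatorname{T}=\widetilde{\operatorname{H}}^{-1}(g\circ\operatorname{T})$. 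Therefore the quantity to bound is
\[
\widetilde{\operatorname{H}}^{-1}\Big[\widetilde{\operatorname{H}}(\operatorname{T}-\operatorname{Id})-\big(\nabla_\mu F(\mu^n)\circ\operatorname{T}-\nabla_\mu F(\mu^*)\big)\Big],
\]
where I inserted $\nabla_\mu F(\mu^*)=0$.

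The bracket is controlled by Lemma \ref{lemma:smoothness gradient of F} with $\mu=\mu^*$ and $v=\operatorname{T}-\operatorname{Id}$, which gives $\|\nabla_\mu F(\mu^n)\circ\operatorname{T}-\nabla_\mu F(\mu^*)-\operatorname{H}_{\mu^*}v\|\le \tfrac{L_{\operatorname H}}2\|v\|^2$. Assumption \ref{assumption:Hessian-lipschitz}, applied with $\gamma=(\operatorname{Id},\operatorname{T})_\#\mu^*$, gives $\|\widetilde{\operatorname{H}}-\operatorname{H}_{\mu^*}\|_{\mathrm{op}}\le L_{\operatorname H}\|v\|$. Together these bound the bracket by $\tfrac32 L_{\operatorname H}W_2^2$.

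For the inverse, note that $\operatorname{H}_{\mu^*}=\operatorname{K}_{\mu^*}\succeq\lambda\operatorname{I_{d\times d}}$ with $\lambda:=\lambda_{\min}\operatorname{K}_{\mu^*}$, because $\operatorname{M}_{\mu^*}=0$ at an absolutely continuous critical point. Weyl-type perturbation then gives $\widetilde{\operatorname{H}}\succeq(\lambda-L_{\operatorname H}W_2)\operatorname{I_{d\times d}}\succeq \tfrac{\lambda}{2}\operatorname{I_{d\times d}}$ once $W_2<\alpha\le\lambda/(2L_{\operatorname H})$. This yields $W_2(\mu^{n+1},\mu^*)\le \tfrac{3L_{\operatorname H}}{\lambda}W_2^2$. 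The stated constant $L_{\operatorname H}/\lambda$ is sharper than this, so to get it I would use a finer integral decomposition: write the bracket as $\int_0^1(\widetilde{\operatorname{H}}-\widetilde{\operatorname{H}}_{\mu^*,s})v\,\mathrm ds$ and apply Lemma \ref{lem:lip-curves}, which gives $\int_0^1(1-s)\,\mathrm ds\,L_{\operatorname H}\|v\|^2=\tfrac{L_{\operatorname H}}2\|v\|^2$. Combined with $\|\widetilde{\operatorname{H}}^{-1}\|\le 2/\lambda$, this gives exactly $L_{\operatorname H}/\lambda$. I expect that decomposition, together with the invertibility of $\widetilde{\operatorname{H}}$, to be the main technical point.

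For the iteration, set $e_n:=\tfrac{L_{\operatorname H}}{\lambda}W_2(\mu^n,\mu^*)$. The one-step bound becomes $e_{n+1}\le e_n^2$. If $W_2(\mu^0,\mu^*)<\lambda/(2L_{\operatorname H})$, then $e_0<1/2$, so by induction $e_n\le e_0^{2^n}\le(1/2)^{2^n}$. This gives $W_2(\mu^n,\mu^*)\le\tfrac{\lambda}{L_{\operatorname H}}(1/2)^{2^n}$, and the iterates stay in $B_2(\alpha,\mu^*)$ because $W_2$ decreases monotonically.
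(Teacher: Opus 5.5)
Your argument is correct and reaches the stated constant. It is organized differently from the paper's.

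The paper takes $\gamma^*\in\Pi_o(\mu^n,\mu^*)$ and bases everything at $\mu^n$. It writes the integrand as $\operatorname{H}_{\mu^n}^{-1}\bigl[\operatorname{H}_{\mu^n}(x-y)-(\nabla_\mu F(\mu^n,x)-\nabla_\mu F(\mu^*,y))\bigr]$ and bounds the bracket by $\tfrac{L_{\operatorname H}}{2}W_2^2$ directly from Lemma~\ref{lemma:smoothness gradient of F} at base point $\mu^n$. The sharp constant is automatic there because the Newton Hessian sits at that base point. The paper then takes $\|\operatorname{H}_{\mu^n}^{-1}\|_{\mathrm{op}}\le 2/\lambda$ from an unquantified continuity of the inverse.

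You instead base at $\mu^*$ and pull back through the Brenier map $\operatorname{T}=\operatorname{T}_{\mu^*}^{\mu^n}$ and the isometry $Jw:=w\circ\operatorname{T}$. That is why you need the extra decomposition $\int_0^1(\widetilde{\operatorname H}_{\mu^*,1}-\widetilde{\operatorname H}_{\mu^*,s})v\,\mathrm ds$ with Lemma~\ref{lem:lip-curves} to recover $\tfrac{L_{\operatorname H}}{2}$; that step is right.

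Your route buys two things:
\begin{itemize}
\item It only uses a transport map out of the absolutely continuous $\mu^*$. The paper applies the $L^2_{\mu^n}$-operator $\operatorname{H}_{\mu^n}^{-1}$ inside a $\gamma^*$-integral and invokes Lemma~\ref{lemma:smoothness gradient of F} at $\mu^n$. Both tacitly require $\gamma^*$ to be induced by a map out of $\mu^n$, which need not hold when $\mu^n$ is singular (e.g.\ an empirical measure).
\item The perturbation bound $\widetilde{\operatorname H}\succeq(\lambda-L_{\operatorname H}W_2)\operatorname{I_{d\times d}}$ gives the explicit radius $\alpha=\lambda/(2L_{\operatorname H})$. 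So the two occurrences of $\alpha$ in the statement are automatically compatible.
\end{itemize}

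You should add one line for the invertibility of $\operatorname{H}_{\mu^n}$ itself:
$\langle\operatorname{H}_{\mu^n}w,w\rangle_{L^2_{\mu^n}}=\langle\widetilde{\operatorname H}Jw,Jw\rangle_{L^2_{\mu^*}}\ge\tfrac{\lambda}{2}\|w\|_{L^2_{\mu^n}}^2$.
After that, $\widetilde{\operatorname H}^{-1}J=J\operatorname{H}_{\mu^n}^{-1}$ follows algebraically from $\widetilde{\operatorname H}J=J\operatorname{H}_{\mu^n}$, with no functional calculus needed.

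One caveat you share with the paper. Both arguments read $\lambda_{\min}\operatorname{K}_{\mu^*}>0$ as $\operatorname{K}_{\mu^*}\succeq\lambda\operatorname{I_{d\times d}}$. A Hilbert--Schmidt operator on the infinite-dimensional space $L^2_{\mu^*}$ is never bounded below, so this coercivity cannot literally hold. That is a problem with the statement's hypothesis, not with your proof relative to the paper's.
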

\begin{proof}
    Let $\gamma^* \in \Pi_o(\mu^n, \mu^*)$ be an optimal coupling between $\mu^n$ and $\mu^*$. Then from the definition of the Newton update and the optimality condition $\nabla_\mu F(\mu^*, y) = 0$ for $\mu^*$-a.e. $y$, we have
    \begin{equation}
    \begin{aligned}
    \label{1}
        W_2^2(\mu^{n+1},\mu^*) &\leq \int_{\mathbb R^d \times \mathbb R^d} \|x-\operatorname{H}_{\mu^n}^{-1}
\nabla_\mu F(\mu^n,x)-y\|^2\mathrm{d}\gamma^*(x,y)\\
&= \int_{\mathbb R^d \times \mathbb R^d} \|\operatorname{H}_{\mu^n}^{-1}\left(\operatorname{H}_{\mu^n}(x-y)
-(\nabla_\mu F(\mu^n,x) - \nabla_\mu F(\mu^*, y))\right)\|^2\mathrm{d}\gamma^*(x,y).
    \end{aligned}
    \end{equation}
By Lemma \ref{lemma:smoothness gradient of F}, 
\begin{equation}
\begin{aligned}
\label{2}
    &\left(\int_{\mathbb R^d \times \mathbb R^d} \left\|\nabla_\mu F(\mu^*,y)  - \nabla_\mu F(\mu^n,x) -  \operatorname{H}_{\mu^n}(y-x)\right\|^2 \mathrm{d}\gamma^*(x,y)\right)^{1/2}\\ &\leq \frac{L_{\operatorname{H}}}{2}\int_{\mathbb R^d \times \mathbb R^d} \|y-x\|^2\mathrm{d}\gamma^*(x,y)\\
    &= \frac{L_{\operatorname{H}}}{2}W_2^2(\mu^n,\mu^*).
\end{aligned}
\end{equation}
Since $\mu^*\in \mathcal P_2^{\mathrm{ac}}(\mathbb R^d)$ is a global minimizer, we have $\nabla_\mu F(\mu^*, \cdot) = 0$ in $L_{\mu^*}^2$ and moreover $\operatorname{M}_{\mu^*}=0$ $\mu^*$-a.e. by \cite[Lemma C.10.]{yamamoto2025hessianguided}. Hence $\operatorname{H}_{\mu^*}=\operatorname{M}_{\mu^*}+\operatorname{K}_{\mu^*}=\operatorname{K}_{\mu^*}$. Because $\lambda_{\mathrm{min}}\operatorname{K}_{\mu^*} > 0$, it follows that $\operatorname{H}_{\mu^*}$ is invertible. By continuity of $\operatorname{H}_{\mu^*}^{-1}$, there exists $\alpha > 0$ such that $\|\operatorname{H}_{\mu^n}^{-1}\|_{\mathrm{op}} \leq 2 \|\operatorname{H}_{\mu^*}^{-1}\|_{\mathrm{op}} = 2 \|\operatorname{K}_{\mu^*}^{-1}\|_{\mathrm{op}} \leq 2 (\lambda_{\mathrm{min}}\operatorname{K}_{\mu^*})^{-1}$ for all $\mu^n \in B_2(\alpha,\mu^*)$. By plugging this into \eqref{1} and \eqref{2}, we obtain
\begin{align*}
    W_2(\mu^{n+1},\mu^*) \leq \frac{L_{\operatorname{H}}}{\lambda_{\mathrm{min}}\operatorname{K}_{\mu^*}}W_2^2(\mu^n,\mu^*).
\end{align*}
Since $W_2(\mu^0,\mu^*) < \alpha < \frac{\lambda_{\mathrm{min}}\operatorname{K}_{\mu^*}}{2L_{\operatorname{H}}}$, we have $\frac{L_{\operatorname{H}}}{\lambda_{\mathrm{min}}\operatorname{K}_{\mu^*}}W_2(\mu^0,\mu^*) \leq \frac{1}{2}$. 

We first show that the iterates remain in $B_2(\alpha,\mu^*)$. Suppose that $W_2(\mu^n,\mu^*) \leq W_2(\mu^0,\mu^*)$. Indeed, the claim is true at $n=0$. Then
\begin{align*}
W_2(\mu^{n+1},\mu^*) &\le \frac{L_{\operatorname{H}}}{\lambda_{\mathrm{min}}\operatorname{K}_{\mu^*}}W_2^2(\mu^n,\mu^*)
\le \frac{L_{\operatorname{H}}}{\lambda_{\mathrm{min}}\operatorname{K}_{\mu^*}}W_2(\mu^0,\mu^*)W_2(\mu^n,\mu^*)\\ 
&\le \frac12 W_2(\mu^n,\mu^*) < W_2^2(\mu^0,\mu^*).
\end{align*}
Thus, by induction,
\[
W_2(\mu^n,\mu^*)\le W_2(\mu^0,\mu^*) < \alpha,
\]
for all $n\ge 0$. Hence $\mu^n\in B_2(\alpha,\mu^*)$ for all $n\ge 0$. Now multiply the recursion by $\frac{L_{\operatorname{H}}}{\lambda_{\mathrm{min}}\operatorname{K}_{\mu^*}}$ to get
\[
\frac{L_{\operatorname{H}}}{\lambda_{\mathrm{min}}\operatorname{K}_{\mu^*}} W_2(\mu^{n+1},\mu^*)\le \left(\frac{L_{\operatorname{H}}}{\lambda_{\mathrm{min}}\operatorname{K}_{\mu^*}} W_2(\mu^n,\mu^*)\right)^2.
\]
Iterating this inequality gives
\[
\frac{L_{\operatorname{H}}}{\lambda_{\mathrm{min}}\operatorname{K}_{\mu^*}} W_2(\mu^n,\mu^*)\le \left(\frac{L_{\operatorname{H}}}{\lambda_{\mathrm{min}}\operatorname{K}_{\mu^*}} W_2(\mu^0,\mu^*)\right)^{2^n}.
\]
Therefore,
\[
W_2(\mu^n,\mu^*) \le \frac{\lambda_{\min}K_{\mu^*}}{L_{\operatorname H}}
\left(\frac{L_{\operatorname H}}{\lambda_{\min}K_{\mu^*}}
W_2(\mu^0,\mu^*)
\right)^{2^n} \leq \frac{\lambda_{\min}K_{\mu^*}}{L_{\operatorname H}} \left(\frac{1}{2}\right)^{2^n}.
\]
This proves quadratic convergence.
\end{proof}

Although WSFN is generally only linearly convergent near a non-degenerate minimizer, the leading linear term may vanish in degenerate settings. The following corollary records one such case.
\begin{corollary}[Quadratic convergence of WSFN in a degenerate case]
\label{cor:local_rate_from_expansion_lemma}
Let Assumptions \ref{assumption:smooth-F}, \ref{assumption:Hessian-lipschitz} hold. Let $\mu^*\in \mathcal P_2^{\mathrm{ac}}(\mathbb R^d)$ be a global minimizer of $F$ satisfying the conditions $\nabla_\mu F(\mu^*, \cdot) = 0$ in $L_{\mu^*}^2$ and $\lambda_{\mathrm{min}}\operatorname{K}_{\mu^*} = 0$. If $\tau =1$, then there exists $\alpha > 0$ such that 
\begin{align*}
    W_2(\mu^{n+1},\mu^*) \le \frac{L_H}{\sqrt{\beta}}W_2^2(\mu^n,\mu^*),
\end{align*}
for all $\mu^n\in B_2(\alpha,\mu^*)$, for all $n \geq N_\alpha$. Furthermore, if $\alpha < \frac{\sqrt{\beta}}{2L_{\operatorname{H}}}$, then $\mu^n \in B_2(\alpha,\mu^*)$, for all $n \ge N_\alpha$, and
\[
W_2(\mu^n,\mu^*)\le \frac{\sqrt{\beta}}{L_{\operatorname{H}}}\left(\frac{1}{2}\right)^{2^{n-{N_\alpha}}}.
\]
\end{corollary}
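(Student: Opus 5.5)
We need to prove Corollary \ref{cor:local_rate_from_expansion_lemma}: WSFN with $\tau=1$ near a degenerate minimizer ($\lambda_{\min}\operatorname{K}_{\mu^*}=0$) satisfies $W_2(\mu^{n+1},\mu^*)\le \frac{L_H}{\sqrt\beta}W_2^2(\mu^n,\mu^*)$, and hence converges quadratically.

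The plan mirrors the proof of Theorem \ref{thm:quadratic-newton}, with $\operatorname{H}_{\mu^n}^{-1}$ replaced by the WSFN preconditioner $P_n:=(\operatorname{H}_{\mu^n}^2+\beta\operatorname{I_{d\times d}})^{-1/2}$. The key property of $P_n$ is the uniform bound $\|P_n\|_{\mathrm{op}}\le\beta^{-1/2}$ from Proposition \ref{prop:well-posedness-saddle-free}. This bound holds for every $\mu^n$, with no perturbation argument near $\mu^*$ needed.

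\textbf{Step 1 (coupling bound).} Fix $\gamma^*\in\Pi_o(\mu^n,\mu^*)$ and push it forward by $(x,y)\mapsto(x-P_n\nabla_\mu F(\mu^n)(x),\,y)$. Since $\nabla_\mu F(\mu^*,y)=0$ for $\mu^*$-a.e. $y$, this gives
\[
W_2(\mu^{n+1},\mu^*)\le\big\|(x-y)-P_n\big(\nabla_\mu F(\mu^n,x)-\nabla_\mu F(\mu^*,y)\big)\big\|_{L^2_{\gamma^*}}.
\]
Writing $x-y=P_n(\operatorname{H}_{\mu^n}^2+\beta\operatorname{I_{d\times d}})^{1/2}(x-y)$, we insert $\pm P_n\operatorname{H}_{\mu^n}(x-y)$. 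Lemma \ref{lemma:smoothness gradient of F} then bounds the gradient-Taylor remainder, giving
\[
W_2(\mu^{n+1},\mu^*)\le \Big\|\big(\operatorname{I_{d\times d}}-P_n\operatorname{H}_{\mu^n}\big)(x-y)\Big\|+\frac{L_H}{2\sqrt\beta}W_2^2(\mu^n,\mu^*).
\]
Lemma \ref{lemma:smoothness gradient of F} is stated along maps $\operatorname{Id}+v$ on $L^2_{\mu^n}$; when $\gamma^*$ is not induced by a map, one should use the plan-based version. The same computation applies, as in the proof of Theorem \ref{thm:quadratic-newton}.

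\textbf{Step 2 (linear term).} This is the main obstacle. The operator $\operatorname{I}-P_n\operatorname{H}_{\mu^n}$ has spectral symbol $1-\lambda/\sqrt{\lambda^2+\beta}$. That symbol is not small in general, and on the flat eigenspace $\lambda=0$ it equals $1$, not $0$. So the linear term cannot be killed by spectral calculus alone.

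The intended argument instead exploits degeneracy directly. Let $\mathcal N:=\ker\operatorname{K}_{\mu^*}$ and decompose the displacement $x-y$ into its component in $\mathcal N$ and its component in $\mathcal N^\perp$. On the flat directions, the quadratic expansion (Lemma \ref{lemma:smoothness-of-F-2}) together with minimality of $\mu^*$ should force the displacement to be of order $W_2^2$. On the complementary directions, the contraction factor is strictly less than $1$. Closing this rigorously would need a spectral gap of $\operatorname{K}_{\mu^*}$ away from zero, which I expect is the step where extra structure is implicitly assumed.

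What I would try first is the route the authors' Remark suggests. With $\tau=1$, evaluate the linear part via the expansion of Lemma \ref{lem:preconditioned_gradient_expansion}, taking $Z=\operatorname{T}_{\mu^*}^{\mu^n}$ and $\tilde Z=\operatorname{Id}$. Then argue that this linear part vanishes when $\lambda_{\min}\operatorname{K}_{\mu^*}=0$, so that only the remainder $R$ survives. At $\mu^*$ we have $\nabla_\mu F=0$, so the remainder is bounded by $L_H\big(\tfrac{1}{2\sqrt\beta}+\tfrac{2C_H}{\pi\beta}\big)W_2^2$, which is $O(W_2^2)$. The constant would then be sharpened to $L_H/\sqrt\beta$, possibly by shrinking $\alpha$.

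\textbf{Step 3 (iteration).} Once the one-step bound $W_2(\mu^{n+1},\mu^*)\le cW_2^2(\mu^n,\mu^*)$ holds with $c=L_H/\sqrt\beta$, the rest is routine. If $\alpha<\frac{\sqrt\beta}{2L_H}$, then $cW_2(\mu^{N_\alpha},\mu^*)\le\frac12$. Induction shows the iterates stay in $B_2(\alpha,\mu^*)$, and iterating $cW_{n+1}\le(cW_n)^2$ from $n=N_\alpha$ yields $W_2(\mu^n,\mu^*)\le c^{-1}(1/2)^{2^{n-N_\alpha}}$.
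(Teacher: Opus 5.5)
Steps 1 and 3 are fine. Step 2 is a genuine gap, and it cannot be closed: your observation there is decisive, and the statement as written is false. Neither repair you propose works. Apply Lemma \ref{lem:preconditioned_gradient_expansion} with $\tau=1$, $Z=\operatorname{T}_{\mu^*}^{\mu^n}$, $\tilde Z=\operatorname{Id}$ and $\nabla_\mu F(\mu^*,\cdot)=0$. The next displacement is $\bigl(\operatorname{I_{d\times d}}-(\operatorname{K}_{\mu^*}^2+\beta\operatorname{I_{d\times d}})^{-1/2}\operatorname{K}_{\mu^*}\bigr)(Z-\operatorname{Id})$ plus an $O(W_2^2)$ remainder. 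The symbol $1-\lambda/\sqrt{\lambda^2+\beta}$ is bounded below by $1-C_{\operatorname K}/\sqrt{C_{\operatorname K}^2+\beta}>0$ on all of $\sigma(\operatorname{K}_{\mu^*})$, and equals $1$ on $\ker\operatorname{K}_{\mu^*}$. For $\beta>0$ this linear part never vanishes, degenerate or not, so this route gives at best a linear factor. The decomposition route fails too. A spectral gap on $(\ker\operatorname{K}_{\mu^*})^\perp$ only yields a factor in $(0,1)$ there. Minimality of $\mu^*$ imposes no $O(W_2^2)$ bound on the flat component of the displacement.

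Here is a counterexample. Let $d=1$ and $F(\mu)=G(m_\mu)$ with $m_\mu:=\int x\,\mathrm d\mu(x)$, where $G\ge 0$ is smooth, $G(m)=m^4/4$ on $[-1,1]$, and $G''$, $G'''$ are bounded. Then $\nabla_\mu F(\mu,\cdot)\equiv G'(m_\mu)$, $\nabla\nabla_\mu F\equiv 0$ and $\nabla_\mu^2F\equiv G''(m_\mu)$. So Assumptions \ref{assumption:smooth-F} and \ref{assumption:Hessian-lipschitz} hold, and $\operatorname{H}_\mu=\operatorname{K}_\mu=G''(m_\mu)\Pi_\mu$, with $\Pi_\mu$ the orthogonal projection of $L^2_\mu$ onto constants. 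The measure $\mu^*=\mathcal N(0,1)$ is an absolutely continuous global minimizer with $\nabla_\mu F(\mu^*,\cdot)=0$ and $\operatorname{K}_{\mu^*}=0$. From $\mu^n=\mathcal N(m_n,1)$ with $|m_n|<1$, the WSFN step with $\tau=1$ is a constant translation, so $\mu^{n+1}=\mathcal N(m_{n+1},1)$ with
\[
m_{n+1}=m_n-\frac{m_n^3}{\sqrt{9m_n^4+\beta}}.
\]
Hence $W_2(\mu^{n+1},\mu^*)/W_2(\mu^n,\mu^*)\to 1$ as $m_n\to 0$, and no bound $W_2(\mu^{n+1},\mu^*)\le\frac{L_H}{\sqrt\beta}W_2^2(\mu^n,\mu^*)$ can hold on any ball $B_2(\alpha,\mu^*)$.

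The paper's proof breaks at exactly the point you flagged. It substitutes $\lambda_{\min}\operatorname{K}_{\mu^*}=0$ and $\tau=1$ into the estimate of Theorem \ref{thm:local_rate_from_expansion_lemma} and asserts that the linear coefficient vanishes. But $1-\tau\cdot 0/\sqrt{0+4\beta}=1$, which leaves only $W_2(\mu^{n+1},\mu^*)\le W_2(\mu^n,\mu^*)+\frac{L_H}{\sqrt\beta}W_2^2(\mu^n,\mu^*)$. Moreover, that theorem's continuity step already requires $\lambda_{\min}\operatorname{K}_{\mu^*}>0$. So the missing ingredient is not a sharper estimate. A quadratic rate needs the factor $1-\lambda/\sqrt{\lambda^2+\beta}$ to vanish, which happens only in the Newton-type regime $\beta\to 0$ with $\lambda>0$ (cf. Theorem \ref{thm:quadratic-newton}). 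The corollary itself must therefore be weakened.
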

\begin{proof}
    The proof of Theorem \ref{thm:local_rate_from_expansion_lemma} can be followed verbatim up to the point where one obtains the estimate
    \begin{align*}
    W_2(\mu^{n+1},\mu^*) \le \left(1 - \tau \frac{\lambda_\mathrm{min} \operatorname{K}_{\mu^*}}{\sqrt{(\lambda_\mathrm{min} \operatorname{K}_{\mu^*})^2+4\beta}}\right)
W_2(\mu^n,\mu^*)
+
\frac{\tau L_H}{\sqrt{(\lambda_{\mathrm{min}}\operatorname{K}_{\mu^*})^2+\beta}}
W_2^2(\mu^n,\mu^*).
\end{align*}
This bound holds for every $\mu^n \in B_2(\alpha,\mu^*)$, for some $\alpha>0$. Setting $\lambda_\mathrm{min} \operatorname{K}_{\mu^*} = 0$ and $\tau=1$, the linear contribution vanishes. Consequently, we obtain the quadratic estimate
\begin{equation*}
    W_2(\mu^{n+1},\mu^*) \le \frac{L_H}{\sqrt{\beta}}W_2^2(\mu^n,\mu^*).
\end{equation*}
Now assume $\alpha < \frac{\sqrt{\beta}}{2L_{\operatorname{H}}}$. Since \(\mu^{N_\alpha}\in B_2(\alpha,\mu^*)\), we have $W_2(\mu^{N_\alpha},\mu^*)<\alpha$. Suppose inductively that \(W_2(\mu^n,\mu^*)<\alpha\) for some \(n\ge N_\alpha\). Then
\begin{equation*}
    W_2(\mu^{n+1},\mu^*) \le \frac{L_H}{\sqrt{\beta}}W_2^2(\mu^n,\mu^*) < \frac{L_H}{\sqrt{\beta}}\alpha W_2(\mu^n,\mu^*) < \frac{1}{2}W_2(\mu^n,\mu^*)<\alpha.
\end{equation*}
Thus \(\mu^{n+1}\in B_2(\alpha,\mu^*)\). By induction, $\mu^n\in B_2(\alpha,\mu^*)$, for all $n\ge N_\alpha$. Iterating the estimate \(W_2(\mu^{n+1},\mu^*) \le \frac{L_H}{\sqrt{\beta}}W_2^2(\mu^n,\mu^*)\) gives
\begin{equation*}
    \frac{L_H}{\sqrt{\beta}}W_2(\mu^n,\mu^*) \leq \left(\frac{L_H}{\sqrt{\beta}}W_2(\mu^{N_\alpha},\mu^*)\right)^{2^{n-N_\alpha}} <  \left(\frac{L_H}{\sqrt{\beta}}\alpha\right)^{2^{n-N_\alpha}}.
\end{equation*}
Therefore,
\[
W_2(\mu^n,\mu^*)\le \frac{\sqrt{\beta}}{L_{\operatorname{H}}}\left(\frac{1}{2}\right)^{2^{n-{N_\alpha}}}.
\]
\end{proof}

\section{Proofs of main results}
\label{proofs of main results}
In this section, we prove the main theoretical results of the paper. We begin with the second-order sufficient optimality condition, then establish the auxiliary estimates governing the WSFN dynamics, and next prove that the algorithm escapes saddle regions with high probability. These ingredients are finally combined to derive the global convergence guarantee and the local linear convergence result.

\subsection{Proof of the second-order sufficient optimality condition}
We start with the proof of the second-order sufficient optimality condition, which is independent of the algorithmic analysis and relies only on the local second-order geometry of the objective.
\begin{proof}[Proof of Proposition \ref{prop:second-order-suff}]
Let $\mu \in B_2(r, \mu^*)$ for some $r > 0$. Because $\mu^*\in\mathcal{P}_2^{\mathrm{ac}}(\mathbb R^d)$, by Theorem \ref{thm:existence-optimal-coupling}, there exists a convex $\mu^*$-a.e. differentiable function $\varphi$ such that $\nabla \varphi$ is an OT map from $\mu^*$ to $\mu$, i.e., $\mu=(\nabla \varphi)_\#\mu^*$. Consider the constant-speed geodesic
\[
\mu_t:=((1-t)\operatorname{Id}+t\nabla \varphi)_\#\mu^*,
\]
for $t\in[0,1]$, from $\mu^*$ to $\mu$. Note that $W_2(\mu_t,\mu^*) = t W_2(\mu,\mu^*)<r$, hence $\mu_t\in B_2(r,\mu^*)$ for all $t\in[0,1)$. Since $\lambda_\mathrm{min} \operatorname{K}_{\mu^*} > 0$ and $\operatorname{M}_{\mu^*}= 0$, $\mu^*$-a.e., we have for all $\zeta\in L^2_{\mu^*}$,
\[
\big\langle \operatorname{H}_{\mu^*}\zeta,\zeta\big\rangle_{L^2_{\mu^*}}
=
\big\langle \operatorname{K}_{\mu^*}\zeta,\zeta\big\rangle_{L^2_{\mu^*}}
\ge \lambda_\mathrm{min} \operatorname{K}_{\mu^*}\|\zeta\|_{L^2_{\mu^*}}^2.
\]
By joint local continuity of $(\mu ,x )\mapsto \nabla \nabla_\mu F(\mu,x)$ and $(\mu ,x,\bar{x})\mapsto \nabla_\mu^2 F(\mu,x,\bar{x})$, we have that $t \mapsto \widetilde {\operatorname{H}}_{\mu, t}$ is locally continuous at $t=0$, and since $\widetilde {\operatorname{H}}_{\mu, 0} = \operatorname{H}_{\mu^*}$, there exists $r > 0$ such that for all $t \in [0,1)$,
\[
\|\widetilde {\operatorname{H}}_{\mu, t}-\operatorname{H}_{\mu^*}\|_{\mathrm{op}}\le \frac{\lambda_\mathrm{min} \operatorname{K}_{\mu^*}}{2}.
\]
Then, 
\begin{align*}
&\langle \widetilde {\operatorname{H}}_{\mu,t}(\nabla \varphi-\operatorname{Id}),(\nabla \varphi-\operatorname{Id})\rangle_{L^2_{\mu^*}}\\
&=
\langle \operatorname{H}_{\mu^*}(\nabla \varphi-\operatorname{Id}),(\nabla \varphi-\operatorname{Id})\rangle_{L^2_{\mu^*}}+\langle(\widetilde {\operatorname{H}}_{\mu,t}-\operatorname{H}_{\mu^*})(\nabla \varphi-\operatorname{Id}),(\nabla \varphi-\operatorname{Id})\rangle_{L^2_{\mu^*}}\\
&\ge
\lambda_\mathrm{min} \operatorname{K}_{\mu^*}\|(\nabla \varphi-\operatorname{Id})\|_{L^2_{\mu^*}}^2-\frac{\lambda_\mathrm{min} \operatorname{K}_{\mu^*}}{2}\|(\nabla \varphi-\operatorname{Id})\|_{L^2_{\mu^*}}^2\\
&=
\frac{\lambda_\mathrm{min} \operatorname{K}_{\mu^*}}{2}\|(\nabla \varphi-\operatorname{Id})\|_{L^2_{\mu^*}}^2.
\end{align*}
By Proposition \ref{proposition:Wasserstein-hessian-appendix},
    \begin{equation*}
        \frac{\mathrm d^2}{\mathrm d t^2}F(\mu_t) = \left\langle \widetilde {\operatorname{H}}_{\mu,t}(\nabla \varphi-\operatorname{Id}), (\nabla \varphi-\operatorname{Id})\right\rangle_{L^2_{\mu*}} \geq \frac{\lambda_\mathrm{min} \operatorname{K}_{\mu^*}}{2}\|(\nabla \varphi-\operatorname{Id})\|_{L^2_{\mu^*}}^2= \frac{\lambda_\mathrm{min} \operatorname{K}_{\mu^*}}{2}W_2^2(\mu, \mu^*).
\end{equation*}
    Integrating this equality twice and noting that
    \begin{equation*}
         \frac{\mathrm{d}}{\mathrm{d} t}\Big|_{t=0} F(\mu_t) = \int_{\mathbb R^d} \left\langle \nabla_\mu F (\mu^*, x), v(x)\right\rangle \mathrm{d}\mu^*(x),
    \end{equation*}
    by Lemma \ref{lemma_for_second_order_mu}, we conclude that
    \begin{align*}
        F(\mu)&=F\left((\nabla \varphi)_\#\mu^*\right) = F(\mu^*) + \left\langle \nabla_\mu F(\mu^*, \cdot), v\right\rangle_{L_{\mu^*}^2} + \int_0^1 (1-t)\frac{\mathrm d^2}{\mathrm d t^2}F(\mu_t)\mathrm{d}t.
    \end{align*}
Since $\nabla_\mu F(\mu^*, \cdot) = 0$ in $L_{\mu^*}^2$,
\begin{equation*}
    F(\mu) \geq F(\mu^*) + \frac{\lambda_\mathrm{min} \operatorname{K}_{\mu^*}}{2}W_2^2(\mu, \mu^*)\int_0^1 (1-t)\mathrm{d}t = \frac{\lambda_\mathrm{min} \operatorname{K}_{\mu^*}}{4}W_2^2(\mu, \mu^*),
\end{equation*}
for all $\mu\in B_2(r,\mu^*)$, which proves strict local minimality.
\end{proof}
We now turn to the analysis of the WSFN iteration itself. The next lemmas provide the basic control needed on perturbed initializations and on the objective decrease along the discrete dynamics.

\subsection{Auxiliary lemmas for the WSFN dynamics}
In this subsection, we collect the deterministic estimates used in the analysis of the algorithm. These results control the effect of the initial perturbation and the decrease of the objective along the WSFN iterates.
\begin{lemma}
    \label{not_very_increase}
     Let Assumption \ref{assumption:smooth-F} hold, with $C_{\operatorname{H}} : = C_{\operatorname{M}} + C_{\operatorname{K}}$. Let $\mu^*$ be an $(\varepsilon,\delta)$-saddle point, let $\xi \sim \operatorname{GP}(0, C_{\mu^*})$. If $\|\xi\|_{L_{\mu^*}^2} \leq \kappa$ and $\eta = \frac{2 F_0}{\kappa(\varepsilon + \sqrt{\varepsilon^2 +2 C_{\operatorname{H}}F_0})}$, for some $\kappa > 0$ and $F_0 > 0$, then $\mu^{n_{\mathrm{in}}} = (\operatorname{Id}+\eta \xi)_{\#}\mu^*$ satisfies
    \begin{align*}
        F(\mu^{n_{\mathrm{in}}}) - F(\mu^*) \leq F_0.
    \end{align*}
\end{lemma}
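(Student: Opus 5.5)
The natural route is the quadratic upper bound of Lemma~\ref{lemma:smoothness-of-F-2} (first statement), applied at the base point $\mu^*$ with the perturbation $v = \eta\xi \in L^2_{\mu^*}$. This $v$ is admissible because $\xi\in L^2_{\mu^*}$ almost surely by Lemma~\ref{lem:Gaussian-L2}. Since $\mu^{n_{\mathrm{in}}} = (\operatorname{Id}+\eta\xi)_\#\mu^*$, Assumption~\ref{assumption:smooth-F} gives
\begin{equation*}
F(\mu^{n_{\mathrm{in}}}) - F(\mu^*) \le \eta\,\langle \nabla_\mu F(\mu^*,\cdot),\xi\rangle_{L^2_{\mu^*}} + \frac{C_{\operatorname{H}}}{2}\eta^2\|\xi\|_{L^2_{\mu^*}}^2 .
\end{equation*}

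Next I bound the linear term by Cauchy--Schwarz. The $(\varepsilon,\delta)$-saddle condition gives $\|\nabla_\mu F(\mu^*,\cdot)\|_{L^2_{\mu^*}}\le\varepsilon$, and by hypothesis $\|\xi\|_{L^2_{\mu^*}}\le\kappa$. Writing $s:=\eta\kappa\ge 0$, this yields
\begin{equation*}
F(\mu^{n_{\mathrm{in}}}) - F(\mu^*) \le \varepsilon s + \frac{C_{\operatorname{H}}}{2}s^2 =: q(s).
\end{equation*}
It then suffices to show $q(s)\le F_0$ for the prescribed $\eta$.

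The quadratic $q$ is increasing on $[0,\infty)$, and its positive root of $q(s)=F_0$ is
\begin{equation*}
s^\star=\frac{-\varepsilon+\sqrt{\varepsilon^2+2C_{\operatorname{H}}F_0}}{C_{\operatorname{H}}}.
\end{equation*}
Rationalizing the numerator gives
\begin{equation*}
s^\star=\frac{2F_0}{\varepsilon+\sqrt{\varepsilon^2+2C_{\operatorname{H}}F_0}},
\end{equation*}
which is exactly $\eta\kappa$ for the stated choice of $\eta$. Hence $q(\eta\kappa)=F_0$, which is the claim. The rationalized form also avoids dividing by $C_{\operatorname{H}}$, so the argument is well defined even in degenerate cases.

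The only delicate point is making sure Lemma~\ref{lemma:smoothness-of-F-2} applies to a general (non-optimal) perturbation $v\in L^2_{\mu^*}$, rather than only along geodesics. Its proof uses only Lemma~\ref{lemma_for_second_order_mu} and the uniform bound of Lemma~\ref{lem:uniform_bound_tildeH} along the curve $(\operatorname{Id}+tv)_\#\mu^*$, so it holds for any $v\in L^2_{\mu^*}$. No use of the curvature condition $\lambda_{\min}\operatorname{K}_{\mu^*}<-\delta$ is needed; only the gradient bound enters.
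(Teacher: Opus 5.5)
Your proof is correct and follows essentially the same route as the paper: apply the first (uniform-bound) statement of Lemma~\ref{lemma:smoothness-of-F-2} with $v=\eta\xi$, then bound the linear term by Cauchy--Schwarz using $\|\nabla_\mu F(\mu^*,\cdot)\|_{L^2_{\mu^*}}\le\varepsilon$ and $\|\xi\|_{L^2_{\mu^*}}\le\kappa$, and note that $\eta\kappa$ is exactly the positive root of $\varepsilon s+\frac{C_{\operatorname{H}}}{2}s^2=F_0$. Your explicit rationalization simply spells out the ``by the choice of $\eta$'' step that the paper leaves implicit.
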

\begin{proof}
    Let $\pi_t = (\operatorname{Id} + t \eta \xi)_{\#} \mu^*$ for $t \in [0, 1]$, with $\pi_0 = \mu^*$ and $\pi_1 = (\operatorname{Id}+\eta \xi)_{\#}\mu^*=\mu^{n_{\mathrm{in}}}$. Applying Lemma \ref{lemma:smoothness-of-F-2} gives
    \begin{align*}
        F(\mu^{n_{\mathrm{in}}}) &\leq F(\mu^*) + \eta \langle \nabla_\mu F(\mu^*), \xi  \rangle_{L_{\mu^*}^2} + \frac{C_{\operatorname{H}}}{2}\eta^2\|\xi\|_{L_{\mu^*}^2}^2\\
        &\leq F(\mu^*) + \eta \kappa \varepsilon + \frac{C_{\operatorname{H}}}{2}\eta^2\kappa^2\\
        &= F(\mu^*) + F_0,
    \end{align*}
   where the second inequality follows from $\|\xi\|_{L_{\mu^*}^2} \leq \kappa$ and the last equality follows from the choice of $\eta$.
\end{proof}
\begin{lemma}[Descent lemma]
\label{descent lemma}
    Let Assumption \ref{assumption:smooth-F} hold. Let $\{\mu^k\}_{k=0}^{n-1} \subset \mathcal{P}_2(\mathbb R^d)$ be a sequence of iterates generated by scheme \eqref{eq:sfn} with stepsize $\tau \leq \frac{\sqrt{\beta}}{C_{\operatorname{M}} + C_{\operatorname{K}}}$ for $\beta > 0$. Then
    \begin{align*}
        F(\mu^{k+1}) \leq  F(\mu^k) - \frac{\tau\sqrt{\beta}}{2}\|(\operatorname{H}_{\mu^k}^2+\beta\operatorname{I_{d \times d}})^{-\frac{1}{2}}\nabla_\mu F(\mu^k)\|^2_{L^2_{\mu^k}},
    \end{align*}
    for all $k \geq 0,$ and
    \begin{equation*}
        F(\mu^0) - F(\mu^n) \geq \frac{\tau\sqrt{\beta}}{2}\sum_{k=0}^{n-1} \|(\operatorname{H}_{\mu^k}^2+\beta\operatorname{I_{d \times d}})^{-\frac{1}{2}}\nabla_\mu F(\mu^k)\|_{L^2_{\mu^k}}^2.
    \end{equation*}
\end{lemma}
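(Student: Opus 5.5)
The plan is to apply the second-order smoothness bound of Lemma \ref{lemma:smoothness-of-F-2}, in its first form based on Assumption \ref{assumption:smooth-F}, to the WSFN displacement, and then to control the resulting quadratic term using the spectral bounds on the preconditioner. Write $g_k := \nabla_\mu F(\mu^k,\cdot)$, $P_k := (\operatorname{H}_{\mu^k}^2+\beta\operatorname{I_{d \times d}})^{-\frac12}$ and $v := -\tau P_k g_k \in L^2_{\mu^k}$. By Proposition \ref{prop:well-posedness-saddle-free} this $v$ is well defined and $\mu^{k+1} = (\operatorname{Id}+v)_\#\mu^k$. Lemma \ref{lemma:smoothness-of-F-2} then gives
\[
F(\mu^{k+1}) \le F(\mu^k) - \tau\langle g_k, P_k g_k\rangle_{L^2_{\mu^k}} + \frac{C_{\operatorname{M}}+C_{\operatorname{K}}}{2}\tau^2\|P_k g_k\|_{L^2_{\mu^k}}^2 .
\]

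The key step is to lower bound the linear term by $\|P_k g_k\|^2$. Put $w := P_k g_k$, so that $g_k = P_k^{-1} w = (\operatorname{H}_{\mu^k}^2+\beta\operatorname{I_{d \times d}})^{\frac12} w$. Then $\langle g_k, P_k g_k\rangle = \langle (\operatorname{H}_{\mu^k}^2+\beta\operatorname{I_{d \times d}})^{\frac12} w, w\rangle$. Since $\operatorname{H}_{\mu^k}$ is self-adjoint (Lemma \ref{lem:upper-bound-inner-hessian}), spectral calculus with $\sqrt{\lambda^2+\beta}\ge\sqrt{\beta}$ gives $\langle g_k,P_k g_k\rangle \ge \sqrt{\beta}\,\|w\|^2_{L^2_{\mu^k}}$. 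The one-step bound therefore becomes
\[
F(\mu^{k+1}) \le F(\mu^k) - \tau\Big(\sqrt\beta - \tfrac{C_{\operatorname{M}}+C_{\operatorname{K}}}{2}\tau\Big)\|w\|^2 .
\]
With $\tau \le \sqrt\beta/(C_{\operatorname{M}}+C_{\operatorname{K}})$ the bracket is at least $\sqrt\beta/2$, which gives the first claim. The second claim follows by summing this from $k=0$ to $n-1$ and telescoping.

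The only step needing care is rewriting $\langle g,Pg\rangle$ through $w$, which relies on invertibility and self-adjointness of $P_k$. Both are established in the proof of Proposition \ref{prop:well-posedness-saddle-free}. If the spectral-calculus step feels loose, an alternative is to note that $P_k^{1/2}$ exists and is self-adjoint, and then use $\langle g,Pg\rangle = \|P_k^{1/2}g\|^2 \ge \sqrt\beta\,\|P_k g\|^2$, since $\|P_k^{1/2}\|_{\mathrm{op}}\le \beta^{-1/4}$.
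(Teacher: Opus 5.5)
Your proposal is correct and follows the paper's proof. You apply the first bound of Lemma~\ref{lemma:smoothness-of-F-2} to $v=-\tau P_k g_k$, then use $\langle g_k,P_kg_k\rangle_{L^2_{\mu^k}}\ge\sqrt{\beta}\,\|P_kg_k\|^2_{L^2_{\mu^k}}$ together with $\tau\le\sqrt{\beta}/(C_{\operatorname{M}}+C_{\operatorname{K}})$, and telescope; the paper obtains the key inequality from $P_k\preceq\beta^{-1/2}\operatorname{I_{d \times d}}$, which is equivalent to your route via $P_k^{-1}\succeq\sqrt{\beta}\operatorname{I_{d \times d}}$.
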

\begin{proof}
    Applying Lemma \ref{lemma:smoothness-of-F-2} with $\mu=\mu^k$ and $v = -\tau(\operatorname{H}_{\mu^k}^2+\beta\operatorname{I_{d \times d}})^{-\frac{1}{2}}\nabla_\mu F(\mu^k)$ gives
    \begin{align*}
        F\left((\operatorname{Id}-\tau (\operatorname{H}_{\mu^k}^2+\beta\operatorname{I_{d \times d}})^{-\frac{1}{2}}\nabla_\mu F(\mu^k))_\#\mu^k\right) &\leq F(\mu^k)  -\tau \left\langle \nabla_\mu F(\mu^k), (\operatorname{H}_{\mu^k}^2+\beta\operatorname{I_{d \times d}})^{-\frac{1}{2}}\nabla_\mu F(\mu^k)\right\rangle_{L_{\mu^k}^2}\\ 
        &+ \frac{\tau^2(C_{\operatorname{M}}+C_{\operatorname{K}})}{2}\|(\operatorname{H}_{\mu^k}^2+\beta\operatorname{I_{d \times d}})^{-\frac{1}{2}}\nabla_\mu F(\mu^k)\|^2_{L^2_{\mu^k}}.
    \end{align*}
    By scheme \eqref{eq:sfn},
    \begin{equation*}
        F\left((\operatorname{Id}-\tau (\operatorname{H}_{\mu^k}^2+\beta\operatorname{I_{d \times d}})^{-1/2}\nabla_\mu F(\mu^k))_\#\mu^k\right) = F(\mu^{k+1}).
    \end{equation*}
    Since $\operatorname{H}_{\mu^k}$ is bounded and symmetric by Assumption \ref{assumption:smooth-F}, it is self-adjoint, and hence so is $(\operatorname{H}_{\mu^k}^2+\beta\operatorname{I_{d \times d}})^{-\frac{1}{2}}$. Therefore, using that $\operatorname{H}_{\mu^k}^2+\beta\operatorname{I_{d \times d}} \succeq \beta\operatorname{I_{d \times d}}$ implies $(\operatorname{H}_{\mu^k}^2+\beta\operatorname{I_{d \times d}})^{-\frac{1}{2}} \preceq \beta^{-\frac{1}{2}}\operatorname{I_{d \times d}}$ gives
    \begin{align*}
        \|(\operatorname{H}_{\mu^k}^2+\beta\operatorname{I_{d \times d}})^{-\frac{1}{2}}\nabla_\mu F(\mu^k)\|^2_{L^2_{\mu^k}} 
        &\leq \frac{1}{\sqrt{\beta}}\left\langle \nabla_\mu F(\mu^k), (\operatorname{H}_{\mu^k}^2+\beta\operatorname{I_{d \times d}})^{-\frac{1}{2}}\nabla_\mu F(\mu^k)\right\rangle_{L_{\mu^k}^2},
    \end{align*}
    Therefore, using $\tau \leq \sqrt{\beta}(C_{\operatorname{M}} + C_{\operatorname{K}})^{-1}$ gives
    \begin{align*}
        F(\mu^{k+1}) &\leq F(\mu^k)  -\tau\sqrt{\beta} \|(\operatorname{H}_{\mu^k}^2+\beta\operatorname{I_{d \times d}})^{-\frac{1}{2}}\nabla_\mu F(\mu^k)\|^2_{L^2_{\mu^k}}+ \frac{\tau\sqrt{\beta}}{2}\|(\operatorname{H}_{\mu^k}^2+\beta\operatorname{I_{d \times d}})^{-\frac{1}{2}}\nabla_\mu F(\mu^k)\|^2_{L^2_{\mu^k}}\\
        &= F(\mu^k) - \frac{\tau\sqrt{\beta}}{2}\|(\operatorname{H}_{\mu^k}^2+\beta\operatorname{I_{d \times d}})^{-\frac{1}{2}}\nabla_\mu F(\mu^k)\|^2_{L^2_{\mu^k}}.
    \end{align*}
    Summing from $k=0$ to $k=n-1$ yields
    \begin{equation*}
        F(\mu^0) - F(\mu^n) \geq \frac{\tau\sqrt{\beta}}{2}\sum_{k=0}^{n-1} \|(\operatorname{H}_{\mu^k}^2+\beta\operatorname{I_{d \times d}})^{-\frac{1}{2}}\nabla_\mu F(\mu^k)\|_{L^2_{\mu^k}}^2.
    \end{equation*}
\end{proof}

\subsection{Escape from saddle regions}
With these preliminary estimates in hand, we now fix the parameter choices used in the saddle-escape analysis. Let us define $\tilde{\delta} := \frac{\delta}{\sqrt{\delta^2 + \beta}} > 0$. Let $\zeta \in (0,1)$ and choose the parameters $\tau$, $\kappa$, $n_{\mathrm{out}}$, $F_0$, and $\eta$ as follows:
\begin{align}
    \label{eq:def_tau}
    \tau &\leq \min\left\{1, \frac{\sqrt{\beta}}{C_{\operatorname{H}}}\right\} = O(1),\\\
    \label{eq:def_kappa}
    \kappa &\geq \|\nabla_\mu^2F(\mu^*,\cdot,\cdot)\|_{L^2_{\mu^* \otimes \mu^*}}\sqrt{2\log \frac{4}{\zeta_{\mathrm{ep}}}} = O(1),\\
    \label{eq:def_nout}
    n_{\mathrm{out}} &= \frac{2}{\log(1+\tau \tilde{\delta})} \log \left( \frac{16 \sqrt{2 C_{\operatorname{H}} \tau} \kappa}{\sqrt{e \beta} |c| \log^{1/2}(1+\tau \tilde{\delta})} \right) = \tilde O(\tilde{\delta}^{-1}), \\
    \label{eq:def_f0}
    F_0 &= \frac{\beta \log^2(3/2)}{144 L_{\operatorname{H}}^2 \left(\frac{1}{2\sqrt{\beta}} + \frac{2 C_{\operatorname{H}}}{\pi \beta}\right)^2 (\tau n_{\mathrm{out}})^3} = \tilde O(\tilde{\delta}^3), \\
    \label{eq:def_eta}
    \eta &= \frac{2 F_0}{\kappa\left(\varepsilon + \sqrt{\varepsilon^2 + 2 C_{\operatorname{H}} F_0}\right)} = \tilde O \left( \frac{\tilde \delta^3}{\varepsilon+\tilde \delta^\frac{3}{2}}\right).
\end{align}
We next analyze the behavior of WSFN near an $(\varepsilon,\delta)$-saddle. The key idea is that, after a small perturbation, the component of the iterate in a direction of negative curvature is amplified by the saddle-free dynamics, which leads to a decrease in the objective within a finite number of steps.
\begin{proposition}
    \label{discrete_lower_bound_metric}
    Let Assumptions \ref{assumption_regularity_wg}, \ref{assumption:smooth-F} and \ref{assumption:Hessian-lipschitz} hold. Let $\varepsilon, \delta > 0$ be chosen such that $\varepsilon \left(\frac{R_F}{\sqrt{\beta}}+\frac{2L_{\operatorname{H}}}{\pi\beta}\right) \leq \tilde{\delta}^2$. Let $\mu^* \in \mathcal P^{\mathrm{ac}}_2(\mathbb R^d)$ be an $(\varepsilon,\delta)$-saddle point, i.e., $\|\nabla_\mu F (\mu^*)\|_{L^2_{\mu^*}} \leq \varepsilon$ and $\lambda_{0} := \lambda_{\mathrm{min}} \operatorname{K}_{\mu^*} \leq -\delta$. Let $\xi \sim \mathrm{GP}(0,C_{\mu^*})$ and set $\tilde{\xi} = \xi + c q_0$, with $c \in \mathbb R$, where $q_0 \in L_{\mu^*}^2$ is the eigenvector of the operator $\operatorname{K}_{\mu^*}$ corresponding to the eigenvalue $\lambda_0$. Let $\mu^{n_{\mathrm{in}}+n}$ and $\tilde{\mu}^{n_{\mathrm{in}}+n}$ be the $n$-step of scheme \eqref{eq:saddle_free_newton} initiated from $\mu^{n_{\mathrm{in}}} = (\operatorname{Id} + \eta \xi)_{\#} \mu^*$ and $\tilde{\mu}^{n_{\mathrm{in}}} = (\operatorname{Id} + \eta \tilde{\xi})_{\#}\mu^*$, respectively, with parameters $\tau$, $\kappa$, $n_\mathrm{out}$, $F_0$ and $\eta$ chosen as in \eqref{eq:def_tau}--\eqref{eq:def_eta}. If $\|\xi\|_{L_{\mu^*}^2} \leq \kappa$ and $\frac{\sqrt{2\pi}|\lambda_0|\zeta_{\mathrm{ep}}}{4} \leq |c| \leq 2\kappa$, for $\zeta_{\mathrm{ep}} \in (0,1)$, then there exists $n \in \{1,..., n_{\mathrm{out}}\}$ such that
    \begin{align*}
        \max\left\{F(\mu^{n_{\mathrm{in}}}) - F(\mu^{n_{\mathrm{in}}+n}), F (\tilde{\mu}^{n_{\mathrm{in}}}) - F(\tilde{\mu}^{n_{\mathrm{in}}+n})\right\} \geq 2 F_0.
    \end{align*}
\end{proposition}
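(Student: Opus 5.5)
This is the standard coupling argument of \cite{jin17a}, transplanted to $L^2_{\mu^*}$ and the WSFN map. We argue by contradiction. Suppose that for every $n\le n_{\mathrm{out}}$ both trajectories decrease $F$ by less than $2F_0$. We will show that both trajectories then stay in a small ball around $\mu^*$, while their difference grows geometrically along $q_0$, and that these two facts are incompatible.

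\textbf{Step 1 (localization).} Write $\mu^{n_{\mathrm{in}}+k}=(Z_k)_\#\mu^*$ and $\tilde\mu^{n_{\mathrm{in}}+k}=(\tilde Z_k)_\#\mu^*$, with
\[
Z_0=\operatorname{Id}+\eta\xi,\qquad Z_{k+1}=Z_k-\tau\mathcal F(Z_k),
\]
and similarly for $\tilde Z_k$. Here $\mathcal F$ is the map of Lemma \ref{lem:preconditioned_gradient_expansion}, and we use Lemma \ref{lem:intertwining_preconditioner} to pull the update back to $L^2_{\mu^*}$. The step size is $\|Z_{k+1}-Z_k\|=\tau\|(\operatorname{H}^2+\beta)^{-1/2}\nabla_\mu F\|$. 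Combining Lemma \ref{descent lemma} with Cauchy--Schwarz gives
\[
\|Z_n-Z_0\|_{L^2_{\mu^*}}\le \sqrt{n\sum_k\|Z_{k+1}-Z_k\|^2}\le\sqrt{\tfrac{2\tau n}{\sqrt\beta}\,(F(\mu^{n_{\mathrm{in}}})-F(\mu^{n_{\mathrm{in}}+n}))}\le\sqrt{\tfrac{4\tau n F_0}{\sqrt\beta}}.
\]
Adding $\|Z_0-\operatorname{Id}\|\le\eta\kappa$, and the analogous bound $\eta(\kappa+2\kappa)$ for $\tilde Z$, we get a radius $\mathcal R\lesssim\sqrt{\tau n_{\mathrm{out}}F_0/\sqrt\beta}+\eta\kappa$ containing both trajectories for all $n\le n_{\mathrm{out}}$.

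\textbf{Step 2 (difference recursion).} Set $w_k=\tilde Z_k-Z_k$, so $w_0=\eta c q_0$. By Lemma \ref{lem:preconditioned_gradient_expansion},
\[
w_{k+1}=\big(\operatorname{I}-\tau\mathcal A\big)w_k-\tau R_k,\qquad \mathcal A:=(\operatorname{K}_{\mu^*}^2+\beta)^{-1/2}\operatorname{K}_{\mu^*},
\]
with $\|R_k\|\le\rho\|w_k\|$ and $\rho:=2L_{\operatorname H}\big(\tfrac{1}{2\sqrt\beta}+\tfrac{2C_{\operatorname H}}{\pi\beta}\big)\mathcal R+\varepsilon\big(\tfrac{R_F}{\sqrt\beta}+\tfrac{2L_{\operatorname H}}{\pi\beta}\big)$. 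Note that $q_0$ is an eigenvector of $\mathcal A$ with eigenvalue $\lambda_0/\sqrt{\lambda_0^2+\beta}\le-\tilde\delta$, since $t\mapsto t/\sqrt{t^2+\beta}$ is increasing. Hence $\operatorname{I}-\tau\mathcal A$ expands along $q_0$ by at least $1+\tau\tilde\delta$, and has operator norm at most $1+\tau$ because $\|\mathcal A\|_{\mathrm{op}}\le1$.

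We split $w_k=\psi_k+\phi_k$, where $\psi_k=(\operatorname{I}-\tau\mathcal A)^kw_0$ and $\phi_k=-\tau\sum_{j<k}(\operatorname{I}-\tau\mathcal A)^{k-1-j}R_j$. By induction we aim to show $\|\phi_k\|\le\tfrac12\|\psi_k\|$. This reduces to
\[
\tau\rho\sum_{j<k}(1+\tau\tilde\delta)^{k-1-j}\cdot\tfrac32\|\psi_j\|\le\tfrac32\,\tau\rho\, k\,\|\psi_k\|/(1+\tau\tilde\delta)\le\tfrac12\|\psi_k\|,
\]
which holds provided $\rho\,\tau n_{\mathrm{out}}\le\frac13$. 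On $\operatorname{span}(q_0)$ the propagator is exactly multiplication by the eigenvalue factor, and $\psi_j$ lives on that span.

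\textbf{Main obstacle: verifying $\rho\,\tau n_{\mathrm{out}}\lesssim1$.} The gradient part of $\rho$ is handled by the hypothesis $\varepsilon(\cdots)\le\tilde\delta^2$ together with $\tau n_{\mathrm{out}}=\tilde O(\tilde\delta^{-1})$, up to the logarithm that $n_{\mathrm{out}}$ absorbs. The curvature part is handled by the choice \eqref{eq:def_f0}, which makes
\[
L_{\operatorname H}\big(\tfrac{1}{2\sqrt\beta}+\tfrac{2C_{\operatorname H}}{\pi\beta}\big)\sqrt{\tau n_{\mathrm{out}}F_0/\sqrt\beta}\;\tau n_{\mathrm{out}}\lesssim\log(3/2)/12.
\]
The factor $\log(3/2)$ suggests that the authors instead compare $\prod_j(1+\tau\tilde\delta\pm\tau\rho)$ with $(3/2)$-type losses, and I would follow that route if the crude bound fails. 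The $\eta\kappa$ part of $\mathcal R$ is smaller by \eqref{eq:def_eta}.

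\textbf{Conclusion.} By Step 2,
\[
\|w_{n_{\mathrm{out}}}\|\ge\tfrac12(1+\tau\tilde\delta)^{n_{\mathrm{out}}}\eta|c|\ge\tfrac12(1+\tau\tilde\delta)^{n_{\mathrm{out}}}\eta\,\sqrt{2\pi}|\lambda_0|\zeta_{\mathrm{ep}}/4 .
\]
The definition \eqref{eq:def_nout} is chosen so that this exceeds $2\mathcal R$. On the other hand, Step 1 gives $\|w_{n_{\mathrm{out}}}\|\le\|Z_{n_{\mathrm{out}}}-\operatorname{Id}\|+\|\tilde Z_{n_{\mathrm{out}}}-\operatorname{Id}\|\le2\mathcal R$. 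This is a contradiction, so some $n\le n_{\mathrm{out}}$ yields a decrease of at least $2F_0$ for one of the two trajectories.
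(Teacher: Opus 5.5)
Your architecture matches the paper's proof. You argue by contradiction, localize both trajectories through Lemma \ref{descent lemma} and Cauchy--Schwarz, take the linearized difference recursion from Lemma \ref{lem:preconditioned_gradient_expansion}, and play geometric growth of $w_k$ along $q_0$ against the bounded displacement. Your induction $\|\phi_k\|\le\frac12\|\psi_k\|$ is a legitimate substitute for the paper's discrete Gronwall argument. As you justify it, however, the induction does not close.

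In $\phi_k=-\tau\sum_{j<k}(\operatorname{I}-\tau\mathcal A)^{k-1-j}R_j$ the propagator acts on $R_j$. That is an arbitrary element of $L^2_{\mu^*}$, controlled only in norm, so the fact that $\psi_j\in\operatorname{span}(q_0)$ does not help there. You need a bound on $\|(\operatorname{I}-\tau\mathcal A)^m\|_{\mathrm{op}}$, and the only one you state is $(1+\tau)^m$. Write $\theta:=1-\tau\lambda_0(\lambda_0^2+\beta)^{-1/2}$ for the growth factor along $q_0$, so that $\|\psi_k\|=\theta^k\eta|c|$ and $\theta=1+\tau\tilde\delta$ when $\lambda_0=-\delta$. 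With the $(1+\tau)^m$ bound your estimate only gives $\|\phi_k\|\lesssim\rho(1+\tau)^k\eta|c|$. Over $n_{\mathrm{out}}\asymp\tilde\delta^{-1}\log(\cdot)$ steps the ratio $((1+\tau)/\theta)^{n_{\mathrm{out}}}$ is exponentially large in $\tilde\delta^{-1}$, and no polynomially small $\rho$ can compensate. The factor $(1+\tau\tilde\delta)^{k-1-j}$ in your display is a lower bound on growth along $q_0$, not an upper bound on any propagator norm.

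The missing ingredient is that $q_0$ is the most expanding direction on all of $L^2_{\mu^*}$, i.e. $\|\operatorname{I}-\tau\mathcal A\|_{\mathrm{op}}\le\theta$. This follows from the spectral theorem for the compact self-adjoint operator $\operatorname{K}_{\mu^*}$:
\begin{itemize}
\item The norm equals $\sup_{\lambda\in\sigma(\operatorname{K}_{\mu^*})}|f_\tau(\lambda)|$, where $f_\tau(\lambda)=1-\tau\lambda(\lambda^2+\beta)^{-1/2}$.
\item $f_\tau$ is nonincreasing.
\item $f_\tau$ is positive, because $|\lambda|(\lambda^2+\beta)^{-1/2}<1$ and $\tau\le1$ by \eqref{eq:def_tau}.
\item $\sigma(\operatorname{K}_{\mu^*})\subset[\lambda_0,\infty)$, so the supremum is $f_\tau(\lambda_0)=\theta$.
\end{itemize}
This is the spectral-measure computation in the paper's proof, and it is where $\tau\le1$ is used. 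With $\theta^{k-1-j}$ in place of $(1+\tau\tilde\delta)^{k-1-j}$, your sum becomes $\frac32\tau\rho\,k\,\theta^{k-1}\eta|c|=\frac32\tau\rho k\|\psi_k\|/\theta$. The induction then closes under $\rho\tau n_{\mathrm{out}}\le\frac13$.

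There is a second, smaller issue. Your final step only asserts that \eqref{eq:def_nout} makes $\frac12(1+\tau\tilde\delta)^{n_{\mathrm{out}}}\eta|c|$ exceed $2\mathcal R$. Since $\mathcal R$ itself contains $\sqrt{\tau n_{\mathrm{out}}F_0}$, this needs the paper's bookkeeping:
\begin{itemize}
\item absorb the lower-order $\eta\kappa$ and $\eta|c|$ terms;
\item cancel the $\sqrt{n_{\mathrm{out}}}$ via $(1+x)^{n/2}\ge\sqrt{e\,n\log(1+x)}$;
\item use $\eta\kappa\ge\frac12\sqrt{2F_0/C_{\operatorname H}}$ from \eqref{eq:def_eta}, valid once $\varepsilon$ is small compared with $\sqrt{C_{\operatorname H}F_0}$.
\end{itemize}
Only then does the inequality become the negation of \eqref{eq:def_nout}. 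Also, \eqref{eq:def_nout} is expressed through $|c|$ itself. Replacing $|c|$ by its lower bound $\sqrt{2\pi}|\lambda_0|\zeta_{\mathrm{ep}}/4$, as in your last display, weakens the left-hand side, so it moves you away from the contradiction rather than toward it.
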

\begin{proof}
    We give a proof by contradiction. Assume that for any $n \in \{1,..., n_{\mathrm{out}}\}$, 
    \begin{align*}
        \max\left\{F(\mu^{n_{\mathrm{in}}}) - F(\mu^{n_{\mathrm{in}}+n}), F (\tilde{\mu}^{n_{\mathrm{in}}}) - F(\tilde{\mu}^{n_{\mathrm{in}}+n})\right\} \leq 2 F_0. 
    \end{align*}
    For every $n \in \{0,..., n_{\mathrm{out}}\}$, define
    \begin{equation*}
        \Psi_\tau[\mu^{n_{\mathrm{in}}+n}] = \operatorname{Id} - \tau(\operatorname{H}_{\mu^{n_{\mathrm{in}}+n}}^2+\beta\operatorname{I_{d \times d}})^{-\frac{1}{2}} \nabla_\mu F(\mu^{n_{\mathrm{in}}+n}).
    \end{equation*}
    Then the $n$-step maps of the SFN scheme starting from $\mu^*$ are
    \begin{align*}
        &Z^{n_{\mathrm{in}}+n} := \underbrace{\Psi_\tau[\mu^{{n_{\mathrm{in}}+n}-1}] \circ ... \circ \Psi_\tau[\mu^{n_{\mathrm{in}}}]}_{=:Y^{n_{\mathrm{in}}+n}}\circ (\operatorname{Id} + \eta \xi), \quad n \geq 1, \quad Z^{n_{\mathrm{in}}} := \operatorname{Id} + \eta \xi,\\
        &\tilde{Z}^{n_{\mathrm{in}}+n} = \underbrace{\Psi_\tau[\tilde \mu^{n_{\mathrm{in}}+n-1}] \circ ... \circ \Psi_\tau[\tilde \mu^{n_{\mathrm{in}}}]}_{=:\tilde Y^{n_{\mathrm{in}}+n}} \circ (\operatorname{Id} + \eta \tilde \xi), \quad n \geq 1,\quad  \tilde Z^{n_{\mathrm{in}}} := \operatorname{Id} + \eta \tilde \xi.
        \end{align*}
    Note that $Y^{n_{\mathrm{in}}} = \tilde Y^{n_{\mathrm{in}}} = \operatorname{Id}$. We have
    \begin{equation}
    \begin{aligned}
     \label{evolution_of_y_k}
        W_2(\mu^{n_{\mathrm{in}}+n}, \mu^*)&\leq \|Z^{n_{\mathrm{in}}+n} - \operatorname{Id}\|_{L^2_{\mu^*}}\\
        &\leq \eta \|\xi\|_{L^2_{\mu^*}} + \|Z^{n_{\mathrm{in}}+n} - (\operatorname{Id} + \eta \xi)\|_{L^2_{\mu^*}}\\
        &=\eta \|\xi\|_{L^2_{\mu^*}} + \|Y^{n_{\mathrm{in}}+n} - \operatorname{Id}\|_{L^2_{\mu^{n_{\mathrm{in}}}}}\\
        &\leq \eta \|\xi\|_{L^2_{\mu^*}} + \sum_{k=0}^{n-1} \|Y^{n_{\mathrm{in}}+k+1} - Y^{n_{\mathrm{in}}+k}\|_{L^2_{\mu^{n_{\mathrm{in}}}}}\\
        &= \eta \|\xi\|_{L^2_{\mu^*}} + \sum_{k=0}^{n-1} \|\Psi_\tau[\mu^{n_{\mathrm{in}}+k}]\circ Y^{n_{\mathrm{in}}+k}- Y^{n_{\mathrm{in}}+k}\|_{L^2_{\mu^{n_{\mathrm{in}}}}}\\
        &= \eta \|\xi\|_{L^2_{\mu^*}} + \sum_{k=0}^{n-1} \|\Psi_\tau[\mu^{n_{\mathrm{in}}+k}]- \operatorname{Id}\|_{L^2_{\mu^k}}\\
        &=\eta \|\xi\|_{L^2_{\mu^*}} + \tau\sum_{k=0}^{n-1} \|(\operatorname{H}_{\mu^{n_{\mathrm{in}}+k}}^2+\beta\operatorname{I_{d \times d}})^{-\frac{1}{2}} \nabla_\mu F(\mu^{n_{\mathrm{in}}+k})\|_{L^2_{\mu^k}}\\
        &\leq \eta \|\xi\|_{L^2_{\mu^*}} + \tau\sqrt{n}\left(\sum_{k=0}^{n-1} \|(\operatorname{H}_{\mu^{n_{\mathrm{in}}+k}}^2+\beta\operatorname{I_{d \times d}})^{-\frac{1}{2}} \nabla_\mu F(\mu^{n_{\mathrm{in}}+k})\|_{L^2_{\mu^k}}^2\right)^{1/2}\\
        &\leq \eta \|\xi\|_{L^2_{\mu^*}} + \tau\sqrt{n}\left(\frac{2}{\tau\sqrt{\beta}}(F(\mu^{n_{\mathrm{in}}})-F(\mu^{n_{\mathrm{in}}+n}))\right)^{1/2}\\
        &\leq \eta \|\xi\|_{L^2_{\mu^*}} + \sqrt{2}\sqrt{\frac{\tau n}{\beta}}\left(2F_0\right)^{1/2}\\
        &\leq \eta \kappa + 2\sqrt{\frac{\tau n_{\mathrm{out}}}{\beta}}F_0^{1/2},
    \end{aligned}
    \end{equation}
    where the fourth inequality follows from the Cauchy-Schwarz inequality, the fifth inequality follows from Lemma \ref{descent lemma} and the sixth inequality follows from the contradiction hypothesis. Similarly, 
    \begin{equation}
        \label{evolution_of_tilde_y_k}
        W_2(\tilde \mu^{n_{\mathrm{in}}+n}, \mu^*)\leq\|\tilde{Z}^{n_{\mathrm{in}}+n} - \operatorname{Id}\|_{L^2_{\mu^*}} \leq \eta \kappa + \eta |c| + 2\sqrt{\frac{\tau n_{\mathrm{out}}}{\beta}}F_0^{1/2}.
    \end{equation}
    In the following we will show that $\mu^{n_{\mathrm{in}}+n}$ and $\tilde \mu^{n_{\mathrm{in}}+n}$ cannot be too far apart. To this end, since $\mu^{n_{\mathrm{in}}+n} = (Z^{n_{\mathrm{in}}+n})_{\#}\mu^*$ and $\tilde \mu^{n_{\mathrm{in}}+n} = (\tilde Z^{n_{\mathrm{in}}+n})_{\#}\mu^*$, we may define 
    \begin{align*}
        w^{n_{\mathrm{in}}+n} := \tilde{Z}^{n_{\mathrm{in}}+n} - Z^{n_{\mathrm{in}}+n} \in L_{\mu^*}^2,
    \end{align*}
    and prove that $\|w^{n_{\mathrm{in}}+n}\|_{L_{\mu^*}^2}$ cannot be too large. In particular, we have
    \begin{equation}
    \begin{aligned}
        &w^{n_{\mathrm{in}}+n+1} - w^{n_{\mathrm{in}}+n}\\ 
        &=  \tilde{Z}^{n_{\mathrm{in}}+n+1} - \tilde Z^{n_{\mathrm{in}}+n} - (Z^{n_{\mathrm{in}}+n+1 }- Z^{n_{\mathrm{in}}+n})\\
        &= - \tau (\operatorname{H}_{\tilde \mu^{n_{\mathrm{in}}+n}}^2 +\beta\operatorname{I_{d \times d}})^{-\frac{1}{2}}\nabla_\mu F(\tilde{\mu}^{n_{\mathrm{in}}+n}) \circ \tilde{Z}^{n_{\mathrm{in}}+n} + \tau (\operatorname{H}_{\mu^{n_{\mathrm{in}}+n}}^2 +\beta\operatorname{I_{d \times d}})^{-\frac{1}{2}}\nabla_\mu F(\mu^{n_{\mathrm{in}}+n}) \circ Z^{n_{\mathrm{in}}+n} \\
        &= -\tau(\operatorname{K}_{\mu^*}^2+\beta\operatorname{I_{d \times d}})^{-\frac{1}{2}}\operatorname{K}_{\mu^*}(\tilde Z^{n_{\mathrm{in}}+n}-Z^{n_{\mathrm{in}}+n}) -\tau R(Z^{n_{\mathrm{in}}+n},\tilde Z^{n_{\mathrm{in}}+n})\\
        &=-\tau (\operatorname{K}_{\mu^*}^2+\beta\operatorname{I_{d \times d}})^{-\frac{1}{2}}\operatorname{K}_{\mu^*}w^{n_{\mathrm{in}}+n} -\tau R(Z^{n_{\mathrm{in}}+n},\tilde Z^{n_{\mathrm{in}}+n}),
        \label{recurrence_formula} 
    \end{aligned}
     \end{equation}
     where the penultimate equality follows from Lemma \ref{lem:preconditioned_gradient_expansion} and $R(Z^{n_{\mathrm{in}}+n},\tilde Z^{n_{\mathrm{in}}+n})$ satisfies 
     \begin{align*}
        \|R(Z^{n_{\mathrm{in}}+n},\tilde Z^{n_{\mathrm{in}}+n})\|_{L_{\mu^*}^2} &\leq \Bigg(L_{\operatorname{H}}\left(\frac{1}{2\sqrt{\beta}}+\frac{2C_{\operatorname{H}}}{\pi\beta}\right)\left(\|Z^{n_{\mathrm{in}}+n}-\operatorname{Id}\|_{L_{\mu^*}^2}+\|\tilde Z^{n_{\mathrm{in}}+n}-\operatorname{Id}\|_{L_{\mu^*}^2}\right)\\
    &+\|\nabla_\mu F(\mu^*)\|_{L_{\mu^*}^2}\left(\frac{R_F}{\sqrt{\beta}}+\frac{2L_{\operatorname{H}}}{\pi\beta}\right)\Bigg)\|\tilde Z^{n_{\mathrm{in}}+n}-Z^{n_{\mathrm{in}}+n}\|_{L_{\mu^*}^2}.
\end{align*}
    By induction on $n$, the recurrence formula \eqref{recurrence_formula} implies
    \begin{align*}
        w^{n_{\mathrm{in}}+n} &= \left(\operatorname{I_{d \times d}} -\tau (\operatorname{K}_{\mu^*}^2+\beta\operatorname{I_{d \times d}})^{-\frac{1}{2}}\operatorname{K}_{\mu^*}\right)^{n} w^{n_{\mathrm{in}}}\\ 
        &- \tau\sum_{k=0}^{n-1} \left(\operatorname{I_{d \times d}} -\tau (\operatorname{K}_{\mu^*}^2+\beta\operatorname{I_{d \times d}})^{-\frac{1}{2}}\operatorname{K}_{\mu^*}\right)^{n-k-1} R(Z^{n_{\mathrm{in}}+k},\tilde Z^{n_{\mathrm{in}}+k}).
    \end{align*}
    We have $\|\nabla_\mu^2 F(\mu^*, \cdot,\cdot)\|_{L_{\mu^* \otimes \mu^*}^2} < \infty$, so by \cite[Theorem VI.23]{ReedSimon1980}, the operator $\operatorname{K}_{\mu^*}$ is Hilbert-Schmidt, hence it is compact by \cite[Theorem VI.22]{ReedSimon1980}. Moreover, it is self-adjoint since it is bounded and symmetric by Assumption \ref{assumption:smooth-F}. Therefore, by \cite[Theorem VI.16 (the Hilbert-Schmidt theorem)]{ReedSimon1980}, it admits a complete orthonormal eigenbasis $\{q_n\}_{n \ge 0}$ so that $\operatorname{K}_{\mu^*}q_n = \lambda_n q_n$, with $\lambda_n \neq 0$, for all $n$ and $\lambda_n \to 0$ as $n \to \infty$. Since $\lambda_0 :=\lambda_{\min}\operatorname{K}_{\mu^*} < -\delta$, there exists $(\lambda_0,q_0)$, with $\|q_0\|_{L_{\mu^*}^2} =1$, such that
\[
\operatorname{K}_{\mu^*}q_0=\lambda_0 q_0,\quad \lambda_0 < -\delta.
\]
Since $q_0$ is an eigenvector of $\operatorname{K}_{\mu^*}$ with corresponding eigenvalue $\lambda_0$, it follows that $q_0$ is also an eigenvector of $\operatorname{I_{d \times d}} -\tau (\operatorname{K}_{\mu^*}^2+\beta\operatorname{I_{d \times d}})^{-\frac{1}{2}}\operatorname{K}_{\mu^*}$ with corresponding eigenvalue $1-\tau \frac{\lambda_0}{\sqrt{\lambda_0^2 + \beta}}$ for $\tau > 0$. Thus, using the fact that $w^{n_{\mathrm{in}}} = \tilde{Z}^{n_{\mathrm{in}}} - Z^{n_{\mathrm{in}}} = \eta c q_0$, we get
\begin{equation*}
        \left(\operatorname{I_{d \times d}} -\tau (\operatorname{K}_{\mu^*}^2+\beta\operatorname{I_{d \times d}})^{-\frac{1}{2}}\operatorname{K}_{\mu^*}\right)^{n} w^{n_{\mathrm{in}}} = \left(1-\tau \frac{\lambda_0}{\sqrt{\lambda_0^2 + \beta}}\right)^{n} w^{n_{\mathrm{in}}}.
\end{equation*}
Therefore, using $\|w^{n_{\mathrm{in}}}\|_{L_{\mu^*}^2} = \eta|c|$, we have
\begin{equation*}
    \left\|\left(\operatorname{I_{d \times d}} -\tau (\operatorname{K}_{\mu^*}^2+\beta\operatorname{I_{d \times d}})^{-\frac{1}{2}}\operatorname{K}_{\mu^*}\right)^{n} w^{n_{\mathrm{in}}}\right\|_{L^2_{\mu^*}} = \eta|c|\left(1-\tau \frac{\lambda_0}{\sqrt{\lambda_0^2 + \beta}}\right)^{n}.
\end{equation*}
Now we upper bound $\|\left(\operatorname{I_{d \times d}} -\tau (\operatorname{K}_{\mu^*}^2+\beta\operatorname{I_{d \times d}})^{-\frac{1}{2}}\operatorname{K}_{\mu^*}\right)\|_{\mathrm{op}}$ using spectral functional calculus. The key observation is that the function $f(x) = \frac{x}{\sqrt{\beta+x^2}}$ is non-decreasing on $\mathbb R$.

Let $\sigma(\operatorname{K}_{\mu^*})$ be the spectrum of $\operatorname{K}_{\mu^*}$. Since $\operatorname{K}_{\mu^*}$ is self-adjoint, we have $\sigma(\operatorname{K}_{\mu^*}) \subset \mathbb R$. Let $\Omega_{\operatorname{K}}$ be the Borel $\sigma$-algebra of subsets of $\sigma(\operatorname{K}_{\mu^*})$. By \cite[1.1. Definition, Chapter IX]{conway1994course}, a spectral measure for $(\sigma(\operatorname{K}_{\mu^*}), \Omega_{\operatorname{K}}, L_{\mu^*}^2)$ is defined as a function $E:\Omega_{\operatorname{K}} \to \mathcal{B}(L_{\mu^*}^2)$ such that
\begin{itemize}
    \item for each $\Delta \in \Omega_{\operatorname{K}}$, $E(\Delta)$ is a projection,
    \item $E(\emptyset) = 0$ and $E(\sigma(\operatorname{K}_{\mu^*})) =1$,
    \item $E(\Delta_1 \cap \Delta_2) = E(\Delta_1)E(\Delta_2)$ for $\Delta_1, \Delta_2 \in \Omega_{\operatorname{K}}$,
    \item if $\{\Delta_n\}_{n \geq 1}$ are pairwise disjoint sets from $\Omega_{\operatorname{K}}$, then
    \begin{equation*}
        E\left(\cup_{n=1}^\infty \Delta_n \right) = \sum_{i=1}^\infty E(\Delta_n).
    \end{equation*}
\end{itemize}
By \cite[1.9. Lemma, Chapter IX]{conway1994course}, if $E$ is a spectral measure for $(\sigma(\operatorname{K}_{\mu^*}), \Omega_{\operatorname{K}}, L_{\mu^*}^2)$ and $\phi,\psi \in L_{\mu^*}^2$, then 
\begin{equation*}
    E_{\phi, \psi}(\Delta) := \langle E(\Delta)\phi, \psi \rangle_{L_{\mu^*}^2}
\end{equation*}
defines a countably additive measure on $\Omega_{\operatorname{K}}$ with $\|E\|_{\operatorname{TV}} = |E|(\sigma(\operatorname{K}_{\mu^*})) \leq \|\phi\|_{L_{\mu^*}^2}\|\psi\|_{L_{\mu^*}^2}$. 

Since $\operatorname{K}_{\mu^*}$ is self-adjoint on $L_{\mu^*}^2$, it is a normal operator. Hence, by \cite[2.2. The Spectral Theorem, Chapter IX]{conway1994course}, there is a unique spectral measure $E$ on the Borel subsets of $\sigma(\operatorname{K}_{\mu^*})$ such that 
\begin{equation*}
    \operatorname{K}_{\mu^*} = \int_{\sigma(\operatorname{K}_{\mu^*})} \lambda \mathrm{d}E(\lambda).
\end{equation*}
Let $f:\sigma(\operatorname{K}_{\mu^*}) \to \mathbb R$ be bounded $\Omega_{\operatorname{K}}$-measurable. Then, by \cite[1.10. Proposition, Chapter IX]{conway1994course}, there exists a unique operator $f(\sigma(\operatorname{K}_{\mu^*})) \in \mathcal{B}(L_{\mu^*}^2)$ such that for all $\phi,\psi \in L_{\mu^*}^2$,
\begin{equation*}
    \langle\phi, f(\operatorname{K}_{\mu^*})\psi\rangle_{L_{\mu^*}^2} = \int_{\sigma(\operatorname{K}_{\mu^*})} f(\lambda) \mathrm{d}\langle\phi,E(\lambda)\psi\rangle_{L_{\mu^*}^2}.
\end{equation*}
Choose $\phi=\psi \in L_{\mu^*}^2$ and $f_\tau(\lambda) = 1 - \tau (\lambda^2+\beta)^{-\frac{1}{2}}\lambda$, for $\tau > 0$, which is bounded on $\sigma(\operatorname{K}_{\mu^*})$ since $\operatorname{K}_{\mu^*}$ is bounded. Moreover, $f_\tau$ is non-increasing on $\mathbb R$. Hence, for any $\lambda \in \sigma(\operatorname{K}_{\mu^*})$, since $\lambda \geq \lambda_0$, we get
\begin{equation*}
    1 - \tau (\lambda^2+\beta)^{-\frac{1}{2}}\lambda= f_\tau(\lambda) \leq f_\tau(\lambda_0) = 1 - \tau (\lambda_0^2+\beta)^{-\frac{1}{2}}\lambda_0.
\end{equation*}
From $\tau \leq \min\left\{1,\frac{\sqrt{\beta}}{C_{\operatorname{M}} + C_{\operatorname{K}}}\right\} < 1,$ it follows that $f_\tau(\lambda) > 0$, which implies
\begin{equation*}
    \left(1 - \tau (\lambda^2+\beta)^{-\frac{1}{2}}\lambda\right)^2 \leq \left(1 - \tau (\lambda_0^2+\beta)^{-\frac{1}{2}}\lambda_0\right)^2.
\end{equation*}
Hence, by \cite[4.7. Theorem, Chapter X]{conway1994course}, 
\begin{equation*}
\begin{aligned}
&\|(\operatorname{I_{d \times d}}-\tau(\operatorname{K}_{\mu^*}^2+\beta\operatorname{I_{d \times d}})^{-\frac{1}{2}}\operatorname{K}_{\mu^*})\psi\|_{L_{\mu^*}^2}^2 = \int_{\sigma(\operatorname{K}_{\mu^*})}\left(1-\tau\frac{\lambda}{\sqrt{\lambda^2+\beta}}\right)^{2}\mathrm{d}\langle \psi,E(\lambda)\psi\rangle_{L_{\mu^*}^2}\\
&\leq \int_{\sigma(\operatorname{K}_{\mu^*})}\left(1-\tau\frac{\lambda_0}{\sqrt{\lambda_0^2+\beta}}\right)^{2}\mathrm{d}\langle \psi,E(\lambda)\psi\rangle_{L_{\mu^*}^2}\\
&\leq \left(1-\tau\frac{\lambda_0}{\sqrt{\lambda_0^2+\beta}}\right)^{2}E(\sigma(\operatorname{K}_{\mu^*}))\|\psi\|_{L_{\mu^*}^2}^2\\
&=\left(1-\tau\frac{\lambda_0}{\sqrt{\lambda_0^2+\beta}}\right)^{2}\|\psi\|_{L_{\mu^*}^2}^2.
\end{aligned}
\end{equation*}
Therefore,
\begin{equation*}
    \|\operatorname{I_{d \times d}} -\tau (\operatorname{K}_{\mu^*}^2+\beta\operatorname{I_{d \times d}})^{-\frac{1}{2}}\operatorname{K}_{\mu^*}\|_{\mathrm{op}} \leq 1-\tau\frac{\lambda_0}{\sqrt{\lambda_0^2+\beta}}.
\end{equation*}
Putting everything together gives
    \begin{equation}
    \begin{aligned}
    \label{eq:difference wn}
    &\left|\|w^{n_{\mathrm{in}}+n}\|_{L^2_{\mu^*}} - \eta|c|\left(1-\tau \frac{\lambda_0}{\sqrt{\lambda_0^2 + \beta}}\right)^{n}\right|\\
    &\left\|w^{n_{\mathrm{in}}+n} - \left(\operatorname{I_{d \times d}} -\tau (\operatorname{K}_{\mu^*}^2+\beta\operatorname{I_{d \times d}})^{-\frac{1}{2}}\operatorname{K}_{\mu^*}\right)^{n} w^{n_{\mathrm{in}}}\right\|_{L^2_{\mu^*}}\\
    &\leq \tau \sum_{k=0}^{n-1} \|\operatorname{I_{d \times d}} -\tau (\operatorname{K}_{\mu^*}^2+\beta\operatorname{I_{d \times d}})^{-\frac{1}{2}}\operatorname{K}_{\mu^*}\|_{\mathrm{op}}^{n-k-1}\|R(Z^{n_{\mathrm{in}}+k},\tilde Z^{n_{\mathrm{in}}+k})\|_{L_{\mu^*}^2}\\
    &\leq \tau\left(1-\tau\frac{\lambda_0}{\sqrt{\lambda_0^2+\beta}}\right)^{n-k-1} \sum_{k=0}^{n-1} \|R(Z^{n_{\mathrm{in}}+k},\tilde Z^{n_{\mathrm{in}}+k})\|_{L_{\mu^*}^2}\\
    &\leq \frac{\tau \Delta}{1-\tau\frac{\lambda_0}{\sqrt{\lambda_0^2+\beta}}} \sum_{k=0}^{n-1} \left(1-\tau\frac{\lambda_0}{\sqrt{\lambda_0^2+\beta}}\right)^{n-k}\|w^{n_{\mathrm{in}}+k}\|_{L_{\mu^*}^2},
    \end{aligned}
    \end{equation}
    where the last inequality follows from the fact that
    \begin{align*}
        &\|R(Z^{n_{\mathrm{in}}+k},\tilde Z^{n_{\mathrm{in}}+k})\|_{L_{\mu^*}^2}\\ &\leq \underbrace{\Bigg(4L_{\operatorname{H}}\left(\frac{1}{2\sqrt{\beta}}+\frac{2C_{\operatorname{H}}}{\pi\beta}\right)\left(\eta \kappa + \sqrt{\frac{\tau n_{\mathrm{out}}}{\beta}}F_0^{1/2}\right)+\varepsilon\left(\frac{R_F}{\sqrt{\beta}}+\frac{2L_{\operatorname{H}}}{\pi\beta}\right)\Bigg)}_{:=\Delta > 0}\|w^{n_{\mathrm{in}}+k}\|_{L_{\mu^*}^2},
    \end{align*}
    due to \eqref{evolution_of_y_k}, \eqref{evolution_of_tilde_y_k} and the assumption $|c|\leq 2\kappa$.
    For any $n \in \{0,..., n_{\mathrm{out}}\}$, set 
    \begin{equation*}
        a_{n_{\mathrm{in}}+n} := \left(1-\tau \frac{\lambda_0}{\sqrt{\lambda_0^2 + \beta}}\right)^{-n}\|w^{n_{\mathrm{in}}+n}\|_{L_{\mu^*}^2},\ b=\frac{\tau \Delta}{1-\tau \frac{\lambda_0}{\sqrt{\lambda_0^2 + \beta}}},\ d=\eta|c|.
    \end{equation*}
    Observe that $a_{n_{\mathrm{in}}} = \|w^{n_{\mathrm{in}}}\|_{L_{\mu^*}^2} = \eta|c| = d$. Thus, since $b>0$, it can be proved by induction on $n$ that the inequality
    \begin{equation*}
        a_{n_{\mathrm{in}}+n} \leq d + b\sum_{k=0}^{n-1} a_{n_{\mathrm{in}}+k},
    \end{equation*}
    obtained from \eqref{eq:difference wn}, implies
    \begin{equation*}
        a_{n_{\mathrm{in}}+n} \leq d(1+b)^n,
    \end{equation*}
    for all $n \in \{0,..., n_{\mathrm{out}}\}$. Therefore,
    \begin{equation}
    \label{upper bound wn}
        \left(1-\tau \frac{\lambda_0}{\sqrt{\lambda_0^2 + \beta}}\right)^{-n}\|w^{n_{\mathrm{in}}+n}\|_{L_{\mu^*}^2} \leq \eta|c|\left(1+\frac{\tau \Delta}{1-\tau \frac{\lambda_0}{\sqrt{\lambda_0^2 + \beta}}}\right)^n.
    \end{equation}
From this, using \eqref{eq:difference wn} and \eqref{upper bound wn} gives
    \begin{align*}
        \left(1-\tau \frac{\lambda_0}{\sqrt{\lambda_0^2 + \beta}}\right)^{-n}\|w^{n_{\mathrm{in}}+n}\|_{L^2_{\mu^*}}&\geq \eta|c|- \frac{\tau \Delta}{1-\tau \frac{\lambda_0}{\sqrt{\lambda_0^2 + \beta}}} \sum_{k=0}^{n-1} \left(1-\tau \frac{\lambda_0}{\sqrt{\lambda_0^2 + \beta}}\right)^{-k}\|w^{n_{\mathrm{in}}+k}\|_{L_{\mu^*}^2}\\
        &\geq \eta|c| - \frac{\tau \Delta}{1-\tau \frac{\lambda_0}{\sqrt{\lambda_0^2 + \beta}}} \eta|c|\sum_{k=0}^{n-1} \left(1+\frac{\tau \Delta}{1-\tau \frac{\lambda_0}{\sqrt{\lambda_0^2 + \beta}}}\right)^k\\
        &= \eta|c|- \frac{\tau \Delta}{1-\tau \frac{\lambda_0}{\sqrt{\lambda_0^2 + \beta}}} \eta|c|\frac{\left(1+\frac{\tau \Delta}{1-\tau \frac{\lambda_0}{\sqrt{\lambda_0^2 + \beta}}}\right)^{n} - 1}{\frac{\tau \Delta}{1-\tau \frac{\lambda_0}{\sqrt{\lambda_0^2 + \beta}}}}\\
        &=\eta|c|\left(2- \left(1+\frac{\tau \Delta}{1-\tau \frac{\lambda_0}{\sqrt{\lambda_0^2 + \beta}}}\right)^{n}\right)\\
        &\geq \eta|c|\left(2- \left(1+\tau \Delta\right)^{n}\right)\\
        &\geq \eta|c|\left(2-\exp{\left(n_{\mathrm{out}}\tau \Delta\right)}\right),
    \end{align*}
    where the penultimate inequality follows from the fact that $1-\tau \frac{\lambda_0}{\sqrt{\lambda_0^2 + \beta}} \geq 1$ since $\lambda_0 \leq -\delta < 0$, and the last inequality follows from the standard inequality $(1+x)^n \leq e^{nx}$.

    The aim now is to show that $n_{\mathrm{out}} \tau \Delta \leq \log\frac{3}{2}$. To do so, we need to study the asymptotic behavior of the parameters $\eta, n_{\mathrm{out}}$ and $F_0$ as $\delta \to 0$ (which implies $\tilde \delta \to 0$), treating $\tau, \kappa, |c|, \beta, L_{\operatorname{H}}, C_{\operatorname{H}}$ and $R_F$ as fixed constants. Using the Taylor expansion $\log(1+x) = \Theta(x)$\footnote{$f(x) = \Theta(g(x))$ if there exist $k_1,k_2, x_0 > 0$ such that $k_1 g(x) \leq f(x) \leq k_2 g(x)$ for all $x \geq x_0$.}, for small enough $x > 0$, we see that $n_{\mathrm{out}} = \tilde O(\tilde{\delta}^{-1})$. Consequently, \eqref{eq:def_f0} implies $F_0 = \tilde O(\tilde{\delta}^3)$. From $\varepsilon \left(\frac{R_F}{\sqrt{\beta}}+\frac{2L_{\operatorname{H}}}{\pi\beta}\right) \leq \tilde{\delta}^\frac{3}{2}$, we have $\varepsilon = O(\tilde{\delta}^\frac{3}{2})$ and thus $\varepsilon^2 = O(\tilde{\delta}^3)$. In the denominator of \eqref{eq:def_eta}, the terms $2 C_{\operatorname{H}} F_0 = \tilde O(\tilde{\delta}^3)$ and $\varepsilon^2 = O(\tilde{\delta}^3)$ are of the same order as $\delta \to 0$. Therefore, the denominator scales as $\sqrt{\tilde O(\tilde{\delta}^3)} = \tilde O(\tilde{\delta}^{\frac{3}{2}})$ giving:
\begin{equation*}
    \eta = \frac{\tilde O(\tilde{\delta}^3)}{\tilde O(\tilde{\delta}^{\frac{3}{2}})} = \tilde O(\tilde{\delta}^{\frac{3}{2}}).
\end{equation*}
Because $\frac{3}{2} > 1$, it immediately follows that the terms $2\eta \kappa$ and $\eta|c|$ shrink strictly faster than $\tilde{\delta}$ as $\delta \to 0$, meaning $2\eta \kappa = o(\tilde{\delta})$ and $\eta|c| = o(\tilde{\delta})$. Conversely, the term $\sqrt{\frac{\tau n_{\mathrm{out}}}{\beta}}F_0^{1/2}$ scales as $\tilde O(\tilde{\delta}^{-\frac{1}{2}}) \tilde O(\tilde{\delta}^{\frac{3}{2}}) = \tilde O(\tilde{\delta})$.

We now show that $n_{\mathrm{out}} \tau \Delta \leq \log\frac{3}{2}$. Multiplying $\Delta$ by $n_{\mathrm{out}} \tau$ and substituting our definition of $F_0^{1/2}$ into $\sqrt{\frac{\tau n_{\mathrm{out}}}{\beta}}F_0^{1/2}$ yields
\begin{align*}
    n_{\mathrm{out}} \tau \Delta &= 4L_{\operatorname{H}}\left(\frac{1}{2\sqrt{\beta}}+\frac{2C_{\operatorname{H}}}{\pi\beta}\right) \frac{(\tau n_{\mathrm{out}})^{3/2}}{\sqrt{\beta}} F_0^{1/2} \\
    &\quad + 4L_{\operatorname{H}}\left(\frac{1}{2\sqrt{\beta}}+\frac{2C_{\operatorname{H}}}{\pi\beta}\right) \tau n_{\mathrm{out}} \eta \kappa \\
    &\quad + \varepsilon \tau n_{\mathrm{out}} \left(\frac{R_F}{\sqrt{\beta}}+\frac{2L_{\operatorname{H}}}{\pi\beta}\right) \\
    &= \frac{1}{3} \log \frac{3}{2} + \tilde O(\tilde{\delta}^{\frac{1}{2}}) + \tilde O(\tilde{\delta}).
\end{align*}
For sufficiently small $\delta$, the higher-order terms can be upper bounded by $\frac{1}{3} \log \frac{3}{2}$ guaranteeing that $n_{\mathrm{out}} \tau \Delta \leq \log \frac{3}{2}$.

Hence, we conclude
    \begin{equation*}
        \left(1-\tau \frac{\lambda_0}{\sqrt{\lambda_0^2 + \beta}}\right)^{-n}\|w^{n_{\mathrm{in}}+n}\|_{L^2_{\mu^*}}\geq \eta|c|\left(2-\exp{\left(n_{\mathrm{out}}\tau \Delta\right)}\right) \geq \eta|c|\left(2-\exp{\log\left(\frac{3}{2}\right)}\right)=\frac{\eta|c|}{2}.
    \end{equation*}
    Then using again $\lambda_0 \leq -\delta$ we get
    \begin{align*}
        \|w^{n_{\mathrm{in}}+n}\|_{L^2_{\mu^*}}  \geq \frac{\eta|c|}{2}\left(1-\tau \frac{\lambda_0}{\sqrt{\lambda_0^2 + \beta}}\right)^n\geq \frac{\eta|c|}{2}\left(1+\tau \frac{\delta}{\sqrt{\delta^2 + \beta}}\right)^n = \frac{\eta|c|}{2}\left(1+\tau \tilde \delta\right)^n.
    \end{align*}
    On the other hand, 
    \begin{equation}
    \begin{aligned}
     \label{discrete_upper_bound_w_k}
        \|w^{n_{\mathrm{in}}+n}\|_{L^2_{\mu^*}} &\leq W_2(\tilde \mu^{n_{\mathrm{in}}+n}, \mu^*) + W_2(\mu^{n_{\mathrm{in}}+n}, \mu^*)\\
        &\leq 2\eta \kappa + \eta |c| + 4\sqrt{\frac{\tau n_{\mathrm{out}}}{\beta}}F_0^{1/2}
    \end{aligned}
    \end{equation}
    where the second inequality follows from \eqref{evolution_of_y_k} and \eqref{evolution_of_tilde_y_k}.

    We evaluate the bounds on $\|w^{n_{\mathrm{in}}+n}\|_{L^2_{\mu^*}}$ at step $n = n_{\mathrm{out}}$. The lower bound gives
\begin{equation}
    \|w^{n_{\mathrm{in}}+n_{\mathrm{out}}}\|_{L^2_{\mu^*}} \geq \frac{\eta|c|}{2} (1 + \tau \tilde{\delta})^{n_{\mathrm{out}}}.
\end{equation}
Applying the inequality $(1+x)^n \geq \sqrt{e} n^{1/2} (1+x)^{n/2} \log^{1/2}(1+x)$ for $x > 0$, further gives
\begin{equation}\label{eq:refined_lower}
    \|w^{n_{\mathrm{in}}+n_{\mathrm{out}}}\|_{L^2_{\mu^*}} \geq \frac{\eta|c|}{2} \sqrt{e} n_{\mathrm{out}}^{1/2} (1 + \tau \tilde{\delta})^{\frac{n_{\mathrm{out}}}{2}} \log^{1/2}(1+\tau \tilde{\delta}).
\end{equation}
Simultaneously, we evaluate the upper bound. Since the terms $2\eta \kappa$ and $\eta|c|$ are $o(\tilde{\delta})$ and the term $4\sqrt{\frac{\tau n_{\mathrm{out}}}{\beta}}F_0^{1/2}$ is $\tilde O(\tilde{\delta})$, the lower-order terms are strictly absorbed for sufficiently small $\delta$. Doubling the coefficient gives
\begin{equation}\label{eq:refined_upper}
    \|w^{n_{\mathrm{in}}+n_{\mathrm{out}}}\|_{L^2_{\mu^*}} < 8 \sqrt{\frac{\tau n_{\mathrm{out}}}{\beta}} F_0^{1/2}.
\end{equation}
Combining \eqref{eq:refined_lower} and \eqref{eq:refined_upper}, dividing by $\frac{1}{2} n_{\mathrm{out}}^{1/2} \log^{1/2}(1+\tau \tilde{\delta})$ to eliminate the $n_{\mathrm{out}}^{1/2}$ dependence, and isolating the exponential term yields
\begin{equation}\label{eq:isolated_exp}
    (1 + \tau \tilde{\delta})^{\frac{n_{\mathrm{out}}}{2}} < \frac{16 \sqrt{\tau} F_0^{1/2}}{\eta |c| \sqrt{e \beta} \log^{1/2}(1+\tau \tilde{\delta})}.
\end{equation}
Finally, by the definition of $\eta$ in \eqref{eq:def_eta}, for sufficiently small $\delta$ (and corresponding sufficiently small $\varepsilon$), we observe the strict lower bound $\eta \kappa \geq \frac{1}{2} \sqrt{\frac{2 F_0}{C_{\operatorname{H}}}}$, which upon rearranging gives $\frac{F_0^{1/2}}{\eta} \leq \kappa \sqrt{2 C_{\operatorname{H}}}$. Substituting this into the right-hand side of \eqref{eq:isolated_exp} yields
\begin{equation}
    (1 + \tau \tilde{\delta})^{\frac{n_{\mathrm{out}}}{2}} < \frac{16 \sqrt{2 C_{\operatorname{H}} \tau} \kappa}{\sqrt{e \beta} |c| \log^{1/2}(1+\tau \tilde{\delta})}.
\end{equation}
Taking the logarithm of both sides and multiplying by $\frac{2}{\log(1+\tau \tilde{\delta})}$, we obtain
\begin{equation}
    n_{\mathrm{out}} < \frac{2}{\log(1+\tau \tilde{\delta})} \log \left( \frac{16 \sqrt{2 C_{\operatorname{H}} \tau} \kappa}{\sqrt{e \beta} |c| \log^{1/2}(1+\tau \tilde{\delta})} \right).
\end{equation}
This contradicts our choice of $ n_{\mathrm{out}}$ in \eqref{eq:def_nout}.    
\end{proof}
The previous proposition gives a deterministic mechanism for escaping a saddle once the perturbation has a sufficiently large component in the unstable eigendirection. We now formalize this escape phase as a saddle-point episode and quantify its success probability.

We now formalize the notion of a saddle-point episode. Informally, such an episode begins when the iterate is near an $(\varepsilon,\delta)$-saddle and a perturbation is injected. The episode is said to be successful if, after running the WSFN iteration for the prescribed number of steps, the objective has decreased by at least a fixed amount. 
\begin{definition}[Saddle-point episode]
\label{def:saddle-point-episode}
Let $\mu^* \in \mathcal P_2^{\mathrm{ac}}(\mathbb R^d)$ be an $(\varepsilon,\delta)$-saddle point, let $n_{\mathrm{in}}\ge 0$, $\zeta_{\mathrm{ep}} \in (0,1)$, $\eta > 0$ and $\xi \sim \mathrm{GP}(0,C_{\mu^*})$. Define the perturbed iterate $\mu^{n_{\mathrm{in}}}:=(\operatorname{Id}+\eta\xi)_\#\mu^*$. A saddle point episode initiated at time $n_{\mathrm{in}}$ is the finite trajectory generated by scheme \eqref{eq:sfn} from $\mu^{n_{\mathrm{in}}}$ over the next $n_{\mathrm{out}}$ iterations, namely $\mu^{n_{\mathrm{in}}},\mu^{n_{\mathrm{in}}+1},\dots,\mu^{n_{\mathrm{in}}+n_{\mathrm{out}}}$. The episode is called successful if $F(\mu^*)-F(\mu^{n_{\mathrm{in}}+n_{\mathrm{out}}})\ge F_0 > 0$ with probability at least $1-\frac{3\zeta_{\mathrm{ep}}}{4}$.
\end{definition}
\begin{proposition}[Escape from saddle-point episode]
    \label{discrete_decrease_around_saddle}
    Let Assumptions \ref{assumption_regularity_wg}, \ref{assumption:smooth-F} and \ref{assumption:Hessian-lipschitz} hold. Let $\varepsilon,\delta > 0$ be chosen such that $\varepsilon \left(\frac{R_F}{\sqrt{\beta}}+\frac{2L_{\operatorname{H}}}{\pi\beta}\right) \leq \tilde{\delta}^2$. Let $\mu^* \in \mathcal P^{\mathrm{ac}}_2(\mathbb R^d)$ be an $(\varepsilon,\delta)$-saddle point, i.e., $\|\nabla_\mu F (\mu^*)\|_{L^2_{\mu^*}} \leq \varepsilon$ and $\lambda_{0} := \lambda_{\mathrm{min}} \operatorname{K}_{\mu^*} \leq - \delta$. Let $\xi \sim \mathrm{GP}(0,C_{\mu^*})$ and set $\mu^{n_{\mathrm{in}}} = (\operatorname{Id} + \eta \xi)_{\#} \mu^*$ as the initial value of scheme \eqref{eq:sfn}, with parameters $\tau$, $\kappa$, $n_\mathrm{out}$, $F_0$ and $\eta$ chosen as in \eqref{eq:def_tau}--\eqref{eq:def_eta}. Let $\zeta_{\mathrm{ep}} \in (0,1)$. Suppose $c$ from Proposition \ref{discrete_lower_bound_metric} satisfies $|c| \geq \frac{\sqrt{2\pi}|\lambda_0|\zeta_{\mathrm{ep}}}{4}$. Then
    \begin{align*}
        \mathbb P\left(F(\mu^*) - F(\mu^{n_{\mathrm{in}}+n_{\mathrm{out}}}) \geq F_0\right) \geq 1-\frac{3\zeta_{\mathrm{ep}}}{4}.
    \end{align*}
\end{proposition}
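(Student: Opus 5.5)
This follows the standard coupling argument of Jin et al., transposed to $L^2_{\mu^*}$: Proposition \ref{discrete_lower_bound_metric} and the Gaussian structure of $\xi$ in Lemma \ref{lem:independence} do most of the work. Decompose $\xi = \langle q_0,\xi\rangle_{L^2_{\mu^*}} q_0 + \xi_\perp$. By Lemma \ref{lem:independence}, the scalar $\theta := \langle q_0,\xi\rangle_{L^2_{\mu^*}}$ is $\mathcal N(0,\lambda_0^2)$ and independent of $\xi_\perp$. For fixed $\xi_\perp$, call $\theta$ (equivalently $\xi$) \emph{stuck} if the episode started from $(\operatorname{Id}+\eta\xi)_\#\mu^*$ fails, meaning $F(\mu^*) - F(\mu^{n_{\mathrm{in}}+n_{\mathrm{out}}}) < F_0$.

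The first step is to show that, on the event $\|\xi\|_{L^2_{\mu^*}}\le\kappa$, the stuck set in the $\theta$-coordinate is an interval of length less than $\frac{\sqrt{2\pi}|\lambda_0|\zeta_{\mathrm{ep}}}{4}$. Take two stuck values $\theta,\theta'$, both with $\|\xi\|,\|\tilde\xi\|\le\kappa$, and set $c=\theta'-\theta$, so $|c|\le 2\kappa$. Suppose $|c| \ge \frac{\sqrt{2\pi}|\lambda_0|\zeta_{\mathrm{ep}}}{4}$. Then Proposition \ref{discrete_lower_bound_metric} gives some $n\le n_{\mathrm{out}}$ at which one of the two trajectories has decreased by at least $2F_0$ from its perturbed start. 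Lemma \ref{not_very_increase} bounds the perturbed start by $F(\mu^*)+F_0$, and Lemma \ref{descent lemma} makes $F$ non-increasing along the WSFN steps. Together these give $F(\mu^{n_{\mathrm{in}}+n_{\mathrm{out}}}) \le F(\mu^*) + F_0 - 2F_0 = F(\mu^*)-F_0$ for that trajectory, so it is not stuck. This contradicts the choice of $\theta,\theta'$, so any two stuck points lie within that distance.

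The second step is the probability bound. Since the density of $\mathcal N(0,\lambda_0^2)$ is at most $1/(\sqrt{2\pi}|\lambda_0|)$, conditioning on $\xi_\perp$ and integrating gives
\[
\mathbb P\bigl(\text{stuck},\ \|\xi\|\le\kappa\bigr) \le \frac{1}{\sqrt{2\pi}|\lambda_0|}\cdot \frac{\sqrt{2\pi}|\lambda_0|\zeta_{\mathrm{ep}}}{4} = \frac{\zeta_{\mathrm{ep}}}{4}.
\]
Next, $\mathbb P(\|\xi\|_{L^2_{\mu^*}}>\kappa)\le \zeta_{\mathrm{ep}}/2$ follows from the choice \eqref{eq:def_kappa} and Gaussian concentration. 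Lemma \ref{lem:Gaussian-L2} gives the scale $\|\nabla^2_\mu F(\mu^*,\cdot,\cdot)\|_{L^2_{\mu^*\otimes\mu^*}}$; a Borell--TIS or Hanson--Wright type tail would give $2e^{-\kappa^2/(2\sigma^2)}\le \zeta_{\mathrm{ep}}/2$. A union bound then yields failure probability at most $3\zeta_{\mathrm{ep}}/4$.

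The hard part is the first step. Two issues need care there. First, the hypothesis $|c|\ge \frac{\sqrt{2\pi}|\lambda_0|\zeta_{\mathrm{ep}}}{4}$ in the statement must be read as applying to every pair of perturbations differing in the $q_0$-direction, so that Proposition \ref{discrete_lower_bound_metric} can be applied pairwise. Second, the norm constraint has to be kept on both members of each pair. I would handle this by restricting to $\theta$ with $|\theta|\le\kappa$ and $\|\xi_\perp\|\le\kappa$, adjusting constants if needed. The concentration constant for $\|\xi\|$ also needs checking against \eqref{eq:def_kappa}.
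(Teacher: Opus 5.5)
Your proposal is correct and is essentially the paper's proof. It uses the same splitting $\xi=\langle q_0,\xi\rangle_{L^2_{\mu^*}}q_0+\xi_\perp$ via Lemma \ref{lem:independence}, the same contradiction argument through Proposition \ref{discrete_lower_bound_metric} bounding the diameter of the stuck set inside $\{\|\xi\|_{L^2_{\mu^*}}\le\kappa\}$ by $\frac{\sqrt{2\pi}|\lambda_0|\zeta_{\mathrm{ep}}}{4}$, and the same density bound; it differs only in bookkeeping. You fold Lemma \ref{not_very_increase} and Lemma \ref{descent lemma} into the definition of ``stuck'', since they hold deterministically on the norm event. The paper instead first bounds the probability of a $2F_0$ decrease and then union-bounds separately over $\{F(\mu^{n_{\mathrm{in}}})-F(\mu^*)\le F_0\}$; this charges the norm tail $\zeta_{\mathrm{ep}}/4$ twice, which matches your single $\zeta_{\mathrm{ep}}/2$.

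Your two closing worries are already settled by your own first step. The contradiction argument never uses the hypothesis on $c$, which only matters through \eqref{eq:def_nout}, where $|c|$ should be read as that threshold. Taking stuck values inside $\{\|\xi\|_{L^2_{\mu^*}}\le\kappa\}$ already keeps the norm bound on both members of each pair, so the restriction to $|\theta|\le\kappa$, $\|\xi_\perp\|_{L^2_{\mu^*}}\le\kappa$ is unnecessary, and it would only give $\|\xi\|_{L^2_{\mu^*}}\le\sqrt{2}\,\kappa$.
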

\begin{proof}
    By Lemma \ref{lem:Gaussian-L2}, $\|\xi\|_{L^2_{\mu^*}} \sim \mathcal N\left(0,\|\nabla_\mu^2F(\mu^*,\cdot,\cdot)\|_{L^2_{\mu^* \otimes \mu^*}}^2\right)$. Setting $\kappa$ as in \eqref{eq:def_kappa}, i.e., 
    \begin{equation*}
        \kappa \geq \|\nabla_\mu^2F(\mu^*,\cdot,\cdot)\|_{L^2_{\mu^* \otimes \mu^*}}\sqrt{2\log \frac{4}{\zeta}},
    \end{equation*}
    it follows from the Gaussian tail bound,
    \begin{equation*}
        \mathbb P \left(\|\xi\|_{L^2_{\mu^*}} > \kappa\right) \leq \exp\left(-\frac{\kappa^2}{2\|\nabla_\mu^2F(\mu^*,\cdot,\cdot)\|_{L^2_{\mu^* \otimes \mu^*}}^2}\right),
    \end{equation*}
    that $\|\xi\|_{L^2_{\mu^*}} \leq \kappa$ with probability at least $1 - \frac{\zeta_{\mathrm{ep}}}{4}$. Taking $\xi' = \xi + cq_0$, with $q_0$ being the eigenvector of $\operatorname{K}_{\mu^*}$ corresponding to $\lambda_0$, we have $|c| \leq \|\xi\|_{L^2_{\mu^*}} + \|\xi'\|_{L^2_{\mu^*}} \leq 2\kappa$. By assumption, $|c| \geq \frac{\sqrt{2\pi}|\lambda_0|\zeta_{\mathrm{ep}}}{4}$. Choosing $\tau$, $\kappa$, $n_\mathrm{out}$, $F_0$ and $\eta$ as in \eqref{eq:def_tau}--\eqref{eq:def_eta}, we now show that Proposition \ref{discrete_lower_bound_metric} implies
    \begin{align*}
        \mathbb P\left (F(\mu^{n_{\mathrm{in}}}) - F(\mu^{n_{\mathrm{in}}+n_{\mathrm{out}}}) \geq 2 F_0 \right ) \geq 1 - \frac{\zeta_{\mathrm{ep}}}{2}.
    \end{align*}
    Define the event of escaping the saddle point by
\[
    \mathcal{E} := \left\{ \exists n \in \{1, ..., n_{\mathrm{out}}\} |F(\mu^{n_{\mathrm{in}}}) - F(\mu^{n_{\mathrm{in}}+n}) \geq 2 F_0 \right\}.
\]
The aim is to lower bound $\mathbb{P}(\mathcal{E})$. Let $\mathcal{A} = \mathcal{E}^c$ be the event that the scheme does not escape the saddle point, meaning $F(\mu^{n_{\mathrm{in}}}) - F(\mu^{n_{\mathrm{in}}+n}) < 2 F_0$ for all $n \in \{1, ..., n_{\mathrm{out}}\}$. Define the bounded norm event by
\[
    \mathcal{B} := \left\{ \|\xi\|_{L_{\mu^*}^2} \leq \kappa \right\}.
\]
We have $\mathbb{P}(\mathcal{B}) \geq 1 - \frac{\zeta_{\mathrm{ep}}}{4}$. Using the union bound, we get
\[
    \mathbb{P}(\mathcal{E}) \geq \mathbb{P}(\mathcal{E} \cap \mathcal{B}) = \mathbb{P}(\mathcal{B}) - \mathbb{P}(\mathcal{A} \cap \mathcal{B}).
\]
To lower bound $\mathbb{P}(\mathcal{E})$, we must upper bound $\mathbb{P}(\mathcal{A} \cap \mathcal{B})$. We decompose the perturbation $\xi$ into a component along the unit eigenvector $q_0$ and an orthogonal component $\xi_\perp$ so that
\[
    \xi = \langle q_0, \xi \rangle_{L_{\mu^*}^2} q_0 + \xi_\perp,
\]
and $\langle q_0, \xi_\perp \rangle_{L_{\mu^*}^2} = 0$.

For each fixed orthogonal component $\xi_\perp$, we define the ``stuck region'' $I_{\xi_\perp}$ as the set scalars for which the perturbation $\xi$ remains bounded and the scheme fails to escape, that is
\[
    I_{\xi_\perp} := \left\{ x \in \mathbb R| x q_0 + \xi_\perp \in \mathcal{A} \cap \mathcal{B} \right\}.
\]
Take $x_1, x_2 \in I_{\xi_\perp}$. Let $\xi_1 = x_1 q_0 + \xi_\perp$ and $\xi_2 = x_2 q_0 + \xi_\perp$. Let $c = x_2 - x_1$, so $\xi_2 = \xi_1 + c q_0$. By definition, both $\xi_1$ and $\xi_2$ are in $\mathcal{B}$, meaning $\|\xi_1\|_{L_{\mu^*}^2} \leq \kappa$ and $\|\xi_2\|_{L_{\mu^*}^2} \leq \kappa$. The triangle inequality gives
\[
    |c| = |x_2 - x_1| \leq 2\kappa.
\]
According to Proposition \ref{discrete_lower_bound_metric}, if $\|\xi_1\|_{L_{\mu^*}^2} \leq \kappa$ and $\frac{\sqrt{2\pi}|\lambda_0|\zeta_{\mathrm{ep}}}{4} \leq |c| \leq 2\kappa$, then at least one of the initializations ($\mu^{n_{\mathrm{in}}}$ or $\tilde{\mu}^{n_{\mathrm{in}}}$) must escape. However, since we assumed both $x_1$ and $x_2$ are in the stuck region $\mathcal{A}$, this escape condition cannot hold. Because we already know $|c| \leq 2\kappa$, it must be the case that
\[
    |x_2 - x_1| < \frac{\sqrt{2\pi}|\lambda_0|\zeta_{\mathrm{ep}}}{4}.
\]
This implies that the maximum distance between any two points in $I_{\xi_\perp}$ is bounded. Thus, $I_{\xi_\perp}$ is contained within an interval of length $\frac{\sqrt{2\pi}|\lambda_0|\zeta_{\mathrm{ep}}}{4}$. By Lemma \ref{lem:independence}, $\langle q_0, \xi \rangle_{L_{\mu^*}^2} \sim \mathcal{N}(0, \lambda_0^2)$, with density
\[
    p(x) = \frac{1}{\sqrt{2\pi}|\lambda_0|} \exp\left(-\frac{x^2}{2\lambda_0^2}\right),
\]
and $\langle q_0, \xi \rangle_{L_{\mu^*}^2}$ is independent from $\xi_\perp$. The density is bounded by its value at $x = 0$, so $p(x) \leq \frac{1}{\sqrt{2\pi}|\lambda_0|}$. Due to independence, we can now bound the conditional probability of $\langle q_0, \xi \rangle_{L_{\mu^*}^2}$ falling into the stuck region by 
\[
     \mathbb{P}(\mathcal{A} \cap \mathcal{B}|\xi_\perp) = \mathbb{P}(\langle q_0, \xi \rangle_{L_{\mu^*}^2} \in I_{\xi_\perp}| \xi_\perp) = \int_{I_{\xi_\perp}} p(x) \mathrm dx \leq \frac{1}{\sqrt{2\pi}|\lambda_0|} \cdot \frac{\sqrt{2\pi}|\lambda_0|\zeta_{\mathrm{ep}}}{4} = \frac{\zeta_{\mathrm{ep}}}{4}.
\]
Taking the expectation over the orthogonal component $\xi_\perp$, the unconditional probability of being in $\mathcal{A} \cap \mathcal{B}$ is
\[
    \mathbb{P}(\mathcal{A} \cap \mathcal{B}) = \mathbb{E}_{\xi_\perp} \left[ \mathbb{P}(\langle q_0, \xi \rangle_{L_{\mu^*}^2} \in I_{\xi_\perp}|\xi_\perp) \right] \leq \frac{\zeta_{\mathrm{ep}}}{4}.
\]
We substitute this back into our initial probability bound
\[
    \mathbb{P}(\mathcal{E}) \geq \mathbb{P}(\mathcal{B}) - \mathbb{P}(\mathcal{A} \cap \mathcal{B}) \geq \left(1 - \frac{\zeta_{\mathrm{ep}}}{4}\right) - \frac{\zeta_{\mathrm{ep}}}{4} = 1 - \frac{\zeta_{\mathrm{ep}}}{2}.
\]
Finally, let $a_n = F(\mu^{n_{\mathrm{in}}}) - F(\mu^{n_{\mathrm{in}}+n}),$ for any $1 \leq n \leq n_{\mathrm{out}}$. By the descent lemma (Lemma \ref{descent lemma}),
\begin{equation*}
    F(\mu^{n_{\mathrm{in}}+n}) \leq F(\mu^{n_{\mathrm{in}}+n-1}),
\end{equation*}
for all $1 \leq n \leq n_{\mathrm{out}}$, so the sequence $(a_n)_n$ is nondecreasing, i.e., $a_1 \leq a_2 \leq ... \leq a_{n_{\mathrm{out}}}$. Hence, the events $E_n := \{a_n \geq 2F_0\}$ are nested, that is
\begin{equation*}
    E_1 \subseteq E_2 \subseteq ... \subseteq E_{n_{\mathrm{out}}}.
\end{equation*}
Therefore, by definition on $\mathcal{E}$,
\begin{equation*}
    \mathcal{E} = \bigcup_{n=1}^{n_{\mathrm{out}}} E_n = E_{n_{\mathrm{out}}}.
\end{equation*}
Hence, we obtain
\begin{align*}
        \mathbb P\left (F(\mu^{n_{\mathrm{in}}}) - F(\mu^{n_{\mathrm{in}} +n_{\mathrm{out}}}) \geq 2 F_0 \right) \geq 1 - \frac{\zeta_{\mathrm{ep}}}{2}.
\end{align*}
Now, to compute $\mathbb P\left(F(\mu^*) - F(\mu^{n_{\mathrm{in}}+n_{\mathrm{out}}})\right)$, let the events $A := \left\{ F(\mu^{n_{\mathrm{in}}}) - F(\mu^*) \leq F_0 \right\}$ and $B := \left\{ F(\mu^{n_{\mathrm{in}}}) - F(\mu^{n_{\mathrm{in}}+n_{\mathrm{out}}}) \geq 2F_0 \right\}$. By Lemma \ref{not_very_increase}, $\mathbb P(A) \geq 1-\frac{\zeta_{\mathrm{ep}}}{4}$. Now, on the event \(A \cap B\), we have
\begin{equation*}
F(\mu^{n_{\mathrm{in}}+n_{\mathrm{out}}}) \leq F(\mu^{n_{\mathrm{in}}})-2F_0 \leq \bigl(F(\mu^*)+F_0\bigr)-2F_0 = F(\mu^*)-F_0.
\end{equation*}
Therefore,
\[
A \cap B \subset \left\{ F(\mu^*) - F(\mu^{n_{\mathrm{in}}+n_{\mathrm{out}}}) \geq F_0 \right\}.
\]
Hence,
\[
\mathbb P\left(F(\mu^*) - F(\mu^{n_{\mathrm{in}}+n_{\mathrm{out}}}) \geq F_0\right) \geq \mathbb P(A \cap B).
\]
Using the union bound,
\[
\mathbb P(A \cap B) = 1-\mathbb P(A^c \cup B^c) \geq 1-\mathbb P(A^c)-\mathbb P(B^c).
\]
Since $\mathbb P(A^c)\leq \frac{\zeta_{\mathrm{ep}}}{4}$ and $\mathbb P(B^c)\leq \frac{\zeta_{\mathrm{ep}}}{2}$, it follows that
\[
\mathbb P(A \cap B) \geq 1-\frac{\zeta_{\mathrm{ep}}}{4}-\frac{\zeta_{\mathrm{ep}}}{2} = 1-\frac{3\zeta_{\mathrm{ep}}}{4}.
\]
Consequently,
\[
\mathbb P\left(F(\mu^*) - F(\mu^{n_{\mathrm{in}}+n_{\mathrm{out}}}) \geq F_0\right)
\geq 1-\frac{3\zeta_{\mathrm{ep}}}{4}.
\]
\end{proof}

\subsection{Proofs of the convergence theorems}
We are now ready to prove the main convergence results. The argument combines two facts: outside the saddle region, large-gradient steps decrease the objective by a uniform amount, while inside the saddle region, successful saddle-point episodes also yield a decrease.

The proof has three steps. First, we show that every large  gradient step decreases the objective by at least $G_0=\frac{\tau\sqrt{\beta}}{2(C_{\operatorname H}^2+\beta)}\varepsilon^2$. Second, Proposition \ref{discrete_decrease_around_saddle} shows that each saddle point episode decreases the objective by at least $F_0$ with high probability. Third, Assumption \ref{ass:ss} implies that before the hitting time $N_\alpha$, every iterate is either in the large gradient regime or in a saddle regime. Since the total possible decrease is at most $F_{\mathrm{min}}$, the number of large gradient steps is at most $F_{\mathrm{min}}/G_0$, while the number of successful saddle episodes is at most $F_{\mathrm{min}}/F_0$.
\begin{proof}[Proof of Theorem \ref{thm:global_strict_saddle}]
We decompose the evolution of the algorithm before time $N_\alpha$ into two types of phases: (i) large-gradient steps, that is, indices $n<N_\alpha$ such that $\|\nabla_\mu F(\mu^n,\cdot)\|_{L^2_{\mu^n}}>\varepsilon$, and (ii) saddle point episodes, which are blocks of $n_{\mathrm{out}}$ iterations started at indices $n<N_\alpha$ such that $\|\nabla_\mu F(\mu^n,\cdot)\|_{L^2_{\mu^n}}\le \varepsilon$ and $ \lambda_{\min}\operatorname K_{\mu^n}\le -\delta$. By Assumption \ref{ass:ss}, if $n<N_\alpha$, then the third alternative cannot hold. Hence every iterate prior to $T_\alpha$ belongs to one of the two cases above. By Lemma \ref{descent lemma},
\begin{equation*}
F(\mu^{n+1}) \le F(\mu^n)
-\frac{\tau\sqrt{\beta}}{2}
\bigl\|
(\operatorname H_{\mu^n}^2+\beta \operatorname{I_{d \times d}})^{-1/2}\nabla_\mu F(\mu^n,\cdot)
\bigr\|_{L^2_{\mu^n}}^2.
\end{equation*}
Since $\|\operatorname H_{\mu^n}\|_{\mathrm{op}}\le C_{\operatorname H}$, we have $\operatorname H_{\mu^n}^2+\beta \operatorname{I_{d \times d}} \preceq (C_{\operatorname H}^2+\beta)\operatorname{I_{d \times d}}$, hence
\[
(\operatorname H_{\mu^n}^2+\beta \operatorname{I_{d \times d}})^{-1/2}\succeq \frac{1}{\sqrt{C_{\operatorname H}^2+\beta}}\operatorname{I_{d \times d}}.
\]
Therefore, whenever
$\|\nabla_\mu F(\mu^n,\cdot)\|_{L^2_{\mu^n}}>\varepsilon$,
\begin{equation*}
F(\mu^n)-F(\mu^{n+1})
\ge \frac{\tau\sqrt{\beta}}{2(C_{\operatorname H}^2+\beta)}
\|\nabla_\mu F(\mu^n,\cdot)\|_{L^2_{\mu^n}}^2
\ge \frac{\tau\sqrt{\beta}}{2(C_{\operatorname H}^2+\beta)}\varepsilon^2.
\end{equation*}
Set $G_0:=\frac{\tau\sqrt{\beta}}{2(C_{\operatorname H}^2+\beta)}\varepsilon^2$. If $N_{\mathrm{grad}}$ denotes the number of large-gradient steps before time $N_\alpha$, then $N_{\mathrm{grad}}G_0 \le F_{\mathrm{min}}$, and hence
\begin{equation}
\label{eq:Ngrad_bound}
N_{\mathrm{grad}} \le \frac{F_{\mathrm{min}}}{G_0}
= \frac{2(C_{\operatorname H}^2+\beta)}{\tau\sqrt{\beta}}
\frac{F_{\mathrm{min}}}{\varepsilon^2}.
\end{equation}
Let $N_{\mathrm{sad}}$ denote the number of saddle point episodes started before time $N_\alpha$. By Proposition \ref{discrete_decrease_around_saddle},
each such episode satisfies
\begin{equation*}
F(\mu^*)-F(\mu^{n_{\mathrm{in}}+n_{\mathrm{out}}})\ge F_0
\end{equation*}
with probability at least $1-\frac{3\zeta_{\mathrm{ep}}}{4}$. Let $\mathcal A_m$ be the event that the first $m$ saddle point episodes (if they occur) all satisfy the above decrease estimate. On $\mathcal A_m$, it holds that $mF_0 \le F_{\mathrm{min}}$ because the objective can decrease by at most $F_{\mathrm{min}}$ before reaching its infimum. Consequently, there cannot be more than $M:=\left\lceil \frac{F_{\mathrm{min}}}{F_0}\right\rceil$ successful saddle point episodes before time $T_\alpha$. For each $j\ge 1$, let $\mathcal F_j$ be the failure event of the $j$-th saddle point episode. By Proposition \ref{discrete_decrease_around_saddle}, $\mathbb P(\mathcal F_j)\le \frac{3\zeta_{\mathrm{ep}}}{4}$. Now let
\begin{equation*}
\mathcal A_M:=\bigcap_{j=1}^M \mathcal F_j^c.
\end{equation*}
Then, by the union bound,
\begin{equation*}
\mathbb P(\mathcal A_M) = 1-\mathbb P(\mathcal A_M^c) \ge 1-\sum_{j=1}^M \mathbb P(\mathcal F_j) \ge 1-\frac{3\zeta_{\mathrm{ep}}}{4}M.
\end{equation*}
On the event $\mathcal A_M$, we have
\begin{equation*}
N_{\mathrm{sad}} \le \frac{F_{\mathrm{min}}}{F_0}.
\end{equation*}
Since each saddle point episode consumes exactly $n_{\mathrm{out}}$ iterations, the total number of iterations spent in saddle point episodes is bounded by
\begin{equation}
\label{eq:sad_steps_bound}
n_{\mathrm{out}}N_{\mathrm{sad}} \le n_{\mathrm{out}}\frac{F_{\mathrm{min}}}{F_0}.
\end{equation}
Before time $N_\alpha$, every iteration is either a large-gradient step or belongs to a saddle point episode. Therefore, on the event $\mathcal A_M$,
\begin{equation*}
N_\alpha \le N_{\mathrm{grad}}+n_{\mathrm{out}}N_{\mathrm{sad}}.
\end{equation*}
Combining \eqref{eq:Ngrad_bound} and \eqref{eq:sad_steps_bound}, we obtain
\begin{equation*}
N_\alpha \le \frac{2(C_{\operatorname H}^2+\beta)}{\tau\sqrt{\beta}}
\frac{F_{\mathrm{min}}}{\varepsilon^2} + n_{\mathrm{out}}\frac{F_{\mathrm{min}}}{F_0},
\end{equation*}
The probability follows from the lower bound on $\mathbb P(\mathcal A_M)$ with $\zeta_\mathrm{ep} = \frac{4}{3}M^{-1}\zeta$. Finally, since $n_{\mathrm{out}}=\tilde O(\tilde\delta^{-1})$ and $F_0=\tilde O(\tilde\delta^{3})$, we get $n_{\mathrm{out}}F_0^{-1}=\tilde O(\tilde\delta^{-4})$, and therefore, since $\tilde \delta = \frac{\delta}{\sqrt{\delta^2+\beta}}$,
\begin{equation*}
N_\alpha = \tilde{O} \left(\left(\frac{C_{\operatorname H}^2+\beta}{\sqrt{\beta}}\frac{1}{\varepsilon^2} + \frac{\left(\delta^2+\beta\right)^2}{\delta^4}\right)F_{\mathrm{min}} \right),
\end{equation*}
up to logarithmic factors.
\end{proof}
Having established the global hitting-time bound, we finally prove the local linear convergence result once the iterates enter a neighborhood of a non-degenerate global minimizer.
\begin{proof}[Proof of Theorem \ref{thm:local_rate_from_expansion_lemma}]
Let $\gamma_n\in\Pi_o(\mu^n,\mu^*)$ be an optimal coupling. Define $P_{\mu^n}:=(\operatorname{H}_{\mu^n}^2+\beta \operatorname{I_{d \times d}})^{-1/2}$. Since $\mu^{n+1} = (\operatorname{Id}-\tau P_{\mu^n}\nabla_\mu F(\mu^n,\cdot))_\#\mu^n$, and since \(\nabla_\mu F(\mu^*,y)=0\) for \(\mu^*\)-a.e. \(y\), the measure obtained by pushing \(\gamma_n\) forward under $(x,y)\mapsto \left( x-\tau P_{\mu^n}\nabla_\mu F(\mu^n,x), y\right)$ is a coupling between \(\mu^{n+1}\) and \(\mu^*\). Therefore,
\begin{align*}
W_2(\mu^{n+1},\mu^*) &\le \left( \int \left\|x-y-\tau P_{\mu^n}\nabla_\mu F(\mu^n,x)\right\|^2 \mathrm d\gamma_n(x,y)\right)^{1/2}.
\end{align*}
Adding and subtracting \(\operatorname{H}_{\mu^n}(x-y)\), we get
\begin{align*}
x-y-\tau P_{\mu^n}\nabla_\mu F(\mu^n,x) &=
\left(\operatorname{I_{d \times d}}-\tau P_{\mu^n} \operatorname{H}_{\mu^n}\right)(x-y)\\
&+\tau P_{\mu^n}
\left(\operatorname{H}_{\mu^n}(x-y)- \left(\nabla_\mu F(\mu^n,x)-\nabla_\mu F(\mu^*,y)\right)\right).
\end{align*}
Hence, by Minkowski's inequality,
\begin{equation}
\begin{aligned}
\label{eq:regularized_local_step_1}
W_2(\mu^{n+1},\mu^*) &\le \left\|
\operatorname{I_{d \times d}}-\tau P_{\mu^n} \operatorname{H}_{\mu^n}
\right\|_{\mathrm{op}}
W_2(\mu^n,\mu^*)
\\
&+\tau\|P_{\mu^n}\|_{\mathrm{op}}\left(\int \left\|\operatorname{H}_{\mu^n}(x-y) - \left(\nabla_\mu F(\mu^n,x)-\nabla_\mu F(\mu^*,y)\right)\right\|^2 \mathrm d\gamma_n(x,y)\right)^{1/2}.
\end{aligned}
\end{equation}
By Lemma \ref{lemma:smoothness gradient of F},
\[
\left(\int \left\|\operatorname{H}_{\mu^n}(x-y) - \left(\nabla_\mu F(\mu^n,x)-\nabla_\mu F(\mu^*,y)\right)
\right\|^2 \mathrm d\gamma_n(x,y)\right)^{1/2}\le \frac{L_{\operatorname{H}}}{2}W_2^2(\mu^n,\mu^*).
\]
Since $\mu^*\in \mathcal P_2^{\mathrm{ac}}(\mathbb R^d)$ is a global minimizer, we have $\nabla_\mu F(\mu^*, \cdot) = 0$ in $L_{\mu^*}^2$ and moreover $\operatorname{M}_{\mu^*}=0$ $\mu^*$-a.e. by \cite[Lemma C.10.]{yamamoto2025hessianguided}. Hence $\operatorname{H}_{\mu^*}=\operatorname{M}_{\mu^*}+\operatorname{K}_{\mu^*}=\operatorname{K}_{\mu^*}$. By continuity of $P_{\mu}$, there exists $\alpha > 0$ such that
\begin{align*}
    \|P_{\mu^n}\|_{\mathrm{op}} \leq 2 \|P_{\mu^*}\|_{\mathrm{op}} = 2 \|(\operatorname{K}_{\mu^*}^2+\beta\operatorname{I_{d \times d}})^{-1/2}\|_{\mathrm{op}} \leq 2 \left((\lambda_{\mathrm{min}}\operatorname{K}_{\mu^*})^2+\beta\right)^{-1/2},
\end{align*} 
for all $\mu^n \in B_2(\alpha,\mu^*)$. Therefore, \eqref{eq:regularized_local_step_1} yields
\begin{equation}
\label{eq:regularized_step_before_spectral}
W_2(\mu^{n+1},\mu^*) \le \left\|
\operatorname{I_{d \times d}}-\tau P_{\mu^n}\operatorname{H}_{\mu^n}
\right\|_{\mathrm{op}}
W_2(\mu^n,\mu^*)
+
\frac{\tau L_H}{\sqrt{(\lambda_{\mathrm{min}}\operatorname{K}_{\mu^*})^2+\beta}}
W_2^2(\mu^n,\mu^*).
\end{equation}
It remains to bound the first term. Fix \(n\ge N_\alpha\) and let $\mu^n \in B_2(\alpha, \mu^*)$. Because $\mu^*\in\mathcal{P}_2^{\mathrm{ac}}(\mathbb R^d)$, by Theorem \ref{thm:existence-optimal-coupling}, there exists a convex $\mu^*$-a.e. differentiable function $\varphi$ such that $\nabla \varphi$ is an OT map from $\mu^*$ to $\mu^n$, i.e., $\mu^n=(\nabla \varphi)_\#\mu^*$. Consider the constant-speed geodesic
\[
\mu_t:=((1-t)\operatorname{Id}+t\nabla \varphi)_\#\mu^*,
\]
for $t\in[0,1]$, from $\mu^*$ to $\mu^n$. Note that $W_2(\mu_t,\mu^*) = t W_2(\mu^n,\mu^*)<\alpha$, hence $\mu_t\in B_2(\alpha,\mu^*)$ for all $t\in[0,1)$. Since $\lambda_\mathrm{min} \operatorname{K}_{\mu^*} > 0$ and $\operatorname{M}_{\mu^*}= 0$, $\mu^*$-a.e., we have for all $\zeta\in L^2_{\mu^*}$,
\[
\big\langle \operatorname{H}_{\mu^*}\zeta,\zeta\big\rangle_{L^2_{\mu^*}}
=
\big\langle \operatorname{K}_{\mu^*}\zeta,\zeta\big\rangle_{L^2_{\mu^*}}
\ge \lambda_\mathrm{min} \operatorname{K}_{\mu^*}\|\zeta\|_{L^2_{\mu^*}}^2.
\]
By continuity of $\operatorname{H}_\mu$, for all $t \in [0,1)$,
\[
\|\widetilde {\operatorname{H}}_{\mu, t}-\operatorname{H}_{\mu^*}\|_{\mathrm{op}}\le \frac{\lambda_\mathrm{min} \operatorname{K}_{\mu^*}}{2}.
\]
Hence, for every $\zeta$,
\[
\langle \widetilde {\operatorname{H}}_{\mu, t}\zeta,\zeta\rangle_{L^2_{\mu^*}}
\ge
\langle \operatorname{H}_{\mu^*}\zeta,\zeta\rangle_{L^2_{\mu^*}}
-
\|\widetilde {\operatorname{H}}_{\mu, t}-\operatorname{H}_{\mu^*}\|_{\rm op}\|\zeta\|_{L^2_{\mu^*}}^2
\ge
\frac {\lambda_\mathrm{min} \operatorname{K}_{\mu^*}}{2}\|\zeta\|_{L^2_{\mu^*}}^2.
\]
Let $\sigma(\widetilde {\operatorname{H}}_{\mu, t})$ be the spectrum of $\widetilde {\operatorname{H}}_{\mu, t}$. Since $\widetilde {\operatorname{H}}_{\mu, t}$ is self-adjoint, we have $\sigma(\widetilde {\operatorname{H}}_{\mu, t}) \subset \left[\frac{\lambda_\mathrm{min} \operatorname{K}_{\mu^*}}{2},+\infty\right)$. Let $E$ be the unique spectral measure of $\sigma(\widetilde {\operatorname{H}}_{\mu, t})$. 

Let $\psi \in L_{\mu^*}^2$ and $f_\tau(\lambda) = 1 - \tau (\lambda^2+\beta)^{-\frac{1}{2}}\lambda$, for $\tau > 0$. Note $f_\tau$ is non-increasing on $\mathbb R$. Hence, for any $\lambda \in \sigma(\widetilde {\operatorname{H}}_{\mu, t})$, since $\lambda \geq \frac{\lambda_\mathrm{min} \operatorname{K}_{\mu^*}}{2}$, we get
\begin{equation*}
    1 - \tau (\lambda^2+\beta)^{-\frac{1}{2}}\lambda= f_\tau(\lambda) \leq f_\tau\left(\frac{\lambda_\mathrm{min} \operatorname{K}_{\mu^*}}{2}\right) = 1 - \tau \left(\left(\frac{\lambda_\mathrm{min} \operatorname{K}_{\mu^*}}{2}\right)^2+\beta\right)^{-\frac{1}{2}}\frac{\lambda_\mathrm{min} \operatorname{K}_{\mu^*}}{2}.
\end{equation*}
From $\tau \leq 1,$ it follows that $f_\tau(\lambda) > 0$, which implies
\begin{align*}
    &\left(1 - \tau (\lambda^2+\beta)^{-\frac{1}{2}}\lambda\right)^2\\ 
    &\leq \left(1 - \tau \left(\left(\frac{\lambda_\mathrm{min} \operatorname{K}_{\mu^*}}{2}\right)^2+\beta\right)^{-\frac{1}{2}}\frac{\lambda_\mathrm{min} \operatorname{K}_{\mu^*}}{2}\right)^2=\left(1 - \tau \frac{\lambda_\mathrm{min} \operatorname{K}_{\mu^*}}{\sqrt{(\lambda_\mathrm{min} \operatorname{K}_{\mu^*})^2+4\beta}}\right)^2.
\end{align*}
Hence, by \cite[4.7. Theorem, Chapter X]{conway1994course}, 
\begin{equation*}
\begin{aligned}
&\|(\operatorname{I_{d \times d}}-\tau(\widetilde {\operatorname{H}}_{\mu, t}^2+\beta\operatorname{I_{d \times d}})^{-\frac{1}{2}}\widetilde {\operatorname{H}}_{\mu, t})\psi\|_{L_{\mu^*}^2}^2 = \int_{\sigma(\widetilde {\operatorname{H}}_{\mu, t})}\left(1-\tau\frac{\lambda}{\sqrt{\lambda^2+\beta}}\right)^{2}\mathrm{d}\langle \psi,E(\lambda)\psi\rangle_{L_{\mu^*}^2}\\
&\leq \int_{\sigma(\widetilde {\operatorname{H}}_{\mu, t})}\left(1 - \tau \frac{\lambda_\mathrm{min} \operatorname{K}_{\mu^*}}{\sqrt{(\lambda_\mathrm{min} \operatorname{K}_{\mu^*})^2+4\beta}}\right)^2\mathrm{d}\langle \psi,E(\lambda)\psi\rangle_{L_{\mu^*}^2}\\
&= \left(1 - \tau \frac{\lambda_\mathrm{min} \operatorname{K}_{\mu^*}}{\sqrt{(\lambda_\mathrm{min} \operatorname{K}_{\mu^*})^2+4\beta}}\right)^2E(\sigma(\widetilde {\operatorname{H}}_{\mu, t}))\|\psi\|_{L_{\mu^*}^2}^2\\
&=\left(1 - \tau \frac{\lambda_\mathrm{min} \operatorname{K}_{\mu^*}}{\sqrt{(\lambda_\mathrm{min} \operatorname{K}_{\mu^*})^2+4\beta}}\right)^2\|\psi\|_{L_{\mu^*}^2}^2.
\end{aligned}
\end{equation*}
Therefore, since $\tau \in (0,1]$,
\begin{equation*}
    \|\operatorname{I_{d \times d}} -\tau (\widetilde {\operatorname{H}}_{\mu, t}^2+\beta\operatorname{I_{d \times d}})^{-\frac{1}{2}}\widetilde {\operatorname{H}}_{\mu, t}\|_{\mathrm{op}} \leq 1 - \tau \frac{\lambda_\mathrm{min} \operatorname{K}_{\mu^*}}{\sqrt{(\lambda_\mathrm{min} \operatorname{K}_{\mu^*})^2+4\beta}}.
\end{equation*}
Using the fact that operator norm is continuous and $\widetilde {\operatorname{H}}_{\mu, t} \to \operatorname{H}_{\mu^n}$ as $t \uparrow 1$, we take limit as $t \uparrow 1$ above to get
\begin{equation*}
     \|\operatorname{I_{d \times d}} -\tau (\operatorname{H}_{\mu^n}^2+\beta\operatorname{I_{d \times d}})^{-\frac{1}{2}}\operatorname{H}_{\mu^n}\|_{\mathrm{op}} \leq 1 - \tau \frac{\lambda_\mathrm{min} \operatorname{K}_{\mu^*}}{\sqrt{(\lambda_\mathrm{min} \operatorname{K}_{\mu^*})^2+4\beta}}.
\end{equation*}
Substituting in \eqref{eq:regularized_step_before_spectral} gives
\begin{align*}
    W_2(\mu^{n+1},\mu^*) \le \left(1 - \tau \frac{\lambda_\mathrm{min} \operatorname{K}_{\mu^*}}{\sqrt{(\lambda_\mathrm{min} \operatorname{K}_{\mu^*})^2+4\beta}}\right)
W_2(\mu^n,\mu^*)
+
\frac{\tau L_H}{\sqrt{(\lambda_{\mathrm{min}}\operatorname{K}_{\mu^*})^2+\beta}}
W_2^2(\mu^n,\mu^*).
\end{align*}
Now assume
\[
\alpha < \frac{\lambda_\mathrm{min} \operatorname{K}_{\mu^*}}{2L_{\operatorname{H}}}\sqrt{\frac{(\lambda_\mathrm{min} \operatorname{K}_{\mu^*})^2+\beta}{(\lambda_\mathrm{min} \operatorname{K}_{\mu^*})^2+4\beta}}.
\]
Then
\[
1-\tau \frac{\lambda_\mathrm{min} \operatorname{K}_{\mu^*}}{\sqrt{(\lambda_\mathrm{min} \operatorname{K}_{\mu^*})^2+4\beta}}
+ \frac{\tau L_H}{\sqrt{(\lambda_{\mathrm{min}}\operatorname{K}_{\mu^*})^2+\beta}}\alpha < 1-\frac{\tau}{2} \frac{\lambda_\mathrm{min} \operatorname{K}_{\mu^*}}{\sqrt{(\lambda_\mathrm{min} \operatorname{K}_{\mu^*})^2+4\beta}}
<1.
\]
Since \(\mu^{N_\alpha}\in B_2(\alpha,\mu^*)\), we have $W_2(\mu^{N_\alpha},\mu^*)<\alpha$. Suppose inductively that \(W_2(\mu^n,\mu^*)<\alpha\) for some \(n\ge N_\alpha\). Then
\begin{align*}
W_2(\mu^{n+1},\mu^*) &\le \left(1-\tau \frac{\lambda_\mathrm{min} \operatorname{K}_{\mu^*}}{\sqrt{(\lambda_\mathrm{min} \operatorname{K}_{\mu^*})^2+4\beta}}
+ \frac{\tau L_H}{\sqrt{(\lambda_{\mathrm{min}}\operatorname{K}_{\mu^*})^2+\beta}}W_2(\mu^n,\mu^*)
\right)W_2(\mu^n,\mu^*)\\
&\le
\left(1-\tau \frac{\lambda_\mathrm{min} \operatorname{K}_{\mu^*}}{\sqrt{(\lambda_\mathrm{min} \operatorname{K}_{\mu^*})^2+4\beta}}
+ \frac{\tau L_H}{\sqrt{(\lambda_{\mathrm{min}}\operatorname{K}_{\mu^*})^2+\beta}}\alpha
\right)W_2(\mu^n,\mu^*)\\
&<\left(1-\frac{\tau}{2} \frac{\lambda_\mathrm{min} \operatorname{K}_{\mu^*}}{\sqrt{(\lambda_\mathrm{min} \operatorname{K}_{\mu^*})^2+4\beta}}\right)W_2(\mu^n,\mu^*)\\
&<\alpha.
\end{align*}
Thus \(\mu^{n+1}\in B_2(\alpha,\mu^*)\). By induction, $\mu^n\in B_2(\alpha,\mu^*)$, for all $n\ge N_\alpha$. Iterating the estimate \(W_2(\mu^{n+1},\mu^*)\le \left(1-\frac{\tau}{2} \frac{\lambda_\mathrm{min} \operatorname{K}_{\mu^*}}{\sqrt{(\lambda_\mathrm{min} \operatorname{K}_{\mu^*})^2+4\beta}}\right) W_2(\mu^n,\mu^*)\) gives
\begin{align*}
W_2(\mu^n,\mu^*)
&\le
\left(1-\frac{\tau}{2} \frac{\lambda_\mathrm{min} \operatorname{K}_{\mu^*}}{\sqrt{(\lambda_\mathrm{min} \operatorname{K}_{\mu^*})^2+4\beta}}\right)^{n-N_\alpha}W_2(\mu^{N_\alpha},\mu^*)\\
&\le
\alpha \left(1-\frac{\tau}{2} \frac{\lambda_\mathrm{min} \operatorname{K}_{\mu^*}}{\sqrt{(\lambda_\mathrm{min} \operatorname{K}_{\mu^*})^2+4\beta}}\right)^{n-N_\alpha}.
\end{align*}
This concludes the proof.
\end{proof}

\end{document}